\documentclass[12pt]{amsart}
\usepackage{amssymb, amsmath, amsfonts, amsthm}
\usepackage{a4wide}
\usepackage{mathrsfs} 
\usepackage{mathtools}
\usepackage{enumerate}
\usepackage{physics}

\newtheorem{theorem}{Theorem}[section]
\newtheorem*{theorem*}{Theorem}

\newtheorem{corollary}[theorem]{Corollary}
\newtheorem{lemma}[theorem]{Lemma}
\newtheorem{rem}[theorem]{Remark}

\newtheorem{proposition}[theorem]{Proposition}

\theoremstyle{definition}
\newtheorem{definition}[theorem]{Definition}

\usepackage{color}


\newcommand{\rr}{\mathbb{R}}
\newcommand{\nn}{\mathbb{N}}

\newcommand{\ee}{\varepsilon}




\newcommand{\C}{\mathbb{C}}

\newcommand{\eee}{\mathcal{E}}
\newcommand{\EEE}{\mathbb{E}}
\newcommand{\rme}{\mathrm{e}}

\newcommand{\K}{\mathbb{K}}

\newcommand{\N}{\mathbb{N}}

\newcommand{\R}{\mathbb{R}}




\newcommand{\n}[1]{\| #1\|}

\newcommand{\e}{\varepsilon}

\renewcommand{\frown}{\mathbin{\raisebox{0.3ex}{$\smallfrown$}}}

\renewcommand{\leq}{\leqslant}
\renewcommand{\geq}{\geqslant}

\begin{document}
\title[Asymptotic ideal seminorms]{Operator ideals and three-space properties of asymptotic ideal seminorms}
\author[R.M. Causey]{Ryan M. Causey}
\address{Department of Mathematics, Miami University, Oxford, OH 45056, USA}
\email{causeyrm@miamioh.edu}

\author[S. Draga]{Szymon Draga}
\address{Institute of Mathematics, Czech Academy of Sciences, \v{Z}itn\'a 25, 115 67 Prague 1, Czech Republic}
\email{szymon.draga@gmail.com}

\author[T. Kochanek]{Tomasz Kochanek}
\address{Institute of Mathematics, Polish Academy of Sciences, \'Sniadeckich 8, 00-656 Warsaw, Poland\, {\rm and}\, Institute of Mathematics, University of Warsaw, Banacha~2, 02-097 Warsaw, Poland}
\email{tkoch@impan.pl}

\thanks{Research of the second-named author was supported by the GA\v{C}R project 16-34860L and RVO: 67985840.}

\subjclass[2010]{Primary 46B06, 46B20, 46B28; Secondary 46B09}
\keywords{Szlenk index, Szlenk power type, three-space property, operator ideals}

\begin{abstract}
We introduce asymptotic analogues of the Rademacher and martingale type and cotype of Banach spaces and operators acting on them. Some classical local theory results related, for example, to the~`automatic-type' phenomenon, the type-cotype duality, or the Maurey--Pisier theorem, are extended to the asymptotic setting. We also investigate operator ideals corresponding to the asymptotic subtype/subcotype. As an~application of this theory, we provide a~sharp version of a~result of Brooker and Lancien by showing that any twisted sum of Banach spaces with Szlenk power types $p$ and $q$ has Szlenk power type $\max\{p,q\}$.
\end{abstract}

\maketitle

\tableofcontents
\addtocontents{toc}{\protect\setcounter{tocdepth}{1}}

\section{Introduction}
Beginning with the result of James \cite{James} that every uniformly convex Banach space is superreflexive, and the converse of Enflo \cite{Enflo} that every superreflexive Banach space is isomorphic to a uniformly convex Banach space, a beautiful theory has emerged regarding the renorming of superreflexive Banach spaces.  In particular, Pisier \cite{pisier} proved that every superreflexive Banach space can be renormed to be uniformly convex with some power type modulus, and demonstrated a~method by which one can determine the supremum of the corresponding power types by means of Haar cotype quantities. Since then, Haar, martingale, and Rademacher type and cotype have played important roles in the local theory of Banach spaces. One remarkable result regarding these topics is the solution of several three-space problems by Enflo, Lindenstrauss, and Pisier \cite{ELP}. Namely, they showed that if $p_X$ denotes the supremum of those $p$ for which $X$ has Haar type $p$ (resp. Rademacher type $p$), then for any Banach space $X$ and any closed subspace $Y$ of $X$, we have $p_X=\max\{p_Y, p_{X/Y}\}$. Further elegant results regarding Haar, martingale, Rademacher and Gaussian type and cotype of operators were obtained by Beauzamy \cite{Beau}, Hinrichs \cite{Hinrichs} and Wenzel \cite{Wenzel}. 

Each of the topics from the previous paragraph is of a~local nature. The goal of this work is to prove the asymptotic analogues of these local results. We introduce asymptotic analogues of both the martingale and Rademacher type/cotype ideal norms and study duality, renorming, ideal properties, and three-space properties associated with these ideal norms. An important asymptotic renorming result is that a~Banach space is asymptotically uniformly smoothable if and only if its Szlenk index does not exceed $\omega$, the first infinite ordinal, and the modulus of asymptotic uniform smoothness can be taken to have power type (this is due to Knaust, Odell, and Schlumprecht \cite{KOS} for separable spaces and Raja \cite{R} for the general case). Furthermore, Godefroy, Kalton and Lancien \cite{GKL} used an~isomorphic invariant, the Szlenk power type $\mathsf{p}(X)$ of $X$, to determine for which $p$ a~given separable, asymptotically uniformly smoothable Banach space $X$ can be renormed to have modulus of power type $p$. Their result was extended to nonseparable spaces, higher ordinals, and operators in \cite{C}. Moreover, Brooker and Lancien \cite{BL} showed that being asymptotically uniformly smoothable is a~three-space property and they gave the quantitative estimate $\mathsf{p}(X)\leqslant \mathsf{p}(Y)+\mathsf{p}(X/Y)$ for any Banach space $X$ and any closed subspace $Y$ of $X$ such that both $Y$ and $X/Y$ are asymptotically uniformly smoothable. 
We first define the notions of block basic  and block type/cotype for a collection of formal identities between basic sequences, meant to parallel the notions of martingale and Rademacher type. We prove block analogues of several well-known local results regarding martingale type and cotype. We then define the $n^{\mathrm{th}}$ asymptotic structure of an~operator and use the notions of asymptotic basic type/cotype and asymptotic type/cotype, as well as weak$^\ast$ analogues of these notions for adjoints. Each of these notions has a~parameter $p$, which ranges over the interval $[1, \infty]$. Regarding these results, we obtain the following (for details see Section~4 and Theorem~\ref{ideal_theorem}). 
\begin{theorem} For any $1\leqslant p,q\leqslant \infty$, there is an ideal norm $\mathfrak{a}_p$ on the class of operators $\mathfrak{A}_p$ with asymptotic basic type $p$ making $(\mathfrak{A}_p, \mathfrak{a}_p)$ into a Banach ideal. Analogous results hold for the classes of operators with asymptotic basic cotype $q$, asymptotic type $p$, asymptotic cotype $q$, or any of the analogous weak$^\ast$ classes.
\end{theorem}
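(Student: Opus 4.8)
The plan is to reduce the statement to three assertions, each checked directly from the definitions set up in Section~4. Write $\llll(X,Y)$ for all bounded operators and recall that the $n^{\mathrm{th}}$ asymptotic basic type$\,p$ seminorm $\mathfrak{a}_{p,n}$ is defined by quantifying, over normalized weakly null trees of height $n$ in the domain, the best constant in an $\ell_p$-type estimate for norms of the induced linear combinations of the vectors $Tx_t$; the ideal norm is $\mathfrak{a}_p(T)=\sup_{n}\mathfrak{a}_{p,n}(T)$, normalized in Section~4 so that $\n{T}\leq\mathfrak{a}_p(T)$ (one recovers $\n{T}$ from trees of height one). The three assertions are: (1) for each $n$, $T\mapsto\mathfrak{a}_{p,n}(T)$ is an ideal seminorm on $\llll$, i.e.\ it is subadditive, positively homogeneous, satisfies the two-sided ideal inequality $\mathfrak{a}_{p,n}(BTA)\leq\n{B}\,\mathfrak{a}_{p,n}(T)\,\n{A}$, and obeys the rank-one bound $\mathfrak{a}_{p,n}(x^\ast\otimes y)\leq\n{x^\ast}\n{y}$; (2) consequently $\mathfrak{a}_p$ is an ideal norm and $\mathfrak{A}_p=\{T:\mathfrak{a}_p(T)<\infty\}$ is an operator ideal containing the finite-rank operators; (3) each component $\mathfrak{A}_p(X,Y)$ is complete under $\mathfrak{a}_p$.

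For (1), subadditivity and homogeneity are immediate, since $\mathfrak{a}_{p,n}(T)$ is an infimum of constants controlling norms of fixed linear combinations of the $Tx_t$, and a combination of the $(S+T)x_t$ splits into the corresponding combinations for $S$ and for $T$ (the bookkeeping is identical for the $p=\infty$, basic, martingale, and Rademacher variants). The ideal inequality is the structural point: precomposition by $A\in\llll(W,X)$ carries a normalized weakly null tree in $W$ to a weakly null tree in $X$ of norm at most $\n{A}$ --- here one uses that bounded operators are weak-to-weak continuous, so weak nullity at each level is preserved --- so the family of trees relevant to $TA$ lies, after rescaling by $\n{A}$, inside that for $T$; postcomposition by $B$ merely multiplies every output vector by an operator of norm $\n{B}$. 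The rank-one estimate holds because $x^\ast$ sends the levels of a weakly null tree to scalar sequences converging to $0$, so passing to a sufficiently deep subtree makes the relevant averages of $(x^\ast\otimes y)x_t$ arbitrarily small compared to $\n{x^\ast}\n{y}$; combined with the domination $\n{\cdot}\leq\mathfrak{a}_p$ this yields $\mathfrak{a}_p(x^\ast\otimes y)=\n{x^\ast}\n{y}$, in particular $\mathfrak{a}_p(\mathrm{id}_{\K})=1$. Assertion (2) is then formal: a pointwise supremum of ideal seminorms is an ideal seminorm wherever finite, its finiteness class is automatically a linear subspace of each $\llll(X,Y)$ stable under two-sided multiplication, and it contains the finite-rank operators by the rank-one bound and subadditivity; the separation axiom comes again from $\n{\cdot}\leq\mathfrak{a}_p$. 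The weak$^\ast$ classes are handled verbatim, the only change being that one works with weak$^\ast$-null trees in a dual space and with adjoint operators, using that $A^\ast$ is weak$^\ast$-to-weak$^\ast$ continuous with $\n{A^\ast}=\n{A}$ in place of weak-to-weak continuity.

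The substantive work is (3). From $\n{\cdot}\leq\mathfrak{a}_p$, an $\mathfrak{a}_p$-Cauchy sequence $(T_k)$ in $\mathfrak{A}_p(X,Y)$ is $\n{\cdot}$-Cauchy, hence converges in operator norm to some $T\in\llll(X,Y)$. The key observation is that, for each fixed $n$, the seminorm $\mathfrak{a}_{p,n}$ is lower semicontinuous with respect to operator-norm convergence: for a fixed weakly null tree of height $n$ the defining quantity involves only finitely many vectors $Tx_t$ and norms (or averages) thereof, hence depends continuously on $T$, and a supremum of continuous functions is lower semicontinuous. So, fixing $\ee>0$, choosing $N$ with $\mathfrak{a}_p(T_k-T_l)<\ee$ for $k,l\geq N$, fixing $k\geq N$ and letting $l\to\infty$, we get for every $n$
\[
\mathfrak{a}_{p,n}(T_k-T)\;\leq\;\liminf_{l}\mathfrak{a}_{p,n}(T_k-T_l)\;\leq\;\liminf_{l}\mathfrak{a}_p(T_k-T_l)\;\leq\;\ee ,
\]
and taking the supremum over $n$ gives $\mathfrak{a}_p(T_k-T)\leq\ee$. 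Hence $T_k-T\in\mathfrak{A}_p$, so $T=T_k-(T_k-T)\in\mathfrak{A}_p$ and $T_k\to T$ in $\mathfrak{a}_p$, proving completeness. I expect the only genuinely delicate points to be the ideal inequality --- specifically, that the passage to tail subtrees used in the definition interacts correctly with composition by $A$ and $B$ --- and, for the weak$^\ast$ classes, the verification that the relevant trees in the dual are preserved by adjoints; both, however, follow from the continuity properties of (adjoints of) bounded operators noted above.
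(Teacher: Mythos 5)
There is a genuine gap at the very start, and it propagates through the rest of the argument. You define the ideal norm as $\mathfrak{a}_p(T)=\sup_n \mathfrak{a}_{p,n}(T)$ and assert that $\|T\|\leqslant\mathfrak{a}_p(T)$ ``since one recovers $\|T\|$ from trees of height one''. That is false. The level-one quantity $\alpha_{p,1}(T)$ is the supremum of $\lim_\nu\|Tx_\nu\|$ over normalized weakly null nets $(x_\nu)$ in $B_X$, which is a kind of essential norm of $T$ along weakly null directions; it equals $0$ for every compact $T$, and more generally need not be comparable to $\|T\|$. In fact each $\alpha_{p,n}$ vanishes on compact operators (the paper makes this a standing hypothesis in Proposition \ref{ideal prop1}), so your $\sup_n\mathfrak{a}_{p,n}$ is identically $0$ on the nonzero compact operators and is therefore only a seminorm, not a norm. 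This breaks three places in your write-up: (a) the separation axiom for $\mathfrak{a}_p$; (b) the rank-one normalization $\mathfrak{a}_p(x^\ast\otimes y)=\|x^\ast\|\|y\|$, which with your definition would read $0=\|x^\ast\|\|y\|$ --- indeed your own observation that passing to deep subtrees makes the relevant averages of $(x^\ast\otimes y)x_t$ small directly contradicts the claimed domination $\|\cdot\|\leqslant\mathfrak{a}_p$; and (c) completeness, whose first step extracts an operator-norm limit from the asserted domination $\|\cdot\|\leqslant\mathfrak{a}_p$. The paper's remedy is to set $\mathfrak{a}_p(T)=\|T\|+\sup_n\alpha_{p,n}(T)$ (Proposition \ref{ideal prop1} and Theorem \ref{ideal_theorem}); then the separation and the rank-one identity are immediate (since $\alpha_{p,n}(x^\ast\otimes y)=0$), and the completeness argument runs exactly as you outline, via lower semicontinuity of each $\alpha_{p,n}$ in the operator norm.

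Beyond that, the overall architecture of your proof --- (1) each $\mathfrak{a}_{p,n}$ is an ideal seminorm, (2) therefore the class $\mathfrak{A}_p$ is an ideal with ideal norm, (3) completeness by lower semicontinuity --- is exactly the paper's strategy, codified there as Proposition \ref{ideal prop1} and applied in Theorem \ref{ideal_theorem}. One smaller point worth flagging: you call subadditivity of $\mathfrak{a}_{p,n}$ ``immediate'' because ``a combination of the $(S+T)x_t$ splits''. Since $\alpha_{p,n}$ is a supremum over asymptotic models (equivalently, an infimum/inevitable quantity over trees via Lemma \ref{primary type lemma}), the decomposition into the quantities for $S$ and $T$ is not verbatim; the paper's proof intersects the two inevitable sets of branches on which the $S$- and $T$-estimates hold, using that a finite intersection of inevitable sets is inevitable. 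Your sketch is in the right spirit, but this step is where the combinatorial lemma (Proposition \ref{tech prop}) is actually invoked, and is not quite automatic.
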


In perfect parallel with the local case, any asymptotically uniformly smoothable Banach space automatically has some nontrivial asymptotic basic type $p$, but there are asymptotically uniformly smoothable operators which have no nontrivial asymptotic basic type ({\it cf.} Theorem~\ref{causey_glk}, Remarks~\ref{op_nontrivial} and \ref{Mr.K}). This fact is analogous to Pisier's result that any superreflexive Banach space has nontrivial martingale type $p$ for some $1<p<2$, while it is easy to construct super weakly compact operators which do not have martingale type $p$ for any $p>1$ (see \cite{Wenzel}).

Analogous assertions for asymptotic basic cotype and asymptotic type/cotype can be formulated in terms of the notions of asymptotic (basic) subtype/subcotype which we define following the results of Beauzamy, Hinrichs and Wenzel. Regarding these notions, we prove the following theorem, where clause (i) follows from Theorem~\ref{main1}(i) and Remark~\ref{Szlenk remark}, whereas clauses (ii) and (iii) follow from Theorem~\ref{main1}(iii) and (iv), respectively. (Here $Sz(A)$ denotes the Szlenk index of an~operator $A$.)
\begin{theorem}\label{subtype_char}
Let $A\colon X\to Y$ be an operator between Banach spaces $X$ and $Y$. 
\begin{enumerate}[{\rm (i)}]
\item $A$ has asymptotic basic subtype if and only if $Sz(A)\leqslant \omega$.
\item Either $A$ has asymptotic subtype or the identity on $\ell_1$ is crudely asymptotically finitely representable in $A$, and these two alternatives are exclusive. Moreover, the class of operators with asymptotic subtype is a~closed operator ideal. 
\item Either $A$ has asymptotic subcotype or the identity on $c_0$ is crudely asymptotically finitely representable in $A$, and these two alternatives are exclusive. Moreover, the class of operators with asymptotic subcotype is a~closed operator ideal.
\end{enumerate}
\label{subtype}
\end{theorem}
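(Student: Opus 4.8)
All three clauses are read off from the general statement Theorem~\ref{main1}, so the plan is to prove the latter first and then specialize. Theorem~\ref{main1} should provide, for an operator $A\colon X\to Y$, (a) an equivalence between polynomial control of its $n$th asymptotic basic type/cotype seminorms and a Szlenk-type condition, and (b) a Beauzamy-style dichotomy for the (non-basic) asymptotic subtype and subcotype. Granting this, clause (i) follows by combining Theorem~\ref{main1}(i)---that $A$ has asymptotic basic subtype iff its asymptotic basic type seminorms grow subpolynomially, the ``sub'' condition being non-maximal growth of the associated asymptotic ideal seminorm---with Remark~\ref{Szlenk remark}, which records that this growth condition is equivalent to $Sz(A)\leqslant\omega$ through the operator renorming theory of \cite{C} (extending \cite{KOS,R,GKL}). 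No further work is needed for (i).

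Clauses (ii) and (iii) are the dichotomies supplied by Theorem~\ref{main1}(iii) and (iv). The exclusivity of the two alternatives is the soft half: if the identity on $\ell_1$ (resp.\ $c_0$) is crudely asymptotically finitely representable in $A$, then finite $\ell_1^n$- (resp.\ $c_0^n$-) structures can be transported through $A$ along weakly null trees of every height, which drives the associated asymptotic ideal seminorms to their extremal values and hence rules out asymptotic subtype (resp.\ subcotype). The ``moreover'' part---that the class of operators with asymptotic subtype (resp.\ subcotype) is a closed operator ideal---follows from the ideal structure of Theorem~\ref{ideal_theorem}: the asymptotic basic type/cotype quantities are subadditive in $A$ and submultiplicative under pre- and post-composition with bounded operators, so ``having some nontrivial polynomial control'' is stable under sums and two-sided composition; finite-rank operators trivially qualify; and closedness in the operator norm holds because the $n$th asymptotic seminorm is $1$-Lipschitz in $A$, so a uniform bound on it survives operator-norm limits.

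The heart of the matter, and the step I expect to be the main obstacle, is the nontrivial half of the dichotomy in Theorem~\ref{main1}: from the failure of polynomial control of the asymptotic subtype seminorms one must manufacture, for every $\varepsilon>0$ and every $n$, a height-$n$ weakly null tree in $X$ on whose branches $A$ acts $(1+\varepsilon)$-isomorphically as the identity on $\ell_1^n$---that is, a genuine crude asymptotic finite representation of $\ell_1$ through $A$---and symmetrically for $c_0$ (alternatively, on the cotype side one may pass to the weak$^\ast$ analogues already introduced for adjoints). This is the asymptotic reincarnation of the Maurey--Pisier/Krivine machinery: it requires the correct repeated-averages hierarchy along the Schreier families on weakly null trees, a stabilization (Ramsey) lemma upgrading ``almost isometric on many branches'' to ``on a full subtree,'' and care that every estimate is witnessed through $A$ rather than merely inside $X$ or $Y$. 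Everything else---exclusivity, the ideal axioms, and the translation in clause~(i)---is routine once the seminorm estimates and the renorming equivalence are in place.
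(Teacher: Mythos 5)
Your top-level decomposition matches the paper exactly: clause (i) is Theorem~\ref{main1}(i) combined with Remark~\ref{Szlenk remark}, while (ii) and (iii) are Theorem~\ref{main1}(iii)--(iv) for the dichotomy together with the ideal result (Theorem~\ref{various_ideals}, built on Proposition~\ref{ideal prop1}) for the ``moreover'' clauses. The exclusivity of the alternatives is, as you say, the soft half. One small point: the ideal structure you need here is not Theorem~\ref{ideal_theorem} (which treats the \emph{Banach} ideal of operators with $\sup_n\alpha_{p,n}(A)<\infty$ for fixed $p$) but Proposition~\ref{ideal prop1}'s second part, applied with $\lambda_n=\tau_{2,n}/n^{1/2}$ (resp.\ $\gamma_{2,n}/n^{1/2}$), which is exactly what Theorem~\ref{various_ideals} does; your Lipschitz-in-$A$ observation is the right mechanism for closedness. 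Another small point: in clause (i) the relevant reference in Remark~\ref{Szlenk remark} is \cite{Alt}, which gives the combinatorial characterization of $Sz(A)\leqslant\omega$ via $\varrho$-$\ell_1^+$-sequences in $\{A\}_n$; it is not the renorming theory of \cite{C} that is being invoked at this step.

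Where your sketch goes off the rails is the ``heart of the matter.'' The hard direction of Theorem~\ref{main1}(iii)--(iv) is \emph{not} proved by an asymptotic Maurey--Pisier/Krivine argument, repeated-averages hierarchies, or Schreier families. Indeed the paper's Section~3 explicitly opens by noting that the Maurey--Pisier route does not extend verbatim to operators (the diagonal-operator example), and this is the very reason Beauzamy's ``subtype'' notion is used. The actual route in Theorems~\ref{btype_thm} and~\ref{bco_thm} is: (a) Beauzamy's averaging argument, carried out at the level of abstract block structures (norming functionals $f_\ee^\ast$ for each sign choice, the expectation $F(i)$, and the selection of a large set $S$ of coordinates on which $|F(i)|$ is bounded below), to pass between the $\rho$-$\ell_1$-sequence condition and the decay of $\Theta_n$; (b) James' non-distortion dichotomy, iterated along blockings of both domain and range bases simultaneously, to upgrade block-representability of a formal identity between two renormings of $\ell_1$ to block-representability of $c\cdot\mathrm{Id}_{\ell_1}$; and (c) an ultrafilter construction producing a witnessing operator on a quotient of $c_{00}$ from arbitrarily long $\varrho$-$\ell_1$-sequences. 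No Schreier families, no repeated averages, and no Ramsey-style stabilization beyond the elementary pruning Proposition~\ref{tech prop} are involved. Your ``almost isometric on many branches $\Rightarrow$ on a full subtree'' idea is morally Proposition~\ref{tech prop}/Corollary~\ref{dpt}, which is used, but the rest of your toolkit is not.

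Finally, you overclaim the conclusion. The dichotomy asserts only \emph{crude} asymptotic finite representability of $\ell_1$ (resp.\ $c_0$) through $A$, i.e.\ with some fixed constant $C$; the theorem does not give, and one should not expect to manufacture, $(1+\varepsilon)$-isomorphic copies for every $\varepsilon>0$ through an arbitrary operator. What James' argument inside Theorem~\ref{btype_thm} actually yields is block-representability of $c\cdot\mathrm{Id}_{\ell_1}$ for a single scalar $c>0$, which is precisely crude representability with $C=\max\{c,1/c\}$ and no better. For the identity operator on a Banach space you do get $c=1$ (and Proposition~\ref{basic_prop}(vi) makes this precise), but not for a general operator $A$.
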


Restricting our attention to Banach spaces (that is, taking $A$ to be the identity operator), we prove the following `automatic power type' phenomenon which in general fails for operators. It follows from Corollary~\ref{aut_type} applied to appropriate identity blocks ({\it cf.} also Theorem~\ref{main2}).
\begin{theorem} A Banach space $X$ has asymptotic basic subtype if and only if there exists $1<p<\infty$ such that $X$ has asymptotic basic type $p$. The analogous result holds for asymptotic basic subcotype, asymptotic subtype, asymptotic subcotype, and the corresponding weak$^\ast$ properties. 
\label{automatic power type}
\end{theorem}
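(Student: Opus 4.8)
The plan is to reduce all six assertions to Corollary~\ref{aut_type} (for the ``basic'' notions) and to Theorem~\ref{main2} (for the remaining ones) by attaching to $X$ the right collection of formal identities between basic sequences --- an \emph{identity block}. Recall that, essentially by unwinding the definitions, the asymptotic basic type $p$ of $X$ is the block type $p$ of the identity block $\mathcal{I}_X$ whose members are the formal identities associated with the basic sequences arising along the admissible weakly null trees of height $n$ in $B_X$, so that the length-$n$ block-type seminorm of $\mathcal{I}_X$ is precisely the $n$th asymptotic basic type constant of $X$. Under this identification ``$X$ has asymptotic basic subtype'' reads ``$\mathcal{I}_X$ has block subtype'' and ``$X$ has asymptotic basic type $p$'' reads ``$\mathcal{I}_X$ has block type $p$''. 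Hence the equivalence for asymptotic basic type is exactly Corollary~\ref{aut_type} applied to the block $\mathcal{B}=\mathcal{I}_X$.

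It is worth recalling the mechanism behind Corollary~\ref{aut_type}, which is the conceptual heart of the statement. For a block $\mathcal{B}$ let $\rho_n=\rho_n(\mathcal{B})$ denote the supremum of the length-$n$ Rademacher averages $\mathbb{E}\|\sum_{i=1}^n\varepsilon_i x_i\|$ over all admissible normalized configurations of $\mathcal{B}$. One has $1\le\rho_n\le n$, the sequence $\rho$ is nondecreasing, and --- inserting a length-$m$ play into each branch of a length-$n$ play --- $\rho_{mn}\le\rho_m\rho_n$. The block has block subtype exactly when $\rho_{n_0}<n_0$ for some $n_0\ge 2$ (equivalent, via submultiplicativity, to $\rho_n=o(n)$); iterating gives $\rho_{n_0^k}\le n_0^{k\theta}$ with $\theta:=\log_{n_0}\rho_{n_0}\in[0,1)$, and monotonicity interpolates to $\rho_n\le Cn^{\theta}$ for every $n$. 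Since $\rho_n\lesssim n^{1/p}$ for all $n$ is precisely block type $p$, this yields block type $p$ for $p:=1/\theta$ when $\theta>0$ and for every finite $p$ when $\theta=0$; in either case block type $p$ for some $p\in(1,\infty)$. The reverse implication is immediate, since block type $p>1$ forces $\rho_n\lesssim n^{1/p}<n$ eventually. Applying this to $\mathcal{I}_X$ proves the theorem for asymptotic basic type.

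The other five notions follow by the same scheme with the block and the quantity adjusted. For asymptotic basic \emph{cotype} one uses the reciprocal quantity $\mu_n(\mathcal{B})$, the infimum of the length-$n$ Rademacher averages over admissible normalized configurations; now $1\le\mu_n\le n$, $\mu$ is nondecreasing, and $\mu_{mn}\ge\mu_m\mu_n$, so block subcotype ($\mu_{n_0}>1$ for some $n_0$) iterates to $\mu_n\gtrsim n^{1/q}$, i.e.\ block cotype $q$ for some $q\in(1,\infty)$. For asymptotic subtype and asymptotic subcotype one replaces $\mathcal{I}_X$ by the identity block built from the full $n$th asymptotic structure of $X$ (equivalently, from the asymptotic envelopes) instead of from weakly null trees, and appeals to Theorem~\ref{main2} in place of Corollary~\ref{aut_type}; the (super)submultiplicative bookkeeping is unchanged, since concatenating an admissible play of length $m$ with one of length $n$ produces, up to an absorbable constant, an admissible play of length $mn$. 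Finally, the two weak$^\ast$ properties are treated verbatim with weak$^\ast$-null trees in $B_{X^\ast}$ (respectively the weak$^\ast$ asymptotic structure) and the weak$^\ast$ block (co)type seminorms, which obey the same two estimates.

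The main obstacle is what hides in ``appropriate identity block'': for each of the six notions one must verify that the block attached to $X$ computes the corresponding asymptotic quantity scale by scale, and --- crucially for the iteration --- that the admissibility constraints at scales $m$ and $n$ compose to the constraint at scale $mn$ with a constant independent of $m$ and $n$, so that the submultiplicativity (respectively supermultiplicativity) required by Corollary~\ref{aut_type} and Theorem~\ref{main2} is genuinely available uniformly. Once that compatibility is in place, the automatic-power-type conclusion is just the scalar iteration recorded above; in particular, the present argument does not require the ``$\ell_1$/$c_0$ not crudely asymptotically finitely representable'' reformulations of subtype and subcotype from Theorem~\ref{subtype_char}.
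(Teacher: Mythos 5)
Your overall strategy is exactly the paper's: observe that $\mathcal{A}(X)$ (and likewise $\mathcal{A}^\ast(X) = \mathcal{A}^\ast(\mathrm{Id}_X)$, since $\mathrm{Id}_X^\ast=\mathrm{Id}_{X^\ast}$) is an identity block, translate the asymptotic quantities to block quantities via Definition~\ref{AA_def}, and apply Corollary~\ref{aut_type}. Two points in your write-up are off, however.

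First, you have mislocated the distinction between the ``basic'' and non-``basic'' asymptotic notions. By Definition~\ref{AA_def}, asymptotic basic (sub)type and asymptotic (sub)type of $X$ are \emph{both} read off from the same identity block $\mathcal{A}(X)$, built from weakly null trees; what changes is the block quantity evaluated on it ($\alpha_{p,n}$, $\theta_n$ for the basic versions, with no Rademacher averaging; $\tau_{p,n}$, $\Theta_n$ for the non-basic ones, with $L_p$-averages over signs). There is no ``envelope'' block to switch to. So for asymptotic subtype one still applies Corollary~\ref{aut_type} to $\mathcal{A}(X)$, just reading its non-basic clauses; Theorem~\ref{main2}(iii) indeed records this, but is itself a consequence of Corollary~\ref{aut_type} rather than an alternative to it. (Also, ``the asymptotic basic type $p$ of $X$ is the block type $p$ of $\mathcal{I}_X$'' should read block \emph{basic} type; $\alpha_{p,n}$, not $\tau_{p,n}$, is the relevant scale there.)

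Second, your sketch of the mechanism conflates the basic and non-basic quantities and, more substantively, overstates what the submultiplicativity iteration alone yields. Submultiplicativity (Proposition~\ref{submP}, valid because $\mathcal{A}(X)$ is an identity block) plus $\Theta_{n_0}<1$ does give $\Theta_n=O(n^{\theta-1})$ with $\theta\in[0,1)$, but the passage from this one-scale decay to genuine block type $p$ --- uniform $L_p$-domination against $\|(a_i)\|_{\ell_p^n}$ for \emph{arbitrary} coefficient sequences --- is not the ``scalar iteration recorded above.'' It is the content of Proposition~\ref{basic_prop}(v), which requires Kahane--Khintchine and the level-set/interpolation argument; so the sentence ``$\rho_n\lesssim n^{1/p}$ for all $n$ is precisely block type $p$'' is false as stated. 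Since you ultimately invoke Corollary~\ref{aut_type} as a black box, this does not invalidate the proof, but it misrepresents where the analytic work lives. With these corrections your argument is the paper's argument.
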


We also obtain the following result regarding duality (see Corollary~\ref{duality_for_operators}).
\begin{theorem} 
An operator $A\colon X\to Y$ between Banach spaces $X$ and $Y$ has asymptotic basic subtype {\rm (}resp. asymptotic basic type $p${\rm )} if and only if $A^\ast$ has weak$^\ast$ asymptotic basic subcotype {\rm (}resp. weak$^\ast$ asymptotic basic cotype $q$, where $1/p+1/q=1${\rm )}. 
\end{theorem}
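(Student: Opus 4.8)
The plan is to establish the duality between asymptotic basic (sub)type of $A$ and weak$^\ast$ asymptotic basic (sub)cotype of $A^\ast$ by unwinding the definitions at the level of finite block sequences and invoking the standard pairing between a Banach space and its dual.

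First I would recall the definitions involved. The asymptotic basic type $p$ of $A$ is quantified by an inequality of the form
\[
\left\| \sum_{i} A x_i \right\| \leqslant \mathfrak{a}_p(A) \left( \sum_i \|x_i\|^p \right)^{1/p}
\]
(up to the relevant $\ell_p$-normalization and the asymptotic/block structure over which the $x_i$ range), whereas the weak$^\ast$ asymptotic basic cotype $q$ of $A^\ast$ is quantified by a lower $\ell_q$-estimate on weak$^\ast$-null block sequences in the dual. My plan is to show these two statements are formally dual: given a finite weak$^\ast$-null block sequence $(f_i)$ witnessing (failure of) the cotype inequality for $A^\ast$, I would test it against a normalized block sequence $(x_i)$ in $X$ chosen by Hahn--Banach to nearly norm the relevant functional combinations, use the duality $\langle A^\ast f_i, x_i\rangle = \langle f_i, A x_i\rangle$, and apply Hölder's inequality. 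Conversely, starting from a block sequence in $X$ witnessing the type inequality for $A$, I would produce via Hahn--Banach a weak$^\ast$-null block sequence in $X^\ast$ (using that finitely supported block sequences can be chosen to be weak$^\ast$-null by passing far enough out in the basis/asymptotic filtration) realizing the dual estimate for $A^\ast$.

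The key technical steps, in order, are: (1) pin down that the $n^{\mathrm{th}}$ asymptotic structure of $A$ and the weak$^\ast$ asymptotic structure of $A^\ast$ are genuinely dual objects — i.e., block sequences admissible in one correspond, after passing to a subnet/subfamily given by the Schreier-type families, to block sequences admissible in the other; (2) carry out the Hahn--Banach selection uniformly, so that the constant is preserved exactly (for the power-type statement with $1/p + 1/q = 1$) and so that "subtype" (existence of \emph{some} nontrivial estimate) transfers to "subcotype"; (3) verify that the quantitative constants match, which is where Hölder's inequality with conjugate exponents enters and forces the relation $1/p+1/q=1$. For the "subtype $\iff$ subcotype" half of the statement, I would additionally invoke Theorem~\ref{automatic power type}: $A$ has asymptotic basic subtype iff it has asymptotic basic type $p$ for some $1<p<\infty$, and then the power-type duality closes the loop.

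The main obstacle I anticipate is step (1): making precise that the asymptotic/weak$^\ast$-asymptotic structures are dual requires care with the order of quantifiers in the asymptotic game (the "for all tails, there exists a vector" alternation) and with the fact that weak$^\ast$-null sequences in $X^\ast$ do not in general lift to weak-null sequences in $X$. The resolution should be that the block (finite, Schreier-admissible) formulation used throughout the paper only ever involves \emph{finitely many} vectors at a time, so one can always arrange the dual vectors to be supported far enough out to be "asymptotically null" in the required weak$^\ast$ sense, and the Hahn--Banach norming is exact on finite-dimensional subspaces. Once this dictionary is set up, the two inequalities are mirror images and the remaining work is the routine Hölder estimate.
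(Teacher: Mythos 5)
Your high-level strategy (duality via Hahn--Banach norming and H\"older with conjugate exponents) is the right starting point, and the paper indeed establishes the corollary by proving a quantitative two-sided estimate $\frac{1}{2}\alpha_{p,n}(A)\leqslant \beta^\ast_{p',n}(A)\leqslant 2\alpha_{p,n}(A)$. However, there are two genuine gaps in your plan.

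The first and more serious gap is the one you yourself flag as ``step (1)'': making the asymptotic structures dual to each other. Your proposed resolution --- ``arrange the dual vectors to be supported far enough out'' --- does not address the actual obstruction. When you start from a weakly null tree $(x_t)_{t\in D^{\leqslant n}}$ in $B_X$ witnessing $\alpha_{p,n}(A)>\alpha$ and take Hahn--Banach functionals $y^\ast_t\in B_{Y^\ast}$ norming $A\sum_i a_i x_{t|_i}$ for each branch $t\in D^n$, the family $(y^\ast_t)$ is in no sense weak$^\ast$-null: there is no reason for the $y^\ast_t$ to cluster at $0$, and one cannot simply ``push them out''. The paper's resolution is specific and you do not reproduce it: by weak$^\ast$ compactness one first extends the branch functionals to a weak$^\ast$-closed tree $(y^\ast_t)_{t\in D^{\leqslant n}}$ (defining $y^\ast_t$ at interior nodes as weak$^\ast$ cluster points of $y^\ast_{t\smallfrown(u)}$), and then passes to the difference tree $z^\ast_t = y^\ast_{\phi(t)}-y^\ast_{\phi(t^-)}$ after a pruning $\phi$. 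The weak$^\ast$-closedness is precisely what makes this difference tree weak$^\ast$-null, and the stabilization ensures $\|A^\ast z^\ast_t\|\geqslant c_{|t|}$ for suitable $c_i$. Without this difference-tree construction (or an equivalent device) your argument in the direction from type of $A$ to weak$^\ast$ cotype of $A^\ast$ does not close. The converse direction also requires a pruning and a careful $\theta/2$ margin (a factor of $2$ is lost in each direction, which is why the paper's lemma has constants $1/2$ and $2$ rather than the exact match you seem to expect; for the corollary this does not matter since boundedness and $o(n^{1/p'})$ are unaffected by constants).

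The second gap is your appeal to Theorem~\ref{automatic power type} to handle the subtype part. That theorem is stated and proved only for Banach spaces (identity blocks), and it genuinely fails for operators: Remark~\ref{op_nontrivial} exhibits operators with asymptotic basic subtype but no nontrivial asymptotic basic type, so you cannot replace ``subtype'' by ``type $p$ for some $p>1$'' for a general operator $A$. What you need instead is the equivalence (ii)$\Leftrightarrow$(iii) in Theorems~\ref{bbtype_thm} and~\ref{bbco_thm}, which says (for any stable collection, hence for $\mathcal{A}(A)$ and $\mathcal{A}^\ast(A)$) that having block basic \emph{sub}type is equivalent to having block basic \emph{sub}type $p$ for some (every) $1<p<\infty$, i.e.\ $\alpha_{p,n}=o(n^{1/p'})$; similarly for subcotype. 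With that reformulation the subtype/subcotype equivalence in the corollary follows from the quantitative lemma exactly as the type/cotype equivalence does.
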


Similarly to the result mentioned earlier, there exist operators with asymptotic basic subcotype or asymptotic subtype/subcotype and without nontrivial corresponding power type (see
Remark~\ref{op_nontrivial}). Note also that Theorem~\ref{subtype_char} yields a~structural characterization of operators with asymptotic subtype/subcotype and that the~corresponding result in the classical Rademacher type/cotype case would be that a~Banach space either has some nontrivial type (resp. cotype) or contains $\ell_1^n$'s (resp. $\ell_\infty^n$'s) uniformly, while it is easy to construct operators with no nontrivial type (resp. cotype) which do not uniformly preserve $\ell_1^n$'s. 

The paper is concluded with a quantitative strengthening of the result of Brooker and Lancien mentioned earlier, as well as a~proof of the analogous fact for asymptotic type. In what follows, let $\mathsf{q}(X)$ denote the supremum of those $p$ such that $X$ has asymptotic basic type $p$ and let $\mathsf{t}(X)$ denote the supremum of those $p$ such that $X$ has asymptotic type $p$. Combining Theorems~\ref{subtype} and \ref{automatic power type} with an~asymptotic version of the well-known blocking arguments of James, we obtain that $Sz(X)\leqslant \omega$ if and only if $\mathsf{q}(X)>1$ and $\ell_1$ fails to be asymptotically finitely representable in $X$ if and only if $\mathsf{t}(X)>1$. The aforementioned result is contained in Theorem~\ref{BL_main}, where the assertion for $\mathsf{q}(X)$ is presented in a~dual form ({\it cf.} Remark~\ref{Mr.K}).
\begin{theorem} 
Let $X$ be a Banach space and $Y$ a closed subspace of $X$.
Then
$$\mathsf{q}(X)=\min\{\mathsf{q}(Y), \mathsf{q}(X/Y)\}\quad\mbox{ and }\quad \mathsf{t}(X)=\min\{\mathsf{t}(Y), \textbf{\emph{t}}(X/Y)\}.
$$
\end{theorem}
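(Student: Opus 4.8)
The plan is to prove each equality by combining a routine ``$\leqslant$'' with a quantitative blocking argument for ``$\geqslant$''; I describe the argument for $\mathsf q$ and indicate the modifications for $\mathsf t$ at the end. For the easy inequality, asymptotic basic type $p$ passes to a closed subspace $Y\subseteq X$ because the weak topology of $Y$ is the restriction of that of $X$ (as $Y^\ast=X^\ast/Y^\perp$), so every weakly null tree witnessing the $n$th asymptotic structure of $Y$ is one in $X$, and the block-type-$p$ estimate for $X$ restricts; it passes to $X/Y$ by duality, since by Corollary~\ref{duality_for_operators} asymptotic basic type $p$ of $X$ is equivalent to weak$^\ast$ asymptotic basic cotype $q$ of $X^\ast$, the latter passes to the weak$^\ast$-closed subspace $(X/Y)^\ast\cong Y^\perp$ (whose relative weak$^\ast$ topology is its own weak$^\ast$ topology), and this is in turn equivalent to asymptotic basic type $p$ of $X/Y$. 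Taking suprema over $p$ gives $\mathsf q(X)\leqslant\min\{\mathsf q(Y),\mathsf q(X/Y)\}$; when this minimum equals $1$ nothing more is needed, since $\mathsf q\geqslant1$ always, so assume it exceeds $1$.

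For the reverse inequality I would fix $1<q<\min\{\mathsf q(Y),\mathsf q(X/Y)\}$ and $\delta>0$, so that $Y$ and $X/Y$ have asymptotic basic type $q$ with a common constant $C_0$; it suffices to show that $X$ has asymptotic basic type $q-\delta$ and then let $q\uparrow\min\{\mathsf q(Y),\mathsf q(X/Y)\}$ and $\delta\downarrow 0$. Fix $n$ and a normalized weakly null tree of height $n$ realizing part of the $n$th asymptotic structure of $X$, and let $Q\colon X\to X/Y$ be the quotient map. The first step is an asymptotic James-type blocking: after passing to a full subtree one may assume that along every branch $x_1,\dots,x_n$ each vector decomposes as $x_i=u_i+v_i$ with $u_i\in Y$ and $\|v_i\|\leqslant(1+\varepsilon)\|Qx_i\|=:(1+\varepsilon)\delta_i$; the scales $\delta_i$ are, level by level, constant up to a factor $2$, so the levels split into ``scale bands'' $B$ of scale $\asymp 2^{-k}$; the $Y$-parts $(u_i)$ form an array realizing the $n$th asymptotic structure of $Y$; and on each band $B$ of scale $\asymp 2^{-k}$ the map $Q$ is $\asymp 2^{-k}$-isomorphic on the span of the corresponding branch vectors, so that $(2^{k}Qx_i)_{i\in B}$ realizes the $n$th asymptotic structure of $X/Y$.

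Granting this, fix coefficients $(a_i)_{i=1}^n$ with support of size $m$; on the selected branch, route a band $B$ of scale $\asymp 2^{-k}$ to ``$X/Y$'' if it carries many active levels at a not-too-small scale --- precisely, if its number of active levels exceeds a threshold $\asymp 2^{kq^\ast}$, with $q^\ast$ the exponent conjugate to $q$, and $2^{-k}>n^{-(1-1/q)}$ --- and to ``$Y$'' otherwise. For an $X/Y$-routed band one bounds $\|\sum_{i\in B}a_ix_i\|$ by $2^{k}$ times the $X/Y$-norm of $\sum_{i\in B}a_i Qx_i$ and applies the block-type-$q$ inequality of $X/Y$, which produces a gain; the threshold forces the active-level counts of the $X/Y$-routed bands to grow geometrically, so there are only $O(\log m)$ of them and their contributions sum to at most a power of $\log m$. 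For the $Y$-routed levels one pools all the corresponding $u_i$'s into a single array in $Y$ and applies the block-type-$q$ inequality of $Y$; the leftover perturbation $\sum a_i v_i$ has norm at most $\sum|a_i|\delta_i$, and because distinct bands have geometrically separated scales this telescopes, the residual loss coming only from the lowest bands and being, for $n$ large, absorbed by passing from $q$ to $q-\delta$. Adding the two contributions yields $\|\sum a_i x_i\|\lesssim C(\sum|a_i|^{q-\delta})^{1/(q-\delta)}$ with $C$ independent of $n$, the tree, and $(a_i)$, i.e.\ asymptotic basic type $q-\delta$ for $X$.

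The main obstacle is the blocking step: arranging \emph{simultaneously and uniformly over a subtree} that the $Y$-parts form a genuine array in the asymptotic structure of $Y$, that the scales are constant within bands, and --- most delicately --- that $Q$ is near-isometric with the correct, scale-dependent constant on the span of each band's branch vectors. I would obtain the last point by an iterated gliding-hump/weak-compactness argument: at each level the successors are weakly null in $X$ and their images weakly null in $X/Y$, and one prunes to successors that are, up to a small error, transversal to $Y$ at the scale dictated by a pigeonhole choice of band and compatible with the vectors already chosen; controlling the accumulation of these errors through $n$ levels is the technical heart. For the second equality one argues in the same way with asymptotic type in place of asymptotic basic type, invoking clause~(ii) of Theorem~\ref{subtype_char} so that the alternative to a branch being governed by the asymptotic subtype of the quotient is the asymptotic finite representability of $\ell_1$, which is excluded by $\textbf{\emph{t}}(X/Y)>1$; here $\textbf{\emph{t}}(X/Y)$ denotes the quotient-adapted variant of $\mathsf t$ for which the inheritance used in the easy inequality holds.
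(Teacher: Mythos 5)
Your easy inequality is correct and essentially how one recovers it in the dual $\mathsf p$-language (the paper simply cites the known monotonicity of the Szlenk power type under subspaces and quotients). The hard inequality, however, has a genuine gap, and the gap is exactly where your sketch diverges from the paper's route.

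Your plan aims to prove \emph{directly} that $\sup_n\alpha_{q-\delta,n}(X)<\infty$ by a one-shot band decomposition. This cannot work as stated. After splitting $x_i=u_i+v_i$ with $u_i\in Y$, the leftover vectors $v_i$ live in $X$, not in $X/Y$, so bounding $\bigl\|\sum_{i\in B}a_iv_i\bigr\|$ requires a type estimate \emph{in $X$}, not in $X/Y$. The claim that on each band the quotient map $Q$ is ``$\asymp 2^{-k}$-isomorphic on the span of the branch vectors'' is unjustified and in general false: having $\|Qv_i\|\approx\|v_i\|$ for each $i$ separately says nothing about $Q$ on the linear span, and there is no structural reason (even after pruning) why a weakly null array should be transversal to $Y$ in this uniform, span-wise sense. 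What the paper's Lemma~\ref{cool} actually furnishes is far weaker and yet sufficient: for each sign pattern $\ee$, a \emph{single} vector $y_\ee\in Y$ with $\|y_\ee-\sum\ee_ix_{t|_i}\|<5g(\ee)$. This is an approximation of one vector by one vector, not an isomorphism on a subspace.

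The consequence is that the decomposition can only yield the \emph{recursive} inequality of Lemma~\ref{gen},
$$\alpha_{p,mn}(X)\ \leqslant\ 2\,\alpha_{p,m}(X)\,\alpha_{p,n}(Y)\ +\ 5\,\alpha_{p,n}(X)\,\alpha_{p,m}(X/Y),$$
in which $\alpha_{p,m}(X)$ and $\alpha_{p,n}(X)$ reappear on the right. Setting $m=n$ gives $\alpha_{p,n^2}(X)\leqslant 7C\alpha_{p,n}(X)$, and then one needs two further ingredients which your sketch omits: the Enflo--Lindenstrauss--Pisier submultiplicativity lemma (Lemma~\ref{ELP_lemma}), which converts this recursion into $\alpha_{p,n}(X)=O((\log n)^\lambda)$, and the automatic-power-type result (Proposition~\ref{basic_prop}(i)), which upgrades slow growth to a genuine type $r$ for every $r<p$. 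Your ``telescoping over geometrically separated scales'' is intuitively pointing in the same direction, but it is not a substitute for this two-step mechanism, and without it the proof does not close. Finally, a small misreading: $\textbf{\emph{t}}(X/Y)$ in the introduction is not a ``quotient-adapted variant''; it is just $\mathsf t(X/Y)$ in a different font (compare the restatement in Theorem~\ref{BL_main}), so no separate inheritance argument is being hidden there.
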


\section{Block structures} 
In what follows, $\K$ is either $\R$ or $\C$ and $(\rme_i)_{i=1}^n$ stands for the canonical basis of $\K^n$. For any Banach space $X$, the symbols $B_X$ and $S_X$ stand for its unit ball and unit sphere, respectively. All operators are assumed to be linear and bounded. For any sequence $(x_i)$ in a~Banach space,  we denote by $[x_i]$ its closed linear span.

Given $a\geqslant 0$, $b \geqslant 1$ and $n\in\N$, we denote by $\mathcal{B}^{a,b}_n$ the set of all pairs $(r, \rho)$ of seminorms on $\mathbb{K}^n$ such that: 
\begin{enumerate}[(i)]
\item $r$ is a norm;
\item $r(\rme_i)=1$ for each $1\leqslant i\leqslant n$;
\item for all $1\leqslant m\leqslant n$ and all scalars $(a_i)_{i=1}^n$ we have
$$
r\Bigl(\sum_{i=1}^m a_i \rme_i\Bigr) \leqslant br\Bigl(\sum_{i=1}^n a_i \rme_i\Bigr)\quad\mbox{ and }\quad\rho\Bigl(\sum_{i=1}^m a_i \rme_i\Bigr)\leqslant b\rho\Bigl(\sum_{i=1}^n a_i \rme_i\Bigr);
$$
\item for any scalars $(a_i)_{i=1}^n$ we have 
$$
\rho\Bigl(\sum_{i=1}^n a_i \rme_i\Bigr)\leqslant a r\Bigl(\sum_{i=1}^n a_i \rme_i\Bigr).
$$
\end{enumerate}
We endow $\mathcal{B}^{a,b}_n$ with the metric 
\begin{equation*}
\begin{split}
d_n((r, \rho),(s, \sigma))=\max_{(a_i)_{i=1}^n\in B_{\ell_\infty^n}}\!\! \max\Biggl\{\Bigl|r\Bigl(\sum_{i=1}^n a_i \rme_i\Bigr) &-s\Bigl(\sum_{i=1}^n a_i \rme_i\Bigr)\Bigr|,\\
& \Bigl|\rho\Bigl(\sum_{i=1}^n a_i \rme_i\Bigr)-\sigma\Bigl(\sum_{i=1}^n a_i \rme_i\Bigr)\Bigr|\Biggr\}.
\end{split}
\end{equation*}

\begin{definition}\label{block_def}
Given $(r, \rho)\in \mathcal{B}^{a,b}_n$ and $(s,\sigma)\in \mathcal{B}^{a,b}_m$, we say that  $(s, \sigma)$ is a~\emph{block} of $(r, \rho)$ provided there exist a~scalar sequence $(a_i)_{i=1}^n$ and integers $0=k_0<\ldots <k_m\leqslant n$ such that $r(\sum_{k_{j-1}<i\leq k_j} a_i \rme_i)=1$ for each $1\leqslant j\leqslant m$ and 
$$
s\Bigl(\sum_{j=1}^m b_j \rme_j\Bigr)= r\Bigl(\sum_{j=1}^m \sum_{i=k_{j-1}+1}^{k_j} b_j a_i\rme_i\Bigr),\qquad \sigma\Bigl(\sum_{j=1}^m b_j \rme_j\Bigr)= \rho\Bigl(\sum_{j=1}^m \sum_{i=k_{j-1}+1}^{k_j} b_j a_i\rme_i\Bigr)
$$  
for every scalar sequence $(b_j)_{j=1}^m$.
\end{definition}

Let $X$, $Y$ be Banach spaces, $(x_i)_{i=1}^\infty$ a~normalized basic sequence in $X$ and $A\colon X\to Y$ an~operator such that $(Ax_i\colon i\in\N, Ax_i\neq 0)$ is also basic, where the order corresponds to the order of $(x_i)_{i=1}^\infty$. Then, there is $b\geq 1$ such that for all $1\leq m\leq n$ and every scalar sequence $(a_i)_{i=1}^n$ we have 
$$
\Biggl\|\sum_{i=1}^m a_i x_i\Biggr\|\leqslant b\Biggl\|\sum_{i=1}^n a_ix_i\Biggr\|\quad\mbox{ and }\quad \Biggl\|A\sum_{i=1}^m a_i x_i \Biggr\|\leqslant b\Biggl\|A\sum_{i=1}^n a_i x_i\Biggr\|.
$$ 
Hence, for each $n\in\N$, we can associate with the operator $A\vert_{[x_i\colon i\leq n]}$ a~member of $\mathcal{B}^{\|A\|, b}_n$ defined by 
$$
r\Bigl(\sum_{i=1}^n a_i \rme_i\Big)=\Biggl\|\sum_{i=1}^n a_i x_i\Biggr\|\quad\mbox{ and }\quad\rho\Big(\sum_{i=1}^n a_i \rme_i\Big)= \Biggl\|A\sum_{i=1}^n a_i x_i\Biggr\|.
$$
Let $\mathcal{E}\subset\bigcup_{n=1}^\infty\mathcal{B}_n^{a,b}$. We adopt the convention of saying that $A\colon [x_i\colon 1\leqslant i\leqslant n]\to  [Ax_i\colon 1\leqslant i\leqslant n]$ belongs to $\mathcal{E}$, whenever the pair $(r, \rho)$ defined above belongs to $\mathcal{E}$. If it holds true with $A$ being the identity operator, then we say that $(x_i)_{i=1}^n\in \mathcal{E}$.

Recall that if $(x_i)_{i=1}^\infty$ is a~basic sequence, then by a~{\it block sequence} with respect to $(x_i)_{i=1}^\infty$ we mean any sequence $(z_i)_{i=1}^\infty$ of nonzero vectors for which there exist integers $0=p_0<p_1<\ldots$ and a~scalar sequence $(a_i)_{i=1}^\infty$ such that 
$$
z_i=\sum_{j=p_{i-1}+1}^{p_i} a_jx_j\quad\mbox{for }i\in\N.
$$

\begin{definition}
Let $(x_i)_{i=1}^\infty$ be a normalized basic sequence in a~Banach space and let $A$ be an~operator defined on $[x_i\colon i\in\N]$ such that $(Ax_i:i\in \nn, Ax_i\neq 0)$ is also basic. Let also $a\geq 0$, $b\geq 1$ and $\mathcal{E}$ be any collection contained in $\bigcup_{n=1}^\infty\mathcal{B}_n^{a,b}$. We call the~operator $A$:
\begin{itemize}
\item[(i)] {\it block finitely representable on} $\mathcal{E}$ if for each $n\in\N$ the~restriction $A\colon [x_i\colon 1\leqslant i\leqslant n]\to [Ax_i\colon 1\leqslant i\leqslant n]\in \mathcal{E}$;
\item[(ii)] {\it shrinking} if for any bounded block sequence $(z_i)_{i=1}^\infty$ with respect to $(x_i)_{i=1}^\infty$ the~sequence $(Az_i)_{i=1}^\infty$ is weakly null;
\item[(iii)] {\it boundedly complete} if for any block sequence $(z_i)_{i=1}^\infty$ with respect to $(x_i)_{i=1}^\infty$ such that $\inf_n \|Az_n\|>0$ we have $\sup_n \|\sum_{i=1}^n z_i\|=\infty$. 
\end{itemize}
We say that the sequence $(x_i)_{i=1}^\infty$ has any of the~properties listed above if the~identity operator on $[x_i\colon i\in\N]$ has the~corresponding property.
\end{definition}

\begin{definition}\label{def_collection}
Let $a\geq 0$, $b\geq 1$. We say a collection $\mathcal{E}\subset \bigcup_{n=1}^\infty \mathcal{B}^{a,b}_n$ is:
\begin{enumerate}[(i)]
\item \emph{closed} if for each $n\in \nn$, $\mathcal{E}\cap \mathcal{B}^{a,b}_n$ is closed in $\mathcal{B}^{a,b}_n$ with respect to the metric $d_n$;
\item \emph{block closed} if $\mathcal{E}$ contains all blocks of its members;
\item \emph{stable} if it is both closed and block closed;
\item \emph{nontrivial} if for each $n\in \nn$ we have $\mathcal{E}\cap \mathcal{B}^{a,b}_n\neq \varnothing$;
\item an \emph{identity block} if it is stable and $r=\rho$ for every $(r, \rho)\in \mathcal{E}$.
\end{enumerate}
\end{definition}

Each member of an identity block $\eee$ has the form $(r,r)$ with some norm $r$ on $\mathbb{K}^n$. Hence, it can be regarded as the normed space $(\mathbb{K}^n, r)$ for some $n\in \nn$.  Hereinafter, we will use the~convention of referring to elements of $\eee$ as normalized basic sequences in $\mathbb{K}^n$. We do not necessarily need to mention the underlying norm explicitly; sometimes we will denote it by the~usual symbol $\n{\!\cdot\!}$ rather than $r$. Blocks in the~sense of Definition~\ref{block_def} will be regarded as block basic sequences.

Now, we will define two `flavors' of seminorms, each of which having a~parameter $p\in [1,\infty]$, each having both a~type and a~cotype version. Later  we will use these quantities to define both weak and weak$^*$ asymptotic ideal seminorms. One flavor leads to an~asymptotic analogue of the local notions of Rademacher type/cotype, the results of which can be compared to those found in \cite{Beau, Hinrichs}, while the other leads to an~asymptotic analogue of the notion of martingale type, the results of which can be compared to those found in \cite{Wenzel}.

In the sequel, $(\e_i)_{i=1}^\infty$ stands for a~sequence of independent and identically distributed Rademacher random variables. Specifically, we can choose $\e_i$ as~the $i$th coordinate function on the~probability space $\{\pm 1\}^\N$.

\begin{definition}
Let $a\geq 0$, $b\geq 1$ and $\mathcal{E}\subset\bigcup_{k=1}^\infty\mathcal{B}_k^{a,b}$ be a~stable set. For $1\leq p\leq \infty$ and $n\in\N$ we define $\alpha_{p,n}(\mathcal{E})$ (respectively: $\tau_{p,n}(\mathcal{E})$) as the~infimum over all constants $T>0$ such that for every $(r, \rho)\in \mathcal{E}\cap \mathcal{B}^{a,b}_n$ and all scalars $(a_i)_{i=1}^n$ we have
$$
\rho\Big(\sum_{i=1}^n a_i \rme_i\Big)\leqslant T \|(a_i)_{i=1}^n\|_{\ell_p^n};
$$  
$$
\mbox{resp. }\,\Biggl\|\rho\Big(\sum_{i=1}^n \ee_i a_i \rme_i\Big)\Biggr\|_{L_p}\leqslant T \|(a_i)_{i=1}^n\|_{\ell_p^n}.
$$
Similarly, for $1\leq q\leq\infty$ and $n\in\N$ we define $\beta_{q,n}(\mathcal{E})$ (respectively: $\gamma_{q,n}(\mathcal{E})$) as the~infimum over all constants $C>0$ such that for every $(r, \rho)\in \mathcal{E}\cap \mathcal{B}^{a,b}_n$ and all scalars $(a_i)_{i=1}^n$ we have
$$
C r\Bigl(\sum_{i=1}^n a_i \rme_i\Bigr) \geqslant \|(a_i \rho(\rme_i))_{i=1}^n\|_{\ell_q^n}; 
$$
$$
\mbox{resp. }\,C \Biggl\|r\Big(\sum_{i=1}^n \ee_i a_i \rme_i\Big)\Biggr\|_{L_q} \geqslant \|(a_i \rho(\rme_i))_{i=1}^n\|_{\ell_q^n}.
$$
\end{definition}

Now, we introduce some further quantities which are used to test whether a~given collection $\mathcal{E}$ has block (basic) subtype/subcotype without specifying a~concrete exponent.
\begin{definition}
Let $a\geq 0$, $b\geq 1$ and $\mathcal{E}\subset\bigcup_{k=1}^\infty\mathcal{B}_k^{a,b}$ be a~stable set. For $n\in\N$ we define $\theta_n(\mathcal{E})$ (respectively: $\Theta_n(\mathcal{E})$) as the infimum over all constants $T>0$ such that for every $(r, \rho)\in \mathcal{E}\cap \mathcal{B}^{a,b}_n$ and all scalars $(a_i)_{i=1}^n$ we have
$$
\rho\Big(\frac{1}{n}\sum_{i=1}^n a_i \rme_i\Big)\leqslant T \|(a_i)_{i=1}^n\|_{\ell_\infty^n};
$$  
$$
\mbox{resp. }\,\Biggl\|\rho\Big(\frac{1}{n}\sum_{i=1}^n \ee_i a_i \rme_i\Big)\Biggr\|_{L_1}\leqslant T \|(a_i)_{i=1}^n\|_{\ell_\infty^n}.
$$
Similarly, we define $\upsilon_n(\eee)$ (respectively: $\Upsilon_n(\eee)$) as the~infimum over all constants $C>0$ such that for every $(r, \rho)\in \mathcal{E}\cap \mathcal{B}^{a,b}_n$ and all scalars $(a_i)_{i=1}^n$ we have
$$
C r\Bigl(\sum_{i=1}^n a_i \rme_i\Bigr) \geqslant \|(a_i \rho(\rme_i))_{i=1}^n\|_{\ell_{-\infty}^n}; 
$$
$$
\mbox{resp. }\,C \Biggl\|r\Big(\sum_{i=1}^n \ee_i a_i \rme_i\Big)\Biggr\|_{L_1} \geqslant \|(a_i \rho(\rme_i))_{i=1}^n\|_{\ell_{-\infty}^n},
$$
where $\|(b_i)_{i=1}^n\|_{\ell_{-\infty}^n}:=\min_{1\leq i\leq n}\abs{b_i}$.
\end{definition}

For any given $1\leq p\leq\infty$ we denote by $p^\prime$ its conjugate exponent, so that $1/p+1/p^\prime=1$ with the convention that $p^\prime=\infty$ for $p=1$ and $p^\prime=1$ for $p=\infty$.

\begin{definition}
Let $a\geq 0$, $b\geq 1$. For any $1\leq p,q\leq \infty$ we say that a~stable set $\mathcal{E}\subset\bigcup_{k=1}^\infty\mathcal{B}_k^{a,b}$ has:
\begin{enumerate}[(i)]
\item \emph{block basic type} $p$ if $\sup_n \alpha_{p,n}(\mathcal{E})<\infty$, 

\item \emph{block type} $p$ if $\sup_n \tau_{p,n}(\mathcal{E})<\infty$, 

\item \emph{block basic subtype} $p$ if $\alpha_{p,n}(\eee)=\mathrm{o}(n^{1/p^\prime})$, {\it i.e.} $\lim_n n^{1/p-1}\alpha_{p,n}(\mathcal{E})=0$, 

\item \emph{block subtype} $p$ if $\tau_{p,n}(\eee)=\mathrm{o}(n^{1/p^\prime})$, {\it i.e.} $\lim_n n^{1/p-1}\tau_{p,n}(\mathcal{E})=0$, 

\item \emph{block basic subtype} if $\lim_n \theta_n(\mathcal{E})=0$, 

\item \emph{block subtype} if $\lim_n\Theta_n(\mathcal{E})=0$,

\item \emph{block basic cotype} $q$ if $\sup_n \beta_{q,n}(\mathcal{E})<\infty$,

\item \emph{block cotype} $q$ if $\sup_n \gamma_{q,n}(\mathcal{E})<\infty$,

\item \emph{block basic subcotype} $q$ if $\beta_{q,n}(\eee)=\mathrm{o}(n^{1/q})$, 

\item \emph{block subcotype} $q$ if $\gamma_{q,n}(\eee)=\mathrm{o}(n^{1/q})$, 

\item \emph{block basic subcotype} if $\lim_n \upsilon_n(\mathcal{E})=0$,

\item \emph{block subcotype} if $\lim_n \Upsilon_n(\mathcal{E})=0$.
\end{enumerate}
\end{definition}

The terminology introduced in (v), (vi), (xi) and (xii) is justified in the sense that having block (basic) subtype/subcotype is equivalent to having block (basic) subtype/subcotype for some exponent from $(1,\infty)$ (a~{\it nontrivial} one). We will prove these facts in Theorems~\ref{bbtype_thm}, \ref{btype_thm}, \ref{bbco_thm} and \ref{bco_thm} below.

We note that, unlike the case of the martingale and Rademacher type, every $p\in [1,\infty]$ can be realized as the block type or block basic type of some collection. Also, unlike martingale and Rademacher cotype, every $q\in [1,\infty]$ can be the block cotype or block basic cotype of some collection. Indeed, if for each $k\in\N$ we let $\mathcal{E}\cap \mathcal{B}_k^{1,1}$ be the canonical $\ell_q^k$ basis, then $\mathcal{E}$ has block type $q$, block basic type $q$, block cotype $q$, and block basic cotype $q$. 

The notion of Rademacher subtype and subcotype as defined here are block versions of definitions due to Beauzamy \cite{Beau}. Subsequently, the notions of  (Rademacher, Gaussian, martingale, and Haar) subcotype were studied extensively by Hinrichs \cite{Hinrichs} and Wenzel \cite{Wenzel}.  

Recall that a~sequence $(a_n)_{n=1}^\infty$ of real numbers is called {\it submultiplicative} if $a_{mn}\leq a_m a_n$ for all $m,n\in\N$. The~following fact is a a~well-known phenomenon and can be proved by methods similar to those used for classical Rademacher type/cotype constants (see, {\it e.g.}, \cite[Prop.~G.3]{benyamini}). We show it only for the~block basic type constants (this case will be used in the~proof of our main result, Theorem~\ref{BL_main}); in other cases the proof is similar.
\begin{proposition}\label{submP}
Let $a\geqslant 0$, $b\geqslant 1$ and let $\mathcal{E}\subset \cup_{n=1}^\infty \mathcal{B}^{a,b}_n$ be an~identity block. For every $1\leq p\leq\infty$, all the sequences: $(\alpha_{p,n}(\eee))_{n=1}^\infty$, $(\tau_{p,n}(\eee))_{n=1}^\infty$, $(\beta_{p,n}(\eee))_{n=1}^\infty$, $(\gamma_{p,n}(\eee))_{n=1}^\infty$, $(\theta_{p,n}(\eee))_{n=1}^\infty$, $(\Theta_{p,n}(\eee))_{n=1}^\infty$, $(\upsilon_{p,n}(\eee))_{n=1}^\infty$, $(\Upsilon_{p,n}(\eee))_{n=1}^\infty$ are submultiplicative.
\end{proposition}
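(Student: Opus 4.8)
The plan is to prove submultiplicativity for $(\alpha_{p,n}(\eee))_{n=1}^\infty$ directly from the block structure of the identity block $\eee$, and then to note that the remaining seven sequences are handled by the same argument with only cosmetic changes. Fix $m,n\in\N$; I want to show $\alpha_{p,mn}(\eee)\leqslant \alpha_{p,m}(\eee)\,\alpha_{p,n}(\eee)$. It suffices to show that for any $T>\alpha_{p,m}(\eee)$ and any $S>\alpha_{p,n}(\eee)$ and any $(r,\rho)=(r,r)\in\eee\cap\mcb^{a,b}_{mn}$ (recall that in an identity block $\rho=r$), and any scalars $(a_i)_{i=1}^{mn}$, one has $r\bigl(\sum_{i=1}^{mn} a_i\rme_i\bigr)\leqslant TS\,\|(a_i)_{i=1}^{mn}\|_{\ell_p^{mn}}$.

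The key step is a standard blocking/rescaling maneuver. Write the index set $\{1,\dots,mn\}$ as $n$ consecutive blocks $I_1<\dots<I_n$ of length $m$, say $I_j=\{(j-1)m+1,\dots,jm\}$. For each $j$, set $c_j=r\bigl(\sum_{i\in I_j}a_i\rme_i\bigr)$; if $c_j=0$ the $j$th block contributes nothing and can be omitted (after relabelling, one may assume all $c_j>0$, or carry the zero blocks along harmlessly since $r$ is a seminorm). Then $z_j:=\tfrac{1}{c_j}\sum_{i\in I_j}a_i\rme_i$ has $r(z_j)=1$, and by Definition~\ref{block_def} the sequence $(z_j)_{j=1}^n$ (with the scalar sequence $(a_i/c_{j(i)})$ and cut points $k_j=jm$) induces a block $(s,\sigma)\in\mcb^{a,b}_n$ of $(r,r)$; since $\eee$ is block closed, $(s,\sigma)\in\eee$, and since $\eee$ is an identity block, $\sigma=s$. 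By the defining property of the block, $s\bigl(\sum_{j=1}^n b_j\rme_j\bigr)=r\bigl(\sum_{j=1}^n b_j c_j^{-1}\sum_{i\in I_j}a_i\rme_i\bigr)$; taking $b_j=c_j$ gives $s\bigl(\sum_{j=1}^n c_j\rme_j\bigr)=r\bigl(\sum_{i=1}^{mn}a_i\rme_i\bigr)$. Now apply the estimate for $\eee$ at level $n$ to $(s,s)$: $r\bigl(\sum a_i\rme_i\bigr)=s\bigl(\sum c_j\rme_j\bigr)\leqslant S\,\|(c_j)_{j=1}^n\|_{\ell_p^n}$. Finally, for each $j$, applying the level-$m$ estimate to $(r,r)$ and the scalars $(a_i)_{i\in I_j}$ (restricting to coordinates $1,\dots,m$ via the obvious identification, which is legitimate because any initial segment of an element of an identity block again lies in the block) yields $c_j=r\bigl(\sum_{i\in I_j}a_i\rme_i\bigr)\leqslant T\,\|(a_i)_{i\in I_j}\|_{\ell_p^m}$, i.e.\ $c_j^p\leqslant T^p\sum_{i\in I_j}|a_i|^p$. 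Summing over $j$ and taking $p$th roots gives $\|(c_j)_{j=1}^n\|_{\ell_p^n}\leqslant T\,\|(a_i)_{i=1}^{mn}\|_{\ell_p^{mn}}$, whence $r\bigl(\sum a_i\rme_i\bigr)\leqslant TS\,\|(a_i)\|_{\ell_p^{mn}}$. Letting $T\downarrow\alpha_{p,m}(\eee)$ and $S\downarrow\alpha_{p,n}(\eee)$ finishes the case $1\leqslant p<\infty$; the case $p=\infty$ is identical with $\ell_\infty$ norms and maxima in place of sums.

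For the other seven sequences the same scheme applies verbatim, the only change being which norm or $L_p$-norm plays the role of $\|\cdot\|_{\ell_p^n}$ and whether one is bounding $\rho$ from above (the $\alpha,\tau,\theta,\Theta$ cases) or bounding some $\ell_q$- or $\ell_{-\infty}$-expression from below by $r$ (the $\beta,\gamma,\upsilon,\Upsilon$ cases); in the cotype cases one runs the block decomposition the same way and uses that $\|(a_i\rho(\rme_i))_i\|_{\ell_q}$, resp.\ $\min_i|a_i\rho(\rme_i)|$, factors compatibly through the blocking, and in the Rademacher-flavored cases ($\tau,\gamma,\Theta,\Upsilon$) one additionally uses that the Rademacher variables on $\{\pm1\}^{\mathbb N}$ attached to the outer index $j$ and to the inner indices $i\in I_j$ can be taken independent, so Fubini lets one iterate the two estimates inside the $L_p$-norm. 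I expect the only real subtlety to be bookkeeping: making sure that passing to a block of length $n$ and then, inside each of its $n$ coordinates, to an initial segment of length $m$ of the original, is correctly licensed by block-closedness of a stable set together with property (iii) of $\mcb^{a,b}_n$ (the $b$-suppression condition is what guarantees the relevant restrictions stay inside $\mcb^{a,b}$, though for the numerical inequality itself the constant $b$ never enters). This is routine, so I would state the $\alpha$-case in full and remark that the rest follow by the same argument.
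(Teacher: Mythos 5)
Your argument is essentially the paper's proof: decompose $\{1,\dots,mn\}$ into $n$ consecutive blocks of length $m$, form $z_j=\sum_{i\in I_j}a_i\rme_i$, normalize, use block-closedness to obtain a length-$n$ member $(s,s)$ of $\eee$, estimate $r\big(\sum a_i\rme_i\big)=s\big(\sum c_j\rme_j\big)\leqslant\alpha_{p,n}(\eee)\,\|(c_j)\|_{\ell_p^n}$, then bound each $c_j$ by $\alpha_{p,m}(\eee)\,\|(a_i)_{i\in I_j}\|_{\ell_p^m}$ and nest the $\ell_p$ norms. That is exactly the computation in the paper (which also only writes out the $\alpha$-case and declares the other seven analogous), so this is the same approach, not an alternative one.

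Two small imprecisions in the write-up. First, the justification ``any initial segment of an element of an identity block again lies in the block'' is the right idea but the wrong phrasing: for $j\geq 2$ you need the restriction of $r$ to the span of the \emph{interior} interval $I_j$, after relabelling, to lie in $\eee$; this indeed follows from block-closedness (the singletons $(e_i)_{i\in I_j}$ form a normalized block sequence, giving a block of $(r,r)$ of length $m$), but it is not an ``initial segment'' statement, and the $b$-suppression condition from property (iii) is not what does the work here. Second, you say ``$r$ is a seminorm''; in $\mathcal{B}^{a,b}_n$, $r$ is a norm, which actually makes the $c_j=0$ case cleaner: $c_j=0$ forces $a_i=0$ for all $i\in I_j$, so those coordinates genuinely contribute nothing to either side and may be omitted. (The paper handles this by appealing to monotonicity $\alpha_{p,k}(\eee)\leqslant\alpha_{p,n}(\eee)$ for $k<n$, which is a slightly different — and, as stated, somewhat delicate — route; your observation that the zero blocks are harmless is a perfectly good alternative.) With those wording fixes the proposal matches the paper's proof.
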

\begin{proof}
Fix $m,n\in\N$, $1\leq p\leq\infty$ and a~scalar sequence $(a_i)_{i=1}^{mn}$. For $1\leq i\leq n$ define $z_i=\sum_{j=(i-1)m+1}^{im} a_j\rme_j$ and assume for now that every $z_i\neq 0$. Then, for any $(r,r)\in \mathcal{E}\cap\mathcal{B}^{a,b}_{mn}$, the~vectors $z_i/r(z_i)$ produce a~block $(s,s)\in \mathcal{E}\cap\mathcal{B}^{a,b}_n$ and hence
\begin{equation*}
    \begin{split}
        r\Big(\sum_{i=1}^{mn}a_i\rme_i\Big) &=r\Big(\sum_{i=1}^n r(z_i)\frac{z_i}{r(z_i)}\Big)\leq \alpha_{p,n}(\eee)\n{(r(z_i))_{i=1}^n}_{\ell_p^n}\\
        & \leq \alpha_{p,n}(\eee)\alpha_{p,m}(\eee)\n{(\n{(a_j)_{j=(i-1)m+1}^{im}}_{\ell_p^m})_{i=1}^n}_{\ell_p^n}\\   &=\alpha_{p,n}(\eee)\alpha_{p,m}(\eee)\n{(a_i)_{i=1}^{mn}}_{\ell_p^{mn}}.
    \end{split}
\end{equation*}
In the general case, we repeat the same estimation omitting all $i$'s for which $z_i=0$, and we obtain a~right-hand side as above but with $\alpha_{p,k}(\eee)$ instead of $\alpha_{p,n}(\eee)$, where $k<n$. However, since $\eee$ is an~identity block, we have $\alpha_{p,k}(\eee)\leq \alpha_{p,n}(\eee)$. Hence, in both cases the above estimate is valid, which gives $\alpha_{p,mn}(\eee)\leq \alpha_{p,m}(\eee)\alpha_{p,n}(\eee)$.
\end{proof}

We will also need the following, folklore lemma.
\begin{lemma}\label{folklore}
If $(a_n)_{n=1}^\infty$ is a~monotone increasing, submultiplicative sequence of nonnegative numbers and for some $k>1$ and $1\leq p\leq\infty$ we have $a_k\leq k^{1/p}$, then there exists $C>0$ such that $a_n\leq Cn^{1/p}$ for each $n\in\N$.
\end{lemma}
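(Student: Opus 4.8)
The plan is to exploit submultiplicativity together with the hypothesis $a_k \le k^{1/p}$ for a \emph{single} $k > 1$ to control $a_n$ along the geometric sequence $(k^j)_{j=1}^\infty$, and then interpolate to all $n$ using monotonicity. First I would observe that iterating submultiplicativity gives $a_{k^j} \le (a_k)^j \le (k^{1/p})^j = (k^j)^{1/p}$ for every $j \ge 1$, so the bound $a_n \le n^{1/p}$ holds exactly on the powers of $k$. The point is that between consecutive powers $k^j$ and $k^{j+1}$ the sequence cannot grow too much, because it is monotone increasing: for $k^j \le n < k^{j+1}$ we have $a_n \le a_{k^{j+1}} \le (k^{j+1})^{1/p} = k^{1/p} \cdot (k^j)^{1/p} \le k^{1/p}\, n^{1/p}$. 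Hence $a_n \le C n^{1/p}$ with $C = k^{1/p}$ works for all $n \ge k$.

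It remains to handle the finitely many indices $1 \le n < k$. Since the sequence is nonnegative and there are only finitely many such terms, I would simply enlarge the constant: set
$$
C = \max\Bigl\{k^{1/p},\ \max_{1 \le n < k} a_n\Bigr\},
$$
noting $n^{1/p} \ge 1$ for $n \ge 1$ (here $p \ge 1$, and if $p = \infty$ then $n^{1/p} = 1$, which still works since $a_n \le C = C \cdot 1$). Then $a_n \le C \le C n^{1/p}$ for $1 \le n < k$, and $a_n \le k^{1/p} n^{1/p} \le C n^{1/p}$ for $n \ge k$, completing the argument. One should take a moment to note that when $p=\infty$ the statement simply asserts $a_n \le C$, i.e.\ boundedness, and the same computation applies verbatim with $n^{1/p}$ replaced by $1$.

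I do not anticipate a genuine obstacle here; the lemma is elementary and the only care needed is the bookkeeping for small indices and the degenerate case $p = \infty$. The one conceptual ingredient worth stating explicitly is that monotonicity is exactly what lets us pass from the sparse control on $(k^j)_j$ to control on all of $\mathbb{N}$ — without monotonicity one would need the bound on a thicker set of indices. If one wanted to avoid invoking monotonicity for the interpolation step, an alternative is to write each $n$ in "base $k$" and use submultiplicativity more cleverly, but that is strictly more work and yields a worse constant, so the monotone-interpolation route above is the natural one.
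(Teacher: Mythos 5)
Your proof is correct, and the paper itself gives no proof of this lemma, labelling it as folklore and leaving it to the reader. Your argument is the standard one: iterate submultiplicativity along the geometric sequence $(k^j)_j$ to get $a_{k^j}\le (k^j)^{1/p}$, then use monotonicity to interpolate, noting $a_n\le a_{k^{j+1}}\le k^{1/p}(k^j)^{1/p}\le k^{1/p}n^{1/p}$ for $k^j\le n<k^{j+1}$, and finally adjust the constant for the finitely many indices $n<k$ (in fact, monotonicity already gives $a_n\le a_k\le k^{1/p}\le k^{1/p}n^{1/p}$ for those $n$, so $C=k^{1/p}$ works uniformly, but your max is harmless). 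The $p=\infty$ case is handled correctly as the boundedness statement.
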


The assertions contained in our next result have well-known analogies in the classical theory of Rademacher type/cotype (see, {\it e.g.}, \cite[Prop.~G.3, G.4]{benyamini}, \cite[Lemme~1.3, 2.3]{MP}, \cite[Lemma~13.5]{MS}). Clauses (vi) and (viii) correspond to the Maurey--Pisier theorem in its most particular form. They refer directly to Beauzamy-type results which we shall prove in the next section. Clauses (ii) and (iv) also rely on results which are not yet proved, so we postpone their proofs to Section~3.

\begin{proposition}\label{basic_prop}
Let $a\geq 0$, $b\geq 1$ and let $\mathcal{E}\subset \bigcup_{n=1}^\infty \mathcal{B}^{a,b}_n$ be an~identity block. 
\begin{enumerate}[{\rm (i)}]
\item If $1<N\in \nn$ and $1\leqslant p\leqslant r<\infty$ are such that $\alpha_{r,N}(\mathcal{E})\leq N^{\frac{1}{p}-\frac{1}{r}}$, then $\mathcal{E}$ has block basic type $s$ for every $1\leqslant s<p$.  

\item $\mathcal{E}$ has block basic type $p$ for some $1<p<\infty$ if and only if  every $(x_i)_{i=1}^\infty$ block finitely representable on $\mathcal{E}$ is shrinking.   

\item  If $1\leqslant r\leqslant q < \infty$ and $1<N\in \nn$ are such that $\beta_{r,N}(\mathcal{E})\leq N^{\frac{1}{r}-\frac{1}{q}}$, then $\mathcal{E}$ has block basic cotype $s$ for every $q<s<\infty$.  

\item $\mathcal{E}$ has block basic cotype $q$ for some $1<q<\infty$ if and only if every $(x_i)_{i=1}^\infty$ block finitely representable on $\mathcal{E}$ is boundedly complete.  

\item If $1<N\in \nn$ and $1\leqslant p\leqslant r<\infty$ are such that $\tau_{r,N}(\mathcal{E})\leq N^{\frac{1}{p}-\frac{1}{r}}$, then $\mathcal{E}$ has block type $s$ for every $1\leqslant s<p$.  

\item $\mathcal{E}$ has no nontrivial block  type if and only if the canonical $\ell_1$ basis is block finitely representable on $\mathcal{E}$. 

\item  If $1\leqslant r\leqslant q<\infty$ and $1<N\in \nn$ are such that $\gamma_{r,N}(\mathcal{E})\leq N^{\frac{1}{r}-\frac{1}{q}}$, then $\mathcal{E}$ has block cotype $s$ for every $q<s<\infty$. 

\item $\mathcal{E}$ fails to have nontrivial block cotype if and only if the canonical $c_0$ basis is block finitely representable on $\mathcal{E}$. 
\end{enumerate}
\end{proposition}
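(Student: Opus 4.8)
The plan is to prove the eight clauses of Proposition~\ref{basic_prop} by treating them in natural pairs and leveraging the submultiplicativity established in Proposition~\ref{submP} together with Lemma~\ref{folklore}. Clauses (i), (iii), (v) and (vii) are the ``quantitative self-improvement'' statements, and I would handle these first since they are purely elementary. For clause (i): the hypothesis $\alpha_{r,N}(\eee)\le N^{1/p-1/r}$ can be rewritten as $N^{1/r-1}\alpha_{r,N}(\eee)\le N^{-1/p'}\cdot N^{\cdots}$; more directly, since $\alpha_{r,n}(\eee)\le n^{1/p'_r}$ trivially where the trivial bound is $\alpha_{r,n}(\eee)\le n^{1-1/r}$ (because $\rho(\sum a_i\rme_i)\le a\,r(\sum a_i\rme_i)\le a b \sum|a_i| \le \text{(const)} n^{1/r'}\|(a_i)\|_{\ell_r^n}$ after accounting for the basis-constant bound (iii) in $\mathcal B^{a,b}_n$), the sequence $(\alpha_{r,n}(\eee))_n$ is monotone increasing (identity block) and submultiplicative, so by Lemma~\ref{folklore} applied with $k=N$ and exponent chosen so that $N^{1/p-1/r}=N^{1/\text{something}}$, we get $\alpha_{r,n}(\eee)\le C n^{1/p-1/r}$ for all $n$. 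Then a H\"older-type interpolation converts a bound on $\alpha_{r,n}$ into a bound on $\alpha_{s,n}$ for $s<p$: writing $\|(a_i)\|_{\ell_r^n}\le n^{1/r-1/s}\|(a_i)\|_{\ell_s^n}$ when $s\le r$ (and if $s>r$ one uses $\ell_s\hookrightarrow\ell_r$ with no loss), we obtain $\alpha_{s,n}(\eee)\le C n^{1/p-1/r}\cdot n^{1/r-1/s}=Cn^{1/p-1/s}$, which is bounded precisely when $s\le p$, hence certainly bounded for $s<p$. The same scheme, with the obvious sign changes, handles (v) (replace $\rho(\sum a_i\rme_i)$ by the $L_p$-norm of $\rho(\sum\ee_ia_i\rme_i)$ throughout), and the dual versions (iii) and (vii) follow by the mirror-image computation: the trivial lower bound $\beta_{q,n}(\eee)\ge$ (something like $1$) and the submultiplicativity of $(\beta_{r,n}(\eee))_n$ again give $\beta_{r,n}(\eee)\le Cn^{1/r-1/q}$ via Lemma~\ref{folklore}, and then $\|(a_i\rho(\rme_i))\|_{\ell_s^n}\le \|(a_i\rho(\rme_i))\|_{\ell_q^n}$ for $s\ge q$ yields $\beta_{s,n}(\eee)\le\beta_{q,n}(\eee)\le$ bounded — wait, one must be careful, one actually needs $\beta_{s,n}$ controlled, and since $s>q$ we have $\|(b_i)\|_{\ell_s^n}\le\|(b_i)\|_{\ell_q^n}$, so $Cr(\sum a_i\rme_i)\ge\|(a_i\rho(\rme_i))\|_{\ell_q^n}\ge\|(a_i\rho(\rme_i))\|_{\ell_s^n}$, giving $\beta_{s,n}(\eee)\le\beta_{q,n}(\eee)$, and $\beta_{q,n}$ is bounded by the Lemma~\ref{folklore} conclusion with the exponent at the borderline — actually we get $\beta_{q,n}(\eee)\le Cn^{0}=C$. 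Good.

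For clauses (ii), (iv), (vi) and (viii) I would proceed as follows. Clauses (ii) and (iv) are explicitly flagged as relying on results from Section~3, so my plan here is only to indicate the reduction: ``block basic type $p$ for some $p>1$'' should be equated, via a Beauzamy-type dichotomy (to be proved in Section~3, presumably Theorem~\ref{bbtype_thm} and the surrounding material), with ``$\ell_1$ is not block finitely representable on $\eee$,'' and the latter with the shrinking property of every sequence block finitely representable on $\eee$; dually for (iv) with boundedly complete sequences and $c_0$. So for (ii) and (iv) I would simply cite the forthcoming Section~3 results (an asymptotic James-type characterization) and deduce the stated equivalence. For clause (vi): if $\eee$ has some nontrivial block type $p>1$ then $\sup_n\tau_{p,n}(\eee)<\infty$, which immediately prevents the $\ell_1^n$ bases from embedding block-finitely-representably (since on $\ell_1^k$ one has $\tau_{p,k}\ge k^{1-1/p}\to\infty$ using the Rademacher/triangle computation $\|\sum\ee_ia_i\rme_i\|_{\ell_1^k}=\sum|a_i|$). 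Conversely, if $\eee$ has no nontrivial block type, then $\tau_{p,n}(\eee)$ is ``as large as possible'' for every $p>1$; the contrapositive of clause (v) then shows that for every $N$ and every $p>1$ we must have $\tau_{r,N}(\eee)>N^{1/p-1/r}$ for a suitable choice — iterating/taking limits, one extracts for each $k$ a member $(r,r)\in\eee\cap\mathcal B^{a,b}_k$ whose Rademacher averages are uniformly equivalent to the $\ell_1^k$-norm, i.e.\ the $\ell_1$ basis is block finitely representable on $\eee$. This last extraction is the crux: it is an asymptotic Maurey--Pisier argument, and I expect it to lean on Section~3 (Beauzamy-type) machinery as well, via the submultiplicativity to pass from ``$\tau_{p,n}$ grows like $n^{1/p'}$ for all $p$'' to ``there is a single scale $k$ realizing $\ell_1^k$ almost isometrically.'' Clause (viii) is the exact cotype mirror, with $c_0$, $\gamma_{q,n}$, and the bound $\gamma_{q,k}\approx$ the $\ell_\infty^k$ behaviour replacing their type counterparts, and the contrapositive of clause (vii) driving the converse.

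The main obstacle, as the paragraph preceding the proposition already concedes, is that clauses (ii), (iv), (vi), (viii) are not self-contained: they genuinely need the Beauzamy-style structural theorems of Section~3 (an asymptotic analogue of the James non-distortion / Maurey--Pisier extraction), so in the actual write-up these four clauses should be proved \emph{after} Section~3 or stated here with forward references. The purely local content that \emph{can} be proved right now is the quantitative self-improvement in (i), (iii), (v), (vii), and the only real work there is (a) pinning down the correct trivial bounds on $\alpha_{r,n},\tau_{r,n},\beta_{r,n},\gamma_{r,n}$ coming from conditions (iii)--(iv) in the definition of $\mathcal B^{a,b}_n$, so that Lemma~\ref{folklore} applies with the stated hypothesis, and (b) the routine H\"older interpolation between $\ell_s^n$ and $\ell_r^n$ that converts the improved power-type estimate at exponent $r$ into a finite bound at every $s<p$ (resp.\ every $s>q$). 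Everything in (a)--(b) is a short computation; I would present it compactly and refer to \cite[Prop.~G.3]{benyamini} for the template, exactly as the paper suggests.
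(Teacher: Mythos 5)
There is a genuine gap in your treatment of clauses (i), (iii), (v), (vii). The ``routine H\"older interpolation'' you propose does not exist. You write ``$\|(a_i)\|_{\ell_r^n}\le n^{1/r-1/s}\|(a_i)\|_{\ell_s^n}$ when $s\le r$,'' but for $s<r$ the exponent $1/r-1/s$ is negative, and this inequality is simply false (take $(a_i)=(1,0,\dots,0)$: left side $1$, right side $n^{1/r-1/s}<1$). The correct H\"older facts for $s<r$ are $\|(a_i)\|_{\ell_r^n}\le\|(a_i)\|_{\ell_s^n}$ (no power of $n$) and $\|(a_i)\|_{\ell_s^n}\le n^{1/s-1/r}\|(a_i)\|_{\ell_r^n}$, so all you can conclude is the monotonicity $\alpha_{s,n}(\eee)\le\alpha_{r,n}(\eee)\le Cn^{1/p-1/r}$, which is \emph{not} bounded when $p<r$. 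The same confusion appears in your cotype computation, where ``$\beta_{q,n}(\eee)\le Cn^0=C$'' is asserted but does not follow from $\beta_{r,n}(\eee)\le Cn^{1/r-1/q}$ with $r<q$. In other words, a polynomial bound on the type constant at a single exponent $r$ does not interpolate naively down to bounded constants at a smaller exponent; this is exactly the nontrivial content of a Maurey--Pisier-type theorem, not a one-line H\"older step.

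The paper's actual proof of (v) (and of (vii), with (i) and (iii) obtained as corollaries by dropping the Rademacher averaging or by dualizing via Lemma~\ref{duality_lemma}) goes as follows: first, Proposition~\ref{submP} and Lemma~\ref{folklore} give $\tau_{r,n}(\eee)\le Cn^{1/t-1/r}$ for an intermediate $t$ with $s<t<p$; then, for a fixed $(e_i)_{i=1}^n\in\eee$ and coefficients $(a_i)$, one picks norming functionals $x_\ee^\ast$, decomposes $\{1,\dots,n\}$ into level sets $S_j=\{i:\phi^j<|\mathbb{E}\,x_\ee^\ast(\ee_ie_i)|\le\phi^{j-1}\}$, uses block-closedness to run the type estimate on each $S_j$ to control $|S_j|\phi^{jq}$, and assembles a functional in $\ell_q$ that, combined with the Kahane--Khintchine inequality, gives the bounded $\tau_{s,n}$-constant. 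This level-set decomposition is indispensable; it cannot be replaced by H\"older. Your plan for (ii), (iv), (vi), (viii) --- deferring to the Beauzamy-type theorems of Section~3 (Theorems~\ref{bbtype_thm}--\ref{bco_thm}) and deducing the shrinking/boundedly complete characterizations and the $\ell_1/c_0$ dichotomies from those --- does match the paper's structure, so that half of the outline is sound. But clauses (i), (iii), (v), (vii), which you describe as ``purely local content that can be proved right now'' by a ``short computation,'' are exactly where the proposal breaks.
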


Before giving the proof of Proposition~\ref{basic_prop} let us show a~lemma which establishes duality between the~block basic type and the~block basic cotype. First, we introduce the following notation: For each basic sequence $(e_i)_{i=1}^n\in\eee$ ({\it cf.} the~remarks after Definition~\ref{def_collection}) we denote by $(e_i^\ast)_{i=1}^n$ its biorthogonal sequence, that is, the sequence of coordinate functionals. For any $n\in\N$, we define $\eee_n^\ast$ to be the closure in $\mathcal{B}^{a,b}_n$ of the family of all normalized block sequences of length $n$ formed from $\bigcup_{m=n}^\infty \{(e_i^\ast)_{i=1}^m\colon (e_i)_{i=1}^m\in\eee\}$. Let $\eee^{(\ast)}$ be the family of all normalized block sequences of all sequences that are biorthogonal to sequences in $\eee$. Note that, for each $n\in\N$, $\eee_n^\ast$ is the~closure of $\eee^{(\ast)}\cap \mathcal{B}_n^{a,b}$. Finally, we define $\eee^\ast=\bigcup_{n=1}^\infty \eee_n^\ast$.

\begin{lemma}\label{duality_lemma}
Let $a\geq 0$, $b\geq 1$ and let $\mathcal{E}\subset \bigcup_{n=1}^\infty \mathcal{B}^{a,b}_n$ be an~identity block. For every $1\leq p\leq\infty$ and $n\in\N$ we have
$$
\alpha_{p,n}(\eee)\leq\beta_{p^\prime,n}(\eee^\ast)\leq 2b\alpha_{p,n}(\eee)
$$
and
$$
\alpha_{p,n}(\eee^\ast)\leq\beta_{p^\prime,n}(\eee)\leq 2b\alpha_{p,n}(\eee^\ast).
$$
\end{lemma}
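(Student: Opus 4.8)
The statement is a duality between the block basic type constant $\alpha_{p,n}$ of $\eee$ and the block basic cotype constant $\beta_{p',n}$ of the dual collection $\eee^\ast$, and it is the natural asymptotic analogue of the classical fact that type $p$ of a space is (roughly) equivalent to cotype $p'$ of the dual. The plan is to prove it at the level of a single finite-dimensional normed space $(\K^n,r)$ together with its dual $(\K^n,r^\ast)$, and then pass to the closures, using that $\eee$ is stable (hence its members and their biorthogonal block sequences are closed under the relevant operations) and that all the constants in sight are continuous in the metric $d_n$.

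First I would unwind the definitions. Fix $n$ and a normalized basic sequence $(e_i)_{i=1}^n\in\eee$, with norm $r$ and biorthogonal functionals $(e_i^\ast)_{i=1}^n$; because $\eee$ is an identity block, the cotype constant $\beta_{p',n}$ is tested against the data $\rho=r$, so $\rho(\rme_i)=1$ and $\beta_{p',n}(\eee^\ast)$ is the smallest $C$ with $\|(a_i)_{i=1}^n\|_{\ell_{p'}^n}\le C\,r^\ast(\sum a_i e_i^\ast)$ for every member of $\eee^\ast$, while $\alpha_{p,n}(\eee)$ is the smallest $T$ with $r(\sum a_i e_i)\le T\|(a_i)\|_{\ell_p^n}$. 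The inequality $\alpha_{p,n}(\eee)\le\beta_{p',n}(\eee^\ast)$ is the easy direction: given scalars $(a_i)$, pick $(b_i)$ with $\|(b_i)\|_{\ell_{p'}^n}=1$ and $\sum a_i b_i = \|(a_i)\|_{\ell_p^n}$, let $f=\sum b_i e_i^\ast$, and estimate
\[
r\Bigl(\sum_i a_i e_i\Bigr)\ge \frac{\bigl|\langle f,\sum_i a_i e_i\rangle\bigr|}{r^\ast(f)} = \frac{\|(a_i)\|_{\ell_p^n}}{r^\ast(f)} \ge \frac{\|(a_i)\|_{\ell_p^n}}{\beta_{p',n}(\eee^\ast)};
\]
wait — this is backwards; the point is rather that for the \emph{reverse} estimate on $\alpha$ one uses $r(x)=\sup_{r^\ast(f)\le 1}|\langle f,x\rangle|$ together with the cotype lower bound on $r^\ast$. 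Concretely, $r(\sum a_i e_i) = \sup\{|\sum a_i b_i| : r^\ast(\sum b_i e_i^\ast)\le 1\}$, and $r^\ast(\sum b_i e_i^\ast)\le 1$ forces $\|(b_i)\|_{\ell_{p'}^n}\le \beta_{p',n}(\eee^\ast)$, so $|\sum a_i b_i|\le \beta_{p',n}(\eee^\ast)\|(a_i)\|_{\ell_p^n}$ by Hölder, giving $\alpha_{p,n}(\eee)\le\beta_{p',n}(\eee^\ast)$. This argument only needs that each normalized block basic sequence of the $(e_i^\ast)$ lies in $\eee^\ast$, which is how $\eee^\ast$ is defined, and that one is allowed to test $r^\ast$ on such sequences — here one must be slightly careful because a general functional $f\in(\K^n,r)^\ast$ need not itself be a normalized block sequence, but $f/r^\ast(f)$ is, and $\eee^\ast_n$ is defined as a closure of normalized block sequences, so one approximates.

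For the reverse inequality $\beta_{p',n}(\eee^\ast)\le 2b\,\alpha_{p,n}(\eee)$ — the substantive direction — I would argue dually but pay the price of the constant $2b$. Given a normalized block basic sequence $(f_j)_{j=1}^n$ of some $(e_i^\ast)_{i=1}^m$ with $m\ge n$, I want a lower bound $r^\ast(\sum b_j f_j)\ge (2b)^{-1}\alpha_{p,n}(\eee)^{-1}\|(b_j)\|_{\ell_{p'}^n}$. The standard move: the $f_j$ are blocks over a basic sequence with basis constant controlled by $b$ (conditions (iii) of $\mathcal B^{a,b}_n$ transfer to the dual with a factor $b$, essentially the usual ``basis constant of the dual basis $\le 2\times$ that of the basis'' estimate, whence the $2b$), and the predual block basis $(g_j)$ they act on — namely a normalized block sequence of $(e_i)$ dual to the $(f_j)$ — is a member of $\eee\cap\mathcal B^{a,b}_n$ by block-closedness of $\eee$. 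Then for any $(b_j)$, choose $(c_j)$ with $\|(c_j)\|_{\ell_p^n}=1$ realizing $\sum b_j c_j = \|(b_j)\|_{\ell_{p'}^n}$, and
\[
r^\ast\Bigl(\sum_j b_j f_j\Bigr)\cdot r\Bigl(\sum_j c_j g_j\Bigr) \ge \Bigl|\Bigl\langle \sum_j b_j f_j, \sum_j c_j g_j\Bigr\rangle\Bigr| = \Bigl|\sum_j b_j c_j\Bigr| = \|(b_j)\|_{\ell_{p'}^n},
\]
while $r(\sum c_j g_j)\le \alpha_{p,n}(\eee)\|(c_j)\|_{\ell_p^n} = \alpha_{p,n}(\eee)$; combining, $r^\ast(\sum b_j f_j)\ge \alpha_{p,n}(\eee)^{-1}\|(b_j)\|_{\ell_{p'}^n}$, and the factor $2b$ enters precisely when one normalizes the $(g_j)$ and controls how far the biorthogonal-of-biorthogonal construction moves things inside $\mathcal B^{a,b}_n$ — each member of $\eee^\ast$ is by definition a \emph{limit} of such normalized block sequences, so a final continuity/closure argument (the constants $\alpha,\beta$ being $d_n$-continuous as infima of linear conditions) upgrades the bound from the dense set to all of $\eee^\ast_n$. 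The second pair of inequalities, with $\eee$ and $\eee^\ast$ swapped, follows the same way once one observes that $\eee^{\ast\ast}$ — normalized block sequences biorthogonal to members of $\eee^\ast$ — is, up to the same $2b$-type distortion and closure, contained in $\eee$ (again using that the bidual basis of a basic sequence is the original up to the basis constant), so applying the already-proved inequality to $\eee^\ast$ in place of $\eee$ and tracking constants yields $\alpha_{p,n}(\eee^\ast)\le\beta_{p',n}(\eee)\le 2b\,\alpha_{p,n}(\eee^\ast)$.

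The main obstacle, as usual in these duality-of-basic-sequences arguments, is bookkeeping around the \emph{closures} and the \emph{normalizations}: $\eee^\ast_n$ is defined as a $d_n$-closure of normalized block sequences of biorthogonal systems of \emph{longer} members of $\eee$, so one cannot just manipulate a fixed finite-dimensional space — one must (a) verify that the predual block system $(g_j)$ one extracts is genuinely a normalized block basic sequence lying in $\eee\cap\mathcal B^{a,b}_n$ (this is where block-closedness of $\eee$ is used), (b) pin down the exact constant $b$ (vs.\ $2b$) controlling the suppression/basis-constant conditions (iii) under passing to a dual or bidual basis, and (c) check $d_n$-continuity of $\alpha_{p,n}$ and $\beta_{p',n}$ carefully enough to pass from the dense set of honest block sequences to its closure without losing the constant. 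None of these is deep, but getting the constant exactly $2b$ rather than, say, $Cb$ requires being disciplined about where each factor of $b$ is spent.
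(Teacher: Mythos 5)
Your argument for the first inequality $\alpha_{p,n}(\eee)\le\beta_{p',n}(\eee^\ast)$ is essentially the paper's: test the cotype inequality of $\eee^\ast$ on the normalized biorthogonal $(e_i^\ast/\|e_i^\ast\|)$ of a norming functional, using $\|e_i^\ast\|\ge 1$ and H\"older. That part is fine.

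The computation for $\beta_{p',n}(\eee^\ast)\le 2b\,\alpha_{p,n}(\eee)$ contains a genuine inconsistency, which you partially notice but do not resolve. You write $\bigl\langle\sum_j b_jf_j,\sum_jc_jg_j\bigr\rangle=\sum_jb_jc_j$ \emph{and} $r\bigl(\sum_jc_jg_j\bigr)\le\alpha_{p,n}(\eee)\|(c_j)\|_{\ell_p^n}$. The second estimate requires $(g_j)$ to be a normalized block sequence (so it lies in $\eee$), but the first requires $\langle f_j,g_j\rangle=1$ exactly. You cannot have both: if $g_j$ is a unit vector in the block subspace $V_j$ supporting the unit functional $f_j$, then $|f_j(g_j)|\le\|f_j|_{V_j}\|$, and $\|f_j|_{V_j}\|$ is only bounded below by $1/(2b)$ (because the block projection onto $V_j$ has norm $\le 2b$ and $f_j=f_j\circ P_j$). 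The correct move — and the one the paper makes — is to pick $g_j\in V_j$, $\|g_j\|=1$, with $\langle f_j,g_j\rangle\ge 1/(2b)$, which injects the factor $2b$ at exactly the right point:
\[
r^\ast\Bigl(\sum_j b_jf_j\Bigr)\ge\alpha_{p,n}(\eee)^{-1}\sum_jb_jc_j\,\langle f_j,g_j\rangle\ge(2b\,\alpha_{p,n}(\eee))^{-1}\|(b_j)\|_{\ell_{p'}^n}.
\]
Saying the $2b$ ``enters precisely when one normalizes'' is not a substitute for this, because as written your chain of inequalities already claims a constant $\alpha_{p,n}(\eee)^{-1}$ before any normalization is done.

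The more serious gap is your plan to obtain the second pair $\alpha_{p,n}(\eee^\ast)\le\beta_{p',n}(\eee)\le 2b\,\alpha_{p,n}(\eee^\ast)$ by applying the first pair to $\eee^\ast$ and identifying $(\eee^\ast)^\ast$ with $\eee$. That identification is false in general, and not merely ``up to $2b$'': the biorthogonal functionals of a normalized block sequence $(x_j^\ast)_{j=1}^n$ of $(e_i^\ast)_{i=1}^m$ live in the dual of the \emph{subspace} $[x_j^\ast]\subset(\K^m,r^\ast)$, which is a \emph{quotient} of $(\K^m,r)$, whereas members of $\eee$ arising as blocks of $(e_i)_{i=1}^m$ are normed \emph{subspaces} of $(\K^m,r)$. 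Subspaces and quotients are not interchangeable, so there is no clean containment of $(\eee^\ast)^\ast$ in $\eee$ in either direction. The paper sidesteps this entirely by proving all four inequalities directly: two by testing with members of $\eee$ (yielding $\alpha_{p,n}(\eee)\le\beta_{p',n}(\eee^\ast)$ and $\beta_{p,n}(\eee)\le 2b\,\alpha_{p',n}(\eee^\ast)$, the latter giving your fourth inequality by swapping $p\leftrightarrow p'$), and two by testing with members of $\eee^{(\ast)}$ — where one takes a norming vector $x=\sum a_ie_i$ for $\sum c_ix_i^\ast$, splits it along the block pattern of the $x_i^\ast$ into normalized pieces $x_i'$ with norms $d_i'$, and uses the cotype of $\eee$ on $(x_i')$ together with H\"older. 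You should work out that direct argument rather than rely on an unestablished biduality $(\eee^\ast)^\ast\approx\eee$.
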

\begin{proof}
We start with proving the inequalities: $\alpha_{p,n}(\eee)\leq\beta_{p^\prime,n}(\eee^\ast)$ and $\beta_{p,n}(\eee)\leq 2b\alpha_{p^\prime,n}(\eee^\ast)$. Of course, the latter one gives the last inequality of our assertion because the roles of $p$ and $p^\prime$ are symmetric. 

Fix any $n\in\N$, $(e_i)_{i=1}^n \in \mathcal{E}$ and a~scalar sequence $(c_i)_{i=1}^n$. Pick a~norm one functional $x^\ast=\sum_{i=1}^n d_i' e_i^\ast$ with $x^\ast(\sum_{i=1}^n c_ie_i)=\n{\sum_{i=1}^n c_ie_i}$. We have 
\begin{equation*}
\begin{split}
1=\Biggl\|\sum_{i=1}^n d_i^\prime e_i^\ast\Biggr\| = \Biggl\|\sum_{i=1}^n \n{e_i^\ast}d_i^\prime \frac{e^\ast_i}{\|e_i^\ast\|}\Biggr\| &\geqslant \beta_{p^\prime, n}(\mathcal{E}^\ast)^{-1} \|(\|e_i^\ast\|d_i^\prime)_{i=1}\|_{\ell_{p^\prime}^n}\\
& \geqslant \beta_{p^\prime,n}(\mathcal{E}^\ast)^{-1} \|(d_i^\prime)_{i=1}^n\|_{\ell_{p^\prime}^n}.
\end{split}
\end{equation*}
Therefore,
$$
\Biggl\|\sum_{i=1}^n c_i e_i\Biggr\| = \sum_{i=1}^n c_id_i^\prime \leqslant \|(c_i)_{i=1}^n\|_{\ell_p^n} \|(d_i^\prime)_{i=1}^n\|_{\ell_{p^\prime}^n} \leqslant \beta_{p^\prime,n}(\mathcal{E}^\ast)\|(c_i)_{i=1}^n\|_{\ell_p^n}
$$
which gives the first of the announced inequalities.

For the second inequality, pick a scalar sequence $(d_i)_{i=1}^n$ so that $\|(d_i)_{i=1}^n\|_{\ell_{p^\prime}^n}=1$ and $\sum_{i=1}^n c_id_i=\|(c_i)_{i=1}^n\|_{\ell_p^n}$. Then we have 
$$
\Biggl\|\sum_{i=1}^n d_ie_i^\ast\Biggr\| = \Biggl\|\sum_{i=1}^n \|e_i^\ast\|d_i \frac{e^\ast_i}{\|e_i^\ast\|}\Biggr\| \leqslant \alpha_{p^\prime,n}(\mathcal{E}^\ast) \|(\|e_i^\ast\|d_i)_{i=1}\|_{\ell_{p^\prime}^n} \leqslant 2b\alpha_{p^\prime,n}(\mathcal{E}^*)
$$
and hence
$$
2b\alpha_{p^\prime,n}(\mathcal{E}^\ast)\Biggl\|\sum_{i=1}^n c_i e_i\Biggr\|\geqslant \Bigl(\sum_{i=1}^n d_ie_i^\ast\Bigr)\Bigl(\sum_{i=1}^n c_i e_i\Bigr)=\|(c_i)_{i=1}^n\|_{\ell_p^n}
$$
as desired.

By density, for proving the remaining two inequalities it is sufficient to consider only members of $\eee^{(\ast)}$. For each $(x_i^\ast)_{i=1}^n\in \mathcal{E}^{(\ast)}$ there exist $n\leqslant m\in\N$, $(e_i)_{i=1}^m\in\mathcal{E}$, integers $0=k_0<\ldots <k_n=m$ and a~scalar sequence $(b_i)_{i=1}^m$ such that $x_i^\ast=\sum_{k_{i-1}<j\leq k_j} b_j e_j^\ast$. Since $b$ is the basis constant, we have $1\leqslant \|e^*_j\|\leqslant 2b$ for each $1\leq j\leq m$.

Fix any scalar sequence $(c_i)_{i=1}^n$ and $(x_i^\ast)_{i=1}^n\in\eee$ as above. Pick a~norm one vector $x=\sum_{i=1}^m a_ie_i$ so that $\n{\sum_{i=1}^n c_ix_i^\ast}=(\sum_{i=1}^n c_ix_i^\ast)(x)$. Set 
$$
I=\Bigl\{i\leqslant n\colon \sum_{j=k_{i-1}+1}^{k_i} a_je_j\not=0\Bigr\}
$$
and define
$$
x_i^\prime=\left\{
\begin{array}{cl}
\displaystyle{\frac{\sum_{j=k_{i-1}+1}^{k_i} a_j e_j}{\left\|\sum_{j=k_{i-1}+1}^{k_i} a_j e_j\right\|}} & \mbox{if }i\in I\\
e_{k_i} & \mbox{if }i\not\in I,
\end{array}\right.\qquad 
d_i^\prime=\left\{
\begin{array}{cl}
\left\|\sum_{j=k_{i-1}+1}^{k_i} a_j e_j\right\| & \mbox{if }i\in I\\
0 & \mbox{if }i\not\in I.
\end{array}\right.
$$
Then
$$
1= \Biggl\|\sum_{i=1}^m a_i e_i\Biggr\| = \Biggl\|\sum_{i=1}^n d_i^\prime x_i^\prime\Biggr\| \geqslant \beta_{p,n}(\mathcal{E})^{-1} \|(d_i^\prime)_{i=1}^n\|_{\ell_p^n}
$$
and 
\begin{equation*}
\begin{split}
\Biggl\|\sum_{i=1}^n c_i x^\ast_i\Biggr\| &=\Bigl(\sum_{i=1}^n c_ix^\ast_i\Bigr)(x) =\sum_{i=1}^n c_id_i^\prime x^\ast_i(x_i^\prime)\\
&\leq\sum_{i=1}^n |c_id_i'| \leqslant \|(c_i)_{i=1}^n\|_{\ell_{p'}^n}\|(d_i')_{i=1}^n\|_{\ell_p^n} \leqslant \beta_{p,n}(\mathcal{E})\|(c_i)_{i=1}^n\|_{\ell_{p'}^n}
\end{split}
\end{equation*}
which proves that $\alpha_{p',n}(\eee^\ast)\leq\beta_{p,n}(\eee)$ (and the same holds true after swapping $p$ and $p'$).

For the remaining inequality, pick $(x_i)_{i=1}^n\in\eee$ such that $x_i^\ast(x_i)\geq 1/2b$. Note this can be done since basis constants of elements of $\eee$ are at most $b$. By the form of $x_i$'s we can also assume that each $x_i$ is a~combination of $e_j$'s with $k_{i-1}<j\leq k_i$. Take also a~scalar sequence $(d_i)_{i=1}^n$ with $\n{(d_i)_{i=1}^n}_{\ell_p^n}=1$ and $\sum_{i=1}^n c_id_i = \|(c_i)_{i=1}^n\|_{\ell_{p'}^n}$. Then $\|\sum_{i=1}^n d_i x_i\|\leqslant \alpha_{p,n}(\mathcal{E})$ and 
\begin{equation*}
\begin{split}
\Biggl\|\sum_{i=1}^n c_ix_i^\ast\Biggr\| &\geq \alpha_{p,n}(\eee)^{-1}\Bigl(\sum_{i=1}^n c_ix_i^\ast\Bigr)\Bigl(\sum_{i=1}^n d_ix_i\Bigr)=\alpha_{p,n}(\eee)^{-1}\sum_{i=1}^n c_id_ix_i^\ast(x_i)\\
&\geq (\alpha_{p,n}(\eee)2b)^{-1} \|(c_i)_{i=1}^n\|_{\ell_{p'}^n}
\end{split}
\end{equation*}
as required.
\end{proof}



\begin{proof}[Proof of Proposition \ref{basic_prop}\,{\rm (i), (iii), (v), (vii)}]
(i) This assertion follows easily from the proof of (v) by omitting the~expectation over sign choices.

(iii) In view of Lemma~\ref{duality_lemma}, we have
$$
\alpha_{r',N}(\eee^\ast)\leq\beta_{r,N}(\eee)\leq N^{\frac{1}{r}-\frac{1}{q}}=N^{\frac{1}{q'}-\frac{1}{r'}}.
$$
By assertion (i) (applied to $r^\prime$ in the place of $r$ and $q^\prime$ in the place of $p$), we infer that for every $s>q$ the~sequence $(\alpha_{s',n}(\eee^\ast))_{n=1}^\infty$ is bounded. By appealing to Lemma~\ref{duality_lemma} once again, we get
$$
\sup_n\beta_{s,n}(\eee)\leq 2b\sup_n \alpha_{s',n}(\eee^\ast)<\infty.
$$

(v) For $s=1$ the result is trivial, so assume $s>1$ and pick any number $t$ with $s<t<p$. Then we have $\tau_{r,N}(\eee)<N^{1/t-1/r}$ and therefore Proposition~\ref{submP} and Lemma~\ref{folklore} imply that there is a~constant $C>0$ such that $\tau_{r,n}(\eee)\leq Cn^{1/t-1/r}$ for each $n\in\N$. (Notice that the~sequence $(\tau_{r,n}(\eee))_{n=1}^\infty$ is increasing as $\eee$ is block closed). Define 
$$
\alpha=1-\frac{1}{t},\quad q=\frac{s}{s-1}\quad\mbox{ and }\quad \beta=q-\frac{1}{\alpha}
$$
and note that $\alpha>1-1/s=1/q$, hence $\beta>0$.

Fix any $\phi\in (0,1)$ and set 
$$
D=\Bigl(\phi^{-q}C^{\frac{1}{\alpha}}\sum_{j=1}^\infty \phi^{j\beta}\Bigr)^{\! 1/q}<\infty.
$$
Fix $n\in\N$, $(e_i)_{i=1}^n\in\eee$ and any scalar sequence $(a_i)_{i=1}^n$. Let $E=[e_i\colon 1\leq i\leq n]$. For each $\e=(\e_i)_{i=1}^n\in\{\pm 1\}^n$ pick a~functional $x_\e^\ast\in B_{E^\ast}$ with
$$
x_\e^\ast\Bigl(\sum_{i=1}^n a_i\e_ie_i\Bigr)=\Biggl\|\sum_{i=1}^n a_i\e_ie_i\Biggr\|.
$$
For every $j\in\N$ define
$$
S_j=\bigl\{i\leq n\colon \phi_j<\abs{\EEE x_\e^\ast(\e_ie_i)}\leq \phi^{j-1}\bigr\},
$$
where the expectation value is taken, as usual, with respect to the~uniform probability on $\{\pm 1\}^n$. For all $j\in\N$ and $i\in S_j$ we also pick a~unimodular scalar $\delta_i$ so that $\abs{\EEE x_\e^\ast(\e_i e_i)}=\EEE x_\e^\ast(\e_i\delta_i e_i)$. Then we have
$$
\abs{S_j}\phi^j\leq \sum_{i\in S_j}\EEE x_\e^\ast(\e_i\delta_ie_i)\leq\Biggl\|\sum_{i\in S_j}\e_i\delta_ie_i\Biggr\|_{L_r(E)}\!\!\leq C\abs{S_j}^{\frac{1}{t}}.
$$
Here, we have used the fact that $(\delta_ie_i)_{i\in S_j}\in\eee$ and $\tau_{r,\abs{S_j}}(\eee)\leq C\abs{S_j}^{1/t-1/r}$ if $S_j\not=\varnothing$, whereas all the~terms equal zero if $S_j=\varnothing$. Rearranging the~above inequality gives
$$
\abs{S_j}^\alpha \phi^j=\abs{S_j}^{1-\frac{1}{t}}\phi^j\leq C
$$
which implies that
$$
\abs{S_j}\phi^{jq}=\bigl(\abs{S_j}^\alpha\phi^j\bigr)^{\frac{1}{\alpha}}\phi^{j\beta}\leq C^{\frac{1}{\alpha}}\phi^{j\beta}.
$$

Now, define 
$$
x^\ast=\sum_{j=1}^\infty \phi^{j-1}\sum_{i\in S_j} e_i^\ast\in\ell_q
$$
($e_i^\ast$ stands here for the $i$th element of the~canonical basis of $\ell_q$) and observe that since the sets $S_j$ are mutually disjoint, we have
$$
\n{x^\ast}_{\ell_q}=\Bigl(\sum_{j=1}^\infty \phi^{-q}\phi^{jq}\abs{S_j}\Bigr)^{\!\! 1/q}\leq \Bigl(\phi^{-q}C^{\frac{1}{\alpha}}\sum_{j=1}^\infty \phi^{j\beta}\Bigr)^{\!\! 1/q}=D.
$$
Hence, if $B_s$ stands for the constant stemming from the Kahane--Khintchine inequality (see, {\it e.g.} \cite[\S 9.2]{MS}), then we have
\begin{equation*}
\begin{split}
\frac{1}{B_s}\Biggl\|\sum_{i=1}^n a_i\e_ie_i\Biggr\|_{L_s(E)}\!\!\! & \leq \Biggl\|\sum_{i=1}^n a_i\e_ie_i\Biggr\|_{L_1(E)}\\
&=\EEE x_\e^\ast\Bigl(\sum_{i=1}^n a_i\e_ie_i\Bigr)\leq \sum_{i=1}^n\abs{a_i}\abs{\EEE x_\e^\ast(\e_ie_i)}\\
&=\sum_{j=1}^\infty\sum_{i\in S_j}\abs{a_i}\abs{\EEE x_\e^\ast(\e_ie_i)}\leq \sum_{j=1}^\infty\sum_{i\in S_j}\abs{a_i}\phi^{j-1}\\
&=x^\ast((a_i)_{i=1}^n)\leq \n{x^\ast}_{\ell_q}\n{(a_i)_{i=1}^n}_{\ell_s^n}\leq D\n{(a_i)_{i=1}^n}_{\ell_s^n}.
\end{split}
\end{equation*}
Consequently, $\eee$ has block type $s$ with constant at most $DB_s$.

(vii) Proposition~\ref{submP} and Lemma~\ref{folklore} imply that there is a~constant $C>0$ such that $\gamma_{r,n}(\eee)\leq Cn^{1/r-1/q}$ for every $n\in\N$. Fix any $\phi\in (0,1)$ and set
$$
D=\phi^{-1}\sum_{j=1}^\infty \phi^{j(1-\frac{q}{s})}<\infty.
$$
Fix $n\in\N$, $(e_i)_{i=1}^n\in\eee$ and any scalar sequence $(a_i)_{i=1}^n$ such that $\max_{1\leq i\leq n}\abs{a_i}=1$. For each $1\leq i\leq n$ pick a~unimodular scalar $\delta_i$ so that $\delta_ia_i=\abs{a_i}$. Replacing $(a_i)_{i=1}^n$ by $(\delta_ia_i)_{i=1}^n$ and $(e_i)_{i=1}^n$ by $(\delta_i^{-1} e_i)_{i=1}^n\in \mathcal{E}$ we may assume each $a_i$ is nonnegative. Let $E=[e_i\colon 1\leq i\leq n]$. For each $j\in \nn$ let 
$$
S_j=\{ i\leqslant n: \phi^j<|a_i|\leqslant \phi^{j-1}\};
$$
note that 
$$
\|(a_i)_{i=1}^n\|_{\ell_s^n}  \leqslant \phi^{-1} \sum_{j=1}^\infty \phi^j|S_j|^{1/s}.
$$
For every $j\in \nn$ with $S_j\neq \varnothing$ we have
\begin{equation*}
\begin{split}
\gamma_{r, |S_j|}(\mathcal{E}) \Biggl\|\sum_{i=1}^n \ee_i a_i e_i\Biggr\|_{L_s(E)} &\geq \gamma_{r, |S_j|} \Biggl\|\sum_{i=1}^n \ee_i a_i e_i\Biggr\|_{L_r(E)}\\
&\geq \phi^j \gamma_{r, |S_j|} \Biggl\|\sum_{i\in S_j} \ee_i e_i\Biggr\|_{L_r(E)}\geq \phi^j |S_j|^{1/r}.  
\end{split}
\end{equation*}
Here we have used the fact that $a_i$'s are real numbers and that the~sequence $(\ee_i e_i)_{i=1}^n$ is $1$-unconditional in $L_r(E)$. Therefore, we get
$$
\phi^j |S_j|^{1/r} \leq\gamma_{r, |S_j|} \Biggl\|\sum_{i=1}^n \ee_i a_i e_i\Biggr\|_{L_s(E)}\leqslant C|S_j|^{\frac{1}{r}-\frac{1}{q}},
$$ 
thus 
$$
|S_j|^\frac{1}{q} \leqslant C\phi^{-j}\Biggl\|\sum_{i=1}^n \ee_i a_i e_i\Biggr\|_{L_s(E)}.
$$
Note that this last inequality holds also for $j\in \nn$ with $S_j=\varnothing$, so it is true for all $j\in\N$.

Now, since $\max_{1\leqslant i\leqslant n}|a_i|=1$ and $q/s<1$, we obtain
$$
|S_j|^{\frac{1}{s}} \leq C^{\frac{q}{s}} \phi^{-\frac{jq}{s}} \Biggl\|\sum_{i=1}^n \ee_i a_i e_i\Biggr\|_{L_s(E)}^{\frac{q}{s}} \leq C \phi^{-\frac{jq}{s}} \Biggl\|\sum_{i=1}^n \ee_i a_i e_i\Biggr\|_{L_s(E)},
$$
hence
$$
\|(a_i)_{i=1}^n\|_{\ell_s^n} \leq \phi^{-1}\sum_{i=1}^n \phi^j |S_j|^{1/s} \leqslant CD \Biggl\|\sum_{i=1}^n \ee_i a_i e_i\Biggr\|_{L_s(E)}.$$
This shows that $\gamma_{s,n}(\mathcal{E})\leqslant CD$ for each $n\in \nn$ and the assertion follows.
\end{proof}

The above proof contains the following `automatic-type' result which was used several times along the way and which is worth recording separately.

\begin{corollary}\label{aut_type}
If $\eee\subset \bigcup_{n=1}^\infty \mathcal{B}^{a,b}_n$ is an identity block, then $\eee$ has block (basic) subtype/subcotype if and only if it has nontrivial block (basic) type/cotype.
\end{corollary}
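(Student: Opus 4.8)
The plan is to assemble the corollary directly from Proposition~\ref{basic_prop}(i), (iii), (v), (vii), together with a few elementary estimates relating the ``sub'' quantities $\theta_n,\Theta_n,\upsilon_n,\Upsilon_n$ to the power-type quantities $\alpha_{p,n},\tau_{p,n},\beta_{q,n},\gamma_{q,n}$. There are four equivalences to prove (for block basic type, block type, block basic cotype, block cotype), and in each the implication ``nontrivial power type $\Rightarrow$ sub'' will be a one-line estimate, while for ``sub $\Rightarrow$ nontrivial power type'' I will produce, for a single $N$, a bound of the exact shape $\alpha_{r,N}(\eee)\le N^{1/p-1/r}$ (resp.\ $\tau_{r,N},\beta_{r,N},\gamma_{r,N}$) required as hypothesis of Proposition~\ref{basic_prop}(i) (resp.\ (v), (iii), (vii)). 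One may assume $\eee$ is nontrivial, since otherwise $\eee\cap\mathcal{B}^{a,b}_n=\varnothing$ for all large $n$ by block-closedness, all constants eventually vanish, and every assertion is trivial.

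For the type cases I will use that, since $\eee$ is an identity block, the definitions give $\theta_n(\eee)=n^{-1}\alpha_{\infty,n}(\eee)$ at once (the factor $\tfrac1n$ converts the $\ell_\infty$-estimate into the $\alpha_{\infty,n}$-estimate). Hence block basic type $p$ with $\sup_n\alpha_{p,n}(\eee)<\infty$ forces $\theta_n(\eee)=\mathrm O(n^{1/p-1})\to0$ for $1<p<\infty$; conversely, if $\theta_n(\eee)\to0$ I fix $N>1$ with $\theta_N(\eee)<1$, set $\delta:=\log_N\alpha_{\infty,N}(\eee)$ (which lies in $[0,1)$ as $1\le\alpha_{\infty,N}(\eee)=N\theta_N(\eee)<N$), and choose $1<p\le r<\infty$ with $1/p-1/r=\delta$ (e.g.\ $p=2/(1+\delta)$, $r=2/(1-\delta)$); then $\alpha_{r,N}(\eee)\le\alpha_{\infty,N}(\eee)=N^{1/p-1/r}$ (as $\|\cdot\|_{\ell_r^n}\ge\|\cdot\|_{\ell_\infty^n}$), so Proposition~\ref{basic_prop}(i) yields block basic type $s$ for all $s<p$. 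The Rademacher case is identical in outline, with $\Theta_n(\eee)$ in place of $\theta_n(\eee)$: here the Kahane--Khintchine inequality gives $\tau_{r,n}(\eee)\le B_r\,n\,\Theta_n(\eee)$ (trading the $L_1$- for the $L_r$-norm in the definition of $\Theta_n$), and since the constant $B_r$ cannot be absorbed against a single $N$ I use submultiplicativity of $(\Theta_n(\eee))$ (Proposition~\ref{submP}) to obtain, from $\Theta_{N_0}(\eee)<1$, a polynomial bound $\Theta_{N_0^k}(\eee)\le N_0^{-\mu k}$ with $\mu>0$; picking $1<p\le r<\infty$ with $1-1/p+1/r<\mu$ and then $N=N_0^k$ so large that $B_r\le N^{\mu-(1-1/p+1/r)}$ gives $\tau_{r,N}(\eee)\le B_rN^{1-\mu}\le N^{1/p-1/r}$, and Proposition~\ref{basic_prop}(v) applies. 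The reverse implications are the analogous one-line estimates $\Theta_n(\eee)\le\tau_{p,n}(\eee)\,n^{1/p-1}$.

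For the cotype cases the ``nontrivial $\Rightarrow$ sub'' estimates are $\upsilon_n(\eee)\le\beta_{q,n}(\eee)\,n^{-1/q}$ and $\Upsilon_n(\eee)\le B_q\,\gamma_{q,n}(\eee)\,n^{-1/q}$, using $\|(a_i)\|_{\ell_q^n}\ge n^{1/q}\min_i|a_i|$ and, for the second, Kahane--Khintchine. For ``sub $\Rightarrow$ nontrivial'' in the Rademacher flavour I will rerun the dyadic decomposition from the proof of Proposition~\ref{basic_prop}(vii): splitting the coefficient vector of a normalized $(e_i)_{i=1}^N\in\eee$ into level sets $S_j$, applying the definition of $\Upsilon_{|S_j|}(\eee)$ on $(e_i)_{i\in S_j}$ with unit coefficients, and using $1$-unconditionality of $(\e_ie_i)$ in $L_r(E)$ together with a polynomial bound $\Upsilon_k(\eee)\le C\,k^{-\mu}$ (submultiplicativity, Proposition~\ref{submP}) and the trivial $\|r(\sum\e_ia_ie_i)\|_{L_r}\ge1$, I get $\|(a_i)\|_{\ell_r^N}\le C'\|r(\sum\e_ia_ie_i)\|_{L_r}$ with $C'$ independent of $N$ for any fixed $r>\max\{1,1/\mu\}$, i.e.\ $\eee$ has block cotype $r\in(1,\infty)$. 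In the basic flavour, where no such unconditionality is available, I will pass to the dual collection $\eee^\ast$ (again an identity block): by Lemma~\ref{duality_lemma} $\sup_n\beta_{q,n}(\eee)<\infty$ iff $\sup_n\alpha_{q',n}(\eee^\ast)<\infty$, so $\eee$ has nontrivial block basic cotype iff $\eee^\ast$ has nontrivial block basic type, which by the block-basic-type case already proved is equivalent to $\theta_n(\eee^\ast)\to0$; and it remains to identify $\theta_n(\eee^\ast)\to0$ with $\upsilon_n(\eee)\to0$.

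The hard part will be exactly this last equivalence. One direction is free: Lemma~\ref{duality_lemma} with $p=\infty$, $p'=1$ gives $\alpha_{\infty,n}(\eee^\ast)\le\beta_{1,n}(\eee)$, and since $\sum_i|a_i|\ge n\min_i|a_i|$ one has $\upsilon_n(\eee)\le n^{-1}\beta_{1,n}(\eee)\le 2b\,\theta_n(\eee^\ast)$, so $\theta_n(\eee^\ast)\to0\Rightarrow\upsilon_n(\eee)\to0$. The reverse -- equivalently $\upsilon_n(\eee)\to0\Rightarrow\beta_{1,n}(\eee)=\mathrm o(n)$, or merely $\beta_{1,N}(\eee)<N$ for a single $N$, which already suffices via Proposition~\ref{basic_prop}(iii) with $r=1$ -- is not reachable from the crude triangle-inequality bound on $\beta_{1,n}$: without unconditionality one cannot restrict a norm-one vector to a non-interval set of coordinates with bounded loss of norm. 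This step is precisely where one must invoke the asymptotic version of James's blocking arguments (the same device underlying Proposition~\ref{basic_prop}(ii), (iv)), and it is the main obstacle in the proof.
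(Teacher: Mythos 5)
Your route is genuinely different from the paper's, and it is not complete. The paper derives Corollary~\ref{aut_type} by combining Proposition~\ref{basic_prop}(i), (iii), (v), (vii) with the Beauzamy--Hinrichs equivalences of Section~3: Theorems~\ref{bbtype_thm}--\ref{bco_thm} each contain (as the equivalence of items (i)--(iii)) the step ``$\theta_n\to 0$ (resp.\ $\Theta_n,\upsilon_n,\Upsilon_n\to 0$) $\Leftrightarrow$ block (basic) sub(co)type~$2$,'' after which a single sub-power point $\alpha_{2,N}<N^{1/2}$ (resp.\ $\tau_{2,N},\beta_{2,N},\gamma_{2,N}$) feeds into Proposition~\ref{basic_prop}. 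You attempt to bypass Section~3 entirely. For block basic subtype, the identity $\theta_n(\eee)=n^{-1}\alpha_{\infty,n}(\eee)$ together with $\alpha_{r,n}(\eee)\le\alpha_{\infty,n}(\eee)$ is indeed a clean shortcut. The Rademacher subtype and subcotype cases are plausible with Kahane--Khintchine and Proposition~\ref{submP}, though you should check that $\Theta_n$ and $\Upsilon_n$ are nonincreasing (this follows from $1$-unconditionality of $(\e_i\rme_i)$ in $L_1$ and block-closedness) so that submultiplicativity really yields a polynomial bound at every $n$, not just along a geometric subsequence. The duality reduction for block basic subcotype is sound as far as it goes, and you are right to view the remaining implication as the crux.

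But your diagnosis of what fills that last gap is off. The step $\upsilon_n(\eee)\to0\Rightarrow\theta_N(\eee^\ast)<1$ for some $N$ (equivalently $\beta_{1,N}(\eee)<N$) is indeed unreachable by a pointwise estimate. In the paper, however, it is not supplied by a James-type dichotomy: it is the Hinrichs-style interval-partition argument in the implication $\neg$(i)$\Rightarrow\neg$(iii) of Theorem~\ref{bbco_thm}. Rather than restricting the coefficient vector to the non-interval set $S=\{i\colon|a_i\rho(\rme_i)|\ge 1/c\}$, one partitions $\{1,\dots,M\}$ into \emph{intervals} $I_1,\dots,I_n$ each meeting $S$, forms the blocks $f_l=\sum_{j\in I_l}a_j\rme_j$ (which \emph{are} blocks of $(\rme_i)$, with $\sum_l f_l$ equal to the original vector), and uses the basis constant to obtain $\rho(f_l)\ge|a_j\rho(\rme_j)|/(2b)\ge 1/(2bc)$ from any $j\in S\cap I_l$. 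Normalizing the $f_l$ produces a length-$n$ block in $\eee$ witnessing $\upsilon_n(\eee)\ge\vartheta/(2b)$. This manoeuvre circumvents precisely the non-unconditionality obstruction you identify, without any dichotomy. The cleanest way to close your argument is therefore to invoke Theorem~\ref{bbco_thm}(iii)$\Rightarrow$(ii) and then apply Proposition~\ref{basic_prop}(iii), as the paper implicitly does.
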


\section{Beauzamy-type results}
It is a classical fact, and a~corollary from the~Maurey--Pisier theorem \cite{MP}, that $\ell_1$ (respectively: $c_0$) is crudely finitely representable in a~Banach space $X$ if and only if $X$ fails to have nontrivial Rademacher type (resp. cotype). This does not extend verbatim to operators as is easily seen by considering a~diagonal operator (on $\ell_1$ or $c_0$) with diagonal entries vanishing like $1/\log n$. Nevertheless, an~appropriate operator version of this phenomenon was given by Beauzamy \cite{Beau} who proved that $\ell_1$ (resp. $c_0$) is not  crudely finitely representable in an~operator $A$ if and only if $A$ has Rademacher subtype (resp. subcotype),  which is actually equivalent to $A$ having subtype $2$ (resp. subcotype $2$). The aim of the present section is to derive analogous results in the~language of general block structures.
\begin{definition}
Given $a\geqslant 0$, $b\geqslant 1$, $n\in \nn$ and $\varrho>0$, we say that $(r, \rho)\in \mathcal{B}^{a,b}_n$ is:
\begin{enumerate}[(i)]
\item a {\it $\varrho$-$\ell_1^+$-sequence} if for every sequence $(a_i)_{i=1}^n\subset [0,\infty)$ we have $$\rho\Bigl(\sum_{i=1}^n a_i \rme_i\Bigr) \geqslant \varrho\sum_{i=1}^n a_i,$$   
\item a {\it $\varrho$-$\ell_1$-sequence} if for every scalar sequence $(a_i)_{i=1}^n$ we have $$\rho\Bigl(\sum_{i=1}^n a_i \rme_i\Bigr) \geqslant \varrho \sum_{i=1}^n |a_i|,$$ 
\item a {\it $\varrho$-$c_0^+$-sequence} if there exist $(a_i)_{i=1}^n\subset [\varrho, \infty)$ such that $\rho(\rme_i)\geqslant \varrho$ for each $1\leqslant i\leqslant n$ and $$\max_{1\leqslant m\leqslant n} r\Bigl(\sum_{i=1}^m  a_i \rme_i\Bigr)\leqslant 1,$$ 
\item a {\it $\varrho$-$c_0$-sequence} if $\rho(\rme_i)\geqslant \varrho$ for each $1\leqslant i\leqslant n$ and $$\max_{(a_i)_{i=1}^n \in B_{\ell_\infty^n}}\! \varrho r\Bigl(\sum_{i=1}^n a_i \rme_i\Bigr) \leqslant 1.$$  
\end{enumerate} 
\end{definition}

\begin{theorem}\label{bbtype_thm} 
Let $a\geqslant 0$, $b\geqslant 1$ and $\mathcal{E}\subset \bigcup_{n=1}^\infty \mathcal{B}^{a,b}_n$ be a~stable set. Then, the following assertions are equivalent:
\begin{enumerate}[{\rm (i)}]
\item $\mathcal{E}$ has block basic subtype $p$ for every $1<p<\infty$. 
\item $\mathcal{E}$ has block basic subtype $p$ for some $1<p<\infty$. 
\item $\mathcal{E}$ has block basic subtype. 
\item For every $\varrho>0$ there exists $N=N(\varrho)\in \nn$ such that $\mathcal{E}\cap \mathcal{B}^{a,b}_n$ contains no $\varrho$-$\ell_1^+$-sequences whenever $n\geq N$.
\item Every operator which is block finitely representable on $\eee$ is shrinking.
\end{enumerate}
\end{theorem}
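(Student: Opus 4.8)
My plan is to run the circle $(\mathrm{i})\Rightarrow(\mathrm{ii})\Rightarrow(\mathrm{iii})\Rightarrow(\mathrm{i})$ and adjoin to it the equivalences $(\mathrm{iii})\Leftrightarrow(\mathrm{iv})$ and $(\mathrm{iv})\Leftrightarrow(\mathrm{v})$. Three links are routine: $(\mathrm{i})\Rightarrow(\mathrm{ii})$ is trivial; since $\|\cdot\|_{\ell_\infty^n}\leqslant\|\cdot\|_{\ell_p^n}\leqslant n^{1/p}\|\cdot\|_{\ell_\infty^n}$ one has $\theta_n(\mathcal{E})=n^{-1}\alpha_{\infty,n}(\mathcal{E})\leqslant n^{1/p-1}\alpha_{p,n}(\mathcal{E})$, so block basic subtype $p$ forces $\theta_n(\mathcal{E})\to0$, which is $(\mathrm{ii})\Rightarrow(\mathrm{iii})$; and evaluating a $\varrho$-$\ell_1^+$-sequence of length $n$ at $(1,\dots,1)$ gives $\theta_n(\mathcal{E})\geqslant\varrho$, so $\theta_n(\mathcal{E})\to0$ rules out such sequences in all but finitely many dimensions, which is $(\mathrm{iii})\Rightarrow(\mathrm{iv})$.

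For $(\mathrm{iii})\Rightarrow(\mathrm{i})$ I would use a dyadic layer-cake argument. Fix $p\in(1,\infty)$ and $\delta>0$, choose $K$ with $\theta_k(\mathcal{E})<\delta$ for $k\geqslant K$, and take $(r,\rho)\in\mathcal{E}\cap\mathcal{B}^{a,b}_n$ and scalars $(a_i)_{i=1}^n$ with $\|(a_i)\|_{\ell_p^n}\leqslant1$. Split the indices into the layers $L_j=\{i:2^{-j}<|a_i|\leqslant2^{-j+1}\}$, $\ell_j=|L_j|$; then $\ell_j<2^{jp}$ (as $\sum_i|a_i|^p\leqslant1$) and $\ell_j\leqslant n$. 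Writing $L_j=\{i_1<\dots<i_{\ell_j}\}$, the single-coordinate blocks $\mathrm{sgn}(a_{i_t})\rme_{i_t}$ form, by block-closedness of $\mathcal{E}$, a member of $\mathcal{E}\cap\mathcal{B}^{a,b}_{\ell_j}$ whose $\rho$-seminorm at $(|a_{i_t}|)_t$ is exactly $\rho(\sum_{i\in L_j}a_i\rme_i)$; hence $\rho(\sum_{i\in L_j}a_i\rme_i)\leqslant\ell_j\,\theta_{\ell_j}(\mathcal{E})\max_t|a_{i_t}|\leqslant2^{1-j}\ell_j\,\theta_{\ell_j}(\mathcal{E})$. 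Summing over $j$ and separating the indices with $\ell_j\geqslant K$ from the rest — using $\theta_{\ell_j}(\mathcal{E})<\delta$ together with $\sum_j2^{-j}\ell_j\leqslant C_p\,n^{1/p'}$ (from $\ell_j\leqslant\min\{2^{jp},n\}$) on the first, and $\theta_{\ell_j}(\mathcal{E})\leqslant a$ together with $\sum_j2^{-j}\leqslant1$ on the second — one gets $\rho(\sum_i a_i\rme_i)\leqslant C_p\delta\,n^{1/p'}+C'aK$. Taking the supremum over $\mathcal{E}\cap\mathcal{B}^{a,b}_n$ and letting $n\to\infty$ gives $\limsup_n n^{1/p-1}\alpha_{p,n}(\mathcal{E})\leqslant C_p\delta$, and $\delta$ was arbitrary.

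For $(\mathrm{iv})\Rightarrow(\mathrm{iii})$ I would argue the contrapositive: if $\theta_n(\mathcal{E})\not\to0$ there are $c>0$, lengths $n_k\to\infty$, members $(r_k,\rho_k)\in\mathcal{E}\cap\mathcal{B}^{a,b}_{n_k}$ and vectors $v_k=\sum_i a^{(k)}_i\rme_i$ with $\|(a^{(k)}_i)\|_{\ell_\infty}\leqslant1$ and $\rho_k(v_k)\geqslant cn_k$. Pick a Hahn--Banach functional $f_k$ with $|f_k|\leqslant\rho_k$ and $\mathrm{Re}\,f_k(v_k)=\rho_k(v_k)$. Given a target $m$, take $k$ large, split $[n_k]$ into $M=\lceil(2a/c)m\rceil$ consecutive intervals, let $U_t$ be the restriction of $v_k$ to the $t$-th interval and $w_t=U_t/r_k(U_t)$ (discarding $t$ with $U_t=0$). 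From $\mathrm{Re}\,f_k(v_k)=\sum_t r_k(U_t)\,\mathrm{Re}\,f_k(w_t)$, the bound $|\mathrm{Re}\,f_k(w_t)|\leqslant a$, and $r_k(U_t)\leqslant n_k/M$, a pigeonhole estimate shows $|\{t:\mathrm{Re}\,f_k(w_t)\geqslant c/2\}|\geqslant m$; any $m$ of the corresponding blocks form (realized as a block in the sense of Definition~\ref{block_def}) a member of $\mathcal{E}\cap\mathcal{B}^{a,b}_m$ on which $\mathrm{Re}\,f_k$ is $\geqslant c/2$ coordinatewise, hence a $(c/2)$-$\ell_1^+$-sequence, contradicting $(\mathrm{iv})$. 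The equivalence $(\mathrm{iv})\Leftrightarrow(\mathrm{v})$ I would prove by two arguments of the same flavour. For $\neg(\mathrm{v})\Rightarrow\neg(\mathrm{iv})$: if $A$ is block finitely representable on $\mathcal{E}$ along $(x_i)$ but not shrinking, take a bounded block sequence $(z_i)$ with $(Az_i)$ not weakly null, pass to a subsequence and multiply by unimodular scalars so that $\mathrm{Re}\,y^\ast(Az_i)\geqslant\varepsilon$ for a fixed $y^\ast$; since $\inf_i\|z_i\|>0$ (else $Az_i\to0$), the normalized blocks $z_i/\|z_i\|$ yield, for every $m$, a $\varrho$-$\ell_1^+$-sequence in $\mathcal{E}\cap\mathcal{B}^{a,b}_m$ with $\varrho$ independent of $m$. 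For $\neg(\mathrm{iv})\Rightarrow\neg(\mathrm{v})$: a restriction of a $\varrho$-$\ell_1^+$-sequence is again one and is a block, hence lies in $\mathcal{E}$, so the nonempty compact sets of length-$m$ such sequences form an inverse system under restriction; a point of the inverse limit is a pair $(r,\rho)$ of seminorms on the finitely supported sequences with all finite sections in $\mathcal{E}$ and $\rho(\sum_{i\leqslant m}c_i\rme_i)\geqslant\varrho\sum_i c_i$ for $c_i\geqslant0$, and completing $r$ to a Banach space $X$ with canonical basis $(x_i)$ and factoring $\rho$ through a Banach space $Y$ gives an operator $A\colon X\to Y$ block finitely representable on $\mathcal{E}$ for which, by Mazur's lemma, no convex combination of the $Ax_i$ has norm $<\varrho$; thus $(Ax_i)$ is not weakly null and $A$ is not shrinking.

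I expect the main obstacle to be the quantitative implication $(\mathrm{iii})\Rightarrow(\mathrm{i})$: condition $(\mathrm{iii})$ is a single scalar asymptotic, and extracting from it the $\mathrm{o}(n^{1/p'})$-control of $\alpha_{p,n}(\mathcal{E})$ uniformly over the whole collection $\mathcal{E}$ and for every exponent $p$ requires balancing the number and the sizes of the dyadic layers against the $\ell_p$-normalization with some care. A secondary difficulty is the inverse-limit step in $(\mathrm{v})\Rightarrow(\mathrm{iv})$, which rests on the compactness of the metric spaces $\mathcal{B}^{a,b}_n$ and is what lets one pass from block finite representability of $\ell_1^+$-structure to its representability by a single operator. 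The recurring point of vigilance — easy to get wrong — is to check that each extracted block sequence genuinely is a block in the exact sense of Definition~\ref{block_def}, so that stability of $\mathcal{E}$ keeps it inside $\mathcal{E}$.
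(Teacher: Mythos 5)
Your proof is correct but follows a genuinely different route from the paper's. The paper derives the quantitative conclusion $(\mathrm{i})$ directly from $(\mathrm{iv})$: pick a norming functional $f^\ast$ for $\rho$, observe that at most $N(\varrho/2)$ coordinates can satisfy $|f^\ast(\rme_i)|\geqslant\varrho/2$ (any more would yield a $\varrho/2$-$\ell_1^+$-sequence), and apply H\"older on the remaining coordinates; $(\mathrm{iii})$ then falls out of the same calculation, and $(\mathrm{v})$ is handled by proving $(\mathrm{ii})\Rightarrow(\mathrm{v})$ via Ces\`aro means of block sequences and $(\mathrm{v})\Rightarrow(\mathrm{iv})$ via an ultrafilter construction. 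You instead run $(\mathrm{ii})\Rightarrow(\mathrm{iii})\Rightarrow(\mathrm{i})$, with the crucial step being a dyadic layer-cake estimate: decompose the $\ell_p$-unit vector by layers $L_j$, control each layer intrinsically via $\theta_{\ell_j}(\mathcal{E})$ (using that the single-coordinate blocks on $L_j$ form a member of $\mathcal{E}\cap\mathcal{B}^{a,b}_{\ell_j}$), and balance $\ell_j\leqslant\min\{2^{jp},n\}$ against the weights $2^{-j}$ to reach $o(n^{1/p'})$; you establish $(\mathrm{iv})\Rightarrow(\mathrm{iii})$ separately by a pigeonhole over consecutive intervals, and $(\mathrm{iv})\Leftrightarrow(\mathrm{v})$ directly, phrasing the compactness step as an inverse limit where the paper uses an ultrafilter limit of seminorms. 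The paper's arrangement gets $(\mathrm{i})$ and $(\mathrm{iii})$ from one Hahn--Banach computation; yours establishes $(\mathrm{iii})\Rightarrow(\mathrm{i})$ intrinsically without detouring through the $\ell_1^+$-criterion, at the cost of a slightly more elaborate decomposition. The point of vigilance you correctly flag is the real one: in the layer-cake step the sets $L_j$ are in general non-consecutive, but Definition~\ref{block_def} allows intermediate coefficients $a_i=0$ (the intervals $(k_{j-1},k_j]$ may absorb the gaps), so the induced length-$\ell_j$ pairs genuinely lie in $\mathcal{E}$ by stability, and the argument goes through.
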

\begin{proof}
(i) $\Rightarrow$ (ii) is trivial.

(ii) $\Rightarrow$ (v) Let $1<p<\infty$ be such that $\sup_n\alpha_{p,n}(\mathcal{E})$ is finite and let $A\colon[x_i\colon i\in \nn]\to [Ax_i\colon i\in \nn]$ be block finitely representable on $\mathcal{E}$. If $(z_i)_{i=1}^\infty$ is any normalized block subsequence of $(x_i)_{i=1}^\infty$, then 
$$
\Biggl\|\frac{1}{n}A\sum_{i=1}^n z_i\Biggr\|\leqslant \frac{\alpha_{p,n}(\mathcal{E})}{n^{1/p}}\xrightarrow[\, n\to\infty\,]{} 0.
$$ 
This means that every normalized block subsequence of $(x_i)_{i=1}^\infty$ has a~convex block subsequence whose image under $A$ converges to zero in norm. Hence, $A$ is shrinking.

(v) $\Rightarrow$ (iv) We argue by contraposition. Assume there is $\varrho>0$ such that for every $n\in\N$ there exists a~$\varrho$-$\ell_1^+$-sequence $(r_n,\rho_n)\in \eee\cap\mathcal{B}^{a,b}_n$. Fix a~free ultrafilter $\mathcal{U}$ of subsets of $\N$. In the~linear space $c_{00}(\mathbb{K})$ of all finitely supported sequences in $\mathbb{K}$ define a~norm $r$ and a~seminorm $\rho$ by the formulas:
$$
r\Bigl(\sum_{i=1}^k a_i\rme_i\Bigr)=\mathcal{U}\mbox{-}\lim_{n}r_n\Bigl(\sum_{i=1}^k a_i\rme_i\Bigr)\quad\mbox{ and }\quad \rho\Bigl(\sum_{i=1}^k a_i\rme_i\Bigr)=\mathcal{U}\mbox{-}\lim_{n}\rho_n\Bigl(\sum_{i=1}^k a_i\rme_i\Bigr).
$$
Set $Z=\{x\in c_{00}(\mathbb{K})\colon \rho(x)=0\}$. This is a~linear subspace of $c_{00}(\mathbb{K})$ and the formula $r'(x+Z)=\rho(x)$ defines a~norm on the~quotient space $c_{00}(\mathbb{K})/Z$. Consider the operator $\widetilde{A}\colon c_{00}(\mathbb{K})\to c_{00}(\mathbb{K})/Z$ given by $\widetilde{A}(\rme_i)=\rme_i+Z$ for $i\in\N$; it is continuous with respect to the norms $r$ and $r^\prime$. Let $A$ be its unique continuous extension to $c_0(\mathbb{K})$, the completion of $c_{00}(\mathbb{K})$ with respect to $r$. Since $\eee$ is closed, $A$ is block finitely representable on $\eee$. Note also that for every sequence $(a_i)_{i=1}^k\subset [0,\infty)$ we have 
$$
r^\prime\Bigl(A\sum_{i=1}^k a_i \rme_i\Bigr)=\rho\Bigl(\sum_{i=1}^k a_i\rme_i\Bigr)\geq \varrho\sum_{i=1}^k a_i
$$
which shows that $A$ is not shrinking and therefore condition (v) fails to hold.

(iv) $\Rightarrow$ (i) Fix arbitrarily $\varrho>0$ and pick $N\in\N$ so that $\eee\cap\mathcal{B}^{a,b}_n$ contains no $\varrho/2$-$\ell_1^+$-sequences whenever $n\geq N$. For an~arbitrary $p\in (1,\infty)$ choose $M\in\N$ such that $M\geq N$ and $aN/M^{1/p^\prime}<\varrho/2$.

Fix any $n\geq M$, $(r,\rho)\in \eee\cap\mathcal{B}^{a,b}_n$ and a~scalar sequence $(a_i)_{i=1}^n\in B_{\ell_p^n}$. We may find a~functional $f^\ast\in B_{(\mathbb{K}^n, \rho)^\ast}$ such that 
$$
\mathrm{Re}\,f^\ast\Bigl(\sum_{i=1}^n a_i \rme_i\Bigr)= \rho\Bigl(\sum_{i=1}^n a_i \rme_i\Bigr).
$$
Note that $(\mathbb{K}^n, \rho)$ is only assumed to be a~seminormed space, however, we may pass first to the normed space $([\rme_i\colon i\in I], \rho)$, where $I=\{i\colon \rho(\rme_i)\neq 0\}$, choose a~norming functional for $\sum_{i\in I} a_i \rme_i$ and then extend it by $0$ to $\mathbb{K}^n$.  

Let $J=\{i\in I\colon |f^\ast(\rme_i)|\geqslant \varrho/2\}$. Notice that $|J|\leqslant N$, as otherwise we could choose unimodular scalars $(\delta_j)_{j\in J}$ such that $(\delta_j \rme_j, \delta_j \rme_j)_{j\in J}\subset (\mathbb{K}^n, r)\times (\mathbb{K}^n, \rho)$ forms a~$\varrho/2$-$\ell_1^+$-sequence in $\mathcal{E}$. Then 
\begin{equation*}
\begin{split}
\rho\Bigl(\sum_{i=1}^n a_i e_i\Bigr) &\leq \sum_{j\in J} a + \|(a_i)_{i\in I\setminus J}\|_{\ell_p^{|I|-|J|}}\|(|f^\ast(\rme_i)|)_{i\in I\setminus J}\|_{\ell_{p^\prime}^{|I|-|J|}}\\
&\leq a N + (\varrho/2) n^{1/p^\prime} < \varrho n^{1/p^\prime}. 
\end{split}
\end{equation*} 
Since $\varrho>0$ was arbitrary, we obtain $\alpha_{p,n}(\mathcal{E})=o(n^{1/p^\prime})$, that is, $\eee$ has block basic subtype $p$.

(iv) $\Rightarrow$ (iii) can be shown similarly. Indeed, we fix $\varrho>0$ and $N\in\N$ as above and we pick $M\geq N$ so that $aN/M<\varrho/2$. Then we deduce as in the~previous paragraph that for every $n\geq M$ we have $\rho(\frac{1}{n}\sum_{i=1}^n \rme_i)<\varrho$.

(iii) $\Rightarrow$ (iv) is obvious and the proof has been completed. 
\end{proof}

The proof of our next result is split in three parts. We separately prove that conditions (i)--(iii) and (iv)--(vi) are equivalent, while the~equivalence between (iii) and (iv) follows from a~modification of the~work of Beauzamy \cite[Th\'{e}or\`{e}me~1]{Beau}. His argument concerns arbitrary vectors in a~Banach space, as opposed to members of a~prescribed set of sequences. Nonetheless, the techniques used involve only passing to block sequences and therefore can be repeated under our hypotheses.
\begin{theorem}\label{btype_thm}
Let $a\geqslant 0$, $b\geqslant 1$ and $\mathcal{E}\subset \bigcup_{n=1}^\infty \mathcal{B}^{a,b}_n$ be a~stable set. Then, the following assertions are equivalent:
\begin{enumerate}[{\rm (i)}]
\item $\mathcal{E}$ has block subtype $p$ for every $1<p<\infty$. 
\item $\mathcal{E}$ has block subtype $p$ for some $1<p<\infty$. 
\item $\mathcal{E}$ has block subtype.
\item For every $\varrho>0$ there exists $N=N(\varrho)\in \nn$ such that $\mathcal{E}\cap \mathcal{B}^{a,b}_n$ contains no $\varrho$-$\ell_1$-sequences whenever $n\geq N$. 
\item There do not exist any renormings $|\!\cdot\!|_1$ and $|\!\cdot\!|_2$ of $\ell_1$ such that the formal identity $\iota\colon (\ell_1, |\cdot|_1)\to (\ell_1, |\cdot|_2)$ is block finitely representable on $\eee$. 
\item There does not exist $c>0$ such that $c\cdot\mathrm{Id}_{\ell_1}$ is block finitely representable on $\mathcal{E}$.  
\end{enumerate}
\end{theorem}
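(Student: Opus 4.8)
The plan is to prove Theorem~\ref{btype_thm} by establishing two separate cycles of implications, (i)$\Rightarrow$(ii)$\Rightarrow$(iii)$\Rightarrow$(iv)$\Rightarrow$(i) among the first four conditions and (iv)$\Leftrightarrow$(v)$\Leftrightarrow$(vi) for the last three, so that the whole list becomes equivalent. The implication (i)$\Rightarrow$(ii) is trivial. For (ii)$\Rightarrow$(iii), suppose $\mathcal{E}$ has block subtype $p$ for some $1<p<\infty$, i.e. $\tau_{p,n}(\mathcal{E})=\mathrm{o}(n^{1/p'})$; then for any $(r,\rho)\in\mathcal{E}\cap\mathcal{B}^{a,b}_n$ and scalars $(a_i)$ with $\|(a_i)\|_{\ell_\infty^n}\leqslant 1$, one has $\|(a_i)\|_{\ell_p^n}\leqslant n^{1/p}$, hence $\|\rho(\tfrac1n\sum \ee_i a_i\rme_i)\|_{L_1}\leqslant \tfrac1n\|\rho(\sum\ee_i a_i\rme_i)\|_{L_p}\leqslant \tfrac1n\tau_{p,n}(\mathcal{E})n^{1/p}=n^{1/p-1}\tau_{p,n}(\mathcal{E})\to 0$, so $\Theta_n(\mathcal{E})\to 0$ and $\mathcal{E}$ has block subtype. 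The implication (iii)$\Rightarrow$(iv) is obvious by contraposition: a $\varrho$-$\ell_1$-sequence $(r,\rho)\in\mathcal{E}\cap\mathcal{B}^{a,b}_n$ forces $\rho(\tfrac1n\sum\ee_i\rme_i)\geqslant\varrho$ pointwise, hence $\Theta_n(\mathcal{E})\geqslant\varrho$ for arbitrarily large $n$, contradicting (iii).

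The substantive implication in the first cycle is (iv)$\Rightarrow$(i), which I would run exactly like the corresponding step (iv)$\Rightarrow$(i) in the proof of Theorem~\ref{bbtype_thm}, but now carrying the Rademacher averages through. Fix $\varrho>0$ and choose $N=N(\varrho/2)$ so that $\mathcal{E}\cap\mathcal{B}^{a,b}_n$ has no $\varrho/2$-$\ell_1$-sequence for $n\geqslant N$; given $1<p<\infty$ pick $M\geqslant N$ with $aN/M^{1/p'}<\varrho/2$. For $n\geqslant M$, $(r,\rho)\in\mathcal{E}\cap\mathcal{B}^{a,b}_n$ and $(a_i)\in B_{\ell_p^n}$, estimate $\|\rho(\sum\ee_i a_i\rme_i)\|_{L_p}$ by passing, for each sign choice $\e$, to a norming functional $f_\e^\ast$ on $([\rme_i:i\in I],\rho)$ and splitting the index set according to whether the averaged coordinate $|\mathbb{E}f_\e^\ast(\ee_i\rme_i)|$ is large or small: the large-coordinate set has size at most $N$ (else unimodular twists would produce a $\varrho/2$-$\ell_1$-sequence in $\mathcal{E}$), and the small part is controlled by Hölder against $\ell_{p'}^n$. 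This yields $\tau_{p,n}(\mathcal{E})=\mathrm{o}(n^{1/p'})$, i.e. block subtype $p$ for every $p\in(1,\infty)$. I expect this passage — correctly handling the expectation over sign choices while reproducing the Beauzamy-style extraction of the $\ell_1$-block — to be the main technical obstacle, although it is largely parallel to the already-proven basic case.

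For the second cycle, (vi)$\Rightarrow$(v) is immediate since $c\cdot\mathrm{Id}_{\ell_1}$ is a particular formal identity between (equivalent) renormings of $\ell_1$. The equivalence (iv)$\Leftrightarrow$(v) I would prove by an ultrafilter/ultrapower construction as in the proof of Theorem~\ref{bbtype_thm}: if (iv) fails, there is $\varrho>0$ and for each $n$ a $\varrho$-$\ell_1$-sequence $(r_n,\rho_n)\in\mathcal{E}\cap\mathcal{B}^{a,b}_n$; take $r=\mathcal{U}\text{-}\lim r_n$, $\rho=\mathcal{U}\text{-}\lim\rho_n$ on $c_{00}(\mathbb{K})$, quotient out the kernel of $\rho$, and obtain a formal identity between the completions, which are renormings of $\ell_1$ because both $r$ and $\rho$ dominate (a constant times) the $\ell_1$-norm and are dominated by it up to the basis constant; since $\mathcal{E}$ is closed this identity is block finitely representable on $\mathcal{E}$, contradicting (v). Conversely, if such an $\iota\colon(\ell_1,|\cdot|_1)\to(\ell_1,|\cdot|_2)$ is block finitely representable on $\mathcal{E}$, the restrictions to initial segments, after normalizing in $|\cdot|_1$, give $\varrho$-$\ell_1$-sequences in $\mathcal{E}\cap\mathcal{B}^{a,b}_n$ for a fixed $\varrho>0$ and all $n$, so (iv) fails. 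Finally, for (v)$\Rightarrow$(vi) — which together with (vi)$\Rightarrow$(v) pins down the third condition — one invokes the modification of Beauzamy \cite[Th\'eor\`eme~1]{Beau} described in the paragraph preceding the statement: Beauzamy's argument, which uses only passage to block sequences, shows that from any formal identity between renormings of $\ell_1$ that is block finitely representable on $\mathcal{E}$ one can extract (by a James-type blocking) a scalar multiple $c\cdot\mathrm{Id}_{\ell_1}$ that is also block finitely representable on $\mathcal{E}$; the only point to check is that every step of that extraction produces block sequences of block sequences, which stay inside $\mathcal{E}$ by block closedness. This chain closes all the equivalences.
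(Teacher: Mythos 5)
Your cycle among (i)--(iii) and your handling of the ultrafilter construction are sound, and you correctly identify where the James non-distortion dichotomy belongs. However, there are two problems, one of them substantial.

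\textbf{The gap in (iv)$\Rightarrow$(i).} Your plan is to lift the proof of (iv)$\Rightarrow$(i) from Theorem~\ref{bbtype_thm} by ``carrying the Rademacher averages through,'' extracting from the norming functionals $f_\e^\ast$ an index set $S=\{i: |\mathbb{E}\,\mathrm{Re}\,f_\e^\ast(\e_i\rme_i)|\geqslant\varrho/2\}$ and claiming that if $|S|>N$ then ``unimodular twists would produce a $\varrho/2$-$\ell_1$-sequence in $\mathcal{E}$.'' This claim is false. In the basic (non-Rademacher) case there is a single norming functional $f^\ast$, so the unimodular twists $(\delta_j\rme_j)_{j\in J}$ satisfy $\rho(\sum_j a_j\delta_j\rme_j)\geqslant \mathrm{Re}\,f^\ast(\sum_j a_j\delta_j\rme_j)\geqslant (\varrho/2)\sum_j a_j$ for nonnegative scalars, which is exactly a $\varrho/2$-$\ell_1^+$-sequence. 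Here, by contrast, each $f_\e^\ast$ norms a different vector $\sum_i\e_i a_i\rme_i$, and the quantity $F(i)=\mathbb{E}\,\mathrm{Re}\,f_\e^\ast(\e_i\rme_i)$ is an average over $\e$, with no single functional behind it. Having $|F(i)|$ large on a set $S$ only yields a lower bound on the Rademacher average $\|\rho(\sum_{i\in T}\frac{\e_i}{k}\delta_i\rme_i)\|_{L_1}$ --- precisely what the paper uses to show $\neg$(i)$\Rightarrow\neg$(iii), i.e.\ that $\Theta_n$ stays bounded away from zero. It does \emph{not} give a pointwise lower bound $\rho(\sum_{i\in T} a_i\delta_i\rme_i)\geqslant\varrho/2\sum|a_i|$ valid for all scalar sequences, which is what a $\varrho/2$-$\ell_1$-sequence requires. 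Note also that a $\varrho$-$\ell_1$-sequence demands the lower bound for all sign patterns, whereas the single-functional trick in the basic case only ever produces a one-sided ($\ell_1^+$) estimate. The paper avoids this trap by splitting the work: $\neg$(i)$\Rightarrow\neg$(iii) via the Kahane--Khintchine argument, and (iv)$\Rightarrow$(iii) via an appropriate modification of Beauzamy's Th\'eor\`eme~1, which the authors only cite (it is the asymmetric analogue of the detailed proof they give for (iv)$\Rightarrow$(iii) in Theorem~\ref{bco_thm}). Your proposal tries to short-circuit Beauzamy's step, and the shortcut does not work.

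\textbf{Labels of (v)/(vi) reversed.} You write that ``(vi)$\Rightarrow$(v) is immediate since $c\cdot\mathrm{Id}_{\ell_1}$ is a particular formal identity,'' and that ``(v)$\Rightarrow$(vi)'' is handled by the James-type blocking. This is backwards. Both (v) and (vi) are non-existence assertions, and $c\cdot\mathrm{Id}_{\ell_1}$ is a special case of a formal identity between renormings; hence the trivial implication is (v)$\Rightarrow$(vi), and the direction requiring the James dichotomy (given a formal identity between renormings block finitely representable on $\mathcal{E}$, extract a scalar multiple of the $\ell_1$-identity that is also block finitely representable) is exactly the contrapositive $\neg$(v)$\Rightarrow\neg$(vi), i.e.\ (vi)$\Rightarrow$(v). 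The mathematical content of what you describe is correct and matches the paper; only the labelling is inverted. Your direct argument for $\neg$(v)$\Rightarrow\neg$(iv) via normalized initial segments is correct and is a small simplification not explicitly appearing in the paper (which instead runs (iv)$\Rightarrow$(vi)$\Rightarrow$(v)$\Rightarrow$(iv)), but it does not help close the gap in (iv)$\Rightarrow$(i) noted above.
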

\begin{proof}
We first argue that (i) $\Rightarrow$ (ii) $\Rightarrow$ (iii) $\Rightarrow$  (i). The first implication is trivial. Since $\Theta_n(\mathcal{E})\leq\tau_{p,n}(\mathcal{E})/n^{1/p^\prime}$ for any $1<p<\infty$, we obtain that (ii) $\Rightarrow$ (iii). 

Now, suppose that (i) fails. Then we may pick $1<p<\infty$, $\vartheta$ and $\theta$ such that
\begin{equation}\label{limsup_B}
\limsup_{n\to\infty}\frac{\tau_{p,n}(\eee)}{n^{1/p^\prime}}>\vartheta>\theta>0.
\end{equation}
By the Kahane--Khintchine inequality, there exists a~constant $A_p>0$ such that for all $n\in \nn$, $(r, \rho)\in \mathcal{E}\cap \mathcal{B}^{a,b}_n$ and all scalars $(a_i)_{i=1}^n$ we have
$$
\Biggl\|\rho\Bigl(\sum_{i=1}^n a_i \ee_i \rme_i\Bigr)\Biggr\|_{L_p} \leq A_p \Biggl\|\rho\Bigl(\sum_{i=1}^n a_i \ee_i \rme_i\Bigr)\Biggr\|_{L_1}.
$$
Fix any $k,N\in \nn$ satisfying 
\begin{equation}\label{theta_B}
(\vartheta-\theta)N^{1/p^\prime}>aA_p k.
\end{equation}
In view of \eqref{limsup_B}, there is $n\geqslant N$, $(a_i)_{i=1}^n\in B_{\ell_p^n}$ and $(r, \rho)\in \mathcal{E}\cap \mathcal{B}^{a,b}_n$ such that 
$$
A_p\Biggl\|\rho\Bigl(\sum_{i=1}^n a_i \ee_i \rme_i\Bigr)\Biggr\|_{L_1} \geq \Biggl\|\rho\Bigl(\sum_{i=1}^n a_i \ee_i\rme_i\Bigr)\Biggr\|_{L_p} >\vartheta n^{1/p^\prime}.
$$
For every sign choice $\ee=(\ee_i)_{i=1}^n\in\{\pm 1\}^n$ pick a~norm one functional $f^\ast_\ee\in (\mathbb{K}^n, \rho)^\ast$ such that 
$$
\mathrm{Re}\,f^\ast_\ee\Bigl(\sum_{i=1}^n a_i \ee_i \rme_i\Bigr) =\rho\Bigl(\sum_{i=1}^n a_i \ee_i \rme_i\Bigr).
$$
Define
$$
F(i)= \mathbb{E}\,\bigl\{\mathrm{Re}\,f^\ast_\ee(\ee_i \rme_i)\bigr\}\quad\mbox{ and }\quad S=\bigl\{i\leqslant n\colon \abs{F(i)}\geq \theta/A_p\bigr\}
$$
(the expectation is taken over $\e\in\{\pm 1\}^n$). Then we have
\begin{equation*}
\begin{split}
\vartheta n^{1/p^\prime}/A_p & \leq \Biggl\|\rho\Bigl(\sum_{i=1}^n a_i \ee_i \rme_i\Bigr)\Biggr\|_{L_1}=\sum_{i=1}^n a_i F(i)\\
&\leq\sum_{i\in S} a + \sum_{i\not\in S}\theta\abs{a_i}/A_p\leq a\abs{S}+\theta n^{1/p^\prime}/A_p 
\end{split}
\end{equation*} 
and therefore inequality \eqref{theta_B} yields that $\abs{S}>k$.    

Let $T$ be a subset of $S$ with cardinality $k$ and for each $i\in T$ choose $\delta_i\in \{\pm 1\}$ with $\delta_i F(i)=\abs{F(i)}$. Then
$$
\Biggl\|\rho\Bigl(\sum_{i\in T} \frac{\ee_i }{k} \delta_i \rme_i\Bigr)\Biggr\|_{L_1(2^{\abs{T}})}\!\!\geq \mathbb{E}\Biggl\{\mathrm{Re}\,f_\ee^\ast\Bigl(\sum_{i\in T} \frac{\ee_i}{k} \delta_i \rme_i\Bigr)\Biggr\}=\sum_{i\in T}\frac{\abs{F(i)}}{k} \geq \theta/A_p.
$$
This shows that 
$$
\liminf_{n\to\infty}\Theta_n(\mathcal{E})\geq\limsup_{n\to\infty}\frac{\tau_{p,n}(\mathcal{E})}{n^{1/p^\prime}A_p}$$
and, in view of \eqref{limsup_B}, condition (iii) fails to hold. This finishes the first part of the proof.

It is quite obvious that (iii) $\Rightarrow$ (iv). For the converse we appeal to the~Beauzamy's result \cite[Th\'{e}or\`{e}me 1]{Beau} as explained before.

It remains to show that (iv) $\Rightarrow$ (vi) $\Rightarrow$ (v) $\Rightarrow$ (iv). The first implication is clear. To see that $\neg$(v) $\Rightarrow$ $\neg$(vi), we use the~classical James' non-distortion technique. 

Suppose $\abs{\,\cdot\,}_1$ and $\abs{\,\cdot\,}_2$ are two renormings of $\ell_1$ such that the formal identity between $(\ell_1,\abs{\,\cdot\,}_1)$ and $(\ell_1,\abs{\,\cdot\,}_2)$ is block finitely representable on $\eee$. The~canonical basis $(e_i)_{i=1}^\infty$ of $\ell_1$ equipped with each of the norms $\abs{\,\cdot\,}_1$ and $\abs{\,\cdot\,}_2$ is, of course, equivalent to the~original canonical $\ell_1$-basis. For convenience, we denote these two copies of $(e_i)_{i=1}^\infty$ by $(f_i)_{i=1}^\infty$ and $(g_i)_{i=1}^\infty$.

Take closed intervals $I,J\subset [0,\infty)$ such that for every scalar sequence $(a_i)_{i=1}^\infty\in S_{\ell_1}$ we have $\abs{\sum_{i=1}^\infty a_i f_i}_1\in I$ and $\abs{\sum_{i=1}^\infty a_i g_i}_2\in J$. By the James dichotomy (see {\it e.g.} \cite[Prop.~2]{OS_handbook}), either there exists a~subsequence $(f_i')_{i=1}^\infty$ of $(f_i)_{i=1}^\infty$ such that
$$
\Biggl|\sum_{i=1}^\infty a_i f_i'\Biggr|_1\in [\sqrt{\min I \max I},\max I]\quad\mbox{for every scalar sequence }(a_i)_{i=1}^\infty\in S_{\ell_1},
$$
or there exists a~block subsequence $(f_i')_{i=1}^\infty$ of $(f_i)_{i=1}^\infty$ such that for each $i\in \nn$ there are $L_i\subset \nn$ and scalars $(b_j)_{j\in L_i}$ satisfying:
$$
\sum_{j\in L_i} \abs{b_j}=1,\quad  f_i'=\sum_{j\in L_i} b_j f_j\quad\mbox{and}\quad \abs{f_i'}_1\in [\min I, \sqrt{\min I\max I}]\,\,\mbox{ for each }i\in\N.
$$
In the former case, we let $(g_i')_{i=1}^\infty$ be the corresponding subsequence of $(g_i)_{i=1}^\infty$ and define $I_1=[\sqrt{\min I\max I},\max I]$. In the latter case, we let $(g_i')_{i=1}^\infty$ be the corresponding block subsequence of $(g_i)_{i=1}^\infty$ and define $I_1=[\min I, \sqrt{\min I\max I}]$. In either case we have $\abs{\sum_{i=1}^\infty a_i g_i'}_2\in J$ for each $(a_i)_{i=1}^\infty\in S_{\ell_1}$. Applying the dichotomy again, this time to $(g_i')_{i=1}^\infty$, we obtain new sequences $(f_i^{(1)})_{i=1}^\infty$, $(g_i^{(1)})_{i=1}^\infty$ and a~subinterval $J_1\subset J$ such that:
$$
\log \frac{\max J_1}{\min J_1}=\frac{1}{2}\log \frac{\max J}{\min J},\quad \Biggl|\sum_{i=1}^\infty a_i f_i^{(1)}\Biggr|_1\in I_1 \,\,\,\mbox{and}\quad \Biggl|\sum_{i=1}^\infty a_i g_i^{(1)}\Biggr|_2\in J_1
$$
for all $(a_i)_{i=1}^\infty \in S_{\ell_1}$.

An $n$-fold application of this pair of dichotomies produces subintervals $I_n\subset I$, $J_n\subset J$ and a~blocking $(f_i^{(n)})_{i=1}^\infty$ of $(f_i)_{i=1}^\infty$ such that if $(g_i^{(n)})_{i=1}^\infty$ denotes the~corresponding blocking of $(g_i)_{i=1}^\infty$, then
$$
\log \frac{\max I_n}{\min I_n}= \frac{1}{2^n} \log \frac{\max I}{\min I}\quad\mbox{and}\quad\log \frac{\max J_n}{\min J_n}= \frac{1}{2^n} \log \frac{\max J}{\min J},
$$
and 
$$
\Biggl|\sum_{i=1}^\infty a_i f_i^{(n)}\Biggr|_1\in I_n,\quad \Biggl|\sum_{i=1}^\infty a_i g^{(n)}_i\Biggr|_2\in J_n\quad\mbox{for each }(a_i)_{i=1}^\infty \in S_{\ell_1}.
$$
Now, assuming $\neg$(v), we may pick $(f_i)_{i=1}^\infty$ and $(g_i)_{i=1}^\infty$ above so that the formal identity $[f_i\colon i\in\N]\to [g_i\colon i\in\N]$ belongs to $\eee$. Since $\eee$ is block closed, for all $k,n\in\N$ the~formal identity
$$
\iota_n\colon \Biggl[\frac{f^{(n)}_i}{|f_i^{(n)}|_1}\colon i\leqslant k\Biggr]\xrightarrow[\phantom{xxx}]{} \Biggl[\frac{g^{(n)}_i}{|g^{(n)}_i|_2}\colon i\leqslant k\Biggr]\in \mathcal{E}.
$$
Observe that $\iota_n$ converges pointwise to $c\cdot\mathrm{Id}_{\ell_1^k}$ as $n\to\infty$, where $\{c_1\}=\bigcap_n I_n$, $\{c_2\}=\bigcap_n J_n$ and $c=c_2/c_1$.  

Finally, we prove $\neg$ (iv) $\Rightarrow$ $\neg$(v) by constructing $\iota\colon (\ell_1,|\!\cdot\!|_1)\to (\ell_1, |\!\cdot\!|_2)$ with the aid of arbitrarily long $\varrho$-$\ell_1$-sequences in $\mathcal{E}$ and an~ultrafilter as in the proof of Theorem~\ref{bbtype_thm}. 
\end{proof}

\begin{theorem}\label{bbco_thm}
Let $a\geqslant 0$, $b\geqslant 1$ and $\mathcal{E}\subset \bigcup_{n=1}^\infty \mathcal{B}^{a,b}_n$ be a~stable set. Then, the following assertions are equivalent:
\begin{enumerate}[{\rm (i)}]
\item $\mathcal{E}$ has block basic subcotype $q$ for every $1<q<\infty$. 
\item $\mathcal{E}$ has block basic subcotype $q$ for some $1<q<\infty$. 
\item $\mathcal{E}$ has block basic subcotype.
\item For every $\varrho>0$ there exists $N=N(\varrho)\in \nn$ such that $\mathcal{E}\cap \mathcal{B}^{a,b}_n$ contains no $\varrho$-$c_0^+$-sequences whenever $n\geq N$.
\item Every operator which is block finitely representable on $\eee$ is boundedly complete.
\end{enumerate}
\end{theorem}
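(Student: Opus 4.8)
The plan is to run the elementary cycle $(\mathrm{i})\Rightarrow(\mathrm{ii})\Rightarrow(\mathrm{iii})\Rightarrow(\mathrm{v})\Rightarrow(\mathrm{iv})\Rightarrow(\mathrm{iii})$, which already yields $(\mathrm{ii})\Leftrightarrow(\mathrm{iii})\Leftrightarrow(\mathrm{iv})\Leftrightarrow(\mathrm{v})$, and then to close the circle with the one genuinely hard implication $(\mathrm{iv})\Rightarrow(\mathrm{i})$. The implication $(\mathrm{i})\Rightarrow(\mathrm{ii})$ is trivial. For $(\mathrm{ii})\Rightarrow(\mathrm{iii})$ I would use the pointwise bound $\upsilon_n(\eee)\leq\beta_{q,n}(\eee)/n^{1/q}$, immediate from the definitions since $\|(a_i\rho(\rme_i))_{i=1}^n\|_{\ell_q^n}\geq n^{1/q}\min_{i\leq n}|a_i|\rho(\rme_i)$; hence $\beta_{q,n}(\eee)=\mathrm{o}(n^{1/q})$ forces $\upsilon_n(\eee)\to 0$.

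For $(\mathrm{iii})\Rightarrow(\mathrm{v})$, suppose $A$ is block finitely representable on $\eee$ via a normalized basic sequence $(x_i)$ and let $(z_i)$ be a block sequence of $(x_i)$ with $\delta:=\inf_i\|Az_i\|>0$. For each $n$ the normalized block sequence $(z_i/\|z_i\|)_{i\leq n}$ produces, by block closedness of $\eee$, a member $(r,\rho)\in\eee\cap\mathcal{B}^{a,b}_n$ with $r(\rme_i)=1$ and $\rho(\rme_i)=\|Az_i\|/\|z_i\|$; feeding the scalars $(\|z_i\|)_{i=1}^n$ into the definition of $\upsilon_n$ gives $\upsilon_n(\eee)\,\bigl\|\sum_{i=1}^nz_i\bigr\|\geq\min_{i\leq n}\|Az_i\|\geq\delta$, so $\bigl\|\sum_{i=1}^nz_i\bigr\|\geq\delta/\upsilon_n(\eee)\to\infty$ and $A$ is boundedly complete. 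For $(\mathrm{v})\Rightarrow(\mathrm{iv})$ I would argue by contraposition via the ultrafilter construction from the proof of Theorem~\ref{bbtype_thm}: if $(\mathrm{iv})$ fails there is $\varrho>0$ together with $\varrho$-$c_0^+$-sequences $(r_n,\rho_n)\in\eee\cap\mathcal{B}^{a,b}_n$ for every $n$, whose witnessing scalars are uniformly bounded (the triangle inequality and $r_n(\rme_i)=1$ force them to lie in $[\varrho,2]$). Fixing a free ultrafilter $\mathcal{U}$ on $\nn$, the $\mathcal{U}$-limits $r=\lim_{\mathcal{U}}r_n$ and $\rho=\lim_{\mathcal{U}}\rho_n$ define a norm and a seminorm on $c_{00}(\K)$ (finiteness from $r_n(\sum_{i\leq k}a_i\rme_i)\leq\sum|a_i|$); passing to the quotient by $\{\rho=0\}$ and completing yields an operator $A$ which is block finitely representable on $\eee$ because $\eee$ is closed, and for $z_i:=c_i\rme_i$ with $c_i=\lim_{\mathcal{U}}c_i^{(n)}$ one gets $\|Az_i\|=c_i\rho(\rme_i)\geq\varrho^2$ while $\bigl\|\sum_{i\leq m}z_i\bigr\|\leq 1$ for all $m$, so $A$ is not boundedly complete.

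The implication $(\mathrm{iv})\Rightarrow(\mathrm{iii})$ is direct. If $\upsilon_n(\eee)\geq\varepsilon$ for infinitely many $n$, then for each such $n$ choose $(r,\rho)\in\eee\cap\mathcal{B}^{a,b}_n$ and scalars $(a_i)$, rescaled so that $\min_{i\leq n}|a_i|\rho(\rme_i)=1$, with $r(\sum a_i\rme_i)<2/\varepsilon$. Property (iii) of $\mathcal{B}^{a,b}_n$ then gives $|a_i|<4b/\varepsilon$, and together with $\rho(\rme_i)\leq a$ and $|a_i|\rho(\rme_i)\geq 1$ this yields $|a_i|\geq 1/a$ and $\rho(\rme_i)>\varepsilon/(4b)$ for every $i$. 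Multiplying the $a_i$ by $\varepsilon/(2b)$ — so that every initial segment now has $r$-norm below $1$ — and absorbing unimodular phases into the $\rme_i$ (which stays inside the stable set $\eee$, these being single-vector blocks), one obtains a $\varrho_0$-$c_0^+$-sequence of length $n$ with $\varrho_0=\min\{\varepsilon/(2ab),\varepsilon/(4b)\}$ independent of $n$, contradicting $(\mathrm{iv})$.

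The main obstacle is $(\mathrm{iv})\Rightarrow(\mathrm{i})$: the "automatic power type" statement that the absence of arbitrarily long $\varrho$-$c_0^+$-sequences forces $\beta_{q,n}(\eee)=\mathrm{o}(n^{1/q})$ simultaneously for every $q\in(1,\infty)$. In contrast with the corresponding step of Theorem~\ref{bbtype_thm}, where the test quantities $\theta_n,\Theta_n$ only involve the seminorm $\rho$ and one may freely restrict to an arbitrary subset of coordinates, here both $\beta_{q,n}$ and $\upsilon_n$ involve the norm $r$, and a general basic sequence admits no control over projections onto arbitrary subsets of its basis; passing instead to consecutive blocks tames the partial sums but lets cancellation creep back in on the $\rho$-side. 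I would therefore adapt Beauzamy's argument \cite{Beau}, whose iterative blocking procedure only ever replaces a sequence by a block sequence of it and so transfers to the present "block" framework: it extracts from a failure of block basic subcotype a genuine $c_0$-type structure (arising, for instance, from normalized sums of cancelling pairs), producing arbitrarily long $\varrho$-$c_0^+$-sequences and contradicting $(\mathrm{iv})$. Carrying out this extraction at an arbitrary exponent $q$ is the technical heart of the proof and the step I expect to require the most care.
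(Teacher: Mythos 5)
Your elementary chain $(\mathrm{i})\Rightarrow(\mathrm{ii})\Rightarrow(\mathrm{iii})\Rightarrow(\mathrm{v})\Rightarrow(\mathrm{iv})\Rightarrow(\mathrm{iii})$ is correct; the ultrafilter construction for $\neg(\mathrm{iv})\Rightarrow\neg(\mathrm{v})$ and the normalization bookkeeping for $(\mathrm{iv})\Rightarrow(\mathrm{iii})$ both check out, and your direct use of $\upsilon_n$ in $(\mathrm{iii})\Rightarrow(\mathrm{v})$ is a tidy alternative to the paper's reference back to Proposition~\ref{basic_prop}(iv). However, the step you identify as the heart of the theorem --- closing the loop back to $(\mathrm{i})$ --- is left as a sketch, not a proof, so this is a genuine gap.

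Moreover the route you sketch is not the one the paper takes, and I would not expect it to transfer. Beauzamy's iterative blocking (his Th\'eor\`eme~2, whose adaptation you invoke) is used in this paper for Theorem~\ref{bco_thm}, the Rademacher cotype version, where averaging over signs is what tames the norm side; there is no reason the ``cancelling pairs'' mechanism should produce $\varrho$-$c_0^+$-sequences, which carry no sign averaging. For Theorem~\ref{bbco_thm} the paper instead proves $\neg(\mathrm{i})\Rightarrow\neg(\mathrm{iii})$ directly, by a level-set argument closer in spirit to Hinrichs than to Beauzamy: starting from $(r,\rho)\in\eee\cap\mathcal{B}^{a,b}_M$ and scalars $(a_i)_{i=1}^M$ with $\|(a_i\rho(\rme_i))\|_{\ell_q^M}=M^{1/q}$ and $r(\sum_ia_i\rme_i)<1/\vartheta$, one first bounds $\max_i|a_i\rho(\rme_i)|\leq 2ab\,r(\sum a_i\rme_i)<2ab/\vartheta=:C$; the set $S=\{i:|a_i\rho(\rme_i)|\geq 1/c\}$ then satisfies $M\leq C^q|S|+M/c^q$, so $|S|$ can be made $>n$ by choosing $M$ large. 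Partitioning $\{1,\dots,M\}$ into $n$ consecutive intervals $I_1,\dots,I_n$ each meeting $S$, the blocks $f_i=\sum_{j\in I_i}a_j\rme_j$ have $\rho(f_i)\geq 1/(2bc)$ (the basis-constant bound applied to any $j\in I_i\cap S$ controls the $\rho$-side even though cancellation could, a priori, kill $\rho(f_i)$); normalizing $g_i=f_i/r(f_i)$ and taking $b_i=2bc\,r(f_i)$ yields $\min_i|b_i\rho(g_i)|\geq 1$ while $r(\sum b_ig_i)<2bc/\vartheta$, which pins $\upsilon_n(\eee)\geq\vartheta/(2b)$ for every $n$. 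This is precisely the device that answers your worry that ``passing to consecutive blocks lets cancellation creep back in on the $\rho$-side'': picking one witness index per block and invoking the basis constant suffices, and no sign extraction is needed.
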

\begin{proof}
(i) $\Rightarrow$ (ii) is trivial.  

(ii) $\Rightarrow$ (iii) Fix $(r, \rho)\in \mathcal{E}\cap \mathcal{B}^{a,b}_n$ and assume $(a_i)_{i=1}^n$ is a~scalar sequence with $\abs{a_i \rho(e_i)}\geqslant 1$ for each $1\leq i\leq n$. Then $\beta_{n,q}(\mathcal{E})r(\sum_{i=1}^n a_i e_i)\geq n^{1/q}$, hence $\upsilon_{n}(\mathcal{E})\leq \beta_{q,n}(\mathcal{E})/n^{1/q}\xrightarrow[]{\phantom{xx}} 0$. 

\hspace*{-3mm}$\neg$(i) $\Rightarrow$ $\neg$(iii) Assume that $1<q<\infty$ is such that 
\begin{equation}\label{subco1}
\limsup_{n\to\infty}\frac{\beta_{q,n}(\mathcal{E})}{n^{1/q}}>\vartheta>0.
\end{equation}
Fix any $n\in \nn$ and $c>1$. Let $C=2ab/\vartheta$ and pick $N\geq n$ such that 
$$
(1-1/c^q)N> C^q n.
$$
According to \eqref{subco1} we may take $M\geqslant N$, a~ scalar sequence $(a_i)_{i=1}^M$ and $(r,\rho)\in \mathcal{E}\cap \mathcal{B}^{a,b}_M$ such that $\n{(a_i \rho(\rme_i))_{i=1}^M}_{\ell_q^M}=M^{1/q}$ and $r(\sum_{i=1}^M a_i \rme_i)<1/\vartheta$. Observe that 
$$
\max_{1\leqslant i\leqslant M} |a_i\rho(\rme_i)| \leqslant a\max_{1\leqslant i\leqslant M} r(a_i\rme_i)\leqslant 2abr\Big(\sum_{i=1}^M a_i \rme_i\Big)<C.
$$  
Let $S=\{i\leqslant M\colon |a_i \rho(\rme_i)|\geqslant 1/c\}$ and note that 
$$
M=\sum_{i=1}^M |a_i \rho(\rme_i)|^q \leqslant C^q |S|+ \sum_{i\not\in S} |a_i\rho(\rme_i)|^q \leqslant C^q |S|+M/c^q
$$
and hence $|S|>M\geq n$. 

Let $I_1,\ldots, I_n$ be a~partition of $\{1,\ldots,M\}$ into intervals such that for each $1\leqslant i\leqslant n$ we have $I_i\cap S\neq \varnothing$. For each $1\leq i\leq n$ define $f_i=\sum_{j\in I_i} a_j \rme_j$ and notice that if $j\in I_i\cap S$, then
$$
1/c\leqslant |a_j\rho(\rme_j)| \leqslant 2b\rho(f_i).
$$
Now, let $g_i=f_i/r(f_i)$ and $b_i=2bc r(f_i)$ for $i\in\N$. Then 
$$
|b_i\rho(g_i)| = 2bc \rho(f_i)\geqslant 1
$$
and
$$r\Big(\sum_{i=1}^n b_i g_i\Big)= 2bc r\Big(\sum_{i=1}^M a_i \rme_i\Big)< \frac{2bc}{\vartheta}.
$$
Since $c>1$ was arbitrary, it follows that $\upsilon_n(\mathcal{E}) \geq\vartheta/2b$ for any $n\in \nn$, thus (iii) does not hold true.

(i) $\Rightarrow$ (v) can be shown by repeating the~argument from the proof of Proposition~\ref{basic_prop}(iv).

$\neg$(iv) $\Rightarrow$ $\neg$(v) Take $\varrho>0$ such that for each $n\in \nn$, $\mathcal{E}\cap \mathcal{B}^{a,b}_n$ admits a~$\varrho$-$c_0^+$-sequence, say $(r_n, \rho_n)$.  For $n\in \nn$ pick a~scalar sequence $(a^n_i)_{i=1}^n\subset [\varrho, \infty)$ such that $\rho_n(\rme_i)\geqslant \varrho$ for each $1\leqslant i\leqslant n$ and $r_n(\sum_{i=1}^m a_i^n \rme_i)\leqslant 1$ for each $1\leq m\leq n$. Note that $|a^n_i|\leqslant 2b$ for all $n\in \nn$, $1\leqslant i\leqslant n$. Fix a~free ultrafilter $\mathcal{U}$ on $\nn$ and consider the norm $r$ and the seminorm $\rho$ on $c_{00}(\mathbb{K})$ defined as in the proof of the~implication (v) $\Rightarrow$ (iv) in Theorem~\ref{bbtype_thm}. Define $a_i=\mathcal{U}\mbox{-}\lim_n a^n_i$ for $i\in\N$ and let $A\colon [\rme_i\colon i\in \nn]\to [A\rme_i\colon i\in \nn]$ be the~continuous extension of the~operator $\widetilde{A}\colon c_{00}(\mathbb{K})\to c_{00}(\mathbb{K})/Z$ given by $\widetilde{A}(\rme_i)=a_i\rme_i+Z$, where $Z=\{x\in c_{00}(\mathbb{K})\colon \rho(x)=0\}$. Then, $A$ is block finitely representable on $\eee$ but evidently fails to be boundedly complete.

(iv) $\Rightarrow$ (iii) Fix $\varrho>0$ and take $N=N(\varrho)\in \nn$ such that for any $n\geqslant N$, $\mathcal{E}\cap \mathcal{B}^{a,b}_n$ admits no $\varrho$-$c_0^+$-sequences. Assume that for some $n\geqslant N$, $C>0$, $(r, \rho)\in \mathcal{E}\cap \mathcal{B}^{a,b}_n$ and $(a_i)_{i=1}^n$ are such that $|a_i \rho(\rme_i)|\geqslant 1$ for each $1\leqslant i\leqslant n$ and $r(\sum_{i=1}^n a_i \rme_i)\leqslant C$. Then $\max_{1\leqslant m\leqslant n}|a_m|, \max_{1\leqslant m\leqslant n}r(\sum_{i=1}^m a_i \rme_i) \leqslant 2bC$, thus $\rho(\rme_i)\geq 1/(2bC)$. If $2bC\leqslant 1/\varrho$, then $(r, \rho)$ is a~$\varrho$-$c_0^+$-sequence. Therefore $C>1/(2b\varrho)$ and hence $\upsilon_n(\mathcal{E})\leqslant 2b\varrho$ for any $n\geqslant N$. Since $\varrho>0$ was arbitrary, we obtain $\lim_n \upsilon_n(\mathcal{E})=0$. 
\end{proof}

\begin{theorem}\label{bco_thm}
Let $a\geqslant 0$, $b\geqslant 1$ and $\mathcal{E}\subset \bigcup_{n=1}^\infty \mathcal{B}^{a,b}_n$ be a~stable set. Then, the following assertions are equivalent:
\begin{enumerate}[{\rm (i)}]
\item $\mathcal{E}$ has block subcotype $q$ for every $1<q<\infty$.
\item $\mathcal{E}$ has block subcotype $q$ for some $1<q<\infty$.
\item $\mathcal{E}$ has block  subcotype.
\item For every $\varrho>0$ there exists $N=N(\varrho)\in \nn$ such that $\mathcal{E}\cap \mathcal{B}^{a,b}_n$ contains no $\varrho$-$c_0$-sequences whenever $n\geq N$. 
\item There do not exist any renormings $|\!\cdot\!|_1$ and $|\!\cdot\!|_2$ of $c_0$ such that the formal identity $\iota\colon (c_0, |\cdot|_1)\to (c_0, |\cdot|_2)$ is block finitely representable on $\eee$. 
\item There does not exist $c>0$ such that $c\cdot\mathrm{Id}_{c_0}$ is block finitely representable on $\mathcal{E}$.
\end{enumerate}
\end{theorem}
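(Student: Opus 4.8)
The plan is to mirror, step for step, the proof of Theorem~\ref{btype_thm}, with the canonical $c_0$ basis replacing the canonical $\ell_1$ basis, the cotype quantities $\gamma_{q,n}$ and $\Upsilon_n$ replacing $\tau_{p,n}$ and $\Theta_n$, and $\varrho$-$c_0$-sequences replacing $\varrho$-$\ell_1$-sequences throughout. Concretely I would establish three blocks of implications: the cycle (i)$\Rightarrow$(ii)$\Rightarrow$(iii)$\Rightarrow$(i); the equivalence (iii)$\Leftrightarrow$(iv); and the cycle (iv)$\Rightarrow$(vi)$\Rightarrow$(v)$\Rightarrow$(iv). Together these give all six equivalences.

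For the first cycle, (i)$\Rightarrow$(ii) is trivial. For (ii)$\Rightarrow$(iii), if $\eee$ has block subcotype $q$ then for any $(r,\rho)\in\eee\cap\mathcal{B}^{a,b}_n$ and scalars $(a_i)_{i=1}^n$,
\[
\min_{1\leq i\leq n}|a_i\rho(\rme_i)|
\;\leq\; n^{-1/q}\bigl\|(a_i\rho(\rme_i))_{i=1}^n\bigr\|_{\ell_q^n}
\;\leq\; n^{-1/q}\gamma_{q,n}(\eee)\Bigl\|r\Bigl(\sum_{i=1}^n\ee_ia_i\rme_i\Bigr)\Bigr\|_{L_q}
\;\leq\; \frac{B_q\,\gamma_{q,n}(\eee)}{n^{1/q}}\Bigl\|r\Bigl(\sum_{i=1}^n\ee_ia_i\rme_i\Bigr)\Bigr\|_{L_1},
\]
where $B_q$ is the Kahane--Khintchine constant; hence $\Upsilon_n(\eee)\leq B_q\gamma_{q,n}(\eee)/n^{1/q}\to 0$, which is (iii). (Unlike the analogous step in Theorem~\ref{btype_thm}, this really needs the Kahane--Khintchine inequality, since here the $L_q$-norm must be dominated by the $L_1$-norm.) For (iii)$\Rightarrow$(i) I argue by contraposition: if $\limsup_n\gamma_{q,n}(\eee)/n^{1/q}>\vartheta>0$ for some $q\in(1,\infty)$, then for arbitrarily large $M$ there are scalars $(a_i)_{i=1}^M$ and $(r,\rho)\in\eee\cap\mathcal{B}^{a,b}_M$ with $\|r(\sum_i\ee_ia_i\rme_i)\|_{L_q}=1$ and $\sum_{i=1}^M|a_i\rho(\rme_i)|^q>\vartheta^qM$. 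Since $|a_i\rho(\rme_i)|\leq a\,r(a_i\rme_i)\leq 2ab\,\|r(\sum_j\ee_ja_j\rme_j)\|_{L_1}\leq 2ab$, the set $S=\{i:|a_i\rho(\rme_i)|\geq\vartheta/2\}$ has $|S|\geq\lambda M$ with $\lambda=\lambda(\vartheta,a,b,q)>0$. Choosing $i_1<\dots<i_n$ in $S$ with $n=\lfloor\lambda M\rfloor$, the sign-twisted subsequence $(\sgn(a_{i_t})\rme_{i_t})_{t=1}^n$ is a block of $(r,\rho)$, hence lies in $\eee\cap\mathcal{B}^{a,b}_n$; testing $\Upsilon_n(\eee)$ with the coefficients $(|a_{i_t}|)_{t=1}^n$ and using that deleting coordinates does not increase a Rademacher average in $L_r$ (Jensen, since the deleted terms are symmetric), one gets $\Upsilon_n(\eee)\geq\vartheta/2$ for these $n$, which tend to $\infty$; thus $\neg$(iii).

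For the equivalence (iii)$\Leftrightarrow$(iv): a $\varrho$-$c_0$-sequence $(r,\rho)\in\eee\cap\mathcal{B}^{a,b}_n$ has $\min_i\rho(\rme_i)\geq\varrho$ and, by convexity, $\|r(\sum_i\ee_i\rme_i)\|_{L_1}\leq\max_{\ee\in\{\pm1\}^n}r(\sum_i\ee_i\rme_i)\leq 1/\varrho$, so $\Upsilon_n(\eee)\geq\varrho^2$; this gives $\neg$(iv)$\Rightarrow\neg$(iii), i.e.\ (iii)$\Rightarrow$(iv). The converse (iv)$\Rightarrow$(iii) is the only genuinely new ingredient: here I invoke the $c_0$-counterpart of the Beauzamy result used for (iii)$\Leftrightarrow$(iv) in Theorem~\ref{btype_thm} (Beauzamy \cite{Beau} treats both the $\ell_1$ and the $c_0$ case), noting as before that his argument only manipulates block sequences and hence applies to sequences drawn from $\eee$. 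For the last cycle, (iv)$\Rightarrow$(vi) is immediate, since block finite representability of $c\cdot\mathrm{Id}_{c_0}$ on $\eee$ yields $\min\{c,1\}$-$c_0$-sequences of every length, contradicting (iv). The implication (vi)$\Rightarrow$(v) is proved in contrapositive form $\neg$(v)$\Rightarrow\neg$(vi) by James' non-distortion technique, exactly as in Theorem~\ref{btype_thm} but with the $c_0$-form of the James dichotomy applied alternately to the two renormings of $c_0$; the normalized formal identities converge (pointwise, hence in each $\mathcal{B}^{a,b}_k$) to $c\cdot\mathrm{Id}_{c_0}$ with $c=c_2/c_1$, and stability of $\eee$ puts the limit in $\eee$. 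Finally, (v)$\Rightarrow$(iv) is proved in contrapositive form $\neg$(iv)$\Rightarrow\neg$(v): from arbitrarily long $\varrho$-$c_0$-sequences in $\eee$ one builds, via the ultrafilter construction of Theorem~\ref{bbtype_thm}, a norm $r$ and a seminorm $\rho$ on $c_{00}(\K)$; the estimates $\tfrac1{2b}\n{(a_i)}_{\ell_\infty}\leq r(\sum a_i\rme_i)\leq\tfrac1\varrho\n{(a_i)}_{\ell_\infty}$ (lower bound from the basis property, upper from the $c_0$-sequence property) and $\tfrac\varrho{2b}\n{(a_i)}_{\ell_\infty}\leq\rho(\sum a_i\rme_i)\leq\tfrac a\varrho\n{(a_i)}_{\ell_\infty}$ show that the relevant completions are renormings of $c_0$, so the induced formal identity witnesses $\neg$(v).

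I expect the main obstacle to be (iv)$\Rightarrow$(iii): one must pass from the purely quantitative fact that $\eee$ admits arbitrarily long $\varrho$-$c_0$-sequences (equivalently, that $\Upsilon_n(\eee)\not\to 0$) to the structural conclusion, and this requires taming the \emph{worst-case} sign behaviour --- controlling $\max_\ee r(\sum_i\ee_i\rme_i)$, not merely its $L_1$-average --- which is precisely the content of Beauzamy's undistortion argument; the point to verify is that its combinatorial core survives the passage to a prescribed block structure $\eee$. A secondary technical point is pinning down the correct $c_0$-version of the James dichotomy and checking that the alternating scheme in $\neg$(v)$\Rightarrow\neg$(vi) still terminates in a scalar multiple of $\mathrm{Id}_{c_0}$.
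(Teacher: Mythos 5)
Your proposal is correct and follows essentially the same route as the paper: the cycle (i)$\Rightarrow$(ii)$\Rightarrow$(iii)$\Rightarrow$(i) via Kahane--Khintchine and a Hinrichs-type truncation argument, the Beauzamy undistortion argument for (iv)$\Rightarrow$(iii), and the James blocking plus ultrafilter constructions for the (iv)--(vi) cycle. The only cosmetic differences are in the $\neg$(i)$\Rightarrow\neg$(iii) step, where you use the fixed threshold $\vartheta/2$ and test $\Upsilon_n$ directly with the coefficients $(|a_{i_t}|)$ rather than the paper's rescaled coefficients $b_i=(a_i\rho(\rme_i))^{-1}\leq c$ (both are fine, yielding $\Upsilon_n\geq\vartheta/2$ versus $\geq\vartheta$), and in (iv)$\Rightarrow$(iii), where you defer to Beauzamy's Th\'eor\`eme~2 while the paper spells out the adaptation in full --- but you correctly identify this as the point that requires care and correctly observe that the argument survives because it only manipulates block sequences.
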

\begin{proof} 
(i) $\Rightarrow$ (ii) is trivial.

(ii) $\Rightarrow$ (iii) If $(r, \rho)\in \mathcal{E}\cap \mathcal{B}^{a,b}_n$ and  $(a_i)_{i=1}^n$ is a~scalar sequence satisfying $|a_i\rho(\rme_i)|\geqslant 1$ for each $1\leqslant i \leqslant n$, then 
$$
B_q\gamma_{n,q}(\mathcal{E})\Biggl\|r\Big(\sum_{i=1}^n a_i \ee_i \rme_i\Big)\Biggr\|_{L_1}\geqslant \gamma_{n,q}(\mathcal{E})\Biggl\|r\Big(\sum_{i=1}^n a_i \ee_i \rme_i\Big)\Biggr\|_{L_q} \geqslant \|(a_i \rho(\rme_i))_{i=1}^n\|_{\ell_q^n} \geqslant n^{1/q},
$$
where $B_q$ is a~constant coming from the Kahane--Khintchine inequality. This shows that $\Upsilon_n(\eee)\leq B_q\gamma_{n,q}(\eee)/n^{1/q}$. Therefore, if $\gamma_{q,n}(\mathcal{E})=o(n^{1/q})$, then  $\lim_n \Upsilon_n(\mathcal{E})= 0$ as desired.

\hspace*{-3mm}$\neg$(i) $\Rightarrow$ $\neg$(iii) We modify a~Hinrichs argument \cite{Hinrichs}. Pick $1<q<\infty$ and $\vartheta$ such that 
\begin{equation*}\label{subco_th}
\limsup_{n\to\infty}\frac{\gamma_{q,n}(\mathcal{E})}{n^{1/q}}>\vartheta>0.
\end{equation*}
Fix $n\in \nn$ and pick $N\geq n$, $c>1$ such that $(1-1/c^q)N>a^q n/\vartheta^q$. Take also $M\geqslant N$, $(r, \rho)\in \mathcal{E}\cap \mathcal{B}^{a,b}_M$ and a~scalar sequence $(a_i)_{i=1}^M$ such that 
$$
\|(a_i \rho(\rme_i))_{i=1}^M\|_{\ell_q^M}=M^{1/q}\quad\mbox{and }\,\quad \Biggl\|r\Big(\sum_{i=1}^M a_i \ee_i \rme_i\Big)\Biggr\|_{L_q} \leq 1/\vartheta.
$$
By the $1$-unconditionality of $(\ee_i \rme_i)_{i=1}^M$ in $L_q(\mathbb{K}^M, \rho)$, we have
$$
\max_{1\leqslant i\leqslant M} |a_i \rho(\rme_i)|^q\leqslant a^q \bigl\|(a_i)_{i=1}^M\bigr\|_{\ell_\infty^M} ^q\leqslant a^q \Biggr\|r\Big(\sum_{i=1}^M a_i \ee_i \rme_i\Big)\Biggr\|^q_{L_q} \leqslant \frac{a^q}{\vartheta^q}.
$$
Define $S=\{i\leqslant M\colon |a_i \rho(\rme_i)| \geqslant 1/c\}$ and note that 
$$
M=\sum_{i=1}^M |a_i \rho(\rme_i)|^q \leqslant \frac{a^q}{\vartheta^q}|S|+ \frac{M}{c^q},
$$
whence $|S|>Mn/N\geq n$. Choose a~subset $T$ of $S$ with $|T|=n$; for each $i\in T$ we define $b_i=(a_i \rho(\rme_i))^{-1}\leqslant c$ and note that $\|(b_i a_i \rho(\rme_i))_{i\in T}\|_{\ell_{-\infty}^n}=1$. Then, using $1$-unconditionality once again, we infer that
$$
\Biggl\|r\Big(\sum_{i\in T} b_i a_i \ee_i \rme_i\Big)\Biggr\|_{L_1} \leqslant c \Biggl\|r\Big(\sum_{i=1}^M a_i \ee_i \rme_i\Big)\Biggr\|_{L_q} \leqslant\frac{c}{\vartheta}.
$$
Since this argument works for any $c>0$ and $n\in \nn$, we obtain $\lim\sup_n\Upsilon_n(\mathcal{E})\geqslant \vartheta$. 

The implication (iii) $\Rightarrow$ (iv) is clear, while the converse one follows from an~appropriate modification of Beauzamy's result \cite[Th\'{e}or\`{e}me~2]{Beau}. For reader's convenience, we present an~outline of this argument. 

Assume (iv) holds true and for each $n\in\N$ define
$$
c_n=\inf\Biggl\{\Biggl\|r\Big(\sum_{i=1}^n\e_i a_i\rme_i\Big)\Biggr\|_{L_1}\colon (a_i)_{i=1}^n\subset\mathbb{K}^n,\, \abs{a_i\rho(\rme_i)}\geq 1\,\mbox{ for each }1\leq i\leq n\Biggr\}.
$$
First, we check that $(c_n)_{n=1}^\infty$ is monotone increasing. It is quite easy to see that $(c_n)_{n=1}^\infty$ is monotone increasing ({\it cf.} \cite[Lemme~7]{Beau}). Let $M=\lim_n c_n$ and suppose, towards a~contradiction, that $M<\infty$.

Fix any $\ee\in (0,\frac14)$, $n\in\N$ and define $\delta>0$ by $\delta=(1+\e)/(1-4\e)$. Find $k_0\in\N$ so that $c_k\geq(1-\ee)M$ for each $k\geq k_0$. For any such $k$ let $l=nk$ and $\eta=2^{-l-1}\ee$. Next, pick $m_0\in\N$ so that $c_m\geq(1-\eta)M$ for each $m\geq m_0$. For any such $m$ define $N=m+l$. There exists a~scalar sequence $(a_i)_{i=1}^N$ satisfying $\abs{a_i\rho(\rme_i)}\geq 1$ for each $1\leq i\leq N$ and 
\begin{equation}\label{Beau1}
\Biggl\|r\Big(\sum_{i=1}^N\e_i a_i\rme_i\Big)\Biggr\|_{L_1}\leq (1+\eta)M.
\end{equation}

Define a map $f\colon\{\pm 1\}^\N\to [0,\infty)$ by
$$
f(s)=\int\limits_{\{\pm 1\}^\N}r\Big(\sum_{i=1}^l\e_i(s)a_i\rme_i+\sum_{i=l+1}^N \e_i(t)a_i\rme_i\Big)\,\mathrm{d}t.
$$
We {\it claim} that 
\begin{equation}\label{Beau2}
f(s)\leq (1+\e)M\quad\mbox{for every }s\in\{\pm 1\}^\N.    
\end{equation}
Indeed, observe that \eqref{Beau1} yields $\int f(s)\,\mathrm{d}s\leq (1+\eta)M$. On the other hand, integrating the~triangle inequality of the~form $\frac12(r(x+y)+r(x-y))\geq r(x)$ ({\it cf.} \cite[Lemme~8]{Beau}) we obtain 
$$
f(s)\geq\int\limits_{\{\pm 1\}^\N}r\Big(\sum_{i=l+1}^N\e_i(t)a_i\rme_i\Big)\,\mathrm{d}t\geq (1-\eta)M.
$$
Hence, for any $s\in\{\pm 1\}^\N$ we have
$$
2^{-l}f(s)+(1-2^{-l})(1-\eta)M\leq 2^{-l}f(s)+\int\limits_{\{\pm 1\}^l\setminus\{s\}} f(t)\,\mathrm{d}t\leq (1+\eta)M,
$$
which proves our claim. (Note that the values of $f$ depend only on the~first $l$ coordinates, so we can identify any $s\in\{\pm 1\}^\N$ with its initial sequence of length $l$.)

Using the triangle inequality in a~similar way as above and appealing to \eqref{Beau2}, we obtain that for each $s\in\{\pm 1\}^\N$,
\begin{equation*}
    r\Big(\sum_{i=1}^l \e_i(s)a_i\rme_i\Big)\leq f(s)\leq (1+\e)M.
\end{equation*}
In a very analogous way we can show that for any set $I\subseteq\{1,\ldots,l\}$ and any $s\in\{\pm 1\}^\N$ we have
\begin{equation}\label{Beau_length_l}
r\Big(\sum_{i\in I} \e_i(s)a_i\rme_i\Big)\leq (1+\e)M.
\end{equation}
To this end, we just modify the definition of $f$ by introducing a~function $g$ defined by
$$
g(s)=\int\limits_{\{\pm 1\}^\N}r\Big(\sum_{i\in I}\e_i(s)a_i\rme_i+\!\!\sum_{1\leq i\leq N,\, i\not\in I}\e_i(t)a_i\rme_i\Big)\,\mathrm{d}t,
$$
and we repeat the same argument as above. In particular, we have
\begin{equation}\label{Beau_X}
    r\Big(\sum_{i\in A_j} \e_i(s)a_i\rme_i\Big)\leq (1+\e)M\quad\mbox{for all }s\in\{\pm 1\}^\N,\,\, 1\leq j\leq n,    
\end{equation}
where $A_j=\{(j-1)k+1,\ldots,jk\}$ for $1\leq j\leq n$. Of course, $\abs{A_j}=k\geq k_0$ and hence
\begin{equation}\label{Beau_Y}
    \int\limits_{\{\pm 1\}^\N} r\Big(\sum_{i\in A_j} \e_i(s)a_i\rme_i\Big)\,\mathrm{d}s\geq (1-\e)M\quad\mbox{for every }1\leq j\leq n.   
\end{equation}
Now, inequalities \eqref{Beau_X} and \eqref{Beau_Y}, jointly with the~assumptions we had on the~sequence $(a_i)_{i=1}^N$, imply that for every $1\leq j\leq n$ there exists a~choice of signs $(\delta_i)_{i\in A_j}\subseteq\{\pm 1\}$ such that 
\begin{equation}\label{Beau_C}
r\Big(\sum_{i\in A_j}\delta_i a_i\rme_i\Big)\geq (1-4\e)M\quad\mbox{and }\quad \rho\Big(\sum_{i\in A_j}\delta_i a_i\rme_i\Big)\geq 1
\end{equation}
({\it cf.} \cite[Lemme~9]{Beau}).

For $1\leq j\leq n$ define $y_j=\sum_{i\in A_j}\delta_ia_i\rme_i$ and notice that inequality \eqref{Beau_length_l} yields
$$
r\Big(\sum_{j=1}^n \sigma_jy_j\Big)\leq (1+\e)M\quad\mbox{for every }(\sigma_j)_{j=1}^n\in\{-1,0,1\}^n.
$$
Therefore, for every scalar sequence $(b_j)_{j=1}^n$ we have
\begin{equation*}
    r\Big(\sum_{j=1}^n b_jy_j\Big)\leq (1+\e)M\n{(b_j)_{j=1}^\infty}_{\ell_\infty^n}.
\end{equation*}
Let $z_j=y_j/r(y_j)$ for $1\leq j\leq n$. Then, according to inequality \eqref{Beau_C}, we have $\rho(z_j)\geq ((1-4\e)M)^{-1}$ for each $1\leq j\leq n$, as well as 
\begin{equation*}
    r\Big(\sum_{j=1}^n b_jz_j\Big)\leq \frac{1+\e}{1-4\e}\n{(b_j)_{j=1}^\infty}_{\ell_\infty^n}=(1+\delta)\n{(b_j)_{j=1}^\infty}_{\ell_\infty^n}
\end{equation*}
for every scalar sequence $(b_j)_{j=1}^n$. 

Consequently, the vectors $z_j$ produce a~block of $(r,\rho)$ which forms a~$\varrho$-$c_0$-sequence of length $n$ with $\varrho=\min\{(1+\delta)^{-1},((1-4\e)M)^{-1}\}$. As $n$ was arbitrary, we get a~contradiction with clause (iv). Therefore, $M=\infty$ which means nothing but $\lim_n\Upsilon_n(\eee)=0$, as desired.

The equivalence between assertions (iv)--(vi) can be shown as in the subtype case ({\it cf.} the~proof of Theorem~\ref{btype_thm}).
\end{proof}

\begin{proof}[Proof of Proposition \ref{basic_prop}\,{\rm (ii), (iv), (vi), (viii)}]
(ii) Assume $\eee$ has block basic type $p$ with some $p\in (1,\infty)$. If $(x_i)_{i=1}^\infty$ is block finitely representable on $\eee$ and $(y_i)_{i=1}^\infty$ is any of its normalized block subsequence, then for all $k,n\in\N$ and for the map
$$
\sum_{i=1}^n a_ie_i\xmapsto{\,\,\,\, r\,\,\,\,}\Biggl\|\sum_{i=1}^n a_i y_{k+i}\Biggr\|
$$
we have $(r,r)\in\eee$ which follows from the fact that $\eee$ is block closed. Hence,
$$
\Biggl\|\frac{1}{n}\sum_{i=1}^n y_{k+i}\Biggr\|\leq \alpha_{p,n}(\eee)\cdot\Bigl\|\Bigl(\frac{1}{n}\Bigr)\Bigr\|_{\ell_p^n}=\frac{\alpha_{p,n}(\eee)}{n^{1-1/p}}\xrightarrow[\, n\to\infty\,]{} 0.
$$
Appealing to the fact that a sequence is weakly null if and only if every subsequence admits a~norm null convex block subsequence, we infer that $(x_i)_{i=1}^\infty$ is shrinking. 

The converse is essentially included in Theorem~\ref{bbtype_thm}. First, note that given any operator $A\colon [x_i\colon i\in\N]\to [Ax_i\colon i\in\N]$ block finitely representable on $\eee$, we see that $A$ is an~isometry as $\eee$ is an~identity block. Hence, $\mathrm{Id}_{[x_i\colon i\in\N]}$ is also block finitely representable on $\eee$ and so is $(x_i)_{i=1}^\infty$. By our assumption, every such operator is shrinking and the~implication (v) $\Rightarrow$ (i) of Theorem~\ref{bbtype_thm} yields that $\eee$ has block basic subtype $2$, that is, $\alpha_{2,n}(\eee)=o(n^{1/2})$. Pick $N>1$ so that $\alpha_{2,N}(\eee)<N^{1/2}$, thus for some $\delta\in (0,1)$ we have $\alpha_{2,N}(\eee)\leq N^{\delta-1/2}$. An~appeal to assertion (i) yields that $\eee$ has block basic type $p$, for every $1\leq p<1/\delta$.

(iv) Assume $\eee$ has block basic cotype $q$ with some $q\in (1,\infty)$. If $(x_i)_{i=1}^\infty$ is block finitely representable on $\eee$ and $(z_i)_{i=1}^\infty$ is any its block subsequence satisfying $\delta:=\inf_n\n{z_n}>0$, then for each $n\in\N$ and for the map
$$
\sum_{i=1}^n a_ie_i\xmapsto{\,\,\,\, r\,\,\,\,}\Biggl\|\sum_{i=1}^n a_i \frac{z_i}{\n{z_i}}\Biggr\|
$$
we have $(r,r)\in\eee$. Hence,
$$
\Biggl\|\sum_{i=1}^n z_i\Biggr\|=\Biggl\|\sum_{i=1}^n \n{z_i}\frac{z_i}{\n{z_i}}\Biggr\|\geq \beta_{q,n}(\eee)^{-1}\n{(\n{z_i})_{i=1}^n}_{\ell_q^n}\geq \delta\beta_{q,n}(\eee)^{-1}n^{1/q}\xrightarrow[\, n\to\infty\,]{} \infty.
$$

For the converse, we argue as in the proof of assertion (ii). By Theorem~\ref{bbco_thm}, $\eee$ has block basic subcotype $2$, that is, $\beta_{2,n}(\eee)=o(n^{1/2})$ and hence we may pick $q\in (1,\infty)$ and $N>1$ so that $\beta_{2,N}(\eee)<N^{1/2-1/q}$. By assertion (iii), $\eee$ has block basic cotype $s$ for every $s>q$.

(vi) This is basically the content of Theorem~\ref{btype_thm} by which we infer that the~canonical basis of $\ell_1$ is block finitely representable on $\eee$ if and only if $\eee$ has no nontrivial block subtype (note that here $\eee$ is an~identity block so the constant $c$ from Theorem~\ref{btype_thm} must be $1$). Arguing as in the~proof of (ii) we observe that this is in turn equivalent to $\eee$ having no nontrivial block type.

(viii) By Theorem~\ref{bco_thm}, we infer that the canonical basis of $c_0$ is block finitely representable on $\eee$ if and only if $\eee$ has no nontrivial block subcotype. Arguing as in the proof of (iv) we infer that this is in turn equivalent to $\eee$ having no nontrivial block cotype.
\end{proof}

\section{Operator ideals and seminorms}
Let $\mathsf{Ban}$ denote the class of all Banach spaces over a~fixed field $\mathbb{K}\in\{\R,\C\}$. We denote by $\mathscr{L}$ the class of all operators (always assumed to be linear and bounded) between all Banach spaces and for $X,Y\in \mathsf{Ban}$ we let $\mathscr{L}(X,Y)$ denote the set of all operators from $X$ into $Y$. Generally, for any $\mathscr{I}\subset \mathscr{L}$ and $X,Y\in \mathsf{Ban}$ we set $\mathscr{I}(X,Y)=\mathscr{I}\cap \mathscr{L}(X,Y)$. We recall that a~class $\mathscr{I}$ is called a~({\it two-sided\,}) {\it ideal} if:
\begin{enumerate}[(i)]
\item $\mathrm{Id}_\mathbb{K}\in \mathscr{I}$,
\item for all $X,Y\in \mathsf{Ban}$, $\mathscr{I}(X,Y)$ is a~vector subspace of $\mathscr{L}(X,Y)$, 
\item for any $W,X,Y,Z\in \mathsf{Ban}$, $C\in \mathscr{L}(W,X)$, $B\in \mathscr{I}(X,Y)$ and $A\in \mathscr{L}(Y,Z)$ we have $ABC\in \mathscr{I}$.
\end{enumerate}
An ideal $\mathscr{I}$ is said to be \emph{closed} provided that for any $X,Y\in \mathsf{Ban}$, $\mathscr{I}(X,Y)$ is closed in $\mathscr{L}(X,Y)$ with its norm topology. 

\begin{definition}
{\bf (a)} If $\mathscr{I}$ is an ideal and $\lambda$ assigns to each member of $\mathscr{I}$ a nonnegative number, then we say that $\lambda$ is an~\emph{ideal norm} provided that:
\begin{enumerate}[(i)]
\item for all $X,Y\in \mathsf{Ban}$, $\lambda$ is a~norm on $\mathscr{L}(X,Y)$, 
\item for any $W,X,Y,Z\in \mathsf{Ban}$, $C\in \mathscr{L}(W,X)$, $B\in \mathscr{I}(X,Y)$ and $A\in \mathscr{L}(Y,Z)$ we have $\lambda(ABC)\leqslant \|A\|\lambda(B)\|C\|$, 
\item for any $X,Y\in \mathsf{Ban}$, $x^\ast\in X^\ast$ and $y\in Y$ we have $\lambda(x^\ast\otimes y)=\|x^\ast\|\|y\|$. (Note that all finite-rank operators belong to $\mathscr{I}$). 
\end{enumerate}
\noindent
{\bf (b)} If $\kappa$ assigns to each member of $\mathscr{L}$ a~nonnegative number, then we say $\kappa$ is an~\emph{ideal seminorm} provided that:
\begin{enumerate}[(i)]
\item for all $X,Y\in \mathsf{Ban}$, $\kappa$ is a~seminorm on $\mathscr{L}(X,Y)$, 
\item for any $W,X,Y,Z\in \mathsf{Ban}$, $C\in \mathscr{L}(W,X)$, $B\in \mathscr{I}(X,Y)$ and $A\in \mathscr{L}(Y,Z)$ we have $\kappa(ABC)\leqslant \|A\|\kappa(B)\|C\|$. 
\end{enumerate}
\noindent{\bf (c)} If $\mathscr{I}$ is an ideal and $\lambda$ is an~ideal norm on $\mathscr{I}$, we say $(\mathscr{I}, \lambda)$ is a~\emph{Banach ideal} provided that for all $X,Y\in \mathsf{Ban}$, $(\mathscr{I}(X,Y), \lambda)$ is a~Banach space. 
\end{definition}

We will use the following fact, which seems to be standard. Because we are unaware of a~proof in the literature, we provide one.  
\begin{proposition}\label{ideal prop1}
For every $n\in\N$, let $\lambda_n$ be an~ideal seminorm on $\mathscr{L}$. Suppose that each $\lambda_n$ vanishes on all compact operators and for each $n\in\N$ there exists a~constant $c_n>0$ such that $\lambda_n(A)\leqslant c_n \|A\|$ for every $A\in\mathscr{L}$. Define
$$
\lambda_\infty(A)=\sup_n \lambda_n(A),\,\,\, \mathscr{I}= \{A\in\mathscr{L}\colon \lambda_\infty(A)<\infty\}\,\,\mbox{ and }\,\,\, \lambda(A)=\|A\|+\lambda_\infty(A).
$$
Then, $(\mathscr{I}, \lambda)$ is a~Banach ideal. Furthermore, if the sequence $(c_n)_{n=1}^\infty$ can be taken to be bounded, then the~class
$$
\mathscr{I}_0=\bigl\{A\in\mathscr{L}\colon \lim_n\lambda_n(A)=0\bigr\}
$$
is a~closed ideal. 
\end{proposition}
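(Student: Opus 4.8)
The proof of Proposition~\ref{ideal prop1} is a routine but somewhat lengthy verification; I will check the ideal axioms, the ideal-norm axioms, and completeness in turn. First I would observe that each $\lambda_n$, being an ideal seminorm, satisfies $\lambda_n(ABC)\leq\|A\|\lambda_n(B)\|C\|$, and hence so does the supremum $\lambda_\infty$; combined with $\|ABC\|\leq\|A\|\|B\|\|C\|\leq\|A\|\,\|B\|\,\|C\|$ this gives $\lambda(ABC)\leq\|A\|\lambda(B)\|C\|$ once we know $B\in\mathscr{I}\Rightarrow ABC\in\mathscr{I}$. That membership is immediate from the same inequality: $\lambda_\infty(ABC)\leq\|A\|\lambda_\infty(B)\|C\|<\infty$. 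Vector-space structure of $\mathscr{I}(X,Y)$ follows because each $\lambda_n$ is a seminorm, so $\lambda_\infty$ is subadditive and absolutely homogeneous, whence $\{\lambda_\infty<\infty\}$ is a linear subspace; and $\mathrm{Id}_\mathbb{K}\in\mathscr{I}$ since $\mathrm{Id}_\mathbb{K}$ has rank one, so $\lambda_n(\mathrm{Id}_\mathbb{K})\leq c_n\|\mathrm{Id}_\mathbb{K}\|=c_n<\infty$ — but wait, this only bounds each term, not the supremum. Here I would instead use that $\lambda_n$ vanishes on compact operators and $\mathrm{Id}_\mathbb{K}$ is compact, so $\lambda_\infty(\mathrm{Id}_\mathbb{K})=0$ and $\lambda(\mathrm{Id}_\mathbb{K})=\|\mathrm{Id}_\mathbb{K}\|=1<\infty$. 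The same remark shows every finite-rank operator lies in $\mathscr{I}$.

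Next, $\lambda$ is a genuine norm on each $\mathscr{I}(X,Y)$: it is subadditive and homogeneous as a sum of a seminorm $\lambda_\infty$ and the norm $\|\cdot\|$, and $\lambda(A)=0$ forces $\|A\|=0$, hence $A=0$. For the ideal-norm identity $\lambda(x^\ast\otimes y)=\|x^\ast\|\|y\|$: since $x^\ast\otimes y$ is compact, $\lambda_\infty(x^\ast\otimes y)=0$, so $\lambda(x^\ast\otimes y)=\|x^\ast\otimes y\|=\|x^\ast\|\|y\|$. This verifies that $(\mathscr{I},\lambda)$ is an ideal equipped with an ideal norm.

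The substantive point is completeness. Fix $X,Y$ and let $(A_k)$ be $\lambda$-Cauchy in $\mathscr{I}(X,Y)$. Since $\|A_k-A_l\|\leq\lambda(A_k-A_l)$, the sequence is $\|\cdot\|$-Cauchy, so $A_k\to A$ in operator norm for some $A\in\mathscr{L}(X,Y)$. I must show $A\in\mathscr{I}$ and $\lambda(A_k-A)\to0$. For each fixed $n$, $\lambda_n$ is $\|\cdot\|$-continuous (being dominated by $c_n\|\cdot\|$), so $\lambda_n(A-A_l)=\lim_k\lambda_n(A_k-A_l)\leq\limsup_k\lambda(A_k-A_l)$, which is $\leq\varepsilon$ for $l$ large, uniformly in $n$; taking the supremum over $n$ gives $\lambda_\infty(A-A_l)\leq\varepsilon$. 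Hence $A-A_l\in\mathscr{I}$, so $A=A_l+(A-A_l)\in\mathscr{I}$, and then $\lambda(A-A_l)=\|A-A_l\|+\lambda_\infty(A-A_l)\to0$. This completes the proof that $(\mathscr{I},\lambda)$ is a Banach ideal; the main obstacle (a mild one) is precisely getting the estimate on $\lambda_\infty(A-A_l)$ uniform in $n$ via the pointwise $\|\cdot\|$-continuity of each $\lambda_n$.

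For the final assertion, assume $(c_n)$ is bounded, say $c_n\leq c$ for all $n$. Then for every $A\in\mathscr{L}$, $\lambda_n(A)\leq c\|A\|$, so $\lambda_\infty(A)\leq c\|A\|<\infty$; thus $\mathscr{I}=\mathscr{L}$ and $\lambda$ is equivalent to $\|\cdot\|$ on every $\mathscr{L}(X,Y)$. The class $\mathscr{I}_0=\{A:\lim_n\lambda_n(A)=0\}$ is a linear subspace of each $\mathscr{L}(X,Y)$ because each $\lambda_n$ is a seminorm (so $\lambda_n(A+B)\leq\lambda_n(A)+\lambda_n(B)$ and $\lambda_n(tA)=|t|\lambda_n(A)$, whence the limit conditions are preserved under sums and scalar multiples), it contains $\mathrm{Id}_\mathbb{K}$ since $\lambda_n(\mathrm{Id}_\mathbb{K})=0$ for all $n$, and it is an ideal by the multiplicativity $\lambda_n(ABC)\leq\|A\|\lambda_n(B)\|C\|$, which forces $\lim_n\lambda_n(ABC)=0$ whenever $\lim_n\lambda_n(B)=0$. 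Finally it is norm-closed: if $A_k\to A$ in $\|\cdot\|$ with each $A_k\in\mathscr{I}_0$, then for any $\varepsilon>0$ pick $k$ with $c\|A-A_k\|<\varepsilon/2$, and then $N$ with $\lambda_n(A_k)<\varepsilon/2$ for $n\geq N$; for such $n$, $\lambda_n(A)\leq\lambda_n(A-A_k)+\lambda_n(A_k)\leq c\|A-A_k\|+\lambda_n(A_k)<\varepsilon$, so $\lim_n\lambda_n(A)=0$ and $A\in\mathscr{I}_0$.
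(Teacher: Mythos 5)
Your proof is correct and follows essentially the same route as the paper's: compactness of finite-rank operators handles $\mathrm{Id}_\mathbb{K}$ and $\lambda(x^\ast\otimes y)$, Lipschitz continuity of each $\lambda_n$ with respect to the operator norm drives the completeness argument, and the boundedness of $(c_n)$ gives closedness of $\mathscr{I}_0$. The only cosmetic difference is in the last step, where the paper packages your direct $\varepsilon/2$-estimate as the pullback $\Phi^{-1}(c_0)$ of the $C$-Lipschitz map $\Phi(A)=(\lambda_n(A))_{n=1}^\infty$ into $\ell_\infty$; this is the same content in different clothing.
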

\begin{proof} 
Since $\lambda_n$ vanishes on compact operators, $\mathscr{I}$ contains all finite-rank operators and for all $X,Y\in \mathsf{Ban}$, $x^\ast\in X^\ast$ and $y\in Y$ we have $\lambda_n(x^\ast\otimes y)= 0$, hence 
$$
\lambda(x^\ast\otimes y)=\|x^\ast\otimes y\|=\|x^\ast\|\|y\|.
$$
The properties of ideal seminorms yield each of the remaining properties needed to know that $\mathscr{I}$ is an~ideal and $\lambda$ is an~ideal norm. We need to show completeness.

Fix $X,Y\in \mathsf{Ban}$ and a~$\lambda$-Cauchy sequence $(A_k)_{k=1}^\infty$ in $\mathscr{I}(X,Y)$. Then $(A_k)_{k=1}^\infty$ is also norm Cauchy, and so it converges in norm to some $A\in \mathscr{L}(X,Y)$. Furthermore, norm continuity of $\lambda_n$ yields that $\lambda_n(A)\leqslant \limsup_k \lambda_n(A_k)$, whence $\lambda_\infty(A) \leqslant\limsup_k \lambda(A_k)<\infty$. A~similar argument yields that 
$$
\limsup_k \lambda_\infty(A-A_k) \leqslant \limsup_j \limsup_k \lambda_\infty(A_j-A_k)=0.
$$
From this it follows that $\lim_k\lambda(A-A_k)=0$. 

Now, suppose that $\sup_n c_n=C<\infty$. The properties of ideal seminorms and the fact that each $\lambda_n$ vanishes on compact operators yield that $\mathscr{I}_0$ is an~ideal. Fix $X,Y\in \mathsf{Ban}$ and define $\Phi\colon\mathscr{L}(X,Y)\to \ell_\infty$ by $\Phi(A)=(\lambda_n(A))_{n=1}^\infty$. Then $\Phi$ is $C$-Lipschitz continuous, whence $\Phi^{-1}(c_0)=\mathscr{I}_0(X,Y)$ is closed. 
\end{proof}

\section{Asymptotic structures and the Szlenk index}
Given a set $D$, we denote by $D^{<\omega}$ the set of all finite sequences in $D$, {\it i.e.} $D^{<\omega}=\bigcup_{n=0}^\infty D^n$. For any $n\in\N$ we set $D^{\leq n}=\bigcup_{i=0}^n D^i$. For any $s,t\in D^{<\omega}$, $s\frown t$ stands for the concatenation of $s$ with $t$ and $|t|$ for the~length of $t$. If $t=(u_i)_{i=1}^n$, then we define $t\vert_m=(u_i)_{i=1}^m$ for $1\leq m\leq n$ and $t\vert_0=\varnothing$. If $t\neq\varnothing$, we denote by $t^-$ its immediate predecessor, that is, $t^-=t|_{|t|-1}$. For $s,t\in D^{<\omega}$ we write $s<t$ provided $|s|<|t|$ and $t|_{|s|}=s$.

Recall that for a~directed set $D$, a~subset $D_1$ of $D$ is called {\it cofinal in} $D$ if for every $u\in D$ there exists $v\in D_1$ with $u\leqslant v$, and it is called {\it eventual} if its complement fails to be cofinal in $D$. Of course, $D_1$ is eventual in $D$ if and only if there exists $u\in D$ such that $u\leqslant v\in D$ implies $v\in D_1$.

\begin{definition}
Let $D$ be a directed set. A subset $D_1\subseteq D^1$ is called {\it full} (resp. {\it inevitable}) provided the set $\{u\in D\colon (u)\in D_1\}$ is cofinal (resp. eventual) in $D$. For $n\in\N$, $T\subseteq D^{n+1}$ and $u\in D$ we set
$$
T(u)=\bigl\{(u_i)_{i=1}^n\in D^n\colon (u,u_1,\ldots,u_n)\in T\bigr\}.
$$
Having defined the notions of being full and inevitable for subsets of $D^k$, for $1\leq k\leq n$, we define $T$ to be {\it full} (resp. {\it inevitable}) provided that $\{u\in D\colon T(u)\mbox{ is full in }D^n\}$ is cofinal in $D$; resp. $\{u\in D\colon T(u)\mbox{ is inevitable in }D^n\}$ is eventual in $D$.
\end{definition}
\begin{definition}
We say a map $\phi\colon D^{\leqslant n}\to D^{\leqslant n}$ is a \emph{pruning} if:
\begin{enumerate}[(i)]
\item $|\phi(t)|=|t|$ for each $t\in D^{\leq n}$;
\item if $t=(u_i)_{i=1}^k\in D^{\leq n}$ and $\phi(t)=(v_i)_{i=1}^k$, then $u_i\leqslant v_i$ in $D$ for each $1\leqslant i\leqslant k$;
\item if $s,t\in D^{\leq n}$, $s<t$, then $\phi(s)<\phi(t)$. \end{enumerate}
\end{definition}

The following assertions are almost trivial to prove, however, they provide a~useful stabilization result which is a~combinatorial tool we shall use quite often.  
\begin{proposition}\label{tech prop}
Let $D$ be a directed set and $n\in \nn$. 
\begin{enumerate}[{\rm (i)}]
\item $\mathcal{E}\subset D^n$ is full if and only if there exists a~pruning $\phi\colon D^{\leqslant n}\to D^{\leqslant n}$ such that $\phi(D^n)\subseteq \mathcal{E}$. 
\item Given $\mathcal{E}\subset D^n$, either $\mathcal{E}$ is full or $D^n\setminus \mathcal{E}$ is inevitable. 
\item If $\mathcal{E}_1, \ldots, \mathcal{E}_k\subseteq D^n$ and $\bigcup_{i=1}^k \mathcal{E}_i$ is full, then there exists $1\leqslant j\leqslant k$ such that $\mathcal{E}_j$ is full. In particular, any finite intersection of inevitable sets is inevitable.
\end{enumerate}
\end{proposition}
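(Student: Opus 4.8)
The plan is to prove the three clauses essentially by induction on $n$, exploiting the recursive structure of the definitions of ``full'' and ``inevitable.'' For clause (i), the backward direction is immediate: if $\phi$ is a pruning with $\phi(D^n)\subseteq\mathcal{E}$, then at each coordinate level the pruning property (ii) forces the relevant slices to be cofinal, so $\mathcal{E}$ is full by unwinding the definition. For the forward direction I would build $\phi$ by induction on the length of sequences. Suppose $\mathcal{E}\subset D^n$ is full, so $\{u\colon \mathcal{E}(u)\text{ is full in }D^{n-1}\}$ is cofinal in $D$; choose $\phi$ on $D^{\leq 1}$ so that $\phi((u))=(v)$ with $u\leq v$ and $\mathcal{E}(v)$ full, which is possible precisely by cofinality. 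Then for each such $v$, apply the inductive hypothesis to the full set $\mathcal{E}(v)\subset D^{n-1}$ to obtain a pruning on $D^{\leq n-1}$ whose image lands in $\mathcal{E}(v)$, and splice these together below $v$. Conditions (i) and (iii) of a pruning are automatic from this construction; condition (ii) holds coordinate-by-coordinate. The only mild subtlety is bookkeeping: one must make the choices uniformly enough that the resulting $\phi$ is well-defined on all of $D^{\leq n}$, but since a pruning is determined level-by-level with no compatibility constraints beyond $s<t\Rightarrow\phi(s)<\phi(t)$, this is routine.

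For clause (ii), I would again induct on $n$. The base case $n=1$ is just the definition: $D_1\subseteq D^1$ is full iff $\{u\colon (u)\in D_1\}$ is cofinal, and it is a general fact about directed sets that a subset is either cofinal or its complement is eventual (if $\{u\colon (u)\in D_1\}$ is not cofinal, there is $u_0$ with no $v\geq u_0$ in it, so all $v\geq u_0$ lie in the complement, i.e.\ the complement is eventual). For the inductive step, suppose $\mathcal{E}\subset D^n$ is not full. Then the set $A=\{u\colon \mathcal{E}(u)\text{ is full in }D^{n-1}\}$ is not cofinal, so $D\setminus A$ is eventual; and for every $u\in D\setminus A$, $\mathcal{E}(u)$ is not full, hence by the inductive hypothesis $(D^{n-1}\setminus\mathcal{E}(u))=(D^n\setminus\mathcal{E})(u)$ is inevitable in $D^{n-1}$. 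Since $D\setminus A$ is eventual and inevitability at the next level holds on all of it, $\{u\colon (D^n\setminus\mathcal{E})(u)\text{ is inevitable}\}$ is eventual, which is exactly the statement that $D^n\setminus\mathcal{E}$ is inevitable.

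For clause (iii), suppose $\bigcup_{i=1}^k\mathcal{E}_i$ is full but no individual $\mathcal{E}_j$ is full. By clause (ii), each $D^n\setminus\mathcal{E}_j$ is inevitable, so it suffices to prove the final assertion first: a finite intersection of inevitable sets is inevitable, since then $\bigcap_{j=1}^k(D^n\setminus\mathcal{E}_j)=D^n\setminus\bigcup_j\mathcal{E}_j$ would be inevitable, contradicting (by clause (ii) again, or directly) that $\bigcup_j\mathcal{E}_j$ is full --- here one uses that a set cannot be both full and have inevitable complement, which is clear from the definitions since ``cofinal'' and ``complement eventual'' are mutually exclusive (a cofinal set meets every eventual set). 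To see that the intersection of two inevitable sets $\mathcal{F},\mathcal{G}\subset D^n$ is inevitable, induct on $n$: for $n=1$, the intersection of two eventual subsets of $D$ is eventual because $D$ is directed (given witnesses $u_0$ for $\mathcal{F}$ and $u_1$ for $\mathcal{G}$, pick $u_2\geq u_0,u_1$; then $v\geq u_2$ lies in both). For the inductive step, $\{u\colon \mathcal{F}(u)\text{ inevitable}\}$ and $\{u\colon \mathcal{G}(u)\text{ inevitable}\}$ are both eventual, so their intersection is eventual, and on it $\mathcal{F}(u)\cap\mathcal{G}(u)=(\mathcal{F}\cap\mathcal{G})(u)$ is inevitable by the inductive hypothesis; hence $\mathcal{F}\cap\mathcal{G}$ is inevitable.

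The main obstacle --- really the only point requiring care --- is getting the induction on $n$ set up cleanly for clause (i), namely assembling the level-by-level choices into a single globally defined pruning $\phi$; the other two clauses are then short deductions from clause (ii) together with the elementary fact that directedness makes eventual sets closed under finite intersection. None of these steps involves any estimate, so the proof is entirely combinatorial.
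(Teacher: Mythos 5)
The paper itself offers no proof of this proposition --- it is dismissed as ``almost trivial'' --- so there is nothing to compare against directly, but your blind proof supplies exactly the obvious inductive argument one would expect, and it is correct. The backward direction of (i) is stated tersely (``unwinding the definition''); to be fully rigorous it also needs an induction on $n$: given a pruning $\phi$ with $\phi(D^n)\subseteq\mathcal{E}$ and $u\in D$, write $\phi((u))=(v)$ and note that $t\mapsto(\phi((u)\frown t))|_{\{2,\ldots,|t|+1\}}$ defines a pruning of $D^{\leqslant n-1}$ landing inside $\mathcal{E}(v)$, whence $\mathcal{E}(v)$ is full by the inductive hypothesis; but this is presumably what you meant. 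The one step in (iii) that deserves a word more than you give it is the mutual exclusivity of ``full'' and ``complement inevitable'' for general $n$: the $n=1$ case is indeed the elementary fact that a cofinal set meets every eventual set, but for $n>1$ one must again induct (if $\{u\colon\mathcal{E}(u)\text{ full}\}$ is cofinal and $\{u\colon D^{n-1}\setminus\mathcal{E}(u)\text{ inevitable}\}$ is eventual, they meet, giving a contradiction at level $n-1$). Since you already set up (ii) by exactly such an induction, this is a two-line addendum rather than a gap. Everything else --- the splicing construction in the forward direction of (i), the contrapositive bookkeeping in (ii), and the direct inductive proof that finite intersections of inevitable sets are inevitable --- is correct and cleanly organized.
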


\begin{corollary}\label{dpt}
Let $D$ be a directed set and $n\in \nn$. If $D^n=\bigcup_{i=1}^k \mathcal{E}_i$, then there exist $1\leqslant j\leqslant k$ and a~pruning $\phi\colon D^{\leq n}\to D^{\leq n}$ such that $\phi(D^n)\subseteq \mathcal{E}_j$.
\end{corollary}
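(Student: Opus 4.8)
The plan is to obtain the corollary as an immediate consequence of Proposition~\ref{tech prop}. The first step is to observe that $D^n$ is itself full: applying clause~(i) of Proposition~\ref{tech prop} to the set $\mathcal{E}=D^n$, the identity map on $D^{\leqslant n}$ is trivially a pruning (conditions (i)--(iii) in the definition of pruning all hold for the identity), and its image of $D^n$ is $D^n\subseteq D^n$, so $D^n$ is full. (Alternatively, one unwinds the inductive definition of fullness and notes that directedness of $D$ makes the relevant sets cofinal at every level.)

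The second step is to split. Since $D^n=\bigcup_{i=1}^k\mathcal{E}_i$ is full by the first step, clause~(iii) of Proposition~\ref{tech prop} furnishes an index $1\leqslant j\leqslant k$ for which $\mathcal{E}_j$ is full.

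The third and last step is to produce the pruning. Apply clause~(i) of Proposition~\ref{tech prop} once more, now in the substantive direction: fullness of $\mathcal{E}_j$ yields a pruning $\phi\colon D^{\leqslant n}\to D^{\leqslant n}$ with $\phi(D^n)\subseteq \mathcal{E}_j$, which is precisely the assertion of Corollary~\ref{dpt}.

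There is no genuine obstacle here; the corollary merely repackages Proposition~\ref{tech prop}. The only point that deserves a word is the very first step---confirming that $D^n$ counts as full---and even that is settled instantly by exhibiting the identity as a pruning.
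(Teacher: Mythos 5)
Your proof is correct and takes exactly the route the paper intends: the paper states Corollary~\ref{dpt} as an immediate consequence of Proposition~\ref{tech prop} without spelling out the argument, and the implicit argument is precisely the three-step chain you give (identity map witnesses fullness of $D^n$, clause~(iii) isolates a full $\mathcal{E}_j$, clause~(i) converts fullness back into a pruning). The one point worth making explicit---that $D^n$ is full---is the one you correctly singled out and settled.
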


Now, we are going to define asymptotic structures of operators on Banach spaces which will allow us to introduce aforementioned asymptotic versions of the Rademacher and martingale type/cotype. The study of asymptotic structures of Banach spaces themselves was initiated in \cite{MTJ} and \cite{MMT-J}, where they were defined in terms of games and winning strategies. Our approach is to use trees on Banach spaces mimicking some ideas from \cite{OS_trees}.

By a tree on a Banach space $X$ we mean a~map with a~domain of the form $D^{\leq n}\setminus\{\varnothing\}$, where $D$ is a~directed set, $n\in\N$, and with values in $X$. Every such map will be typically written as $(x_t)_{t\in D^{\leq n}\setminus\{\varnothing\}}\subset X$. We say it is {\it normalized} if $\n{x_t}=1$ for each $t\in D^{\leq n}\setminus\{\varnothing\}$. We say it is {\it weakly null} provided that for every $t\in D^{\leqslant n-1}$, the net $(x_{t\smallfrown (u)})_{u\in D}$ is weakly null. We say $(x^\ast_t)_{t\in D^{\leqslant n}\setminus\{\varnothing\}}\subset X^\ast$ is \emph{weak}$^\ast$-\emph{null} provided that for every $t\in D^{\leqslant n-1}$ the net $(x_{t\smallfrown (u)}^\ast)_{u\in D}$ is weak$^\ast$-null. We say it is \emph{weak}$^\ast$-\emph{closed} if for every $t\in D^{\leqslant n-1}$ we have
$$
x^\ast_t\in \overline{\bigl\{x^\ast_{t\smallfrown (u)}\colon u\in D\bigr\}}^{w\ast}.
$$   
\begin{rem}\upshape 
If $(x_t)_{t\in D^{\leqslant n}\setminus \{\varnothing\}}\subset X$ is weakly null, $\phi\colon D^{\leqslant n}\to D^{\leqslant n}$ is a~pruning and $u_t=x_{\phi(t)}$ for each $\varnothing \neq t\in D^{\leqslant n}$, then $(u_t)_{t\in D^{\leqslant n}\setminus \{\varnothing\}}$ is weakly null. A~similar statement holds for weak$^\ast$-null trees.   
\end{rem}

\begin{definition}
{\bf (a)} Let $A\colon X\to Y$ be an operator between Banach spaces $X$ and $Y$. For $n\in\N$, we define the $n^{\mathrm{th}}$ {\it asymptotic structure of} $A$, denoted $\{A\}_n$, as follows: If $\dim X<\infty$, then we set $\{A\}_n=\varnothing$. Otherwise, $\{A\}_n$ is the~set of all $(r, \rho)\in \mathcal{B}^{\n{A}, 1}_n$ such that for any $\delta>0$ there exist a~directed set $D$ and a~normalized, weakly null tree $(x_t)_{t\in D^{\leqslant n}\setminus \{\varnothing\}}\subset S_X$ such that
$$
d_n\bigl((x_{t|_i}, Ax_{t|_i})_{i=1}^n, (r, \rho)\bigr)<\delta\quad\mbox{for every }t\in D^n.
$$ 
\noindent{\bf (b)} We define the $n^{\mathrm{th}}$ {\it weak$^\ast$ asymptotic structure of} $A$, denoted $\{A\}^\ast_n$, as follows: If we have $\dim Y<\infty$, then $\{A\}_n^\ast=\varnothing$. Otherwise, $\{A\}_n^\ast$ is the set of all $(r, \rho)\in \mathcal{B}^{\|A\|, 1}_n$ such that for any $\delta>0$ there exist a~directed set $D$ and a~normalized, weak$^\ast$-null tree $(y^\ast_t)_{t\in D^{\leqslant n}\setminus \{\varnothing\}}\subset S_{Y^\ast}$ such that
$$
d_n\bigl((y^\ast_{t|_i}, A^\ast y^\ast_{t|_i})_{i=1}^n, (r, \rho)\bigr)<\delta\quad\mbox{for every }t\in D^n.
$$ 
\end{definition}
By standard arguments, the collections 
$$
\mathcal{A}(A)\coloneqq\bigcup_{n=1}^\infty \{A\}_n\quad\mbox{ and }\quad \mathcal{A}^\ast(A)\coloneqq\bigcup_{n=1}^\infty \{A\}_n^\ast
$$
are stable. In particular, if $A$ is the~identity operator, we obtain the~definition of asymptotic structures of $X$, {\it i.e.} $\{X\}_n\coloneqq\{\mathrm{Id}_X\}_n$, and then $\mathcal{A}(X)\coloneqq\mathcal{A}(\mathrm{Id}_X)$ is an~identity block.

\begin{definition}\label{AA_def}
{\bf (a)} Let $A\colon X\to Y$ be an operator between Banach spaces $X$ and $Y$. For $1\leq p,q\leq\infty$ and $n\in\N$ we define:
$$
\alpha_{p,n}(A),\,\, \tau_{p,n}(A),\,\, \beta_{q,n}(A),\,\, \gamma_{q,n}(A),\,\, \theta_n(A),\,\, \Theta_n(A),\,\, \upsilon_n(A),\,\, \Upsilon_n(A)
$$
to be: $\alpha_{p,n}(\mathcal{A}(A))$, $\tau_{p,n}(\mathcal{A}(A))$, $\beta_{q,n}(\mathcal{A}(A))$, $\gamma_{q,n}(\mathcal{A}(A))$, $\theta_n(\mathcal{A}(A))$, $\Theta_n(\mathcal{A}(A))$, $\upsilon_n(\mathcal{A}(A))$, $\Upsilon_n(\mathcal{A}(A))$, respectively. Similarly, we define $\alpha_{n,p}^\ast(A)$, $\tau_{p,n}^\ast(A)$ etc. by replacing $\mathcal{A}(A)$ with $\mathcal{A}^\ast(A)$.

\noindent
{\bf (b)} We say that $A$ has {\it asymptotic} ({\it basic}) ({\it sub}){\it type}/{\it cotype} (equal to $p$ for some $1\leq p\leq\infty$) if $\mathcal{A}(A)$ has the~corresponding block (basic) (sub)type/cotype (equal to $p$). Similarly, we define {\it weak}$^\ast$ {\it asymptotic} ({\it basic}) ({\it sub}){\it type}/{\it cotype} of $A^\ast$ by replacing $\mathcal{A}(A)$ with $\mathcal{A}^\ast(A)$.
\end{definition}

\begin{rem}\label{op_nontrivial}\upshape 
We note that there exist operators with asymptotic (basic) subtype/subcotype and without nontrivial corresponding power type. Indeed, suppose that $1=\theta_1>\theta_2>\ldots$, $\theta_n\downarrow 0$, $2>p_1>p_2>\ldots$, $p_n\downarrow 1$, $2<q_1<q_2<\ldots$ and $q_n\uparrow \infty$. Consider the operator 
$$
A\colon \Bigl(\bigoplus_{n=1}^\infty \ell_{p_n}\oplus \ell_{q_n}\Bigr)_{\ell_2}\xrightarrow[]{\phantom{xxx}} \Bigl(\bigoplus_{n=1}^\infty \ell_{p_n}\oplus \ell_{q_n}\Bigr)_{\ell_2}\,\,\,\mbox{given by }\, A\vert_{\ell_{p_n}\oplus \ell_{q_n}}= \theta_n \mathrm{Id}_{\ell_{p_n}\oplus \ell_{q_n}}
$$
for each $n\in\N$. Then, $A$ has asymptotic basic subtype, asymptotic subtype, asymptotic basic subcotype, and asymptotic subcotype, but no nontrivial power type for any of these four properties, provided that $p_n\downarrow 1$, $q_n\uparrow \infty$ sufficiently rapidly and $\theta_n\downarrow 0$ sufficiently slowly. 
\end{rem}

\begin{rem}\upshape If $A$ is an operator and $\varrho>0$ is such that $\mathcal{A}(A)$ admits arbitrarily long $\varrho$-$\ell_1^+$-sequences, then $A$ cannot have asymptotic type $p$ for $p>2$. Indeed, if $(r, \rho)\in \{A\}_n$ is a $\varrho$-$\ell_1^+$-sequence, then by the geometric Hahn-Banach theorem, there exists $x^\ast\in B_{(\mathbb{K}^n, \rho)^\ast}$ such that $\mathrm{Re}\, x^\ast(\rme_i)\geqslant \varrho$ for each $1\leqslant i\leqslant n$. Then, for each $1\leqslant i\leqslant n$ we may pick $b_i \in [\varrho/a, 1]$ such that $\mathrm{Re}\, x^\ast(b_i \rme_i)=\varrho$. Now, for any sign choice $(\ee_i)_{i=1}^n$ we have 
$$
\rho\Bigl(\sum_{i=1}^n \ee_i b_i \rme_i\Bigr)^2 \geqslant \Bigl|x^\ast\sum_{i=1}^n \ee_i \rme_i\Bigr|^2= \varrho^2\,\Bigl|\sum_{i=1}^n \ee_i\Bigr|^2.
$$ 
Averaging over all $(\ee_i)_{i=1}^n\in\{\pm 1\}^n$ yields
$$
\Biggl\|\rho\Big(\sum_{i=1}^n \ee_i b_i \rme_i\Big)\Biggr\|_{L_2} \geqslant  \varrho n^{1/2}
$$  
and now the Kahane--Khintchine theorem implies the claim. Our next remark shows that what we have just observed is essentially sharp. 
\end{rem}

\begin{rem}\upshape Beanland, Freeman and Motakis \cite{BFM} constructed, for all $1\leqslant p<q\leqslant \infty$, an~example of a~reflexive Banach space $\mathfrak{X}_{p,q}$ with an~unconditional basis and such that:
\begin{enumerate}[(i)]
\item every normalized block sequence in $\mathfrak{X}_{p,q}$ is dominated by the $\ell_p$ basis; 
\item every normalized block sequence in $\mathfrak{X}_{p,q}$ dominates the $\ell_q$ (resp. $c_0$ if $q=\infty$) basis;
\item $\ell_p$ and $\ell_q$ (resp. $c_0$ if $q=\infty$) are spreading models of every block subspace of $\mathfrak{X}_{p,q}$. 
\end{enumerate}
These properties imply that $\mathfrak{X}_{p,q}$ has asymptotic basic type and asymptotic type $p$, as well as asymptotic basic cotype and asymptotic cotype $q$. Moreover, $p$ and $q$ are here optimal.
\end{rem}

In order to reformulate our results from Section~3 in terms of operators, and hence obtain appropriate asymptotic analogues of Beauzamy's theorems, we will need the following asymptotic version of representability. 
\begin{definition}
Let $1\leq p\leq\infty$ and $C\geq 1$. We say that $\ell_p$ ($c_0$ in the case where $p=\infty$) is $C$-\emph{crudely asymptotically finitely representable in} $A$, provided that for each $n\in \nn$ there exists $(r, \rho)\in \{A\}_n$ such that for every scalar sequence $(a_i)_{i=1}^n$ we have
$$
\frac{1}{C}\, r\Bigl(\sum_{i=1}^n a_i \rme_i\Bigr)\leqslant\n{(a_i)_{i=1}^n}_{\ell_p^n}\leq C\rho\Bigl(\sum_{i=1}^n a_i \rme_i\Bigr).
$$
We say $\ell_p$ is \emph{crudely asymptotically finitely representable} in $A$ (resp. \emph{asymptotically finitely representable} in $A$) if it is $C$-crudely asymptotically finitely representable in $A$  for some $C\geq 1$ (resp. for every $C\geq 1$). Similarly, we define the corresponding weak$^\ast$ versions of these notions by replacing $\{A\}_n$ with $\{A\}^\ast_n$. 
\end{definition}

The following result is an~immediate consequence of our results on block structures (Theorems~\ref{bbtype_thm}--\ref{bco_thm}).  
\begin{theorem}\label{main1}
Let $A\colon X\to Y$ be an operator between Banach spaces $X$ and $Y$. 
\begin{enumerate}[{\rm (i)}]
\item $A$ has asymptotic basic subtype if and only if for every $\varrho>0$ there exists $N=N(\varrho)\in\N$ such that $\{A\}_n$ admits no $\varrho$-$\ell_1^+$-sequences whenever $n\geq N$.

\item $A$ has asymptotic basic subcotype if and only if for every $\varrho>0$ there is $N=N(\varrho)\in\N$ such that $\{A\}_n$ admits no $\varrho$-$c_0^+$-sequences whenever $n\geq N$.

\item $A$ has asymptotic subtype if and only if the identity on $\ell_1$ fails to be crudely asymptotically finitely representable in $A$. 

\item $A$ has asymptotic subcotype if and only if the identity on $c_0$ fails to be crudely asymptotically finitely representable in $A$. \end{enumerate}
\end{theorem}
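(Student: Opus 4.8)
The plan is to obtain all four equivalences by specializing the block-structure results of Section~3 to the stable collection $\eee=\mathcal{A}(A)=\bigcup_{n=1}^\infty\{A\}_n$, for which $a=\|A\|$, $b=1$ and, since the ambient spaces $\mathcal{B}^{\|A\|,1}_m$ are pairwise disjoint, $\eee\cap\mathcal{B}^{\|A\|,1}_n=\{A\}_n$. Recall that, by Definition~\ref{AA_def}, $A$ has a given asymptotic (basic) (sub)type/cotype precisely when $\mathcal{A}(A)$ has the corresponding \emph{block} property. With this dictionary in hand, clauses (i) and (ii) are immediate: the equivalence \textrm{(iii)}$\Leftrightarrow$\textrm{(iv)} of Theorem~\ref{bbtype_thm} applied to $\eee=\mathcal{A}(A)$ says exactly that $A$ has asymptotic basic subtype if and only if for every $\varrho>0$ there is $N$ such that $\mathcal{A}(A)\cap\mathcal{B}^{\|A\|,1}_n=\{A\}_n$ contains no $\varrho$-$\ell_1^+$-sequence whenever $n\geqslant N$; and (ii) follows in the same way from Theorem~\ref{bbco_thm}.

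For clause (iii) I would prove that $\mathrm{Id}_{\ell_1}$ is crudely asymptotically finitely representable in $A$ if and only if $\mathcal{A}(A)$ fails to have block subtype; clause~(iv) is then entirely analogous, with $c_0$, $\varrho$-$c_0$-sequences, and Theorem~\ref{bco_thm} replacing $\ell_1$, $\varrho$-$\ell_1$-sequences, and Theorem~\ref{btype_thm}. For the first implication, assume $\mathrm{Id}_{\ell_1}$ is $C$-crudely asymptotically finitely representable in $A$ and, for each $n$, fix a witnessing pair $(r,\rho)\in\{A\}_n$; the inequality $\|(a_i)_{i=1}^n\|_{\ell_1^n}\leqslant C\rho(\sum_i a_i\rme_i)$ says precisely that $(r,\rho)$ is a $(1/C)$-$\ell_1$-sequence, so condition \textrm{(iv)} of Theorem~\ref{btype_thm} fails for $\mathcal{A}(A)$, hence $\mathcal{A}(A)$ does not have block subtype. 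Conversely, if $\mathcal{A}(A)$ fails to have block subtype then, by the equivalence \textrm{(iii)}$\Leftrightarrow$\textrm{(vi)} of Theorem~\ref{btype_thm}, there is $c>0$ with $c\cdot\mathrm{Id}_{\ell_1}$ block finitely representable on $\mathcal{A}(A)$; this means $(\|\cdot\|_{\ell_1^n},c\|\cdot\|_{\ell_1^n})\in\{A\}_n$ for every $n$ (in particular $c\leqslant\|A\|$, as these pairs must lie in $\mathcal{B}^{\|A\|,1}_n$), and then $C=\max\{1,c^{-1}\}$ witnesses at once that $\mathrm{Id}_{\ell_1}$ is $C$-crudely asymptotically finitely representable in $A$. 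For (iv) one argues identically: a $C$-crude representability witness for $c_0$ in $\{A\}_n$ satisfies $\rho(\rme_i)\geqslant 1/C$ for all $i$ and $\max_{(a_i)\in B_{\ell_\infty^n}}(1/C)\,r(\sum_i a_i\rme_i)\leqslant 1$, i.e.\ it is a $(1/C)$-$c_0$-sequence, negating condition \textrm{(iv)} of Theorem~\ref{bco_thm}; and a $c\cdot\mathrm{Id}_{c_0}$ block finitely representable on $\mathcal{A}(A)$ gives crude representability with $C=\max\{1,c^{-1}\}$.

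I expect no genuine obstacle here: the mathematical content is carried entirely by Theorems~\ref{bbtype_thm}--\ref{bco_thm}, and the only thing needing care is the elementary translation just described between crude asymptotic finite representability of $\ell_p$ (or $c_0$) in $A$ and the representability conditions of those theorems for $\mathcal{A}(A)$. In particular, this route avoids any further ultrafilter or renorming arguments, since those are already absorbed into the cited theorems. The corresponding weak$^\ast$ statements (not part of the present theorem) would be obtained verbatim by replacing $\mathcal{A}(A)$ throughout by $\mathcal{A}^\ast(A)$.
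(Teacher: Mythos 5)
Your proposal is correct and follows exactly the route the paper takes (the paper states the theorem is "an immediate consequence" of Theorems~\ref{bbtype_thm}--\ref{bco_thm}, leaving the dictionary implicit). You have simply spelled out the translation the paper omits: the identification $\mathcal{A}(A)\cap\mathcal{B}^{\|A\|,1}_n=\{A\}_n$ and the elementary equivalences between $C$-crude asymptotic finite representability, $(1/C)$-$\ell_1$- and $(1/C)$-$c_0$-sequences, and block finite representability of $c\cdot\mathrm{Id}_{\ell_1}$ and $c\cdot\mathrm{Id}_{c_0}$, all of which check out.
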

\noindent
Considering the case where $A$ is the~identity operator, and using the~`automatic-type' phenomenon which holds true for identity blocks (Corollary~\ref{aut_type}), we obtain the~following result.
\begin{theorem}\label{main2}
Let $X$ be a Banach space. 
\begin{enumerate}[{\rm (i)}]
\item $X$ has asymptotic basic subtype if and only if for every $\varrho>0$ there exists $N=N(\varrho)\in\N$ such that $\{X\}_n$ admits no $\varrho$-$\ell_1^+$-sequences whenever $n\geq N$, and if and only if $X$ has asymptotic basic type $p$ for some $1<p<\infty$. 

\item $X$ has asymptotic basic subcotype if and only if for every $\varrho>0$ there exists $N=N(\varrho)\in\N$ such that $\{X\}_n$ admits no $\varrho$-$c_0+$-sequences whenever $n\geq N$, and if and only if $X$ has asymptotic basic cotype $q$ for some $1<q<\infty$.

\item $X$ has asymptotic subtype if and only if $\ell_1$ fails to be crudely asymptotically finitely representable in $X$, if and only if $X$ has asymptotic type $p$ for some $1<p<\infty$.  

\item $X$ has asymptotic subcotype if and only if $c_0$ fails to be crudely asymptotically finitely representable in $X$, if and only if $X$ has asymptotic cotype $q$ for some $1<q<\infty$. \end{enumerate}
\end{theorem}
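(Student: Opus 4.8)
The plan is to read off all four clauses of Theorem~\ref{main2} from Theorem~\ref{main1} and the~``automatic-type'' equivalence of Corollary~\ref{aut_type}, the~only extra ingredient being the~observation (already recorded after Definition~\ref{AA_def}) that $\mathcal{A}(X)=\mathcal{A}(\mathrm{Id}_X)$ is an~\emph{identity block}. Concretely, every member of $\{X\}_n$ has the~form $(r,r)$ for a~norm $r$ on $\mathbb{K}^n$, and $\{X\}_n=\mathcal{A}(X)\cap\mathcal{B}^{1,1}_n$; this is precisely the~situation in which Corollary~\ref{aut_type} is applicable, whereas Theorem~\ref{main1}, being stated for arbitrary operators, applies with $A=\mathrm{Id}_X$.

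For clause~(i) I would argue as follows. By Definition~\ref{AA_def}, $X$ has asymptotic basic subtype exactly when $\mathcal{A}(X)$ has block basic subtype. Specialising Theorem~\ref{main1}(i) to $A=\mathrm{Id}_X$ rewrites this as the~statement that for every $\varrho>0$ some $N=N(\varrho)$ rules out all $\varrho$-$\ell_1^+$-sequences in $\{X\}_n$ once $n\geq N$, which is the~first assertion of~(i). Independently, Corollary~\ref{aut_type} applied to the~identity block $\mathcal{A}(X)$ gives that $\mathcal{A}(X)$ has block basic subtype if and only if it has nontrivial block basic type, i.e.\ $\sup_n\alpha_{p,n}(\mathcal{A}(X))<\infty$ for some $p\in(1,\infty)$, which by Definition~\ref{AA_def} is exactly ``$X$ has asymptotic basic type $p$ for some $1<p<\infty$''. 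Concatenating the~two equivalences yields~(i). Clause~(ii) is the~verbatim analogue: one replaces Theorem~\ref{main1}(i) by Theorem~\ref{main1}(ii) and $\varrho$-$\ell_1^+$-sequences by $\varrho$-$c_0^+$-sequences, and uses the~block basic subcotype part of Corollary~\ref{aut_type}.

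Clauses~(iii) and~(iv) run along the~same three-step pattern, now invoking Theorem~\ref{main1}(iii),~(iv) and the~block (sub)type / (sub)cotype halves of Corollary~\ref{aut_type}. The~only point needing a~short justification is the~identification of ``the~identity on $\ell_1$ (resp.\ $c_0$) is crudely asymptotically finitely representable in $\mathrm{Id}_X$'' with ``$\ell_1$ (resp.\ $c_0$) is crudely asymptotically finitely representable in $X$''. For this one uses $r=\rho$: for a~pair $(r,r)\in\{X\}_n$ the~two-sided estimate $\tfrac1C\,r(\sum_i a_i\rme_i)\leq\|(a_i)_i\|_{\ell_p^n}\leq C\,\rho(\sum_i a_i\rme_i)$ collapses to a~genuine $C$-equivalence between $r$ and the~$\ell_p^n$-norm on the~canonical coordinates, which is precisely crude asymptotic finite representability of $\ell_p$ in $X$; there is no stray multiplicative constant to track because, for an~identity block, the~constant ``$c$'' occurring in Theorem~\ref{btype_thm}(vi) and Theorem~\ref{bco_thm}(vi) is forced to equal $1$ (as was already used in the~proof of Proposition~\ref{basic_prop}(vi),~(viii)).

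I expect this last piece of bookkeeping --- keeping the~distortion constants straight and checking that the~operator-level representability notion reduces correctly when $A$ is the~identity --- to be the~only mildly delicate point; everything else is a~purely formal combination of results established in the~preceding sections.
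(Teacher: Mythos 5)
Your proposal is correct and follows essentially the same route as the paper: the paper explicitly says that Theorem~\ref{main2} is obtained from Theorem~\ref{main1} by specializing to $A=\mathrm{Id}_X$ and invoking the automatic-type phenomenon of Corollary~\ref{aut_type} for the identity block $\mathcal{A}(X)$. Your one worry — translating ``crudely asymptotically finitely representable in $\mathrm{Id}_X$'' into ``crudely asymptotically finitely representable in $X$'' — is a non-issue, since the paper's definition of this notion is given only for operators and the spatial phrasing is, by convention, the case $A=\mathrm{Id}_X$; the genuinely relevant observation you correctly isolate is that the scalar $c$ in Theorem~\ref{btype_thm}(vi) and Theorem~\ref{bco_thm}(vi) must equal $1$ for an identity block, which is exactly what makes Corollary~\ref{aut_type} apply.
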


Now, let us recall some basic facts on the Szlenk index and the~Szlenk power type. For more detailed information, the reader may consult the~survey \cite{lancien}. Let $X$ be a Banach space and $K\subset X^\ast$ be a~weak$^\ast$ compact set. For $\ee>0$ we define the so-called {\it Szlenk derivation} by
$$
s_\ee(K)=\Bigl\{x^\ast\in K\colon \mathrm{diam}(V\cap K)>\ee\mbox{ for every weak}^\ast\mbox{ open neighborhood of }x^\ast\Bigr\},
$$
and transfinite derivations by
$$
s_\ee^0(K)=K,\,\,\, s^{\xi+1}_\ee(K)= s_\ee(s_\ee^\xi(K))
$$ 
and for $\xi$ being a~limit ordinal, 
$$
s_\ee^\xi(K)= \bigcap_{\zeta<\xi} s_\ee^\zeta(K).
$$
Next, we define the $\ee$-Szlenk index of $K$ as the minimal ordinal $\xi$ (if exists) for which $s^\xi_\ee(K)=\varnothing$, and we denote it by $Sz(K,\ee)$. If such a~$\xi$ does not exist, we agree to write $Sz(K, \ee)=\infty$ (thus, conventionally, every ordinal $\xi$ satisfies $\xi<\infty$). Next, we set $$
Sz(K)=\sup_{\ee>0} Sz(K,\ee).
$$
Finally, for any Banach spaces $X$, $Y$ and an~operator $A\colon X\to Y$ we define their ($\ee$-){\it Szlenk indices} as follows:
$$
Sz(A,\ee)=Sz(A^\ast B_{Y^\ast},\ee),\,\, Sz(A)=Sz(A^\ast B_{Y^\ast});
$$
$$Sz(X,\ee)=Sz(B_{X^\ast},\ee),\,\, Sz(X)=Sz(B_{X^\ast}).
$$

By a standard compactness argument, $\ee$-Szlenk indices cannot be limit ordinals. Therefore, if $Sz(A)\leqslant \omega$, then for each $\ee>0$ the~value of $Sz(A, \ee)$ must be finite, that is, a~natural number. Thus, we can define the~\emph{Szlenk power type} of $A$ by the formula
$$
\mathsf{p}(A)=\limsup_{\ee\to 0+} \frac{\log Sz(A, \ee)}{\abs{\log(\ee)}}.
$$
In general, this limit need not be finite. However, in the case where $X$ is a~Banach space satisfying $Sz(X)\leqslant\omega$, it is known that the function $(0,1)\ni\ee\xmapsto{\phantom{xx}} Sz(X,\ee)$ is submultiplicative (see \cite[Prop.~4]{lancien}) and hence the~limit defining $\mathsf{p}(X)\coloneqq \mathsf{p}(\mathrm{Id}_X)$ is finite. In fact, we have $1\leq \mathsf{p}(X)<\infty$.

We also need to recall some interplays between the Szlenk index and asymptotic geometry. For an~operator $A\colon X\to Y$ and any $\sigma>0$ we define
$$
\varrho(\sigma, A)=\sup\Bigl\{\limsup_\nu\|y+Ax_\nu\|-1\colon y\in B_Y,\, (x_\nu)\subset \sigma B_X,\, x_\nu\xrightarrow[]{\,\,w\,\,}0\Bigr\}.
$$   
We call $A$ {\it asymptotically uniformly smooth} (in short, AUS) if $\lim_{\sigma\to 0^+}\varrho(\sigma, A)/\sigma=0$. We say $A$ is {\it asymptotically uniformly smoothable} (in short, AUS-{\it able}) if there exists an~equivalent norm $|\!\cdot\!|$ on $Y$ such that $A\colon X\to (Y, |\!\cdot\!|)$ is AUS. For $1<p<\infty$, we say $A$ is $p$-\emph{asymptotically uniformly smoothable} (in short, $p$-AUS) if $\sup_{\sigma>0} \varrho(\sigma, A)/\sigma^p<\infty$.  We say $A$ is $p$-\emph{asymptotically uniformly smoothable} (in short, $p$-AUS-{\it able}) if there exists an~equivalent norm $|\!\cdot\!|$ on $Y$ such that $A\colon X\to (Y, |\!\cdot\!|)$ is $p$-AUS. 

We recall the following renorming theorems associated with these concepts. The spatial version was shown by Godefroy, Kalton and Lancien \cite{GKL}, the operator one by the~first-named author \cite{C} (who also considered some `higher order' versions). 
\begin{theorem}[\cite{GKL}, \cite{C}]\label{causey_glk}
Let $A\colon X\to Y$ be an operator between Banach spaces $X$ and $Y$. 
\begin{enumerate}[{\rm (i)}]
\item  $A$ is {\rm AUS}-able if and only if $Sz(A)\leqslant \omega$. \item If $Sz(A)\leqslant \omega$, then either $A$ is compact or $Sz(A)=\omega$ and 
$$
\mathsf{p}(A)^\prime= \sup\bigl\{p\in (1,\infty)\colon A\text{ is }p\mbox{-{\rm AUS}-able}\bigr\}.
$$
\end{enumerate} 
\end{theorem}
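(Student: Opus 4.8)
The plan is to establish (i) as two implications: an easy one (AUS-ability implies $Sz(A)\leqslant\omega$) and a renorming construction for the converse; (ii) is then read off from quantitative refinements of both, following \cite{GKL} for the spatial case and \cite{C} for operators. Throughout one uses that $Sz(A)\leqslant\omega$ is an isomorphic invariant of the operator, so one is free to pass to equivalent norms on $X$ and on $Y$.

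First, suppose $|\!\cdot\!|$ is an equivalent norm on $Y$ making $A\colon X\to(Y,|\!\cdot\!|)$ AUS, that is, $\varrho(\sigma,A)=\mathrm{o}(\sigma)$. Dualizing the modulus $\varrho(\cdot,A)$ (a routine computation, carried out for operators in \cite{C}) produces a function $c(\varepsilon)>0$ such that every $y^\ast\in A^\ast B_{Y^\ast}$ whose weak$^\ast$-open neighborhoods all meet $A^\ast B_{Y^\ast}$ in a set of diameter $>\varepsilon$ satisfies $\|y^\ast\|\leqslant(1-c(\varepsilon))\|A\|$; one realizes $y^\ast$ as a weak$^\ast$-limit of midpoints of $\varepsilon$-separated pairs drawn from $A^\ast B_{Y^\ast}$ and applies the convexity estimate to one member of each pair. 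Iterating, $\sup\{\|y^\ast\|\colon y^\ast\in s_\varepsilon^j(A^\ast B_{Y^\ast})\}\leqslant(1-c(\varepsilon))^j\|A\|$, so $s_\varepsilon^j(A^\ast B_{Y^\ast})=\varnothing$ as soon as $(1-c(\varepsilon))^j\|A\|<\varepsilon/2$. Hence $Sz(A,\varepsilon)<\omega$ for every $\varepsilon>0$, and therefore $Sz(A)=\sup_{\varepsilon>0}Sz(A,\varepsilon)\leqslant\omega$.

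Next comes the renorming, which I expect to be the crux. Assume $Sz(A)\leqslant\omega$, so $n(\varepsilon):=Sz(A,\varepsilon)$ is a natural number for each $\varepsilon\in(0,1)$; since $\varepsilon\mapsto n(\varepsilon)$ is submultiplicative (see \cite[Prop.~4]{lancien} for the spatial case, the operator case being similar), a standard consequence is that, for any fixed $p_0>\mathsf{p}(A)$, one has $n(\varepsilon)\leqslant C\varepsilon^{-p_0}$ for all $\varepsilon\in(0,1)$ with a suitable constant $C$. One then renorms $Y^\ast$: for each dyadic level $\varepsilon_k=2^{-k}$, use the finitely many Szlenk derivations $s_{\varepsilon_k}^j(A^\ast B_{Y^\ast})$, $0\leqslant j\leqslant n(\varepsilon_k)$, to build a weak$^\ast$-lower semicontinuous gauge $g_k$ on $Y^\ast$ recording how many $\varepsilon_k$-derivations are needed to discard a given functional, and set $|y^\ast|_\ast=\bigl(\|y^\ast\|^2+\sum_k 2^{-\eta k}g_k(y^\ast)^2\bigr)^{1/2}$ with the parameter $\eta$ tuned against the bound $n(\varepsilon_k)\leqslant C\varepsilon_k^{-p_0}$, so that the series converges to an equivalent, weak$^\ast$-lower semicontinuous norm on $Y^\ast$ whose modulus of weak$^\ast$ asymptotic uniform convexity is of power type $q$ for every $q>p_0$. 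Passing to the predual norm on $Y$ and invoking the operator version (from \cite{C}) of the duality between weak$^\ast$ asymptotic uniform convexity of $Y^\ast$ and asymptotic uniform smoothness of $A$, one obtains that $A\colon X\to(Y,|\!\cdot\!|)$ is $p$-AUS for every $p<p_0^\prime$; since $p_0$ may be taken arbitrarily close to $\mathsf{p}(A)$, this at once proves AUS-ability and gives the bound $\sup\{p\colon A\text{ is }p\text{-AUS-able}\}\geqslant\mathsf{p}(A)^\prime$. The delicate point is to carry out the gauge construction while losing nothing in the exponent; this is where the technical weight of \cite{GKL,C} resides.

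Finally, for (ii): if $Sz(A)$ is a finite ordinal, then $A^\ast B_{Y^\ast}$ must be norm compact, for otherwise it would contain, for some $\varepsilon>0$, an infinite $\varepsilon$-separated subset, which by a standard argument forces $s_{\varepsilon/3}^j(A^\ast B_{Y^\ast})\neq\varnothing$ for every $j$, contradicting $Sz(A)<\omega$; and $A^\ast B_{Y^\ast}$ is norm compact exactly when $A$ is compact, so a non-compact $A$ with $Sz(A)\leqslant\omega$ satisfies $Sz(A)=\omega$. For the reverse inequality in the power-type identity, one runs the first implication quantitatively: if $A$ is $p$-AUS then $\varrho(\sigma,A)\leqslant K\sigma^p$ forces $c(\varepsilon)\gtrsim\varepsilon^{p^\prime}$ in the argument above, so that $Sz(A,\varepsilon)=\mathrm{O}(\varepsilon^{-p^\prime}\abs{\log\varepsilon})$ and hence $\mathsf{p}(A)\leqslant p^\prime$, i.e.\ $\mathsf{p}(A)^\prime\geqslant p$. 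Combined with the renorming step, this yields $\mathsf{p}(A)^\prime=\sup\{p\in(1,\infty)\colon A\text{ is }p\text{-AUS-able}\}$, as desired.
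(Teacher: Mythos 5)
The paper does not prove this theorem: it is stated with citations to \cite{GKL} and \cite{C} and used as a known renorming result, so there is no internal proof to compare against. Your proposal is a reasonable reconstruction of the argument that appears in those references, following the standard dualization-plus-iteration step for the easy implication and the Szlenk-derivation gauge renorming for the converse, with \cite{C} handling the operator version and the predual/dual duality. Two points deserve tightening. In the direction ``AUS implies $Sz(A)\leqslant\omega$'' the correct decrement coming from the dualized modulus is additive, not multiplicative: one obtains $\sup\{\|y^\ast\|\colon y^\ast\in s_\varepsilon^{j}(A^\ast B_{Y^\ast})\}\leqslant \|A\|-j\,c(\varepsilon)$, which terminates after finitely many steps because the norm cannot go negative; your multiplicative version $(1-c(\varepsilon))^{j}\|A\|$ only tends to $0$, and the $\varepsilon/2$ cutoff you invoke to conclude is not automatic without a separate observation that a nonempty derived set must contain points of norm $>\varepsilon/2$. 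And in part (ii), the statement ``$Sz(A)<\omega$ implies $A$ compact'' is correct but the one-sentence justification is too quick: finding one weak$^\ast$ cluster point of an $\varepsilon$-separated sequence puts you in $s_{\varepsilon/2}(A^\ast B_{Y^\ast})$, but one must set up the iteration more carefully (e.g.\ by observing that if $s_\varepsilon(K)=\varnothing$ then $K$ admits a finite $\varepsilon$-net, so non-total-boundedness forces $s_\varepsilon^{j}(K)\neq\varnothing$ for all finite $j$) to reach $Sz(A)\geqslant\omega$. As a sketch that explicitly defers the gauge-construction details to \cite{GKL,C}, the proposal is faithful to the known proof; as a self-contained argument it leaves these gaps.
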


\begin{rem}\label{Szlenk remark}\upshape
It is known that $Sz(X)<\infty$ if and only if $X$ is an Asplund space (see \cite[Chapter I.5]{DGZ} for the real case and \cite[Theorem~1.4]{BA} for deducing the complex case from the real one) which for $X$ separable is equivalent to $X^\ast$ being separable. As it was shown in \cite{Alt}, for any operator $A$, the condition $Sz(A)\leqslant \omega$ is equivalent to the~fact that for some (and then, necessarily, for all) $\varrho\in (0,1)$ there exists $N=N(\varrho)\in \nn$ such that $\{A\}_n$ admits no $\varrho$-$\ell_1^+$-sequence provided $n\geq N$ ({\it cf.} \cite[Theorem~2.2]{Alt}).

These facts together allow us to demonstrate essentially the strongest possible separation between asymptotic basic type and asymptotic type. Indeed, by \cite{PX}, there exists a~nonreflexive Banach space $X$ which has type $p$ and cotype $q$ for all $1<p<2<q<\infty$. By \cite{DJL}, there exists a separable Banach space $Y$ which is finitely representable in $X$ and such that $Y^{**}$ is nonseparable. Classical facts about the behavior of type and cotype under dualization and finite representability yield that $Y$ and $Y^\ast$ have type $p$ and cotype $q$ for all $1<p<2<q<\infty$. Since $Y$ is separable and $Y^{**}$ is not, either $Y$ or $Y^*$ fails to be Asplund. Thus, there exists a~non-Asplund space $Z$ which has type $p$ and cotype $q$ for all $1<p<2<q<\infty$. The space $Z$ being non-Asplund has the property that for each $n\in\N$, $\{Z\}_n$ admits a~$\frac{1}{2}$-$\ell_1^+$-sequence. This means that $Z$ fails to have asymptotic basic subtype in spite of having asymptotic type $p$ for every $1<p<2$. 
\end{rem}

\section{Ideals of interest and duality}
Now, our goal is to provide a~direct description of the quantities introduced in Definition~\ref{AA_def} in terms of weakly null trees. First, let us note the~following fact which can be easily proved by induction. 
\begin{lemma}\label{silly prop}
Let $D$ be a directed set and $X$ be a~Banach space. For any $m\leqslant n\in \nn$, $1\leqslant s_1<\ldots <s_m\leqslant n$ and any weakly null tree $(x_t)_{t\in D^{\leqslant n}\setminus \{\varnothing\}}\subset X$, there exists a~monotone map $\phi\colon D^{\leqslant m}\setminus \{\varnothing\}\to D^{\leqslant n}\setminus\{\varnothing\}$ such that $|\phi(t)|=s_{|t|}$ for every $t\in D^{\leqslant m}\setminus \{\varnothing\}$ and, with the~notation $x^\prime_t=x_{\phi(t)}$, $(x^\prime_t)_{t\in D^{\leqslant m}\setminus \{\varnothing\}}$ is a~weakly null tree as well. 
\end{lemma}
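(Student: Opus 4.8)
The plan is to produce $\phi$ explicitly, by recursion on $|t|$, with no selection or diagonalisation whatsoever; the whole point of the construction is to arrange that the children-net of every node of the new tree is, verbatim, a children-net of a single node of the old tree, and hence weakly null by hypothesis.

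Set $s_0=0$ and fix an arbitrary $d_0\in D$ (if $D=\varnothing$ the statement is vacuous). Put $\phi(\varnothing)=\varnothing$ and, recursively, given $t=(u_1,\dots,u_j)\in D^{\leq m}\setminus\{\varnothing\}$ with $\phi(t^-)$ already defined, set
\[
\phi(t)=\phi(t^-)\frown\bigl(\underbrace{d_0,\dots,d_0}_{s_j-s_{j-1}-1},\ u_j\bigr).
\]
Since the $s_i$ are strictly increasing and $s_0=0$, the integer $s_j-s_{j-1}-1$ is nonnegative, so this is well defined, and an immediate induction gives $|\phi(t)|=s_{j-1}+(s_j-s_{j-1})=s_j=s_{|t|}$. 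As $\phi(t)$ always extends $\phi(t^-)$, the map $\phi$ is monotone (that is, $s<t\Rightarrow\phi(s)<\phi(t)$), whence also $\phi(t|_i)=\phi(t)|_{s_i}$ for every $i\leq|t|$. (If one additionally wants $\phi$ to dominate coordinates, as a pruning does, one replaces each dummy $d_0$ by an element of $D$ dominating all previously used entries, which exists by directedness; this is not needed below.)

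It remains to verify that $(x'_t)_{t\in D^{\leq m}\setminus\{\varnothing\}}$, with $x'_t=x_{\phi(t)}$, is a weakly null tree, i.e. that for every $t^-\in D^{\leq m-1}$ the net $\bigl(x'_{t^-\frown(u)}\bigr)_{u\in D}$ is weakly null. Putting $j=|t^-|+1$, the recursion yields $\phi(t^-\frown(u))=r\frown(u)$, where
\[
r=\phi(t^-)\frown(\underbrace{d_0,\dots,d_0}_{s_j-s_{j-1}-1}),\qquad |r|=s_j-1\leq s_m-1\leq n-1,
\]
is a sequence not depending on $u$. Consequently $\bigl(x'_{t^-\frown(u)}\bigr)_{u\in D}=\bigl(x_{r\frown(u)}\bigr)_{u\in D}$ is exactly the children-net at the node $r$ of the original tree, which is weakly null because $(x_t)_{t\in D^{\leq n}\setminus\{\varnothing\}}$ is a weakly null tree and $|r|\leq n-1$. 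The case $t^-=\varnothing$ covers the first level $\bigl(x'_{(u)}\bigr)_{u\in D}$, so $(x'_t)_t$ is a weakly null tree and we are done.

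The only step needing any thought is the placement of the live index $u_j$: it must be put \emph{last} among the $s_j-s_{j-1}$ new coordinates, with the dummies before it, so that the children-net of the new tree collapses onto a children-net of the old one. Trying instead to collapse two distinct original levels onto one new level via a diagonal selection would be genuinely delicate when $X^{\ast}$ is non-separable (a diagonal of weakly null nets need not be weakly null), and the construction above is tailored precisely so as never to do this.
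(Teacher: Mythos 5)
Your proof is correct, and it supplies exactly the induction the paper alludes to without writing out (the paper states the lemma as "easily proved by induction" and gives no argument). The key observation — place the live coordinate $u_j$ last among the $s_j-s_{j-1}$ new coordinates so that the children-net of each node of the new tree coincides with the children-net of a single node of the old tree — is precisely what makes the weak-nullity transfer immediate, and you are right that it avoids any diagonalisation of weakly null nets (which would indeed be problematic). The case analysis (empty $D$, $t^-=\varnothing$, $s_1=1$) is all handled.
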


\begin{lemma}\label{primary type lemma}
Let $A\colon X\to Y$ be an operator between Banach spaces $X$ and $Y$. Let also $n\in\N$, $1\leq p\leq\infty$ and $\alpha>0$. Then, the~following assertions are equivalent:
\begin{enumerate}[{\rm (i)}]
\item $\alpha_{p,n}(A)\leqslant \alpha$.
\item For all choices of: a~directed set $D$, a~weakly null tree $(x_t)_{t\in D^{\leqslant n}\setminus \{\varnothing\}}\subset B_X$, a~scalar sequence $(a_i)_{i=1}^n$ and $\alpha^\prime>\alpha$, the set
$$
\Biggl\{t\in D^n\colon \Biggl\|A\sum_{i=1}^n a_i x_{t|_i}\Biggr\|\leqslant \alpha^\prime\n{(a_i)_{i=1}^n}_{\ell_p^n}\Biggr\}
$$
is inevitable.
\item For all choices of: a~directed set $D$, a~weakly null tree $(x_t)_{t\in D^{\leqslant n}\setminus \{\varnothing\}}\subset B_X$, a~scalar sequence $(a_i)_{i=1}^n$, we have
$$
\inf_{t\in D^n} \Biggl\|A\sum_{i=1}^n a_i x_{t|_i}\Biggr\|\leqslant \alpha\n{(a_i)_{i=1}^n}_{\ell_p^n}.
$$  
\end{enumerate}
Regarding $\tau_{p,n}(A)$, for any $\tau>0$, the following assertions are equivalent: 
\begin{enumerate}[{\rm (i)}]
\setcounter{enumi}{3}
\item $\tau_{p,n}(A)\leqslant \tau$.
\item For all choices of: a~directed set $D$, a~weakly null tree $(x_t)_{t\in D^{\leqslant n}\setminus \{\varnothing\}}\subset B_X$, a~scalar sequence $(a_i)_{i=1}^n$ and $\tau^\prime>\tau$, the set
$$
\Biggl\{t\in D^n\colon \Biggl\|A\sum_{i=1}^n \ee_i a_i x_{t|_i}\Biggr\|_{L_p(Y)}\!\!\leqslant\tau^\prime\n{(a_i)_{i=1}^n}_{\ell_p^n} \Biggr\}
$$
is inevitable.
\item For all choices of: a~directed set $D$, a~weakly null tree $(x_t)_{t\in D^{\leqslant n}\setminus \{\varnothing\}}\subset B_X$, a~scalar sequence $(a_i)_{i=1}^n$, we have
$$
\inf_{t\in D^n} \Biggl\|A\sum_{i=1}^n \ee_i a_i x_{t|_i}\Biggr\|_{L_p(Y)}\!\!\leqslant \tau\n{(a_i)_{i=1}^n}_{\ell_p^n}.
$$  
\end{enumerate}
The analogous statements hold for $\alpha^\ast_{p,n}(A)$ and $\tau^\ast_{p,n}(A)$, where instead of weakly null trees we consider weak$^\ast$-null trees in $B_{Y^\ast}$. 
\end{lemma}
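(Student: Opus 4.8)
The $\tau$-part and the weak$^\ast$-parts will follow the same pattern as the $\alpha$-part, so I will describe the latter in detail. Since $\{A\}_n=\varnothing$ when $\dim X<\infty$ (and weakly null trees are then norm null, so all three conditions hold vacuously), assume $\dim X=\infty$. Because $\mathcal A(A)\cap\mathcal B^{\|A\|,1}_n=\{A\}_n$, condition (i) says exactly that $\rho(\sum_{i=1}^n a_i\rme_i)\le\alpha\|(a_i)_{i=1}^n\|_{\ell_p^n}$ for every $(r,\rho)\in\{A\}_n$ and all scalars. Three of the four needed implications are short. For (ii)$\Rightarrow$(iii): an inevitable subset of $D^n$ is nonempty, so for each $\alpha'>\alpha$ there is $t\in D^n$ with $\|A\sum a_ix_{t|_i}\|\le\alpha'\|(a_i)\|_{\ell_p^n}$, and letting $\alpha'\downarrow\alpha$ yields the infimum bound. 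For (iii)$\Rightarrow$(ii): if the set in (ii) were not inevitable then, by Proposition~\ref{tech prop}(ii), its complement is full, so by Proposition~\ref{tech prop}(i) there is a pruning $\phi$ whose image misses it; the tree $(x_{\phi(t)})_t$ is again weakly null (by the remark following the definition of weakly null trees) and contradicts (iii). For (iii)$\Rightarrow$(i), argue contrapositively: if $\alpha_{p,n}(A)>\alpha$, pick $(r,\rho)\in\{A\}_n$ and scalars with $\rho(\sum a_i\rme_i)>\alpha\|(a_i)\|_{\ell_p^n}$, and for $\delta$ small use the definition of $\{A\}_n$ to get a normalized (hence $B_X$-valued) weakly null tree with $d_n$-distance $<\delta$ from $(r,\rho)$ along every branch; then $\|A\sum a_ix_{t|_i}\|>\rho(\sum a_i\rme_i)-\delta\|(a_i)\|_{\ell_\infty^n}>\alpha\|(a_i)\|_{\ell_p^n}$ for all $t\in D^n$, contradicting (iii).

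The substantive implication is (i)$\Rightarrow$(iii), which I prove by contraposition. Suppose a weakly null tree $(x_t)_{t\in D^{\le n}\setminus\{\varnothing\}}\subset B_X$ and scalars with $\|(a_i)\|_{\ell_p^n}=1$ satisfy $\|A\sum a_ix_{t|_i}\|>\alpha'$ for every $t\in D^n$, some $\alpha'>\alpha$; I will construct $(\tilde r,\tilde\rho)\in\{A\}_n$ and scalars $(a'_i)$ with $\|(a'_i)\|_{\ell_p^n}\le1$ and $\tilde\rho(\sum a'_i\rme_i)\ge\alpha'$, forcing $\alpha_{p,n}(A)\ge\alpha'>\alpha$. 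The branch data $t\mapsto\big(\sum c_i\rme_i\mapsto\|\sum c_ix_{t|_i}\|,\ \sum c_i\rme_i\mapsto\|A\sum c_ix_{t|_i}\|\big)$ lie in the compact metric space of pairs of seminorms $(r,\rho)$ on $\K^n$ with $r(\rme_i)\le1$ and $\rho\le\|A\|r$ (equicontinuity and boundedness on $B_{\ell_\infty^n}$). Applying Corollary~\ref{dpt} to a finite cover of this space by $\delta$-balls and composing the resulting prunings over $\delta=1/k$, I obtain (after reindexing) a weakly null tree along which the branch data converge to a limit pair $(r_\infty,\rho_\infty)$ and the norms $\|x_{t|_i}\|$ converge to numbers $\lambda_i\in[0,1]$. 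Writing the limit defining $r_\infty(\sum c_i\rme_i)$ as an iterated limit in the coordinates of $t$ and applying weak lower semicontinuity of $\|\cdot\|$ to the innermost coordinate at each stage gives $r_\infty(\sum_{i\le m}c_i\rme_i)\le r_\infty(\sum_{i\le n}c_i\rme_i)$ for $m\le n$; since $A$ is weak-to-weak continuous, the same argument applied to $Ax_{t|_i}$ gives the analogous monotonicity of $\rho_\infty$, while plainly $\rho_\infty\le\|A\|r_\infty$ and $r_\infty(\rme_i)=\lambda_i$.

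To promote this data to a genuine member of $\{A\}_n$, put $J=\{i:\lambda_i>0\}$. Restricting the stabilized tree to the levels indexed by $J$ (Lemma~\ref{silly prop}), dividing each vector by its norm --- which preserves weak nullity because the norms are bounded below --- and stabilizing once more, I get a normalized weakly null tree realizing a pair $(r_1,\rho_1)\in\mathcal B^{\|A\|,1}_{|J|}$ (monotonicity again from weak lower semicontinuity, and $r_1(\rme_j)=1$), hence $(r_1,\rho_1)\in\{A\}_{|J|}$; moreover, using $\|x_{t|_j}\|\to\lambda_j$ and that the degenerate coordinates contribute vanishingly ($\|A\|\sum_{i\notin J}|a_i|\,\|x_{t|_i}\|\to0$), one computes $\rho_1(\sum_{j\in J}\lambda_ja_j\rme_j)=\rho_\infty(\sum_{i=1}^n a_i\rme_i)\ge\alpha'$ and $\|(\lambda_ja_j)_{j\in J}\|_{\ell_p^{|J|}}\le\|(a_i)\|_{\ell_p^n}=1$. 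Finally, since $\dim X=\infty$ one may append $n-|J|$ levels of normalized weakly null vectors to the tree realizing $(r_1,\rho_1)$ and re-prune to restore monotonicity, which extends $(r_1,\rho_1)$ to some $(\tilde r,\tilde\rho)\in\{A\}_n$ agreeing with it on the first $|J|$ coordinates; taking $a'_j=\lambda_ja_j$ on those coordinates and $0$ elsewhere gives $\|(a'_j)\|_{\ell_p^n}\le1$ and $\tilde\rho(\sum a'_j\rme_j)\ge\alpha'$, the desired conclusion.

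The $\tau$-version runs verbatim once one notes that $(r,\rho)\mapsto\|\rho(\sum\ee_ia_i\rme_i)\|_{L_p}$ is $d_n$-continuous and that the weak-lower-semicontinuity step applies coordinatewise inside the average $\EEE_\ee$; the weak$^\ast$-versions are obtained by replacing $X$, weak convergence, and weak-to-weak continuity of $A$ by $Y^\ast$, weak$^\ast$ convergence, and weak$^\ast$-to-weak$^\ast$ continuity of $A^\ast$ (here the dual norm of $Y^\ast$ is weak$^\ast$ lower semicontinuous, and $\dim Y=\infty$ supplies normalized weak$^\ast$-null nets for the appending step). The main obstacle is exactly the last passage in (i)$\Rightarrow$(iii): the branch data coming from an arbitrary tree in $B_X$ are neither normalized nor of controlled basis constant, so extracting from their limit a bona fide element of $\{A\}_n$ requires the weak(-$^\ast$) lower semicontinuity trick to recover monotonicity and a separate treatment (splitting off and re-padding) of the degenerate coordinates.
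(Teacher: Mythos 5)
Your proof is correct and follows essentially the same route as the paper's: the easy implications are handled identically (Proposition~\ref{tech prop} for (ii)$\Leftrightarrow$(iii), approximation of a witness $(r,\rho)\in\{A\}_n$ by a tree for (iii)$\Rightarrow$(i)), and the substantive implication is proved by the same mechanism of stabilizing the branch data via Corollary~\ref{dpt}, discarding the levels on which the norms degenerate, normalizing the surviving vectors, invoking Lemma~\ref{silly prop}, and padding back to length $n$. The only organizational differences from the paper are cosmetic: the paper fixes a small $\delta>0$ and works throughout with approximate level norms $b_i$ and the threshold set $S=\{i:b_i>\delta/2\}$, padding the tree to length $n$ \emph{before} running the final iterated stabilization to extract a single $(r,\rho)\in\{A\}_n$; you instead iterate the stabilization first to obtain exact limits $\lambda_i$ and $(r_\infty,\rho_\infty)$, extract $(r_1,\rho_1)\in\{A\}_{|J|}$, and only then pad, which requires one extra compactness/closedness step to pass from $\{A\}_{|J|}$ to $\{A\}_n$. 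Both orderings are valid, and your explicit treatment of the monotonicity of $r_\infty,\rho_\infty$ via weak lower semicontinuity makes precise something the paper leaves implicit.
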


\begin{proof} We shall only prove the equivalence between conditions (i)--(iii), as the proof of the second part is quite similar. First, note that all three conditions are obviously true if $\dim X<\infty$, as then we have $\alpha_{p,n}(A)=0$. So, assume $\dim X=\infty$. 

Since any inevitable set is nonempty, we obviously have (ii) $\Rightarrow$ (iii). If (ii) fails, then for some $(a_i)_{i=1}^n\in B_{\ell_p^n}$ and $(x_t)_{t\in D^{\leqslant n}\setminus\{\varnothing\}}\subset B_X$ as above the set 
$$
\mathcal{E}\coloneqq\Biggl\{t\in D^n\colon \Biggl\|A\sum_{i=1}^n a_i x_{t|_i}\Biggr\|>\alpha^\prime\Biggr\}
$$ 
is full. By Proposition \ref{tech prop}, there exists a~pruning $\phi\colon D^{\leqslant n}\to D^{\leqslant n}$ such that $\phi(D^n)\subset \mathcal{E}$. Then $(x_{\phi(t)})_{t\in D^{\leqslant n}\setminus \{\varnothing\}}$ witnesses the failure of (iii). This shows that (ii) $\xLeftrightarrow[]{\phantom{xxx}}$ (iii). 

Assume that (iii) fails. Fix a directed set $D$, a~weakly null tree $(x_t)_{t\in D^{\leqslant n}}\subset B_X$ and $(a_i)_{i=1}^n \in B_{\ell_p^n}$ such that 
$$
\alpha^{\prime}\coloneqq\inf_{t\in D^n} \Biggl\|A\sum_{i=1}^n a_i x_{t|_i}\Biggr\|>\alpha.
$$ 
Pick $\delta>0$ such that $\alpha+\|A\|n \delta<\alpha^{\prime}$. By applying Proposition~\ref{tech prop}, passing to a~subtree and relabeling, we may assume there exist $b_1, \ldots, b_n\in [0,1]$ such that for each $t\in D^n$ and each $1\leqslant i\leqslant n$ we have $\abs{\|x_{t|_i}\|-b_i}<\delta/2$. Define $S=\{i\leqslant n\colon b_i>\delta/2\}$ and notice that $\n{x_{t|_i}}<b_i+\delta/2\leq\delta$ whenever $t\in D^n$ and $i\not\in S$. Therefore, for each $t\in D^n$ we have
\begin{equation*}
\begin{split}
\Biggl\|A\sum_{i\in S} a_i b_i \frac{x_{t|_i}}{\|x_{t|_i}\|}\Biggr\| & \geq \Biggl\|A\sum_{i=1}^n a_i x_{t|_i}\Biggr\|-\|A\|\sum_{i\not\in S} \|a_ix_{t|_i}\|-\|A\|\sum_{i\in S} \abs{a_i}\!\cdot\!\bigl|b_i-\n{x_{t|_i}}\bigr| \\
& \geqslant \alpha^{\prime}-\n{A}\bigl((n-\abs{S})\delta+\abs{S}\delta/2\bigr)\geq\alpha^{\prime}-\|A\|n\delta>\alpha.  
\end{split}
\end{equation*} 

Now, write $S=\{s_1, \ldots, s_m\}$ with $s_1<\ldots <s_m$. By Lemma~\ref{silly prop}, there is a~monotone map $\phi\colon D^{\leqslant m}\setminus \{\varnothing\}\to D^{\leqslant n}\setminus \{\varnothing\}$ such that $|\phi(t)|=s_{|t|}$ for every $t\in D^{\leqslant m}\setminus \{\varnothing\}$ and, with the~notation $x'_t=x_{\phi(t)}/\n{x_{\phi(t)}}$, the~tree $(x'_t)_{t\in D^{\leqslant m}\setminus \{\varnothing\}}$ is weakly null. We may replace $D$ with $D_1=D\times D_X$, where $D_X$ is a~weak neighborhood basis at $0$ in $X$, directing $D_1$ in the obvious way. Define
$$
x'_t=x'_{(u_i)_{i=1}^k}\quad\mbox{for }\,\, t=(u_i, v_i)_{i=1}^k\in D_1^{\leqslant m}\setminus \{\varnothing\}.
$$
Since $X$ is infinite dimensional, we can extend $(x'_t)_{t\in D_1^{\leqslant m}\setminus\{\varnothing\}}$ to a~weakly null tree $(x_t')_{t\in D_1^{\leqslant n}\setminus\{\varnothing\}}\subset S_X$. Set $c_i=a_{s_i}b_{s_i}$ for $1\leqslant i\leqslant m$ and $c_i=0$ for $m<i\leqslant n$. Then 
$$
\inf_{t\in D^n_1} \Biggl\|A\sum_{i=1}^n c_i x'_{t|_i}\Biggr\|\geqslant \alpha^\prime-\|A\|n\delta >\alpha.
$$
Fix a~sequence $N_1\subset N_2\subset \ldots$ of finite subsets of $B_{\ell_\infty^n}$ such that the~union $\bigcup_{i=1}^\infty N_i$ is dense in $B_{\ell_\infty^n}$. Let $u^0_t=x'_t$ for $t\in D^{\leqslant n}\setminus \{\varnothing\}$. By Corollary~\ref{dpt}, we may recursively choose functions $r_i\colon N_i\to \rr$, $\rho_i\colon N_i\to \rr$, weakly null collections $(u^i_t)_{t\in D_1^{\leqslant n}\setminus \{\varnothing\}}\subset S_X$ and prunings $\phi_i\colon D^{\leqslant n}_1\setminus \{\varnothing\} \to D^{\leqslant n}_1\setminus \{\varnothing\}$ such that:
\begin{itemize}
\setlength{\itemindent}{-5mm}
\item $u^i_t=u^{i-1}_{\phi_i(t)}$ for each $i\in\N$;
\item for all $i\in\N$, $(a_j')_{j=1}^n\in N_i$ and $t\in D^n_1$ we have
$$
\Biggl|r_i\Big(\sum_{j=1}^n a_j'\rme_j\Big)-\Biggl\|\sum_{j=1}^n a_j' u^i_{t|_j}\Biggr\|\Biggr|<1/i
$$ 
and 
$$\Biggl|\rho_i\Big(\sum_{j=1}^n a_j' \rme_j\Big)- \Biggl\|A\sum_{j=1}^n a_j' u^i_{t|_j}\Biggr\|\Biggr|<1/i.
$$ 
\end{itemize} 
Then, the limits $r=\lim_i r_i$ and $\rho=\lim_i \rho_i$ exist on $\bigcup_{i=1}^\infty N_i$ and can be extended to produce a~pair $(r, \rho)\in \{A\}_n$. Obviously, $\rho(\sum_{i=1}^n a_i b_i \rme_i)\geqslant \alpha^\prime-\|A\|n \delta>\alpha$ which yields $\alpha_{p,n}(A) \geqslant \alpha^{\prime}-\|A\|n \delta>\alpha$ and therefore (i) fails. 

Finally, assume (i) fails. Fix $(r, \rho)\in \{A\}_n$ and a~scalar sequence $(a_i)_{i=1}^n\in B_{\ell_p^n}$ such that $\rho(\sum_{i=1}^n a_i \rme_i)>\alpha$. Pick $\delta>0$ such that $\rho(\sum_{i=1}^n a_i \rme_i)-\delta>\alpha$, and a~weakly null tree $(x_t)_{t\in D^{\leqslant n}\setminus\{\varnothing\}}\subset S_X$ such that $d_n((r, \rho), (x_{t|_i}, Ax_{t|_i})_{i=1}^n)<\delta$ for each $t\in D^n$. Then 
$$
\inf_{t\in D^n} \Biggl\|A\sum_{i=1}^n a_i x_{t|_i}\Biggr\|\geqslant \rho\Big(\sum_{i=1}^n a_i \rme_i\Big)-\delta>\alpha,$$
thus condition (iii) fails. 
\end{proof}

\begin{rem}\upshape It is essentially contained in the previous proof that $(\alpha_{p,n}(A))_{n=1}^\infty$ is a~nondecreasing sequence. Indeed, the case $\dim X<\infty$ is evident and otherwise if $m\leqslant n$ and $(x_t)_{t\in D^{\leqslant m}\setminus \{\varnothing\}}$, $(a_i)_{i=1}^m$ witness that $\alpha_{p,m}(A)>\alpha$, then we may replace $D$ with $D_1=D\times D_X$, extend $(x_t)_{t\in D^{\leqslant m}_1\setminus\{\varnothing\}}$ to $(x_t)_{t\in D^{\leqslant n}\setminus \{\varnothing\}}$ and let $a_i=0$ for $m<i\leqslant n$. Then $(x_t)_{t\in D_1^{\leqslant n}\setminus \{\varnothing\}}$ and $(a_i)_{i=1}^n$ witness that $\alpha_{p,n}(A)>\alpha$. Similar argument works for the~sequences $(\tau_{p,n}(A))_{n=1}^\infty$, $(\alpha^\ast_{p,n}(A))_{n=1}^\infty$ and $(\tau^\ast_{p,n}(A))_{n=1}^\infty$. 
\end{rem}

\begin{lemma}\label{primary cotype lemma}
Let $A\colon X\to Y$ be an operator between Banach spaces $X$ and $Y$. Let also $n\in\N$, $1\leq q\leq\infty$ and $\beta>0$. Then, the~following assertions are equivalent:
\begin{enumerate}[{\rm (i)}]
\item $\beta_{q,n}(A)\leqslant \beta$.
\item For all choices of: a~directed set $D$, a~bounded weakly null tree $(x_t)_{t\in D^{\leqslant n}\setminus \{\varnothing\}}\subset X$, a~scalar sequence $(a_i)_{i=1}^n \subset [0,\infty)$ such that $\n{Ax_t}\geq a_{\abs{t}}$ for each $\varnothing\neq t\in D^{\leq n}$ and for any $\beta^\prime>\beta$, we have
$$
\beta^\prime \sup_{t\in D^n} \Biggl\|\sum_{i=1}^n x_{t|_i}\Biggr\|\geq \n{(a_i)_{i=1}^n}_{\ell_q^n}.
$$
\item For all choices of: a~directed set $D$, a~bounded weakly null tree $(x_t)_{t\in D^{\leqslant n}\setminus \{\varnothing\}}\subset X$, a~scalar sequence $(a_i)_{i=1}^n \subset [0,\infty)$ such that $\n{Ax_t}\geq a_{\abs{t}}$ for each $\varnothing\neq t\in D^{\leq n}$ and for any $\beta^\prime>\beta$, the set
$$
\Biggl\{ t\in D^n\colon \beta^\prime \sup_{t\in D^n} \Biggl\|\sum_{i=1}^n x_{t|_i}\Biggr\|\geq \n{(a_i)_{i=1}^n}_{\ell_q^n}\Biggr\}
$$
is inevitable.
\end{enumerate}
Regarding $\gamma_{q,n}(A)$, for any $\gamma>0$, the following assertions are equivalent: 
\begin{enumerate}[{\rm (i)}]
\setcounter{enumi}{3}
\item $\gamma_{q,n}(A)\leqslant \gamma$.
\item For all choices of: a~directed set $D$, a~bounded weakly null tree $(x_t)_{t\in D^{\leqslant n}\setminus \{\varnothing\}}\subset X$, a~scalar sequence $(a_i)_{i=1}^n \subset [0,\infty)$ such that $\n{Ax_t}\geq a_{\abs{t}}$ for each $\varnothing\neq t\in D^{\leq n}$ and for any $\gamma^\prime>\gamma$, we have 
$$
\gamma^\prime\sup_{t\in D^n} \Biggl\|\sum_{i=1}^n \ee_i x_{t\vert_i}\Biggr\|_{L_q(X)}\!\! \geqslant \|(a_i)_{i=1}^n\|_{\ell_q^n}.
$$
\item For all choices of: a~directed set $D$, a~bounded weakly null tree $(x_t)_{t\in D^{\leqslant n}\setminus \{\varnothing\}}\subset X$, a~scalar sequence $(a_i)_{i=1}^n \subset [0,\infty)$ such that $\n{Ax_t}\geq a_{\abs{t}}$ for each $\varnothing\neq t\in D^{\leq n}$ and for any $\gamma^\prime>\gamma$, the set 
$$
\Biggl\{t\in D^n\colon \gamma^\prime \Biggl\|\sum_{i=1}^n \ee_i x_{t\vert_i}\Biggr\|_{L_q(X)} \!\!\geq \|(a_i)_{i=1}^n\|_{\ell_q^n}\Biggr\}.
$$
is inevitable.
\end{enumerate}
The analogous statements hold for $\beta^\ast_{q,n}(A)$ and $\gamma^\ast_{q,n}(A)$, where instead of bounded weakly null trees we consider bounded weak$^\ast$-null trees in $Y^\ast$.
\end{lemma}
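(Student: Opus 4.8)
The plan is to run, on the cotype side, the same dichotomy argument that proves Lemma~\ref{primary type lemma}, with the roles of the upper and lower estimates interchanged. If $\dim X<\infty$ everything is vacuous: then $\{A\}_n=\varnothing$, so $\beta_{q,n}(A)=\gamma_{q,n}(A)=0$, and since a weakly null net in a finite-dimensional space is norm null, $\n{Ax_{(u)}}\leq\n{A}\,\n{x_{(u)}}\to 0$ forces every $a_i$ occurring in (ii), (iii), (v), (vi) to vanish, so all six assertions hold. Assume henceforth $\dim X=\infty$. I will only write out the $\beta$-part, i.e.\ the equivalence of (i)--(iii); the $\gamma$-part is obtained by replacing each $\n{\sum x_{t|_i}}$ throughout by $\n{\sum\ee_i x_{t|_i}}_{L_q(X)}$ (we work with the $L_q$-norms directly, so no Kahane--Khintchine step is needed here), and the two weak$^\ast$-versions by replacing weakly null trees in $X$ with weak$^\ast$-null trees in $Y^\ast$ and $A$ with $A^\ast$. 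The equivalence (ii)~$\Leftrightarrow$~(iii) is purely combinatorial: (iii)~$\Rightarrow$~(ii) holds because an inevitable set is nonempty, and for (ii)~$\Rightarrow$~(iii) one fixes $\beta''\in(\beta,\beta')$ and observes that if the set in (iii) were not inevitable, then by Proposition~\ref{tech prop}(i)--(ii) one could prune to a subtree on which $\beta'\n{\sum_{i=1}^n x_{t|_i}}<\n{(a_i)_{i=1}^n}_{\ell_q^n}$ for every branch, contradicting (ii) applied to that subtree with constant $\beta''$.

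For (ii)~$\Rightarrow$~(i) I argue by contraposition. If $\beta_{q,n}(A)>\beta$, pick $(r,\rho)\in\{A\}_n$ and scalars $(c_i)_{i=1}^n$, normalized so that $r(\sum_{i=1}^n c_i\rme_i)=1$, with $\n{(c_i\rho(\rme_i))_{i=1}^n}_{\ell_q^n}>\beta$, and fix $\beta'\in(\beta,\n{(c_i\rho(\rme_i))}_{\ell_q^n})$. If $(y_t)_{t\in D^{\leq n}\setminus\{\varnothing\}}\subset S_X$ is a normalized weakly null tree witnessing $(r,\rho)$, then replacing it by $(\sgn(c_{\abs{t}})\,y_t)$ and $(c_i)$ by $(\abs{c_i})$ keeps $(r,\rho)\in\{A\}_n$ and all of the above, so we may assume each $c_i\geq 0$. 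Choose the witnessing tree fine enough that $d_n((y_{t|_i},Ay_{t|_i})_{i=1}^n,(r,\rho))<\delta$ along every branch; since every node is an initial segment of a branch, this gives $\abs{\n{Ay_t}-\rho(\rme_{\abs{t}})}<\delta$ for every node $t$. Then the bounded weakly null tree $x_t:=c_{\abs{t}}y_t$ satisfies $\n{Ax_t}>c_{\abs{t}}(\rho(\rme_{\abs{t}})-\delta)=:a_{\abs{t}}$ for every $t$, while $\sup_{t\in D^n}\n{\sum_{i=1}^n x_{t|_i}}=\sup_t\n{\sum_{i=1}^n c_i y_{t|_i}}<1+\delta\max_i c_i$. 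For $\delta$ small this contradicts (ii), since then $\n{(a_i)_{i=1}^n}_{\ell_q^n}>\beta'$ while $\beta'\sup_t\n{\sum_i x_{t|_i}}<\n{(a_i)}_{\ell_q^n}$.

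The substance of the lemma is (i)~$\Rightarrow$~(iii), which parallels the passage ``(iii) fails $\Rightarrow$ (i) fails'' in Lemma~\ref{primary type lemma}. Given a bounded weakly null tree with $\n{Ax_t}\geq a_{\abs{t}}$, given $\beta'>\beta$, and assuming the set in (iii) is not inevitable, I first use Proposition~\ref{tech prop} to prune to a subtree on which $\beta'\n{\sum_{i=1}^n x_{t|_i}}<\n{(a_i)_{i=1}^n}_{\ell_q^n}$ for every branch, and then, via Corollary~\ref{dpt}, prune further so that $\n{x_{t|_i}}$ and $\n{Ax_{t|_i}}$ stabilize to within $\delta$ of constants $\lambda_i,\mu_i$; note that $a_i<2\n{A}\delta$ whenever $\lambda_i<\sqrt\delta$. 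I discard the coordinates with $\lambda_i<\sqrt\delta$, normalize the tree along the remaining coordinates $S=\{s_1<\dots<s_m\}$ using the monotone map of Lemma~\ref{silly prop}, extend to a length-$n$ normalized weakly null tree (here $\dim X=\infty$ is used, through the device $D_1=D\times D_X$ from the proof of Lemma~\ref{primary type lemma}), and recursively prune over an increasing sequence of finite subsets whose union is dense in $B_{\ell_\infty^n}$, producing $(r,\rho)\in\{A\}_n$ with $r(\sum_{j=1}^m\lambda_{s_j}\rme_j)=\n{\sum_{j=1}^m x_{\phi(t|_j)}}+O(\sqrt\delta)$ and $\lambda_{s_j}\rho(\rme_j)\geq a_{s_j}(1-O(\sqrt\delta))$. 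Feeding the scalars $(\lambda_{s_j})_{j=1}^m$, padded by zeros, into $\beta_{q,n}(A)\leq\beta$ yields $\beta\n{\sum_{j=1}^m x_{\phi(t|_j)}}\geq\n{(a_i)_{i=1}^n}_{\ell_q^n}-O(\sqrt\delta)$ (the discarded coordinates change the right-hand $\ell_q$-norm only by $O(\sqrt\delta)$). On the other hand, completing $\phi(t)$ to a branch $u\in D^n$, the subtree estimate gives $\beta'\n{\sum_{i=1}^n x_{u|_i}}<\n{(a_i)}_{\ell_q^n}$, while $\n{\sum_{i=1}^n x_{u|_i}}=\n{\sum_{j=1}^m x_{\phi(t|_j)}}+O(\sqrt\delta)$. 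Combining, $\beta\,\n{(a_i)}_{\ell_q^n}/\beta'\geq\n{(a_i)}_{\ell_q^n}-O(\sqrt\delta)$; letting $\delta\to0$ with $n$, $\n{A}$ and the (fixed) scalar data held constant forces $\beta\geq\beta'$, a contradiction, unless $\n{(a_i)}_{\ell_q^n}=0$, in which case the set in (iii) is trivially all of $D^n$.

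I expect the main obstacle to be precisely this last chain of estimates in (i)~$\Rightarrow$~(iii): tracking how the $\delta$-approximations of $\n{x_{t|_i}}$ and $\n{Ax_{t|_i}}$ propagate through the normalization along $S$, the dropping of the small coordinates, and the recursive pruning that yields $(r,\rho)\in\{A\}_n$, so that all error terms stay uniformly $O(\sqrt\delta)$ with the ambient data fixed. This is the same bookkeeping already carried out on the type side in the proof of Lemma~\ref{primary type lemma}, and it transfers to the cotype side without essential change.
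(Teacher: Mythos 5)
Your proof is correct and follows the paper's argument in all essentials: the combinatorial equivalence of (ii) and (iii) via prunings and Proposition~\ref{tech prop}, and the passage between bad trees and members of $\{A\}_n$ via the stabilize/discard/normalize/extend machinery already set up in the proof of Lemma~\ref{primary type lemma}. Two small slips, neither affecting the argument: the discard bound should be $a_i<2\n{A}\sqrt\delta$ (not $2\n{A}\delta$) when $\lambda_i<\sqrt\delta$, and in $\neg$(i)$\Rightarrow\neg$(ii) you should take $a_{\abs{t}}=\max\{c_{\abs{t}}(\rho(\rme_{\abs{t}})-\delta),\,0\}$ so that the scalars stay nonnegative as (ii) requires.
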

\begin{proof} 
We again restrict ourselves to the case where $\dim X=\infty$ and we are going to show only the~equivalence between conditions (i)--(iii). We deduce that (ii) $\xLeftrightarrow[]{\phantom{xxx}}$ (iii) as in Lemma~\ref{primary type lemma}. In what follows, we show that (i) $\xLeftrightarrow[]{\phantom{xxx}}$ (ii).

Suppose (i) fails, so that there exist $(r, \rho)\in \{A\}_n$, $\beta'>\beta$ and a~scalar sequence $(a_i)_{i=1}^n$ such that $$
\beta r\Big(\sum_{i=1}^n a_i \rme_i\Big)< \|(a_i\rho(\rme_i))_{i=1}^n\|_{\ell_q^n}=1.
$$
Let $R=\|(a_i)_{i=1}^n\|_{\ell_\infty^n}$ and pick $\delta>0$ such that $\beta r(\sum_{i=1}^n a_i e_i)+ (n+\beta)R\delta<1$. Fix a~directed set $D$ and a~weakly null tree $(x_t)_{t\in D^{\leqslant n}\setminus\{\varnothing\}}\subset S_X$ such that $$
d_n \big((r, \rho), (x_{t|_i}, Ax_{t|_i})_{i=1}^n\big)<\delta\quad\mbox{for each }t\in D^n.
$$
Note that 
$$
\beta\sup_{t\in D^n}\Biggl\|\sum_{i=1}^n a_ix_{t|_i}\Biggr\|\leqslant \beta r\Bigl(\sum_{i=1}^n a_i x_{t|_i}\Bigr)+\beta\delta R
$$ 
and 
$$
\bigl\|a_i Ax_{t|_i}\bigr\| \geqslant \max\bigl\{|a_i|\rho(\rme_i)-\delta R,0\bigr\}=:b_i.
$$   
Since $\|(b_i)_{i=1}^n \|_{\ell_q^n} \geqslant 1-n \delta R$, we deduce that 
$$
\sup_{t\in D^n} \beta\Biggl\|\sum_{i=1}^n a_i x_{t|_i}\Biggr\| \leqslant \beta r\Big(\sum_{i=1}^n a_i \rme_i\Big)+\beta \delta R < 1- n\delta R\leqslant \|(b_i)_{i=1}^n\|_{\ell_q^n},
$$
and thus (ii) fails. 

Now, if (ii) fails, then there exist a~bounded, weakly null tree $(x_t)_{t\in D^{\leqslant n}\setminus \{\varnothing\}}$ and a~scalar sequence $(a_i)_{i=1}^n\subset [0,\infty)$ such that $\|Ax_t\|\geqslant a_{|t|}$ for each $\varnothing\neq t\in D^n$ and 
$$
\beta' \sup_{t\in D^n} \Biggl\|\sum_{i=1}^n x_{t|_i}\Biggr\| < \|(a_i)_{i=1}^n \|_{\ell_q^n}.
$$
We may argue as in the~proof of Lemma~\ref{primary type lemma} to assume there exist scalars $b_1, \ldots, b_n\in [0,1]$ such that for each $\varnothing\neq t\in D^n$, $\|x_t\|\approx b_{|t|}$. Then, ignoring those $i$ for which $b_i$ is small, passing to a~collection $(x_t'/\n{x_t'})_{t\in D^{\leqslant m}\setminus \{\varnothing\}}$, filling out to a~weakly null tree $(x_t)_{t\in D_1^{\leqslant n}\setminus \{\varnothing\}}\subset S_X$, we arrive at scalars $(c_i)_{i=1}^n$ and $\beta<\beta''<\beta'$ satisfying 
$$
\beta''\sup_{t\in D^n}\Biggl\|\sum_{i=1}^n c_i x_{t|_i}\Biggr\| \leqslant \bigl\|(c_i \|Ax_{t|_i}\|)_{i=1}^n\bigr\|_{\ell_q^n}.
$$
We then produce $(r, \rho)\in \{A\}_n$ as in Lemma~\ref{primary type lemma} to satisfy 
$$
\beta''r\Big(\sum_{i=1}^n c_i\rme_i\Big)\leqslant \bigl\|(c_i\rho(\rme_i))_{i=1}^n\bigr\|_{\ell_q^n},
$$
showing that condition (i) fails.
\end{proof}

\begin{theorem}\label{ideal_theorem}
For any $1\leqslant p\leqslant \infty$, the class of operators $A$ for which $\sup_n \alpha_{p, n}(A)$ is finite is a~Banach ideal with the ideal norm $\alpha_p(A)=\|A\|+\sup_n \alpha_{p,n}(A)$. Analogous statements hold for $\tau_{p,n}$, $\beta_{q,n}$, $\gamma_{q,n}$, as well as for the~weak$^\ast$ versions of all these quantities.
\end{theorem}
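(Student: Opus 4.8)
The plan is to apply Proposition~\ref{ideal prop1} to the family of ideal seminorms $\{\alpha_{p,n}\colon n\in\N\}$ (and analogously for $\tau_{p,n}$, $\beta_{q,n}$, $\gamma_{q,n}$ and their weak$^\ast$ counterparts). To do this, three things must be verified: first, that each $\alpha_{p,n}$ is an ideal seminorm on $\mathscr{L}$; second, that each $\alpha_{p,n}$ vanishes on compact operators; and third, that $\alpha_{p,n}(A)\leqslant c_n\|A\|$ for some constant $c_n$. Once these are in place, Proposition~\ref{ideal prop1} delivers immediately that $(\mathscr{I}_p,\alpha_p)$ is a Banach ideal, where $\alpha_p(A)=\|A\|+\sup_n\alpha_{p,n}(A)$ and $\mathscr{I}_p=\{A\colon\alpha_p(A)<\infty\}$.

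For the growth bound, note that from the definition $\{A\}_n\subset\mathcal{B}^{\|A\|,1}_n$, so every $(r,\rho)\in\{A\}_n$ satisfies $\rho(\sum a_i\rme_i)\leqslant\|A\|r(\sum a_i\rme_i)\leqslant\|A\|\sum|a_i|\leqslant\|A\|n^{1/p'}\|(a_i)\|_{\ell_p^n}$ by property (iv) of $\mathcal{B}^{a,b}_n$, the triangle inequality, and H\"older; hence $\alpha_{p,n}(A)\leqslant n^{1/p'}\|A\|$, so $c_n=n^{1/p'}$ works. (These $c_n$ are unbounded, which is why Proposition~\ref{ideal prop1} only yields a Banach ideal and not a closed ideal here, consistent with the statement.) Vanishing on compact operators should follow from the characterizations in Lemma~\ref{primary type lemma}: if $A$ is compact, then for any weakly null tree $(x_t)_{t\in D^{\leqslant n}\setminus\{\varnothing\}}\subset B_X$ and any branch coordinate, the net $(Ax_{t\frown(u)})_u$ is norm-null (a compact operator maps weakly null nets to norm-null nets), so one can prune so that $\|Ax_{t|_i}\|$ is as small as desired along every branch, forcing $\inf_{t\in D^n}\|A\sum a_ix_{t|_i}\|=0$; by the equivalence (i)$\Leftrightarrow$(iii) of Lemma~\ref{primary type lemma} this gives $\alpha_{p,n}(A)=0$. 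The analogous computation for $\tau_{p,n}$ uses the $L_p$-version of (iii), for $\beta_{q,n},\gamma_{q,n}$ it uses Lemma~\ref{primary cotype lemma} (a compact operator cannot have $\|Ax_t\|\geqslant a_{|t|}$ along a weakly null tree unless all $a_i=0$, so the cotype inequality is vacuous and the constant is $0$), and for the weak$^\ast$ versions one uses that $A$ compact implies $A^\ast$ compact and the weak$^\ast$-null tree variants.

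The main work is checking that each $\alpha_{p,n}$ is genuinely an ideal seminorm, i.e.\ verifying the two clauses of Definition~(b): subadditivity and homogeneity on each $\mathscr{L}(X,Y)$, and the ideal inequality $\alpha_{p,n}(ABC)\leqslant\|A\|\,\alpha_{p,n}(B)\,\|C\|$. For this the tree characterization of Lemma~\ref{primary type lemma}(iii) is the right tool. Subadditivity: given $A_1,A_2\colon X\to Y$ and $\varepsilon>0$, pick a weakly null tree and scalars nearly witnessing $\alpha_{p,n}(A_1+A_2)$; along the same tree $\inf_{t}\|A_j\sum a_ix_{t|_i}\|\leqslant\alpha_{p,n}(A_j)\|(a_i)\|_{\ell_p^n}$, but the infima are attained at different $t$'s, so one must first prune (using Corollary~\ref{dpt} / Proposition~\ref{tech prop}) to stabilize $\|A_1\sum a_ix_{t|_i}\|$ and $\|A_2\sum a_ix_{t|_i}\|$ simultaneously along all branches of a subtree, then use $\|(A_1+A_2)\sum a_ix_{t|_i}\|\leqslant\|A_1\sum a_ix_{t|_i}\|+\|A_2\sum a_ix_{t|_i}\|$; positive homogeneity is immediate from (iii). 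For the ideal inequality: if $B\colon X\to Y$, $A\colon Y\to Z$, $C\colon W\to X$, take a weakly null tree $(w_t)\subset B_W$; then $(Cw_t/\max(\|Cw_t\|,\text{small}))$ — after the usual pruning to stabilize $\|Cw_t\|\approx b_{|t|}$ and discard near-zero levels, exactly as in the proof of Lemma~\ref{primary type lemma} — is (a rescaling of) a weakly null tree in $B_X$ up to the factor $\|C\|$, apply the $\alpha_{p,n}(B)$ bound there, and finally $\|A(\cdot)\|\leqslant\|A\|\|\cdot\|$; tracking constants gives $\alpha_{p,n}(ABC)\leqslant\|A\|\alpha_{p,n}(B)\|C\|$. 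The weak$^\ast$ cases are handled by the dual statements in Lemmas~\ref{primary type lemma} and \ref{primary cotype lemma}, noting $(ABC)^\ast=C^\ast B^\ast A^\ast$. The one genuinely fiddly point is the pruning bookkeeping needed to handle non-normalized images $Cw_t$ and the simultaneous stabilization of two operators' values — but this is precisely the technique already executed in detail in the proof of Lemma~\ref{primary type lemma}, so it can be invoked rather than repeated.
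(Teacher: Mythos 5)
Your proposal is correct and takes essentially the same route as the paper: apply Proposition~\ref{ideal prop1} after verifying that each $\alpha_{p,n}$ is an ideal seminorm that vanishes on compact operators and grows at most like $n^{1/p'}\|A\|$. Two of your verifications are more elaborate than necessary, though. For the ideal inequality, you renormalize the images $Cw_t$ and prune to discard near-zero levels, but Lemma~\ref{primary type lemma} is already stated in terms of weakly null trees in $B_X$ (not $S_X$), so after scaling to $\|A\|=\|C\|=1$ the paper simply observes that $(Cw_t)_t\subset B_X$ is a weakly null tree and uses the set inclusion $\{t\in D^n\colon\|ABC\sum a_iw_{t|_i}\|\geq\alpha\}\subseteq\{t\colon\|B\sum a_iCw_{t|_i}\|\geq\alpha\}$ together with the failure of the latter to be full; no renormalization is needed. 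For subadditivity, rather than pruning to stabilize $\|A_1\sum a_ix_{t|_i}\|$ and $\|A_2\sum a_ix_{t|_i}\|$ simultaneously, the paper uses Lemma~\ref{primary type lemma}(ii) and the fact that the intersection of two inevitable sets is inevitable (Proposition~\ref{tech prop}(iii)), which gets a common branch directly. These are equivalent in content, and your version would work, but the set-theoretic phrasing spares the bookkeeping you correctly flag as the fiddly point.
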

\begin{proof} 
We will apply Proposition~\ref{ideal prop1}. It is clear that $\alpha_{p,n}(A)$ is nonnegative, satisfies $\alpha_{p,n}(cA)=\abs{c}\alpha_{p,n}(A)$ for any scalar $c$, vanishes on the compact operators and, in view of H\"older's inequality, satisfies $\alpha_{p,n}(A)\leq n^{1/p'}\n{A}$. Fix $W,X,Y,Z\in \mathsf{Ban}$, $C\in \mathscr{L}(W,X)$, $B\in \mathscr{L}(X,Y)$ and $A\in \mathscr{L}(Y,Z)$ with $\|C\|=\|A\|=1$.     Fix also a~directed set $D$ and note that for any $\alpha>\alpha_{p,n}(B)$, any weakly null tree $(w_t)_{t\in D^n\setminus\{\varnothing\}}\subset B_W$ and $(a_i)_{i=1}^n \in B_{\ell_p^n}$ we have
$$
\Biggl\{t\in D^n\colon \Biggl\|ABC\sum_{i=1}^n w_{t|_i}\Biggr\|\geqslant \alpha\Biggr\} \subseteq \Biggl\{t\in D^n\colon \Biggl\|B\sum_{i=1}^n Cw_{t|_i}\Biggr\| \geqslant \alpha\Biggr\}
$$
and since $(Cw_t)_{t\in D^n\setminus\{\varnothing\}}\subset B_X$ is weakly null, the latter set fails to be full. This shows that $\alpha_{p,n}(ABC)\leqslant \alpha_{p,n}(B)$. By homogeneity, $\alpha_{p,n}(ABC)\leqslant \|A\|\alpha_{p,n}(B)\|C\|$. Finally, fix $X,Y\in \mathsf{Ban}$, operators $A,B\colon X\to Y$, $\alpha_A>\alpha_{p,n}(A)$ and $\alpha_B>\alpha_{p,n}(B)$. Then, for any $(a_i)_{i=1}^n\in B_{\ell_p^n}$ and any weakly null $(x_t)_{t\in D^{\leqslant n}\setminus \{\varnothing\}}\subset B_X$ we have
\begin{equation*}
\begin{split}
\Biggl\{t\in D^n\colon \Biggl\|( &A+B)\sum_{i=1}^n x_{t|_i}\Biggr\| \leqslant \alpha_A+\alpha_B\Biggr\}\\
&\supseteq \Biggl\{t\in D^n\colon \Biggl\|A\sum_{i=1}^n x_{t|_i}\Biggr\|\leqslant\alpha_A\Biggr\}\cap \Biggl\{t\in D^n\colon \Biggl\|B\sum_{i=1}^n x_{t|_i}\Biggr\|\leqslant\alpha_B\Biggr\}.
\end{split}
\end{equation*} 
Since the last two sets are inevitable, the intersection of two inevitable sets is inevitable, and supersets of inevitable sets are inevitable, the first set must be inevitable. This shows that $\alpha_{p,n}(A+B)\leqslant \alpha_{p,n}(A)+\alpha_{p,n}(B)$. Thus, Proposition~\ref{ideal prop1} yields the claim regarding $\alpha_{p,n}$. The claims regarding $\tau_{p,n}$, $\alpha^\ast_{p,n}$ and $\tau^\ast_{p,n}$ are similar. 

We now move on to the claims regarding $\beta_{q,n}$. Of course, $\beta_{q,n}(cA)=|c|\beta_{q,n}(A)$ for any operator $A$ and any scalar $c$. To see that $\beta_{q,n}$ satisfies the triangle inequality, we fix Banach spaces $X,Y$, operators $A,B\colon X\to Y$, nonnegative scalars $(c_i)_{i=1}^n$ and a~weakly null collection $(x_t)_{t\in D^{\leqslant n}\setminus \{\varnothing\}}\subset X$ such that for each $\varnothing\neq t\in D^{\leqslant n}$ we have $\|(A+B)x_t\|\geqslant c_{|t|}$. By stabilizing and relabeling, for any $\delta>0$, we may produce scalar sequences $(a_i)_{i=1}^n, (b_i)_{i=1}^n\subset [0,\infty)$ such that 
$$
a_{|t|} \leqslant \|Ax_t\| \leqslant a_{|t|}+\delta\,\,\mbox{ and }\,\, b_{|t|} \leqslant \|Bx_t\|\leqslant b_{|t|}+\delta\quad \mbox{for each }t\in D^{\leqslant n},
$$ 
and 
$$
\delta+ \beta_{q,n}(A)\Biggl\|\sum_{i=1}^n x_{t|_i}\Biggr\| \geqslant \|(a_i)_{i=1}^n\|_{\ell_q^n},
$$
$$
\delta+\beta_{q,n}(B)\Biggl\|\sum_{i=1}^n x_{t|_i}\Biggr\| \geqslant \|(b_i)_{i=1}^n\|_{\ell_q^n}
$$
for every $t\in D^n$. Therefore,
$$
\sup_{t\in D^n} \big(\beta_{q,n}(A)+\beta_{q,n}(B)\big)\Biggl\|\sum_{i=1}^n x_{t|_i}\Biggr\| \geqslant \|(a_i)_{i=1}^n\|_{\ell_q^n}+ \|(b_i)_{i=1}^n\|_{\ell_q^n}-2\delta \geqslant \|(c_i)_{i=1}^n\|_{\ell_q^n}-4\delta.
$$
Since $\delta>0$ was arbitrary, we deduce the triangle inequality for $\beta_{q,n}$. 

Now, let $W,X,Y,Z\in \mathsf{Ban}$ and let $C\colon W\to X$, $B\colon X\to Y$ and $A\colon Y\to Z$ be operators satisfying $\|C\|=\|A\|=1$. Assume also that $(w_t)_{t\in D^{\leqslant n}\setminus \{\varnothing\}}\subset W$ is a~weakly null tree and $(a_i)_{i=1}^n\subset [0,\infty)$ is a~scalar sequence such that $\|ABCw_t\|\geqslant a_{|t|}$ for each $\varnothing\neq t\in D^{\leqslant n}$. Then, $(Cw_t)_{t\in D^{\leqslant n}\setminus\{\varnothing\}}\subset X$ is weakly null and satisfies $\|BCw_t\|\geqslant a_{|t|}$ for each $\varnothing\neq t\in D^{\leqslant n}$. Hence,
$$
\beta_{q,n}(B)\sup_{t\in D^{\leqslant n}} \Biggl\|\sum_{i=1}^n w_{t|_i}\Biggr\| \geqslant \beta_{q,n}(B) \sup_{t\in D^{\leqslant n}} \Biggl\|\sum_{i=1}^n Cw_{t|_i}\Biggr\| \geqslant \|(a_i)_{i=1}^n\|_{\ell_q^n}.
$$   
From this and homogeneity, it follows that $\beta_{q,n}(ABC)\leqslant \|A\|\beta_{q,n}(B)\|C\|$. Finally, since $\beta_{q,n}(A)=0$ for any compact operator $A$, we also have $$
\|x^\ast\otimes y\|+ \sup_n \beta_{q,n}(x^\ast\otimes y) = \|x^\ast\otimes y\|= \|x^\ast\|\|y\|
$$ 
for any $X,Y\in \mathsf{Ban}$, $x^\ast\in X^\ast$ and $y\in Y$.
\end{proof}
\begin{rem}\label{Mr.K}
\upshape 
We note that if $p=\max\{\mathsf{p}(A), 1\}$, then
$$
p^\prime=\sup\bigl\{r\in [1, \infty)\colon A\text{\ has asymptotic basic type\ }r\bigr\}.
$$
(In the case where $A$ is the identity operator it was proved in \cite{DK}). Indeed, if $1<r<p^\prime$, then $A$ is $r$-AUS-able, whence there exists a constant $C$ such that every infinite, weakly null tree in $B_X$ has a~branch whose images under $A$ are $C$-dominated by the $\ell_r$ basis \cite{AUF}. This gives that $\sup_n \alpha_{r,n}(A)\leqslant C$ and $A$ has asymptotic basic type $r$. Conversely, by \cite{Alt}, there exists a~universal constant $c<6$ such that if $Sz(A^*B_{Y^*}, c\ee)>n$, then there exists a~weakly null tree $(x_t)_{t\in D^{\leqslant n}}\subset B_X$ such that for every $t\in D^n$ and every convex combination $x$ of $(x_s\colon \varnothing<s\leqslant t)$ we have $\|Ax\|\geqslant \ee$.  Then 
$$
\ee\leqslant \alpha_{p',n}(A)/n^{1/p}
$$
and hence if $\sup_n \alpha_{p',n}(A)=C<\infty$, we have $Sz(A, c\ee) \leqslant C^p/\ee^p+1$ and $\mathsf{p}(A)\leqslant p$.
\end{rem}

\begin{rem}\upshape 
We remark that the quantity $\alpha_{p,n}(A)$ gives information about uniform domination of members of $\{A\}_n$ by the $\ell_p^n$ basis. The set of $p$ such that $\sup_n \alpha_{p,n}(A)<\infty$ completely determines the supremum of those $p$ for which $A$ is $p$-AUS-able. However, it does not determine whether that supremum is attained. In \cite{AUF}, it was shown that this is determined by the domination of branches of infinite weakly null trees (that is, weakly null trees indexed by $D^{<\omega}$ rather than $D^{\leqslant n}$), and that the class $\mathfrak{T}_p$ of $p$-AUS-able operators can be made into a~Banach ideal with a~natural norm. One can see that for any $1<p\leqslant \infty$, the condition $\sup_n \alpha_{p,n}(A)<\infty$ does not guarantee that $A$ is $p$-AUS-able.  Indeed, consider the  $q$-convexification $T^{(q)}$ of the Figiel--Johnson Tsirelson's space $T$ (see \cite{FJ}) and let $S$ be its dual. Then, $S$ is asymptotic $\ell_p$, whence $\alpha_{p,n}(S)<\infty$. However, $S$ is not $p$-AUS-able. This shows that, if $\mathfrak{A}_p$ is the class of operators $A$ for which $\sup_n \alpha_{p,n}(A)<\infty$, then $\mathfrak{T}_p$ and $\mathfrak{A}_p$ are distinct ideals for every $1<p\leqslant \infty$, and each is a~Banach ideal with an~appropriate norm. Moreover, the argument at the beginning of this remark explains why $\mathfrak{T}_p\subsetneq \mathfrak{A}_p$.
\end{rem}

We may now apply Proposition~\ref{ideal prop1} with $\lambda_n$ being one of the four quantities: $\alpha_{2,n}/n^{1/2}$, $\tau_{2,n}/n^{1/2}$, $\beta_{2,n}/n^{1/2}$, $\gamma_{2,n}/n^{1/2}$, or their weak$^\ast$ counterparts. This is because an~appropriate sequence $(c_n)$ can be taken to be bounded for any one of these eight choices. 
\begin{theorem}\label{various_ideals}
The classes of operators which have {\rm (}weak$^\ast${\rm )} asymptotic {\rm (}basic{\rm )} subtype/sub-\\cotype form a~closed operator ideal. More specifically, we have that:
\begin{enumerate}[{\rm (i)}]
\item The class of operators with Szlenk index not exceeding $\omega$ coincides with the class of operators with asymptotic basic subtype, and is therefore a closed operator ideal. 
\item The class of operators in which $\ell_1$ is not crudely asymptotically finitely representable is a~closed operator ideal. 
\item The class of operators in which $c_0$ is not crudely asymptotically finitely representable is a~closed operator ideal. 
\item The class of operators in which $\ell_1$ is not weak$^\ast$ crudely asymptotically finitely representable is a~closed operator ideal. 
\item The class of operators in which $c_0$ is not weak$^\ast$ crudely asymptotically finitely representable is a~closed operator ideal.
\end{enumerate} 
\end{theorem}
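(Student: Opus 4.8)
The plan is to verify that the eight seminorm sequences in the statement satisfy the hypotheses of Proposition~\ref{ideal prop1}, and to combine this with the characterizations from Theorems~\ref{main1} and \ref{main2} together with Theorem~\ref{causey_glk}(i). Concretely, for a fixed choice among the quantities $\alpha_{2,n}$, $\tau_{2,n}$, $\beta_{2,n}$, $\gamma_{2,n}$ (and their weak$^\ast$ counterparts), set $\lambda_n(A)=\alpha_{2,n}(A)/n^{1/2}$ (respectively with $\tau$, $\beta$, $\gamma$). By Theorem~\ref{ideal_theorem} each of $\alpha_{2,n}$, $\tau_{2,n}$, $\beta_{2,n}$, $\gamma_{2,n}$ is an ideal seminorm on $\mathscr{L}$ that vanishes on compact operators, and each satisfies a bound of the form $\alpha_{2,n}(A)\leq n^{1/2}\|A\|$ (H\"older for $\alpha$, Kahane--Khintchine plus H\"older for $\tau$, and the corresponding cotype estimates for $\beta$, $\gamma$; the relevant $c_n$ can be taken equal to $1$ or to a Kahane--Khintchine constant, in any case \emph{bounded} in $n$). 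Dividing by $n^{1/2}$ preserves the ideal-seminorm property and gives $\lambda_n(A)\leq C\|A\|$ with $C$ independent of $n$. Hence the second conclusion of Proposition~\ref{ideal prop1} applies and
$$
\mathscr{I}_0=\bigl\{A\in\mathscr{L}\colon \lim_n \lambda_n(A)=0\bigr\}
$$
is a closed ideal.

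It then remains to identify $\mathscr{I}_0$ in each case with the class named in the theorem. For $\lambda_n=\alpha_{2,n}/n^{1/2}$ the condition $\lim_n\lambda_n(A)=0$ says precisely that $\alpha_{2,n}(A)=\mathrm{o}(n^{1/2})$, i.e. $A$ has asymptotic basic subtype $2$; by Theorem~\ref{bbtype_thm} (applied to $\mathcal{A}(A)$, which is stable) this is equivalent to $A$ having asymptotic basic subtype, which by Theorem~\ref{main1}(i) and the result of \cite{Alt} recalled in Remark~\ref{Szlenk remark} is equivalent to $Sz(A)\leq\omega$; combined with Theorem~\ref{causey_glk}(i) this gives clause~(i). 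For $\lambda_n=\tau_{2,n}/n^{1/2}$ the condition $\lim_n\lambda_n(A)=0$ is asymptotic subtype $2$, equivalent by Theorem~\ref{btype_thm} to asymptotic subtype, equivalent by Theorem~\ref{main1}(iii) to $\ell_1$ not being crudely asymptotically finitely representable in $A$, which is clause~(ii). Clause~(iii) follows identically from $\lambda_n=\gamma_{2,n}/n^{1/2}$, Theorem~\ref{bco_thm}, and Theorem~\ref{main1}(iv). For the weak$^\ast$ versions (iv) and (v) one runs the same argument with $\mathcal{A}^\ast(A)$ in place of $\mathcal{A}(A)$, using the weak$^\ast$ analogues of Lemmas~\ref{primary type lemma} and \ref{primary cotype lemma} and the weak$^\ast$ clauses of the Beauzamy-type theorems; note that $\mathcal{A}^\ast(A)$ is stable by the same standard arguments, so Theorems~\ref{btype_thm} and \ref{bco_thm} apply verbatim.

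One should also record the first conclusion of Proposition~\ref{ideal prop1}, applied to $\lambda_n$ being $\alpha_{p,n}$, $\tau_{p,n}$, $\beta_{q,n}$, $\gamma_{q,n}$ themselves (without the $n^{1/2}$ normalization), which already gives that the classes with \emph{bounded} sequences are Banach ideals — but that is Theorem~\ref{ideal_theorem} and is not what is claimed here; the present theorem is exactly the closed-ideal statement coming from the ``$\lim=0$'' half of Proposition~\ref{ideal prop1}, and the only genuine input beyond that proposition is the chain of equivalences above.

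The main obstacle is bookkeeping rather than a new idea: one must be careful that the \emph{automatic-type} phenomenon used to pass between ``subtype $2$'' and ``subtype'' is the one valid for \emph{general stable sets} (Theorems~\ref{bbtype_thm}--\ref{bco_thm}), \emph{not} only for identity blocks (Corollary~\ref{aut_type}), since $\mathcal{A}(A)$ and $\mathcal{A}^\ast(A)$ need not be identity blocks when $A$ is not an isometry; fortunately Theorems~\ref{bbtype_thm}--\ref{bco_thm} are stated for stable sets, so this goes through. The only other point requiring a line of care is checking that dividing an ideal seminorm by the constant $n^{1/2}$ still yields an ideal seminorm — immediate from the definition — and that the uniform bound $\lambda_n(A)\leq C\|A\|$ genuinely holds with $C$ independent of $n$, which is where the Kahane--Khintchine constants for the $\tau$ and $\gamma$ cases must be invoked; these are bounded (indeed for the relevant exponent $p=2$ one may take explicit small constants), so the hypothesis ``$(c_n)$ bounded'' of Proposition~\ref{ideal prop1} is met.
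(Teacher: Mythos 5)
Your proposal is correct and follows exactly the route the paper takes: the paper's entire proof is the one-paragraph remark preceding the theorem, namely applying the ``$\mathscr{I}_0$'' clause of Proposition~\ref{ideal prop1} to $\lambda_n\in\{\alpha_{2,n}/n^{1/2},\tau_{2,n}/n^{1/2},\beta_{2,n}/n^{1/2},\gamma_{2,n}/n^{1/2}\}$ and their weak$^\ast$ counterparts, then identifying $\mathscr{I}_0$ with the classes in (i)--(v) via Theorems~\ref{bbtype_thm}--\ref{bco_thm} and Theorem~\ref{main1}. Your observation that one must invoke the stable-set versions of the ``automatic subtype'' equivalences (Theorems~\ref{bbtype_thm}--\ref{bco_thm}) rather than Corollary~\ref{aut_type}, since $\mathcal{A}(A)$ need not be an identity block for a general operator $A$, is exactly the right precaution.
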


Ostensibly, Theorem~\ref{various_ideals} yields eight different ideals. However, the classes of operators with asymptotic basic subtype and weak$^\ast$ asymptotic basic subcotype coincide. The argument follows the usual technique of dualizing $\ell_1^+$-sequences in asymptotic structures with $c_0^+$-sequences in weak$^\ast$ asymptotic structures of the adjoint. The following result should be compared to the analogous quantified duality between martingale type $p$ of an operator and martingale cotype $p^\prime$ of its adjoint. 

The phenomenon appearing in the following lemma is by now quite standard. Our result concerns $\ell_p^n$ linear combinations of branches of weakly null trees and $\ell_q^n$ linear combinations of weak$^\ast$-null trees, whereas the usual duality (appearing, for example, in \cite{GKL}, \cite{AJO}, and \cite{Alt}) concerns only convex combinations of branches of weakly null trees and `flat' linear combinations of weak$^\ast$-null trees (that is, linear combinations in which all coefficients are equal to $1$).   However, the process is similar, so we omit some of the details. 
\begin{lemma} 
For any $1\leqslant p\leqslant \infty$, $n\in \nn$ and any operator $A\colon X\to Y$ between Banach spaces $X$ and $Y$, we have
$$
\frac{1}{2}\alpha_{p,n}(A)\leqslant \beta^\ast_{p',n}(A)\leqslant 2\alpha_{p,n}(A).
$$
\end{lemma}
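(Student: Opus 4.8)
The plan is to derive both inequalities from the weakly-null / weak\(^\ast\)-null tree descriptions of \(\alpha_{p,n}(A)\) and \(\beta^\ast_{p',n}(A)\) supplied by Lemmas~\ref{primary type lemma} and~\ref{primary cotype lemma}, running the standard tree-duality (as in \cite{GKL,AJO,Alt}); this is the operator-tree analogue of Lemma~\ref{duality_lemma}, and because every member of \(\{A\}_n\) and of \(\{A\}^\ast_n\) lies in a class \(\mathcal B^{\,\cdot\,,1}_n\) (monotone constant \(b=1\)), the loss is \(2b=2\), which is where the two \(2\)'s come from. First I would dispose of the trivial case: if \(A\) is compact then \(A\) sends weakly null nets to norm-null nets and \(A^\ast\) sends weak\(^\ast\)-null nets to norm-null nets, so every \((r,\rho)\) in \(\{A\}_n\) or in \(\{A\}^\ast_n\) has \(\rho\equiv 0\); hence \(\alpha_{p,n}(A)=\beta^\ast_{p',n}(A)=0\) and there is nothing to prove. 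So assume \(A\) is non-compact, whence \(X\) and \(Y^\ast\) are infinite dimensional and the tree machinery applies.

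For \(\beta^\ast_{p',n}(A)\leq 2\alpha_{p,n}(A)\): by the weak\(^\ast\) form of Lemma~\ref{primary cotype lemma} it suffices, given a directed set \(D\), a weak\(^\ast\)-null tree \((y^\ast_t)_{t\in D^{\leq n}\setminus\{\varnothing\}}\subseteq B_{Y^\ast}\) and scalars \((a_i)_{i=1}^n\subseteq[0,\infty)\) with \(\n{A^\ast y^\ast_t}\geq a_{|t|}\) for every \(\varnothing\neq t\), to produce, for each \(\varepsilon>0\), a branch \(t\in D^n\) with \((2\alpha_{p,n}(A)+\varepsilon)\bigl\|\sum_{i=1}^n y^\ast_{t|_i}\bigr\|\geq\n{(a_i)_{i=1}^n}_{\ell_{p'}^n}\). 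Fix \((c_i)_{i=1}^n\subseteq[0,\infty)\) with \(\n{(c_i)}_{\ell_p^n}=1\) and \(\sum_i c_ia_i=\n{(a_i)}_{\ell_{p'}^n}\). Pruning \(D\) repeatedly and then passing to a refinement that also witnesses weak nullity (exactly as in the proof of Lemma~\ref{primary type lemma}), I would build a normalized weakly null tree \((x_t)_{t\in D_1^{\leq n}\setminus\{\varnothing\}}\subseteq S_X\) with the following decoupling: at each node \(x_{t|_j}\) is chosen in the pre-annihilator of \([A^\ast y^\ast_{t|_1},\dots,A^\ast y^\ast_{t|_{j-1}}]\) and with \(\mathrm{Re}\,\langle Ax_{t|_j},y^\ast_{t|_j}\rangle\geq\tfrac12 a_j-\varepsilon\) — this is possible because for a weak\(^\ast\)-null net \((\phi_u)\) and a fixed finite-dimensional \(W\subseteq X^\ast\) one has \(\liminf_u \mathrm{dist}(\phi_u,W)\geq\tfrac12\liminf_u\n{\phi_u}\), and this factor \(\tfrac12\) is the one genuinely delicate point, reflecting the \(2b=2\) of Lemma~\ref{duality_lemma}; moreover the ``future'' interactions \(\langle Ax_{t|_j},y^\ast_{t|_i}\rangle\) with \(i>j\) are made \(<\varepsilon n^{-2}\) by pruning the net at level \(i\) (weak\(^\ast\)-nullity against the finitely many fixed vectors \(Ax_{t|_1},\dots,Ax_{t|_{i-1}}\)), while the ``past'' interactions with \(i<j\) vanish by the choice of pre-annihilator. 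Then Lemma~\ref{primary type lemma}(iii) applied to \((x_t)\) and the scalars \((c_j)\) gives \(t\in D_1^n\) with \(\n{A\sum_j c_jx_{t|_j}}\leq\alpha_{p,n}(A)+\varepsilon\), and
\[
\bigl(\alpha_{p,n}(A)+\varepsilon\bigr)\Bigl\|\sum_{i=1}^n y^\ast_{t|_i}\Bigr\|\ \geq\ \Bigl\langle A\sum_{j=1}^n c_jx_{t|_j},\ \sum_{i=1}^n y^\ast_{t|_i}\Bigr\rangle\ \geq\ \tfrac12\sum_{j=1}^n c_ja_j-O(\varepsilon)\ =\ \tfrac12\n{(a_i)_{i=1}^n}_{\ell_{p'}^n}-O(\varepsilon),
\]
so \(\beta^\ast_{p',n}(A)\leq 2\alpha_{p,n}(A)\) after \(\varepsilon\downarrow0\).

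For \(\alpha_{p,n}(A)\leq 2\beta^\ast_{p',n}(A)\): this is dual and follows the same pattern with the roles of \(X\) and \(Y^\ast\) swapped. Using Lemma~\ref{primary type lemma}(iii), fix a weakly null tree \((x_t)_{t\in D^{\leq n}\setminus\{\varnothing\}}\subseteq B_X\) and scalars \((a_i)\) with \(\n{(a_i)}_{\ell_p^n}=1\), and suppose toward a contradiction that \(\n{A\sum_i a_ix_{t|_i}}>2\beta\) for all \(t\in D^n\), where \(\beta>\beta^\ast_{p',n}(A)\). Node by node, pick a norming functional in \(B_{Y^\ast}\) for the partial image \(A(a_1x_{t|_1}+\dots+a_kx_{t|_k})\), pass along the successor net to a weak\(^\ast\)-convergent subnet, subtract the weak\(^\ast\)-limit, and renormalize; the subtraction leaves the relevant lower bounds intact because the limit interacts trivially, in the limit, with the weakly null tail (using weak nullity of \((x_t)\) and weak\(^\ast\)-convergence of the norming functionals), while the correction has norm at most \(2\) — again the factor \(2\). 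This yields a bounded weak\(^\ast\)-null tree \((\widetilde y^\ast_t)\subseteq 2B_{Y^\ast}\) and nonnegative scalars \((b_i)\) with \(\n{A^\ast\widetilde y^\ast_t}\geq b_{|t|}\) and \(\sup_t\bigl\|\sum_i\widetilde y^\ast_{t|_i}\bigr\|\) too small relative to \(\n{(b_i)}_{\ell_{p'}^n}\), contradicting \(\beta^\ast_{p',n}(A)<\beta\) via Lemma~\ref{primary cotype lemma}. In both halves the main obstacle — and the origin of the constant \(2\) — is exactly this: fabricating a genuinely (weak\(^\ast\)-)null dual tree that is still near-biorthogonal to the branches of the given tree; the rest is routine bookkeeping with inevitable sets, Hölder's inequality, and the metric \(d_n\), which is precisely the part the present lemma's proof abbreviates.
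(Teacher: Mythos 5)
Your proposal is correct in substance and follows essentially the same route as the paper: dispose of the compact case, then derive both inequalities from the tree characterizations in Lemmas~\ref{primary type lemma} and~\ref{primary cotype lemma} by the usual dualizing construction (norming functionals, stabilization via prunings, passing to differences of weak$^\ast$-limits in one direction, and near-biorthogonal weakly null trees in the other), with the factor~$2$ arising exactly where you locate it — from $\liminf_u \mathrm{dist}(\phi_u,W)\geqslant\tfrac12\liminf_u\|\phi_u\|$ for a weak$^\ast$-null net $(\phi_u)$ and finite-dimensional $W$ in one direction, and from $\|y^\ast_{\phi(t)}-y^\ast_{\phi(t^-)}\|\leqslant 2$ in the other. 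The paper is similarly terse about the details ("the process is similar, so we omit some of the details"), so your level of abbreviation matches; the one place I'd flag for a written-out version is the interleaving needed to get the constructed tree $(x_t)$ genuinely weakly null — you must enlarge the annihilated subspace $W$ at each node to also absorb the functionals defining the current weak neighbourhood, rather than post-hoc "refining" an already-built tree.
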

\begin{proof}
The assertion is trivial if $A$ is compact, since then we have $\alpha_{p,n}(A)=\beta^\ast_{p',n}(A)=0$. So, in what follows, we assume $A$ is not compact.

Let $\alpha=\alpha_{p,n}(A)$ and suppose that $\beta^\ast_{p',n}(A)>\beta>2 \alpha$. By Lemma~\ref{primary cotype lemma}, there exist a~scalar sequence $(a_i)_{i=1}^n$ with $\n{(a_i)_{i=1}^n}_{\ell_{p'}^n}=1$ and a~normalized weak$^\ast$-null tree $(y^\ast_t)_{t\in D^{\leqslant n}\setminus\{\varnothing\}}$ on $Y^\ast$ such that $\|A^\ast y^\ast_t\|\geqslant |a_{|t|}|$ for each $\varnothing\neq t\in D^n$ and
$$
\beta \sup_{t\in D^n} \Biggl\|\sum_{i=1}^n y^\ast_{t|_i}\Biggr\|<1.
$$
For any $0<\delta,\theta<1$, there exist another directed set $D_1$, a~normalized weakly null collection $(x_t)_{t\in D^n_1\setminus\{\varnothing\}}$ on $X$ and a~monotone, length preserving map $\phi\colon D_1^{\leqslant n}\to D^{\leqslant n}$ such that:
$$
\mathrm{Re}\,y^\ast_{\phi(t)}(Ax_t)\geqslant \frac{\theta|a_{\phi(t)}|}{2}\quad\mbox{for each }\,t\in D_1^{\leqslant n}\setminus\{\varnothing\}
$$
and
$$
|y^\ast_{\phi(s)}(Ax_t)|,\, |y^\ast_{\phi(t)}(Ax_s)|<\delta\quad\mbox{for all }\,s,t\in D_1^{\leqslant n}\setminus\{\varnothing\}\,\mbox{ with }\, s<t. 
$$
Pick a scalar sequence $(b_i)_{i=1}^n\in B_{\ell_p^n}$ such that $\sum_{i=1}^n |a_i|b_i=1$ and $t\in D^n$ such that $\|A\sum_{i=1}^n b_i x_{t|_i}\|<\delta+\alpha_{p,n}(A)$.    Then,
\begin{equation*}
\begin{split}
\beta\Biggl\|\sum_{i=1}^n y^\ast_{\phi(t|_i)}\Biggr\| & \geq \beta(\alpha_{p,n}(A)+\delta)^{-1}\mathrm{Re}\,\Bigl(\sum_{i=1}^n y^\ast_{\phi(t|_i)}\Bigr)\Bigl(A\sum_{i=1}^n b_ix_{t|_i}\Bigr)\\
& \geq \beta(\alpha_{p,n}(A)+\delta)^{-1}\Bigl(\sum_{i=1}^n \theta |a_i|b_i/2- n^2 \delta \Bigr)\\
& =\beta(\alpha_{p,n}(A)+\delta)^{-1}(\theta/2 -n^2\delta).
\end{split}
\end{equation*}
However, by an appropriate choice of $\delta$ and $\theta$ we can guarantee that the last expression is larger than $1$, which gives a~contradiction. 

Now, assume that $\alpha_{p,n}(A)>\alpha>2 \beta^\ast_{p',n}(A)$. Then there exist a~weakly null collection $(x_t)_{t\in D^{\leqslant n}\setminus \{\varnothing\}}\subset S_X$ and a~scalar sequence $(a_i)_{i=1}^n$ with $\n{(a_i)_{i=1}^n}_{\ell_p^n}=1$ such that 
$$
\inf_{t\in D^n} \Biggl\|A\sum_{i=1}^n a_i x_{t|_i}\Biggr\|>\alpha.
$$
For each $t\in D^n$, pick $y^\ast_t\in B_{Y^\ast}$ such that 
$$
\mathrm{Re}\,y^\ast_t\Big(A\sum_{i=1}^n a_i x_{t|_i}\Big)=\Biggl\|A\sum_{i=1}^n a_i x_{t|_i}\Biggr\|.
$$
Fix $\delta>0$ such that $\alpha+3\delta n<\inf_{t\in D^n} \|A\sum_{i=1}^n x_{t|_i}\|$. After stabilizing and relabeling, we may find real scalars $(b_i)_{i=1}^n$ such that 
$$
\big|\mathrm{Re}\,y^\ast_t(Ax_s)- b_{|s|}\big|\leqslant \delta\quad\mbox{for all }\, \varnothing<s\leqslant t\in D^n
$$
and $\sum_{i=1}^n b_i>\alpha+ 3 n \delta$.

Let $D_1=D_X\times D_{Y^*}$, where $D_X$ is a weak neighborhood basis at $0$ in $X$ and $D_{Y^\ast}$ is a~weak$^\ast$ neighborhood basis at $0$ in $Y^\ast$. By weak$^\ast$ compactness, there exists a~collection $(y^\ast_t)_{t\in D^{\leqslant n-1}}\subset B_{Y^\ast}$ such that $(y^\ast_t)_{t\in D^{\leqslant n}}$ is weak$^\ast$ closed. We may then find a~monotone, length preserving map $\phi\colon D_1^{\leqslant n}\setminus \{\varnothing\}\to D^{\leqslant n}\setminus \{\varnothing\}$ such that, with the notation
$$
z_t=x_{\phi(t)},\quad z^\ast_t= y^\ast_{\phi(t)}- y^\ast_{\phi(t^-)},
$$
the collection $(z_t)_{t\in D^{\leqslant n}\setminus \{\varnothing\}}$ is weakly null, while $(z^\ast_t)_{t\in D^{\leqslant n}\setminus \{\varnothing\}}$ is weak$^\ast$-null and, moreover,
$$
\mathrm{Re}\,z^\ast_t(Az_t) \geqslant b_{|t|}-2\delta\quad\mbox{for each }\, t\in D_1^{\leq n}\setminus\{\varnothing\}
$$
and
$$
|z_t^\ast(Az_s)|,\, |z^\ast_s(Az_t)|<\delta\quad\mbox{for all }\, s,t\in D_1^{\leq n}\setminus\{\varnothing\}\,\mbox{ with }\, s<t.
$$
Define $S=\{i\leqslant n\colon b_i-2\delta>0\}$ and let $c_i=b_i-2\delta$ for $i\in S$ and $c_i=0$ otherwise. Since $\|A^\ast z^\ast_t\|\geqslant c_{|t|}$, we obtain
$$
2\beta^\ast_{p',n}(A) \geqslant  2\beta^\ast_{p',n}(A) \sup_{t\in D^n} \Biggl\|\sum_{i=1}^n z^\ast_{t|_i}\Biggr\|\geqslant \|(c_i)_{i=1}^n\|_{\ell_{p'}^n}.
$$
However, 
\begin{equation*}
\begin{split}
\alpha +3\delta n & <  \delta n+\sum_{i\in S}^n a_i b_i \leqslant 3\delta n+\sum_{i\in S} a_i b_i \leqslant 3\delta n+ \|(a_i)_{i=1}^n\|_{\ell_p^n}\|(c_i)_{i=1}^n\|_{\ell_{p'}^n}\\
& \leqslant 3\delta n+ 2 \beta^\ast_{p',n}(A);
\end{split}
\end{equation*}
a contradiction.
\end{proof}

This immediately yields the following corollary giving the exact duality between asymptotic basic type and weak$^\ast$ asymptotic basic cotype. This result parallels the duality between martingale type $p$ and martingale cotype $p^\prime$.  
\begin{corollary}\label{duality_for_operators}
Let $A\colon X\to Y$ be an operator between Banach spaces and let $1\leq p\leq\infty$. Then $A$ has asymptotic basic type $p$ if and only if $A^\ast$ has weak$^\ast$ asymptotic basic cotype $p^\prime$. It has asymptotic basic subtype if and only if $A^\ast$ has weak$^\ast$ asymptotic basic subcotype. 
\end{corollary}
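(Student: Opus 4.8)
The plan is simply to unwind the definitions and invoke the preceding Lemma, which already contains all the analytic content. Recall from Definition~\ref{AA_def} that $A$ has asymptotic basic type $p$ precisely when the collection $\mathcal{A}(A)$ has block basic type $p$, i.e. $\sup_n\alpha_{p,n}(A)<\infty$; dually, $A^\ast$ has weak$^\ast$ asymptotic basic cotype $p'$ precisely when $\mathcal{A}^\ast(A)$ has block basic cotype $p'$, i.e. $\sup_n\beta^\ast_{p',n}(A)<\infty$. The Lemma gives
$$
\tfrac12\,\alpha_{p,n}(A)\leqslant \beta^\ast_{p',n}(A)\leqslant 2\,\alpha_{p,n}(A)\qquad(n\in\N),
$$
so the two suprema are finite or infinite simultaneously. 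This settles the first equivalence; it is valid for every $1\leqslant p\leqslant\infty$ once we use the conventions $1'=\infty$, $\infty'=1$, so that $(p')'=p$.

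For the second (unquantified) equivalence I would first reduce both sides to the single exponent $2$. Since $\mathcal{A}(A)$ is a stable set, Theorem~\ref{bbtype_thm} (the automatic-type phenomenon) tells us that $A$ has asymptotic basic subtype if and only if it has asymptotic basic subtype $2$, that is, $\alpha_{2,n}(A)=\mathrm{o}(n^{1/2})$. Likewise, since $\mathcal{A}^\ast(A)$ is stable, Theorem~\ref{bbco_thm} yields that $A^\ast$ has weak$^\ast$ asymptotic basic subcotype if and only if $\beta^\ast_{2,n}(A)=\mathrm{o}(n^{1/2})$. Applying the Lemma with $p=2$ (so $p'=2$) gives $\tfrac12\alpha_{2,n}(A)\leqslant\beta^\ast_{2,n}(A)\leqslant 2\alpha_{2,n}(A)$, whence one of these two sequences is $\mathrm{o}(n^{1/2})$ exactly when the other is. Chaining the three equivalences finishes the argument.

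There is essentially no obstacle here: the only genuinely nontrivial input — the tree-duality estimate in the Lemma together with the Beauzamy-type characterisations of Section~3 — has already been established. The sole points requiring a moment's care are that $\mathcal{A}(A)$ and $\mathcal{A}^\ast(A)$ are indeed stable sets, so that Theorems~\ref{bbtype_thm} and~\ref{bbco_thm} apply to them verbatim, and that the conjugation conventions are compatible so that dualising the exponent twice returns the original one.
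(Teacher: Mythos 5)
Your proof is correct and follows the approach the paper clearly intends: the paper simply states that the lemma ``immediately yields'' the corollary, and your unwinding of the definitions — using the two-sided comparison $\tfrac12\alpha_{p,n}(A)\leqslant\beta^\ast_{p',n}(A)\leqslant 2\alpha_{p,n}(A)$ for the quantified statement, and Theorems~\ref{bbtype_thm} and~\ref{bbco_thm} to reduce the unquantified subtype/subcotype assertions to a fixed exponent before applying the same inequality — is exactly the natural fill-in. The one remark worth keeping is your note on the conjugation conventions at $p\in\{1,\infty\}$, which makes the first equivalence valid across the whole stated range.
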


\section{A three-space property}
In this final section, we will prove our main three-space property result (Theorem~\ref{BL_main} below). It yields an~optimal improvement of the Brooker--Lancien theorem from \cite{BL}. They showed that if $Y$ is a~closed subspace of a~Banach space $X$, then $\mathsf{p}(X)\leqslant \mathsf{p}(Y)+\mathsf{p}(X/Y)$. We will show that instead of addition one can take maximum and this is, of course, sharp.

\begin{lemma}\label{cool}
Let $X$ be a Banach space and $Y$ a closed subspace of $X$. Let $D$ be a~directed set, $m\in \nn$ and $(x_t)_{t\in D^{\leqslant m}\setminus\{\varnothing\}}\subset X$ be a~weakly null tree. Assume that we are given functions $f,g\colon \{\pm 1\}^m\to (0, \infty)$ such that 
$$
\Biggl\|\sum_{i=1}^m \ee_i x_{t|_i}\Biggr\|<f(\ee)\quad\mbox{and }\quad \Biggl\|\sum_{i=1}^m \ee_i x_{t|_i}\Biggr\|_{X/Y}\!\!<g(\ee)\quad\mbox{for all }\ee\in\{\pm 1\}^m,\,\, t\in D^m.
$$
Then, for any weak neighborhood $u$ of $0$ in $X$ and any weak neighborhood $v$ of $0$ in $Y$, there exists $t\in D^m$ with the~property that for each $\ee\in \{\pm 1\}^m$ there is a~vector $y_\ee\in Y$ such that: 
\begin{enumerate}[{\rm (i)}]
\item $\sum_{i=1}^m \ee_i x_{t|_i}\in u$,
\item $\|y_\ee\|\leqslant 2 f(\ee)$, 
\item $\|y_\ee-\sum_{i=1}^m \ee_i x_{t|_i}\|< 5 g(\ee)$, 
\item $y_\ee\in v$. 
\end{enumerate}
\end{lemma}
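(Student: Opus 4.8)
The plan is to split the statement into an easy part --- that \emph{every} branch yields companions satisfying (ii) and (iii) --- and a harder part --- that the weakly null structure of the tree lets one steer the branch so as to also secure (i) and (iv). First I would normalise the data. Shrinking $u$ and $v$, assume both are balanced and convex. Every $y^\ast\in Y^\ast$ extends to $X^\ast$ by Hahn--Banach, so the weak topology of $Y$ is the one inherited from $X$; fix accordingly a balanced convex weak neighbourhood $w$ of $0$ in $X$ with $w\subseteq u$ and $w\cap Y\subseteq v$, and a balanced convex weak neighbourhood $w'$ of $0$ in $X$ with $w'+w'\subseteq w$.

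Fix for a moment any $t\in D^m$ and set $z_\ee=\sum_{i=1}^m\ee_ix_{t|_i}$ for $\ee\in\{\pm1\}^m$. Since $\n{z_\ee}_{X/Y}<g(\ee)$ and $\n{z_\ee}_{X/Y}\le\n{z_\ee}<f(\ee)$, we may pick $s_\ee\in X$ representing the coset $z_\ee+Y$ with $\n{s_\ee}<\min\{f(\ee),g(\ee)\}$; then $y_\ee:=z_\ee-s_\ee\in Y$ satisfies $\n{y_\ee-z_\ee}=\n{s_\ee}<g(\ee)<5g(\ee)$ and $\n{y_\ee}\le\n{z_\ee}+\n{s_\ee}<2f(\ee)$. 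So (ii) and (iii) hold for an arbitrary branch, and the problem reduces to finding $t$ for which moreover $z_\ee\in w$ (yielding (i)) and $y_\ee\in w$ (yielding (iv), since $y_\ee\in Y$).

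Point (i) is straightforward: build $t|_1<\dots<t|_m$ recursively, at stage $k$ using weak nullity of the net of immediate successors of $t|_{k-1}$ to place $x_{t|_k}$ in a tiny weak neighbourhood, so small (there being only $2^m$ sign patterns) that $z_\ee\in w'$ for all $\ee$. For (iv) one must select the representatives $s_\ee$ not just of norm $<\min\{f(\ee),g(\ee)\}$ but inside $w'$, so that $y_\ee=z_\ee-s_\ee\in w'+w'\subseteq w$. Here the quotient map $Q\colon X\to X/Y$ is used again: $Q$ is weak-to-weak continuous and open, so as the branch varies cofinally the vectors $Qz_\ee$ form a weakly null net in $X/Y$ inside $\min\{f(\ee),g(\ee)\}B_{X/Y}$, and one needs to lift this net to a weakly null net of norm-controlled representatives in $X$; after a further pruning (and using weak$^\ast$ compactness in $X^{\ast\ast}$ to control the cluster points of a family of small-norm representatives) one gets representatives tending weakly to $0$, and then any sufficiently advanced branch works. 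It is worth noting that whenever $g(\ee)\ge f(\ee)/5$ one may bypass this entirely by taking $y_\ee=0$, since then $\n{z_\ee}<f(\ee)\le5g(\ee)$ and $0\in v$; thus the lifting is genuinely needed only for those $\ee$ with $g(\ee)<f(\ee)/5$, where there is ample room in (ii) and (iii).

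The step I expect to be the main obstacle is exactly this lifting: passing from a weakly null net of cosets in $X/Y$ to a weakly null net of small-norm representatives in $X$. Any weak$^\ast$ cluster point of a family of such representatives lies in $Y^{\perp\perp}\subseteq X^{\ast\ast}$ with norm at most $\min\{f(\ee),g(\ee)\}$, and the reason for working with the full $m$-level tree --- many branch points at each level, hence the freedom to prune --- rather than a single weakly null sequence is precisely to be able to re-select and force that cluster point to be $0$. Doing this while keeping every estimate under the thresholds $2$ and $5$ is where the real work of the proof lies.
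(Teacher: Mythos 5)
Your decomposition gets (i), (ii), (iii) right and correctly isolates (iv) as the crux, but the route you sketch for (iv) has a genuine gap. You want to choose, for each branch $t$, a representative $s_t(\ee)$ of the coset $z_t(\ee)+Y$ with $\|s_t(\ee)\|$ small \emph{and} $s_t(\ee)$ in a prescribed weak neighbourhood of $0$ in $X$, so that $y_\ee=z_t(\ee)-s_t(\ee)$ lands in $v$. This amounts to lifting the (bounded, weakly null) net $\bigl(Qz_t(\ee)\bigr)_t$ in $X/Y$ to a weakly null net of norm-controlled representatives in $X$, and that is in general impossible: the quotient map is weak-to-weak continuous but not weak-to-weak open, and a weak$^\ast$ cluster point of your $s_t(\ee)$ lies in $Y^{\perp\perp}\subseteq X^{\ast\ast}$, not necessarily at $0$. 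The ``pruning plus weak$^\ast$ compactness'' you invoke has nothing to bite on here, because the constraint $Qs_t(\ee)=Qz_t(\ee)$ pins $s_t(\ee)$ to a coset that is determined by the tree; the freedom you have (adding an element of $Y$) does not by itself give you membership in a prescribed weak neighbourhood while keeping the norm small. The slack you note for $g(\ee)\geqslant f(\ee)/5$ does not save the case $g(\ee)<f(\ee)/5$, which is exactly where this obstruction is sharpest.

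The paper's actual argument is built to avoid this lifting altogether. It directs the weak neighbourhoods of $0$ in $X$ to obtain a net $\bigl(X_w(\ee)\bigr)_w$ of branch sums that is weakly null and lies in prescribed neighbourhoods, and for each $w$ picks $Y_w(\ee)\in Y$ with $\|X_w(\ee)-Y_w(\ee)\|<g(\ee)$ and (after radial truncation) $\|Y_w(\ee)\|\leqslant f(\ee)$. \emph{No attempt is made to make $Y_w(\ee)$ weakly small.} Instead, one passes to a weak$^\ast$-convergent subnet in $\ell_\infty^{2^m}(Y)^{\ast\ast}$, which forces the \emph{differences} $Y_{w_1}(\ee)-Y_{w_2}(\ee)$ into $v$; then, using Mazur's theorem on the weakly null net $(X_w(\ee))$, one finds a finite convex combination $\sum_{w\in F}a_w X_w(\ee)$ with norm $<g(\ee)$ and sets
\[
y_\ee \;=\; Y_{w_1}(\ee)-\sum_{w\in F}a_w Y_w(\ee)\;=\;\sum_{w\in F}a_w\bigl(Y_{w_1}(\ee)-Y_w(\ee)\bigr).
\]
This is a convex combination of differences, hence in $v$; its norm is $\leqslant 2f(\ee)$; and $\|y_\ee-X_{w_1}(\ee)\|\leqslant \|Y_{w_1}(\ee)-X_{w_1}(\ee)\|+\sum a_w\|Y_w(\ee)-X_w(\ee)\|+\|\sum a_w X_w(\ee)\|\leqslant 2g+2g+g=5g$. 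That is where the $2$ and $5$ come from, and it is precisely the difference-plus-convex-combination device that you would need to supply; as written, your proposal does not close this step.
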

\begin{proof} 
By replacing $u$ and $v$ with a subset, we may assume that both of them are convex and symmetric. Let $D_0$ be the family of all convex, symmetric, weak neighborhoods of $0$ in $X$, directed by reverse inclusion. For every $w\in D_0$, there exists $t_w\in D^m$ such that $x_{t_w|_i}\in \frac{1}{m} w$ for each $1\leq i\leq m$. For any $\ee\in \{\pm 1\}^m$ define
$$
X_w(\ee)=\sum_{i=1}^m \ee_i x_{t_w|_i}\in w.
$$
Observe that the collection $((X_w(\ee))_{\ee\in \{\pm 1\}^m})_{w\in D_0}$ forms a~weakly null net in $\ell_\infty^{2^m}(X)$. By our assumption, for each $w\in D_0$ and each $\ee\in \{\pm 1\}^m$, we may pick $Z_w(\ee)\in Y$ such that $\|X_w(\ee)-Z_w(\ee)\|<g(\ee)$. Define
$$
Y_w(\ee)=\left\{\begin{array}{rl}
\displaystyle{\frac{f(\ee)}{\|Z_w(\ee)\|}Z_w(\ee)} & \mbox{if }\,\|Z_w(\ee)\|>f(\ee)\\
Z_w(\ee) & \mbox{if }\,\n{Z_w(\ee)}\leq f(\ee).
\end{array}
\right.
$$ 
Note that $\|Y_w(\ee)\|\leqslant f(\ee)$ and $\|X_w(\ee)-Y_w(\ee)\|< 2 g(\ee)$. Now, we may pass to subnets $(X_w(\ee))_{w\in D_1}$ and $(Y_w(\ee))_{w\in D_1}$ such that $X_w(\ee)\in u$ and $Y_{w_1}(\ee)-Y_{w_2}(\ee)\in v$ for all $w,w_1,w_2\in D_1$, $\ee\in\{\pm 1\}^m$. Here, we are using the Banach--Alaoglu theorem together with the boundedness of the net $(Y_w(\ee))_{w\in D_0}$ in $\ell_\infty^{2^m}(Y)^{\ast\ast}$. By weak nullity, there exist a finite set $F\subset D_1$ and positive numbers $(a_w)_{w\in F}$ summing to $1$ such that 
$$
\Biggl\|\sum_{w\in F}a_w X_w(\ee)\Biggr\|_{\ell_\infty^{2^m}(X)}<g(\ee).
$$
Now, choose any $w_1\in D_1$ and for any fixed $\ee\in\{\pm 1\}^m$ let 
$$
y_\ee=Y_{w_1}(\ee)-\sum_{w\in F} a_w Y_w(\ee)=\sum_{w\in F} a_w(Y_{w_1}(\ee)-Y_w(\ee))\in v.
$$
We shall show that $t=t_{w_1}$ is the desired element of $D^m$. Assertions (i) and (iv) are immediate from the above construction. To see that (ii) and (iii) are also true, note that
$$
\|y_\ee\| \leqslant \|Y_{w_1}(\ee)\|+\sum_{w\in F}a_w\|Y_w(\ee)\| \leqslant 2f(\ee)
$$
and 
\begin{equation*}
\begin{split}
\|y_\ee-X_{w_1}(\ee)\|\leq &\|Y_{w_1}(\ee)-X_{w_1}(\ee)\|+\! \sum_{w\in F} a_w\|Y_w(\ee)-X_w(\ee)\|\\
& + \Biggl\|\sum_{w\in F}a_w X_w(\ee)\Biggr\|\leqslant 5g(\ee),
\end{split}
\end{equation*}
which completes the proof.
\end{proof}

We also have the following unsigned version of the~previous result, which is even easier. 
\begin{lemma} 
Let $X$ be a Banach space and $Y$ a closed subspace of $X$. Let $D$ be a~directed set, $m\in \nn$ and $(x_t)_{t\in D^{\leqslant m}\setminus\{\varnothing\}}\subset X$ be a~weakly null tree. Assume that there are $a,b>0$ such that 
$$
\Biggl\|\sum_{i=1}^m x_{t|_i}\Biggr\|<a\quad\mbox{and }\quad \Biggl\|\sum_{i=1}^m x_{t|_i}\Biggr\|_{X/Y}\!\!\!<b\quad\mbox{for every }\, t\in D^m.
$$
Then, for any weak neighborhood $u$ of $0$ in $X$ and any weak neighborhood $v$ of $0$ in $Y$, there exist $t\in D^m$ such that:
\begin{enumerate}[{\rm (i)}]
\item $\sum_{i=1}^m x_{t|_i}\in u$,
\item $\|y\|\leqslant 2a$, 
\item $\|y-\sum_{i=1}^m x_{t|_i}\|< 5b$, 
\item $y\in v$. 
\end{enumerate}
\label{2cool}
\end{lemma}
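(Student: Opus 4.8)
The plan is to repeat the proof of Lemma~\ref{cool} almost verbatim, simply suppressing the sign vector $\ee$; the unsigned case is strictly simpler because there is no longer a family indexed by $\{\pm 1\}^m$ to keep track of. As before, I would first shrink $u$ and $v$ so that both become convex and symmetric. Let $D_0$ be the collection of all convex, symmetric weak neighborhoods of $0$ in $X$, directed by reverse inclusion; for each $w\in D_0$ use the weak nullity of the tree to pick $t_w\in D^m$ with $x_{t_w|_i}\in\tfrac1m w$ for every $1\le i\le m$, and set $X_w=\sum_{i=1}^m x_{t_w|_i}\in w$. Then $(X_w)_{w\in D_0}$ is a weakly null net in $X$.

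Next I would approximate in the quotient: for each $w$ choose $Z_w\in Y$ with $\|X_w-Z_w\|<b$, and truncate it to $Y_w=\tfrac{a}{\|Z_w\|}Z_w$ when $\|Z_w\|>a$ and $Y_w=Z_w$ otherwise. Exactly as in Lemma~\ref{cool} one checks $\|Y_w\|\le a$ and, using $\|X_w\|<a$, also $\|X_w-Y_w\|<2b$. The net $(Y_w)_{w\in D_0}$ is bounded in $Y$, hence relatively weak$^\ast$ compact in $Y^{\ast\ast}$, so by the Banach--Alaoglu theorem I may pass to a subnet over some directed set $D_1$ along which $X_w\in u$ (by weak nullity) and $Y_{w_1}-Y_{w_2}\in v$ for all indices in $D_1$.

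The final step is the convex blocking: since $(X_w)_{w\in D_1}$ is still weakly null, choose a finite $F\subset D_1$ and positive weights $(a_w)_{w\in F}$ with $\sum_{w\in F}a_w=1$ and $\|\sum_{w\in F}a_w X_w\|<b$. Fix any $w_1\in D_1$ and put $y=Y_{w_1}-\sum_{w\in F}a_w Y_w=\sum_{w\in F}a_w(Y_{w_1}-Y_w)\in v$, and take $t=t_{w_1}$. Then (i) and (iv) are immediate, (ii) follows from $\|y\|\le\|Y_{w_1}\|+\sum_{w\in F}a_w\|Y_w\|\le 2a$, and (iii) from $\|y-X_{w_1}\|\le\|Y_{w_1}-X_{w_1}\|+\sum_{w\in F}a_w\|Y_w-X_w\|+\|\sum_{w\in F}a_w X_w\|<2b+2b+b=5b$.

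There is no substantive obstacle here; the only points requiring a little care—exactly as in the signed version—are arranging the two subnet passages (obtaining $X_w\in u$ and the mutual $v$-closeness of the $Y_w$'s) simultaneously before performing the convex block, and noting that taking a convex combination preserves membership in $u$ and in $v$ because those sets were made convex at the outset.
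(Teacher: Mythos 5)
Your proof is correct and is exactly what the authors intend: the paper gives no separate argument for Lemma~\ref{2cool}, merely remarking that the unsigned statement is ``even easier'' than Lemma~\ref{cool}, and your verbatim suppression of the sign vector $\ee$, together with the truncation estimate $\|X_w-Y_w\|<2b$ (which does use $\|X_w\|<a$ as you note) and the final $2b+2b+b=5b$ accounting, reproduces the intended proof faithfully.
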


The next lemma is crucial for our three-space result. It corresponds to similar estimates for the Rademacher and martingale types obtained by Enflo, Lindenstrauss and Pisier ({\it cf.} \cite[Thm.~1]{ELP}).
\begin{lemma} 
Let $X$ be a Banach space and $Y$ a closed subspace of $X$.
Then for all $1\leqslant p\leqslant \infty$ and $m,n\in \nn$ we have
$$
\alpha_{p,mn}(X) \leqslant 2\alpha_{p,m}(X)\alpha_{p,n}(Y)+5\alpha_{p,n}(X)\alpha_{p,m}(X/Y)
$$
and 
$$
\tau_{p,mn}(X) \leqslant 2\tau_{p,m}(X)\tau_{p,n}(Y)+5\tau_{p,n}(X)\tau_{p,m}(X/Y).
$$
\label{gen}
\end{lemma}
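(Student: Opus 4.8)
The plan is to combine the branch-infimum description of $\alpha_{p,N}$ and $\tau_{p,N}$ from Lemma~\ref{primary type lemma} with a two-scale blocking argument in the spirit of Enflo--Lindenstrauss--Pisier, the geometric content being carried entirely by Lemmas~\ref{cool} and~\ref{2cool}. We may assume every term on the right-hand side is finite (finite-dimensionality of $X$, $Y$ or $X/Y$ will turn out to be harmless). By homogeneity, to establish the $\alpha$-inequality it suffices, in view of Lemma~\ref{primary type lemma}, to fix a weakly null tree $(x_t)_{t\in D^{\leqslant mn}\setminus\{\varnothing\}}\subset B_X$ and scalars $(a_i)_{i=1}^{mn}$ with $\|(a_i)_{i=1}^{mn}\|_{\ell_p^{mn}}=1$, and to show that for every $\delta>0$ there is $t\in D^{mn}$ with
\[
\Biggl\|\sum_{i=1}^{mn}a_ix_{t|_i}\Biggr\|\leqslant 2\alpha_{p,m}(X)\alpha_{p,n}(Y)+5\alpha_{p,n}(X)\alpha_{p,m}(X/Y)+\delta .
\]
Split a branch of height $mn$ into $n$ consecutive blocks of height $m$, the $j$-th block occupying the coordinates $B_j=\{(j-1)m+1,\dots,jm\}$, and put $\mu_j=\|(a_i)_{i\in B_j}\|_{\ell_p^m}$, so that $\|(\mu_j)_{j=1}^n\|_{\ell_p^n}=1$.

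Fix a small $\eta>0$ and process the blocks in order. Having reached a node $s_{j-1}$ at level $(j-1)m$, the tree $(x_{s_{j-1}\frown r})_{r\in D^{\leqslant m}\setminus\{\varnothing\}}$ is weakly null in $B_X$ and its image under the quotient map $Q\colon X\to X/Y$ is a weakly null tree in $B_{X/Y}$. By Lemma~\ref{primary type lemma}(ii), applied to $X$ and to $X/Y$, the set of branches $r\in D^m$ along which \emph{both} $\|\sum_{i\in B_j}a_ix_{s_{j-1}\frown r|_i}\|_X\leqslant(\alpha_{p,m}(X)+\eta)\mu_j$ and $\|\sum_{i\in B_j}a_ix_{s_{j-1}\frown r|_i}\|_{X/Y}\leqslant(\alpha_{p,m}(X/Y)+\eta)\mu_j$ is an intersection of two inevitable sets, hence inevitable; by Proposition~\ref{tech prop} we prune into it, so that these estimates hold along every remaining branch of the block. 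We then apply Lemma~\ref{2cool} to the weakly null tree $(a_{(j-1)m+l}\,x_{s_{j-1}\frown r|_l})_l$ with $a=(\alpha_{p,m}(X)+\eta)\mu_j$ and $b=(\alpha_{p,m}(X/Y)+\eta)\mu_j$: for any prescribed weak neighbourhoods of $0$ in $X$ and in $Y$ it produces a branch $r_j$ of the block and a vector $y_j\in Y$ such that, writing $W_j=\sum_{i\in B_j}a_ix_{s_{j-1}\frown r_j|_i}$, one has $\|y_j\|\leqslant 2(\alpha_{p,m}(X)+\eta)\mu_j$ and $\|W_j-y_j\|<5(\alpha_{p,m}(X/Y)+\eta)\mu_j$, while $W_j$ lies in the prescribed neighbourhood in $X$ and $y_j$ in the one in $Y$.

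The two weak-neighbourhood clauses drive the outer scale. Letting the choice within block $j$ range over a neighbourhood basis and stabilising as in the proof of Lemma~\ref{primary type lemma} (with Corollary~\ref{dpt}), the vectors $W_j$, the vectors $y_j$, and hence the errors $e_j=W_j-y_j$ assemble into weakly null trees of height $n$ over a common directed set --- the $W_j$- and $e_j$-trees in $X$, the $y_j$-tree in $Y$ --- and a further stabilisation lets us assume that along every branch $\|y_j\|$ and $\|e_j\|$ are, up to $\eta$, fixed numbers $c_j\leqslant 2(\alpha_{p,m}(X)+\eta)\mu_j$ and $c_j'<5(\alpha_{p,m}(X/Y)+\eta)\mu_j$ (blocks with negligible $c_j$ or $c_j'$ being dropped). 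Then $(y_j/\|y_j\|)$ and $(e_j/\|e_j\|)$ are weakly null trees of height $n$ in $S_Y$ and $S_X$, so by the branch-infimum characterisations of $\alpha_{p,n}(Y)$ and $\alpha_{p,n}(X)$, together with $\|(\mu_j)_{j=1}^n\|_{\ell_p^n}=1$ and stability of inevitability under finite intersections, there is a \emph{single} branch along which simultaneously $\|\sum_j c_j(y_j/\|y_j\|)\|\leqslant 2\alpha_{p,n}(Y)(\alpha_{p,m}(X)+\eta)+\eta$ and $\|\sum_j c_j'(e_j/\|e_j\|)\|\leqslant 5\alpha_{p,n}(X)(\alpha_{p,m}(X/Y)+\eta)+\eta$. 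That branch corresponds to a branch $t\in D^{mn}$ of a pruning of the original tree along which $\sum_{i=1}^{mn}a_ix_{t|_i}=\sum_j W_j=\sum_j y_j+\sum_j e_j$ up to an error of order $n\eta$, so the triangle inequality and $\eta\to0$ finish the $\alpha$-inequality. I expect this assembly to be the main obstacle: one must genuinely build one height-$n$ weakly null tree whose branches record the $r_j$, the $y_j$ and the $e_j$ coherently, so that the two characterisations can be invoked on the same branch --- which is precisely where the weak-neighbourhood conclusions of Lemma~\ref{2cool} are indispensable.

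The $\tau$-inequality is proved in the same way, with Lemma~\ref{2cool} replaced by its signed analogue Lemma~\ref{cool}. Within each block one first prunes so that, for all $2^m$ sign patterns $\varepsilon\in\{\pm1\}^m$ at once, the signed block sum $\|\sum_{l=1}^{m}\varepsilon_l a_{(j-1)m+l}x_{s_{j-1}\frown r|_l}\|_X$ stabilises (and similarly in $X/Y$), which fixes the functions $f$, $g$ of Lemma~\ref{cool} with $\|f\|_{L_p}\leqslant(\tau_{p,m}(X)+\eta)\mu_j$ and $\|g\|_{L_p}\leqslant(\tau_{p,m}(X/Y)+\eta)\mu_j$ by the branch-infimum property of $\tau_{p,m}$; one then carries the Rademacher averages through the outer scale exactly as in the proof of Lemma~\ref{primary type lemma}(vi) and closes with the branch characterisation of $\tau_{p,n}(Y)$ and $\tau_{p,n}(X)$. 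Finally, the degenerate cases are automatic: if one of $X$, $Y$, $X/Y$ is finite-dimensional the corresponding constant is $0$ and the relevant family of vectors is forced into arbitrarily small neighbourhoods of $0$, so its contribution vanishes.
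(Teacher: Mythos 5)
Your proposal is correct and follows essentially the same route as the paper's proof: decompose the $mn$ levels into $n$ blocks of height $m$, use the inner-scale estimates for $X$ and $X/Y$ together with Lemma~\ref{cool} (resp.\ Lemma~\ref{2cool}) to lift block sums to nearby vectors in $Y$ with the weak-neighbourhood conclusions providing the outer-scale weak nullity, and close with the $Y$- and $X$-estimates at the outer scale and a triangle inequality. The only difference is cosmetic: the paper presents the $\tau$-inequality in detail and remarks that the $\alpha$-inequality follows by replacing Rademacher coefficients with constants and Lemma~\ref{cool} with Lemma~\ref{2cool}, whereas you present the $\alpha$-case in detail first.
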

\begin{proof} 
We prove only the second inequality. The first can be proved along the same lines replacing the Rademacher coefficients $(\ee_i)$ with constant coefficients and using Lemma~\ref{2cool} instead of \ref{cool}. Furthermore, if either $\dim Y<\infty$ or $\dim X/Y<\infty$, the result is clear, so we assume that $\dim Y=\dim X/Y=\infty$. In this case, $\tau_{p,n}(Y),\tau_{p,n}(X/Y)\geqslant 1$. 

Fix a weakly null tree $(x_t)_{t\in D^{\leqslant mn}\setminus \{\varnothing\}}\subset B_X$ and a~scalar sequence $(a_i)_{i=1}^{mn}$. For each $1\leqslant i\leqslant n$ choose any $b_i>\n{(a_j)_{(i-1)m<j\leqslant im}}_{\ell_p^m}$. For any $\delta>0$, by applying a~suitable pruning, we may assume there exist functions $\eta, \mu\colon\{\pm 1\}^{mn}\times \{1, \ldots, n\}\to \rr$ such that for all $t\in D^{mn}$, $\ee=(\ee_j)_{j=1}^{mn}\in \{\pm 1\}^{mn}$ and $1\leqslant i\leqslant n$:
\begin{itemize}
\setlength{\itemindent}{-5mm}
\setlength\itemsep{2pt}
\item the~values of $\eta(\cdot, i)$ and $\mu(\cdot, i)$ depend only on $\ee_{(i-1)m+1},\ldots,\ee_{im}$,
\item $\Big|\eta(\ee,i)-\left\|\sum_{j=(i-1)m+1}^{im} \ee_ja_jx_{t\vert_j}\right\|\Big|<\delta\,$
\item $\Big|\mu(\ee,i)-\left\|\sum_{j=(i-1)m+1}^{im} \ee_ja_jx_{t\vert_j}\right\|_{X/Y}\Big|<\delta$.
\end{itemize}
Furthermore, decreasing $\delta$ if necessary, we may also assume that
$$
\|\eta(\ee, i)\|_{L_p(\{\pm 1\}^{mn})} \leqslant \tau_{p,m}(X)b_i\quad\mbox{ and }\quad \|\mu(\ee, i)\|_{L_p(\{\pm 1\}^{mn})} \leqslant \tau_{p,m}(X/Y) b_i.
$$    

Now, let $D_1=D_X\times D_Y$, where $D_X$ is a~weak neighborhood basis at $0$ in $X$ and $D_Y$ is a~weak neighborhood basis at $0$ in $Y$. For every $\ee\in \{\pm 1\}^{mn}$, we are going to construct weakly null trees $(X_t(\ee))_{t\in D_1^{\leqslant n}\setminus \{\varnothing\}}\subset X$ and $(Y_t(\ee))_{t\in D_1^{\leqslant n}\setminus \{\varnothing\}}\subset Y$, and a~map $\phi\colon D_1^{\leqslant n}\to D^{\leqslant mn}$ such that:
\begin{itemize}
\setlength{\itemindent}{-5mm}
\setlength\itemsep{2pt}
\item $|\phi(t)|=m|t|$ for each $t\in D_1^{\leqslant n}$, \item $Y_t(\ee)$ is a function only of $\ee_{(|t|-1)m+1}, \ldots, \ee_{|t|m}$, 
\item $\|Y_t(\ee)\|\leqslant 2(\eta(\ee, |t|)+\delta)$,
\item $\|X_t(\ee)-Y_t(\ee)\|<5(\mu(\ee, |t|)+\delta)$, 
\item $X_t(\ee)=\sum_{j=(|t|-1)m+1}^{|t|m} \ee_j a_j x_{\phi(t)|j}$.
\end{itemize}
We define $\phi(t)$ by induction on $|t|$ as follows. First, set $\phi(\varnothing)=\varnothing$. If $s\in D_1^{\leq n}\setminus\{\varnothing\}$ and $\phi(s^-)$ has been defined, we apply Lemma~\ref{cool} to the~weakly null tree $(x_{\phi(s^-)\smallfrown t})_{t\in D^{\leqslant m}\setminus\{\varnothing\}}$ and the functions 
$$
f(\ee)=\eta(\ee, |s|),\quad g(\ee)=\mu(\ee, |s|).
$$
This yields appropriate $t\in D^m$ and $Y_s(\ee)\in Y$. We define $\phi(s)=\phi(s^-)\!\frown\! t$ and then it is easy to check that all the~desired conclusions hold. 

After pruning once again, we may additionally assume that for any $t\in D^n_1$ and $\ee\in \{\pm 1\}^{mn}$ we have
$$
\Biggl\| \sum_{i=1}^n \ee_i^\prime Y_{t|_i}(\ee)\Biggr\|_{L_p(\{\pm 1\}^n)}\!\!\! \leqslant 2\big(\tau_{p,n}(Y)+\delta\big) \bigl\|(\eta(\ee, i)+\delta)_{i=1}^n\bigr\|_{\ell_p^n}
$$
and 
$$
\Biggl\|\sum_{i=1}^n \ee_i^\prime \big(X_{t|_i}(\ee)-Y_{t|_i}(\ee)\big)\Biggr\|_{L_p(\{\pm 1\}^n)}\!\!\! \leqslant 5\big(\tau_{p,n}(X)+\delta\big)\bigl\|(\mu(\ee, i)+\delta)_{i=1}^n\bigr\|_{\ell_p^n}.
$$
(Here, the $L_p$-norms are taken with respect to the variable $\ee^\prime\in\{\pm 1\}^n$ and we use estimates on $\n{Y_t(\ee)}$ and $\n{X_t(\ee)-Y_t(\ee)}$ listed above). Then, for every $t\in D_1^n$, the triangle inequality gives
\begin{equation*}
\begin{split}
\Biggl\|\sum_{i=1}^n X_{t|_i}(\ee)\Biggr\|_{L_p(\{\pm 1\}^{mn})}\!\!\! &=\Biggl\|\sum_{i=1}^n \ee_i' X_{t|_i}(\ee)\Biggr\|_{L_p(\{\pm 1\}^n\times \{\pm 1\}^{mn})}\\
&\leq \Biggl\|\sum_{i=1}^n \ee_i' \big(X_{t|_i}(\ee)-Y_{t|_i}(\ee)\big)\Biggr\|_{L_p(\{\pm 1\}^n\times \{\pm 1\}^{mn})}\\
&\quad +\Biggl\|\sum_{i=1}^n \ee_i' Y_{t|_i}\Biggr\|_{L_p(\{\pm 1\}^n\times \{\pm 1\}^{mn})} \\
&\leq 2\tau_{p,n}(Y) \bigl\|(\|\eta(\ee, i)\|_{L_p(\{\pm 1\}^{mn})})_{i=1}^n\bigr\|_{\ell_p^n}\\
&\quad +5\tau_{p,n}(X)\bigl\|(\|\mu(\ee, i)\|_{L_p(\{\pm 1\}^{mn})})_{i=1}^n \bigr\|_{\ell_p^n}+\mathrm{o}(1) \\
&\leq \bigl(2\tau_{p,n}(Y)\tau_{p,m}(X)+5\tau_{p,n}(X)\tau_{p,m}(X/Y)\bigr)\|(b_i)_{i=1}^n\|_{\ell_p^n}\\
&\hspace{78mm}+\mathrm{o}(1),
\end{split}
\end{equation*}
where $\mathrm{o}(1)\to 0$ as $\delta\to 0$. Recall that $\delta>0$ could be arbitrarily small and $b_i$ arbitrarily close to $\|(a_j)_{(i-1)m<j\leqslant im}\|_{\ell_p^m}$. Therefore, by appealing to Lemma~\ref{primary type lemma}, we obtain the desired inequality.
\end{proof}

\begin{lemma}[\cite{ELP}]\label{ELP_lemma}
If $(a_n)_{n=1}^\infty$ is a nondecreasing sequence of positive numbers and $C>0$ is such that $a_{n^2} \leqslant Ca_n$ for $n\in\N$, then there exist numbers $C_1, \lambda>0$ such that 
$$
a_n \leqslant C_1 (\log n)^\lambda\quad\mbox{for each }n\geq 2.
$$
\end{lemma}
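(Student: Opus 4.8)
The plan is to iterate the multiplicative hypothesis along the sequence $2,2^2,2^4,2^8,\ldots$ of repeated squares and then compare an arbitrary $m$ to this tower.

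First I would establish, by induction on $k\geq 0$, that $a_{n^{2^k}}\leq C^k a_n$ for every $n\in\N$; the case $k=0$ is trivial, and the inductive step applies the hypothesis with $n^{2^k}$ in place of $n$, using that $\bigl(n^{2^k}\bigr)^2=n^{2^{k+1}}$. Taking $n=2$ yields $a_{2^{2^k}}\leq C^k a_2$ for all $k\geq 0$.

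Next, fix $m\geq 2$ and let $k=k(m)$ be the least nonnegative integer with $2^{2^k}\geq m$. Since $(a_n)_{n=1}^\infty$ is nondecreasing, $a_m\leq a_{2^{2^k}}\leq C^k a_2$, so it remains only to control the size of $k$. By minimality, either $k=0$ or $2^{2^{k-1}}<m$; in the latter case $2^{k-1}<\log_2 m$, hence $2^k<2\log_2 m$, that is, $k<1+\log_2(\log_2 m)$. Because $m\geq 2$ gives $\log_2(\log_2 m)\geq 0$, in both cases we obtain $k\leq 1+\log_2(\log_2 m)$.

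Finally I would assume without loss of generality that $C\geq 2$ — enlarging $C$ only weakens the hypothesis — and put $\lambda:=\log_2 C\geq 1>0$. Then
\[
a_m\leq C^k a_2 = 2^{\lambda k}a_2\leq 2^{\lambda(1+\log_2(\log_2 m))}a_2 = 2^{\lambda}a_2\,(\log_2 m)^{\lambda},
\]
and rewriting $\log_2 m=\log m/\log 2$ absorbs a factor $(\log 2)^{-\lambda}$ into the constant, giving $a_m\leq C_1(\log m)^{\lambda}$ with $C_1:=2^{\lambda}a_2(\log 2)^{-\lambda}>0$, valid for every $m\geq 2$ (the case $m=2$ holds since $2^\lambda\geq 2>1$). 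There is no genuine obstacle in this argument; the only points needing a moment's care are the boundary case $m=2$, where $k=0$, and the possibility $C<2$, both of which are disposed of by the normalization $C\geq 2$.
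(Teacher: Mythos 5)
Your proof is correct, and since the paper cites this lemma from Enflo--Lindenstrauss--Pisier without reproducing an argument, there is no in-paper proof to compare against; your iteration along the tower $2^{2^k}$ together with the monotonicity of $(a_n)$ is precisely the standard way to establish the bound, and the normalization $C\geq 2$ cleanly handles both the $m=2$ boundary case and the definition of $\lambda$.
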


Finally, we are in a position to prove our main result. By $\mathsf{t}(X)$ we denote the~supremum over those $1\leq p\leq \infty$ for which $X$ has asymptotic type $p$.
\begin{theorem}\label{BL_main}
Let $X$ be a Banach space and $Y$ a closed subspace of $X$. Then, we have
$$
\mathsf{p}(X)=\max\{\mathsf{p}(Y), \mathsf{p}(X/Y)\}
$$
and
$$\mathsf{t}(X)= \min\{\mathsf{t}(Y), \mathsf{t}(X/Y)\}.
$$
Therefore, for any $1\leqslant p,t\leqslant\infty$, `$\,\mathsf{p}(\,\cdot\,)\leqslant p$' and `$\,\mathsf{t}(\,\cdot\,)\geqslant t$' are three-space properties. In particular, `$\,Sz(\,\cdot\,)\leqslant \omega$' and `$\,\ell_1$ is not asymptotically finitely representable in $\cdot$\,' are three-space properties.
\end{theorem}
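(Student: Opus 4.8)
The plan is to deduce both identities from the submultiplicative growth estimates of Lemma~\ref{gen} together with the characterizations already in hand. First consider the Szlenk side. By Remark~\ref{Mr.K}, for an identity operator we have $\mathsf{q}(X)=\max\{\mathsf{p}(X),1\}'$, so controlling $\mathsf{p}(X)$ is equivalent to controlling $\mathsf{q}(X)$, the supremum of exponents $r$ with $\sup_n\alpha_{r,n}(X)<\infty$. The inequality $\mathsf{p}(X)\geq\max\{\mathsf{p}(Y),\mathsf{p}(X/Y)\}$ (equivalently $\mathsf{q}(X)\leq\min\{\mathsf{q}(Y),\mathsf{q}(X/Y)\}$) is the easy direction: the Szlenk index is monotone under passing to subspaces and quotients, or more directly, any weakly null tree witnessing large $\alpha_{r,n}$ in $Y$ is a weakly null tree in $X$, and the quotient case follows by lifting. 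For the reverse inequality I would argue by contraposition: suppose $X$ has asymptotic basic type $s$ for no $s$ near $\min\{\mathsf{q}(Y),\mathsf{q}(X/Y)\}$; fix $1<r<\min\{\mathsf{q}(Y),\mathsf{q}(X/Y)\}$, so that both $\sup_n\alpha_{r,n}(Y)=:K_Y$ and $\sup_n\alpha_{r,n}(X/Y)=:K_{X/Y}$ are finite. Lemma~\ref{gen} (first inequality) then gives $\alpha_{r,mn}(X)\leq 2\alpha_{r,m}(X)K_Y+5\alpha_{r,n}(X)K_{X/Y}$ for all $m,n$; setting $a_k=\alpha_{r,k}(X)$ (a nondecreasing sequence by the Remark after Lemma~\ref{primary type lemma}) and taking $m=n$ yields $a_{n^2}\leq C a_n$ with $C=2K_Y+5K_{X/Y}$, hence by Lemma~\ref{ELP_lemma} $a_n\leq C_1(\log n)^\lambda=\mathrm{o}(n^{1/r-1/s})$ for any $s<r$. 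Thus $X$ has asymptotic basic type $s$ for every $s<r$, i.e. $\mathsf{q}(X)\geq r$; letting $r\uparrow\min\{\mathsf{q}(Y),\mathsf{q}(X/Y)\}$ gives $\mathsf{q}(X)\geq\min\{\mathsf{q}(Y),\mathsf{q}(X/Y)\}$, and translating back via Remark~\ref{Mr.K} yields $\mathsf{p}(X)\leq\max\{\mathsf{p}(Y),\mathsf{p}(X/Y)\}$.

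The argument for $\mathsf{t}$ is the verbatim analogue using the second inequality of Lemma~\ref{gen}: $\tau_{p,mn}(X)\leq 2\tau_{p,m}(X)\tau_{p,n}(Y)+5\tau_{p,n}(X)\tau_{p,m}(X/Y)$. Fixing $1<r<\min\{\mathsf{t}(Y),\mathsf{t}(X/Y)\}$ makes $\sup_n\tau_{r,n}(Y)$ and $\sup_n\tau_{r,n}(X/Y)$ finite; the same substitution $a_k=\tau_{r,k}(X)$, the monotonicity from the Remark after Lemma~\ref{primary type lemma}, and Lemma~\ref{ELP_lemma} give $a_n=\mathrm{o}(n^\varepsilon)$ for every $\varepsilon>0$, so $X$ has asymptotic type $s$ for every $s<r$, whence $\mathsf{t}(X)\geq r$, and letting $r$ increase to $\min\{\mathsf{t}(Y),\mathsf{t}(X/Y)\}$ finishes the $\geq$ direction. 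The $\leq$ direction (that $\mathsf{t}(X)\leq\mathsf{t}(Y)$ and $\mathsf{t}(X)\leq\mathsf{t}(X/Y)$) again is the monotonicity of $\tau_{p,n}$ under subspaces and quotients: every weakly null tree in $Y$ is one in $X$, giving $\tau_{p,n}(Y)\leq\tau_{p,n}(X)$; for the quotient, a weakly null tree in $X/Y$ lifts (after a small perturbation, using that $B_{X/Y}$ is the image of a slightly larger ball in $X$ and weak nullity can be arranged by the usual lifting through a weak neighborhood basis) to a weakly null tree in $X$ of comparable norm, so $\tau_{p,n}(X/Y)\leq C\tau_{p,n}(X)$ up to a constant independent of $n$, which suffices for the $\mathsf{t}$-inequality.

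Finally, the three-space consequences are immediate: `$\mathsf{p}(\cdot)\leq p$' is preserved because $\max\{\mathsf{p}(Y),\mathsf{p}(X/Y)\}\leq p$ forces $\mathsf{p}(X)\leq p$, and `$\mathsf{t}(\cdot)\geq t$' because $\min\{\mathsf{t}(Y),\mathsf{t}(X/Y)\}\geq t$ forces $\mathsf{t}(X)\geq t$. Taking $p$ arbitrarily large, $\mathsf{p}(X)<\infty$ is a three-space property; by Theorem~\ref{causey_glk}(i) and Remark~\ref{Szlenk remark} this is exactly `$Sz(\cdot)\leq\omega$'. Likewise, combining Theorems~\ref{subtype} and \ref{automatic power type} with the displayed equivalence in the introduction, `$\ell_1$ is not asymptotically finitely representable in $\cdot$' is equivalent to `$\mathsf{t}(\cdot)>1$', which is a three-space property by the $\mathsf{t}$-identity (taking $t>1$ and using that $\min\{\mathsf{t}(Y),\mathsf{t}(X/Y)\}>1$ iff both exceed $1$).

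The step I expect to require the most care is the reduction of the asymptotic quantities of $X/Y$ to those of $X$ — i.e. showing that a weakly null tree in $X/Y$ can be approximately lifted to a weakly null tree in $X$ with controlled norms and an honest directed-set structure — since the norms $\|\cdot\|_{X/Y}$ appearing in Lemma~\ref{gen} are quotient norms and one must be careful that the lifted tree remains weakly null (this is where replacing $D$ by $D\times D_X$ or $D\times D_Y$, as done repeatedly in Section~6, is used). The second delicate point is the bookkeeping in invoking Lemma~\ref{gen}: one must check that the constants $K_Y,K_{X/Y}$ are genuinely independent of $m,n$ and that the finite-dimensional edge cases ($\dim Y<\infty$ or $\dim X/Y<\infty$) are handled, but both are already addressed inside the proof of Lemma~\ref{gen}, so here it is only a matter of citing it correctly.
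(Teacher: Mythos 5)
Your proposal is correct and follows essentially the same route as the paper: combine Lemma~\ref{gen} with $m=n$, apply Lemma~\ref{ELP_lemma} to get polylogarithmic growth of $\alpha_{p,n}(X)$ (resp.\ $\tau_{p,n}(X)$), then use Proposition~\ref{basic_prop}(i)/(v) together with Remark~\ref{Mr.K} to recover the type, the only presentational difference being that the paper cites \cite[\S 2.4]{hsmvz} for the easy direction rather than sketching the lifting. Note one small sign slip: you want $(\log n)^\lambda = o(n^{1/s-1/r})$, not $o(n^{1/r-1/s})$, since $s<r$ forces $1/s-1/r>0$.
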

\begin{proof} 
We know that $\mathsf{p}(X)\geqslant \max\{\mathsf{p}(Y), \mathsf{p}(X/Y)\}$ (see, {\it e.g.}, \cite[\S 2.4]{hsmvz}), so in the case where one of $\mathsf{p}(Y)$, $\mathsf{p}(X/Y)$ is infinite there is nothing to show. Neither is there if $Y$ or $X/Y$ is finite dimensional. In what follows, we assume that none of these trivial situations holds.

Fix any numbers $1<r<p<\min\{\mathsf{p}(Y)^\prime, \mathsf{p}(X/Y)^\prime\}$. Notice that, according to Remark~\ref{Mr.K}, we have
$$
C\coloneqq\sup_n \max\{\alpha_{p,n}(Y), \alpha_{p,n}(X/Y)\}<\infty.
$$
Hence, Lemma~\ref{gen} implies that $\alpha_{p,n^2}(X) \leqslant 7C \alpha_{p,n}(X)$ for each $n\in\N$. Now, in view of Lemma~\ref{ELP_lemma}, there exist constants $C_1, \lambda>0$ such that $\alpha_{p,n}(X) \leqslant C_1 (\log n)^\lambda$ for each $n\geq 2$. For a~sufficiently large $N\in\N$ we must have $\alpha_{p,N}(X) <N^{\frac{1}{r}-\frac{1}{p}}$, whence Proposition~\ref{basic_prop}(i) implies that $X$ has asymptotic basic type $r$. Appealing once again to Remark~\ref{Mr.K} we infer that $\mathsf{p}(X) \leqslant r^\prime$ and by the choice of $r$ and $p$ we finally obtain $\mathsf{p}(X) \leqslant \max\{\mathsf{p}(Y),\mathsf{p}(X/Y)\}$.  

The same reasoning applies to $\mathsf{t}$. 
\end{proof}


\end{document}